\newcommand{\circlesign}[1]{ 
\mathbin{
\mathchoice
{\buildcirclesign{\displaystyle}{#1}}
{\buildcirclesign{\textstyle}{#1}}
{\buildcirclesign{\scriptstyle}{#1}}
{\buildcirclesign{\scriptscriptstyle}{#1}}
} 
}
\newcommand\buildcirclesign[2]{%
\begin{tikzpicture}[baseline=(X.base), inner sep=0, outer sep=0]
\node[draw,circle] (X)  {\ensuremath{#1 #2}};
\end{tikzpicture}%
}
\newtheorem{theorem}{Theorem}[section]
\newtheorem{remark}{Remark}[section]
\newtheorem{lemma}[theorem]{Lemma}
\newtheorem{proposition}[theorem]{Proposition}
\newtheorem{corollary}[theorem]{Corollary}
\newtheorem{define}{Definition}[section]
\newcommand{\joinR}{\hspace{-.1em}}
\newcommand{\RomanI}{I}
\newcommand{\RomanII}{\mbox{\RomanI\joinR\RomanI}}
\newcommand{\RomanIII}{\mbox{\RomanI\joinR\RomanII}}
\DeclareMathOperator*{\Id}{Id}
\DeclareMathOperator*{\supp}{supp}
\DeclareMathOperator*{\divergence}{div}
\DeclareMathSymbol{:}{\mathord}{operators}{"3A}
\begin{document}
\title[2D MHD system forced by space-time white noise]{Remarks on the two-dimensional magnetohydrodynamics system forced by space-time white noise}

\subjclass[2010]{35A02; 35R60; 76W05}
 
\author[Kazuo Yamazaki]{Kazuo Yamazaki}  
\address{University of Nebraska, Lincoln, 203 Avery Hall, PO Box, 880130, Lincoln, NE 68588-0130, U.S.A.; Phone: 402-473-3731; Fax: 402-472-8466}
\email{kyamazaki2@unl.edu}
\date{}
\keywords{Bony's decomposition; global well-posedness; magnetohydrodynamics; renormalization; space-time white noise.}

\begin{abstract}
We study the two-dimensional magnetohydrodynamics system forced by space-time white noise. Due to a lack of an explicit invariant measure, the approach of Da Prato and Debussche (2002, J. Funct. Anal., \textbf{196}, pp. 180--210) on the Navier-Stokes equations does not seem to fit. We follow instead the approach of Hairer and Rosati (2023, arXiv:2301.11059 [math.PR]), take advantage of the structure of Maxwell's equation, such as anti-symmetry, to find an appropriate paracontrolled ansatz and many crucial cancellations, and prove the global-in-time existence and uniqueness of its solution.
\end{abstract}

\maketitle

\section{Introduction}\label{Section 1}
\subsection{Motivation from physics and mathematics}\label{Motivation from physics and mathematics}
Led by the pioneers such as Alfv$\acute{\mathrm{e}}$n \cite{A42a}, Batchelor \cite{B50}, and Chandrasekhar \cite{C51}, the study of magnetohydrodynamics (MHD) concerning the properties of electrically conducting fluids has continuously attracted much attention from scientific community over the past 60 years. For example, while fluid turbulence is often investigated through Navier-Stokes equations, MHD turbulence occurs in laboratory settings such as fusion confinement devices (e.g. reversed field pinch), as well as astrophysical systems (e.g. solar corona) and the conventional system of equations for such study is that of the MHD. 

The idea of investigating hydrodynamic fluctuations via forcing a system of partial differential equations (PDEs) by stochastic force, especially space-time white noise (STWN) (see \eqref{STWN u}, \eqref{STWN b}), can be traced back as far as \cite{LL57} by Landau and Lifshitz in 1957 (see also \cite{N65}) followed by many others: ferromagnetics (e.g. \cite{MM75}); Kardar-Parisi-Zhang (KPZ) equation \cite{KPZ86}, Navier-Stokes equations (e.g. \cite{FNS77, YO86}); MHD system (e.g. \cite{CT92}), $\Phi^{4}$ model from quantum field theory (e.g. \cite{PW81}), Rayleigh-B$\acute{\mathrm{e}}$nard convection (e.g. \cite{ACHS81, GP75, HS92, SH77, ZS71}).  Theories of fluid turbulence in the two-dimensional (2D) case are generally richer due to the well-known advantage that the vorticity is transported by the velocity (see \eqref{est 10}), and motivated due to possible applications and ease in simulations. For example, we quote 
\begin{quotation}
[the vorticity constraint has profound effects on inertial energy transfer ... A principal reason for exploring 2D turbulence has been the possible application to intermediate-scale meteorological flows. Another motivation is that 2D flows are more easily simulated] ... $\sim$ Kraichnan \cite{K67}.
\end{quotation}
The 2D case of the MHD system forced by STWN \eqref{MHD} has also caught special attention from physicists; e.g. Fournier, Sulem, and Pouquet \cite{FSP82} applied renormalization group approach and found a critical spatial dimension threshold of about 2.8 such that if its dimension is higher, then the system displays two non-trivial regimes: kinetic and magnetic; we also refer to \cite[Section 34]{V09} for more discussions concerning renormalization group approach applied on the MHD system forced by STWN. 

Due to an explicit knowledge of a Gaussian invariant measure of the 2D Navier-Stokes equations forced by STWN, which is precisely due to the fact that the vorticity is transported by velocity, Da Prato and Debussche \cite{DD02} in 2002 proved that for almost every initial data with respect to such a measure, there exists a unique solution globally in time. Due to the coupling with Maxwell's equation, such knowledge of a Gaussian invariant measure and therefore an extension of \cite{DD02} to the 2D MHD system forced by STWN has remained absent in the literature, to the best of the author's knowledge. In the hope to gain a better understanding of the MHD turbulence, we follow the approach of Hairer and Rosati \cite{HR23} to address this issue.

\subsection{Main equations}\label{Main equations} 
We set up a minimum amount of notations here before introducing our main equations. Components of any vector are represented by sub-index. For any vector $k= (k_{1}, k_{2}) \in \mathbb{Z}^{2}$, we denote $k^{\bot} \triangleq (k_{2}, -k_{1})$. We work with spatial variable $x \in \mathbb{T}^{d} = (\mathbb{R}/\mathbb{Z})^{d}$ for $d \in \{2,3\}$ with primary focus on $d=2$. Let us define $\partial_{t} \triangleq \frac{\partial}{\partial t}, \partial_{i} \triangleq \frac{\partial}{\partial x_{i}}$ for $i \in \{1,\hdots, d\}$, and $\mathbb{P}_{\neq 0} f \triangleq f - \fint_{\mathbb{T}^{d}} f(x) dx$. We write $A \lesssim B$ whenever there exists a constant $C \geq 0$ that is independent of important parameters such that $A \leq CB$ while $A \approx B$ whenever $A \lesssim B$ and $A \gtrsim B$. Moreover, we write $A \overset{(\cdot)}{\lesssim} B$ whenever $A \lesssim B$ due to $(\cdot)$. Let us denote the Lebesgue, homogeneous and inhomogeneous Sobolev spaces by $L^{p}, \dot{H}^{s}$ and $H^{s}$ for $p \in [1,\infty], s \in \mathbb{R}$ with corresponding norms $\lVert \cdot \rVert_{L^{p}}, \lVert \cdot \rVert_{\dot{H}^{s}}$, and $\lVert \cdot \rVert_{H^{s}}$, respectively. We also denote the Schwartz space and its dual respectively by $\mathcal{S}$ and $\mathcal{S}'$ and Fourier transform over $\mathbb{T}^{d}$ by $\hat{f} \triangleq \mathcal{F} f$. Lastly, we denote the Leray projection operator by $\mathbb{P}_{L}$, which particularly in the 2D case can be written as 
\begin{equation}\label{est 200}
\mathbb{P}_{L} f(x) \triangleq \sum_{k \in \mathbb{Z}^{2} \setminus \{0\}} e^{i2\pi  k \cdot x} \left( \hat{f} (k) \cdot \frac{k^{\bot}}{\lvert k \rvert^{2}} \right) k^{\bot} \hspace{3mm} \forall \hspace{1mm} f \in \mathcal{S}' (\mathbb{T}^{2}) \text{ such that } \hat{f}(0) = 0.
\end{equation} 

Now we fix a probability space $(\Omega, \mathcal{F}, \mathbb{P})$ and denote the velocity and pressure fields and viscous diffusivity respectively by $u: \hspace{1mm} \mathbb{R}_{\geq 0} \times \mathbb{T}^{d} \mapsto \mathbb{R}^{d}$, $\pi: \hspace{1mm} \mathbb{R}_{\geq 0} \times \mathbb{T}^{d} \mapsto \mathbb{R}$, and $\nu_{u} \geq 0$. We introduce forcing by a certain perturbation $\zeta_{u}$ (see \eqref{est 6}), and STWN $\xi_{u}$, a distribution-valued Gaussian field with a correlation of 
\begin{equation}\label{STWN u}
\mathbb{E} [ \xi_{u,i}(t,x) \xi_{u,j} (s,y)] = 1_{ \{i=j\}}  \delta(t-s) \prod_{l=1}^{d} \delta(x_{l} - y_{l}), 
\end{equation} 
where $\mathbb{E}$ denotes a mathematical expectation; i.e., for all $i, j \in \{1, \hdots, d \}$,
\begin{align}\label{est 234}
\mathbb{E} [\xi_{u,i} (\phi) \xi_{u,j} (\psi) ] = 1_{ \{i=j\}} \int_{\mathbb{R} \times \mathbb{T}^{d}} \phi(t,x) \psi(t,x) dx dt \hspace{3mm} \forall \hspace{1mm} \phi, \psi \in \mathcal{S} (\mathbb{R} \times \mathbb{T}^{d}). 
\end{align} 
Then the stochastic Navier-Stokes equations is written as  
\begin{equation}\label{NS} 
\partial_{t} u + (u\cdot\nabla) u + \nabla \pi = \nu_{u} \Delta u + \mathbb{P}_{\neq 0} (\xi_{u} + \zeta_{u}), \hspace{5mm} \nabla\cdot u = 0,
\end{equation} 
given any initial data $u^{\text{in}}$ that is divergence-free. The equation \eqref{NS} in case $\nu_{u} = 0$ reduces to the stochastic Euler equations. Additionally, we denote the magnetic field and magnetic diffusivity by $b: \hspace{1mm} \mathbb{R}_{\geq 0} \times \mathbb{T}^{d} \mapsto \mathbb{R}^{d}$ and $\nu_{b} \geq 0$, respectively. We force the Navier-Stokes equations \eqref{NS} by Lorentz force $(b\cdot\nabla) b$ and Maxwell's equation by another perturbation $\zeta_{b}$ (see \eqref{est 6}) and its own STWN $\xi_{b}$ such that 
\begin{subequations}\label{STWN b}
\begin{align}
&\mathbb{E} [ \xi_{b,i}(t,x) \xi_{b,j} (s,y) ] = 1_{ \{i=j\}} \delta(t-s) \prod_{l=1}^{d} \delta(x_{l} - y_{l}), \label{STWN b1}\\
& \mathbb{E} [ \xi_{u,i} (t,x) \xi_{b,j} (s,y) ] = 0 \hspace{3mm} \forall \hspace{1mm} i, j \in \{1, \hdots, d\}, x, y \in \mathbb{T}^{d}, s, t \in [0,\infty); \label{STWN b2}
\end{align} 
\end{subequations} 
i.e., for all $i, j \in \{1,\hdots, d \}$, 
\begin{equation}\label{est 233}
\mathbb{E} [\xi_{b,i} (\phi) \xi_{b,j} (\psi) ] = 1_{ \{i=j\}} \int_{\mathbb{R} \times \mathbb{T}^{d}} \phi(t,x) \psi(t,x) dx dt, \hspace{3mm}  \mathbb{E}[ \xi_{u,i} (\phi) \xi_{b,j} (\psi) ] = 0. 
\end{equation} 
Then the stochastic MHD system, for simplicity after applying $\mathbb{P}_{L}$ on both equations already, is written as 
\begin{subequations}\label{MHD} 
\begin{align}
& \partial_{t}  u + \mathbb{P}_{L} (u\cdot\nabla) u = \nu_{u} \Delta u - \mathbb{P}_{L} (b\cdot\nabla) b + \mathbb{P}_{L} \mathbb{P}_{\neq 0} (\xi_{u} + \zeta_{u}),  \label{MHD a}\\
& \partial_{t} b +  \nabla\times (b\times u) = \nu_{b} \Delta b + \mathbb{P}_{L} \mathbb{P}_{\neq 0} (\xi_{b} + \zeta_{b}), \label{MHD b}
\end{align}
\end{subequations} 
given any initial data $(u^{\text{in}}, b^{\text{in}})$, both of which are divergence-free and mean-zero. We point out that \eqref{STWN b2} is a standard assumption upon forcing a system of equations with distinguished structures by STWN (see e.g. \cite[Equation (9)]{ZS71}, \cite[Equation (3)]{GP75}). Hereafter, the stochastic equation with zero stochastic force will be referred to as the deterministic case; e.g. ``the Navier-Stokes equations'' refers to \eqref{NS} with $\xi_{u} = \zeta_{u} = 0$. 

\subsection{Previous works}\label{Previous works} 
In this subsection, we review previous works on the MHD system. As we will see, extending results on the Navier-Stokes equations to the MHD system, which reduces to the Navier-Stokes equations when $b\equiv 0$, always relies on the strategic coupling of nonlinear terms and exploiting cancellations. 
\begin{remark}\label{Remark 1.1}
As a fundamental example, let us recall the derivation of the energy identities for \eqref{NS} and \eqref{MHD}. To explain in the deterministic case for for simplicity, we first write an equivalent formulation of  \eqref{MHD b} without random force 
\begin{equation}\label{est 5}
\partial_{t} b  + (u\cdot\nabla) b -  (b\cdot\nabla) u   = \nu_{b} \Delta b.
\end{equation} 
Then, taking $L^{2}(\mathbb{T}^{d})$-inner products of the first equation in \eqref{MHD a} with $u$ and \eqref{est 5} with $b$ produces four nonlinear terms, of which we compute the first one as 
\begin{equation}\label{est 8} 
\int_{\mathbb{T}^{d}} (u\cdot\nabla) u \cdot u dx = \sum_{i,j=1}^{d} \int_{\mathbb{T}^{d}} \frac{1}{2} u_{i} \partial_{i} \lvert u_{j} \rvert^{2} dx = -\sum_{i,j=1}^{d} \int_{\mathbb{T}^{d}} (\partial_{i} u_{i}) \frac{1}{2} \lvert u_{j} \rvert^{2} dx = 0 
\end{equation}
due to $\nabla\cdot u =0$ from \eqref{NS}. Analogous computations show $\int_{\mathbb{T}^{d}} (u\cdot\nabla) b \cdot b dx = 0$ while $\int_{\mathbb{T}^{d}} (b\cdot\nabla) b \cdot u dx \neq 0$ and $\int_{\mathbb{T}^{d}} (b\cdot\nabla) u \cdot b dx \neq 0$; nevertheless,
\begin{equation}\label{est 11} 
\int_{\mathbb{T}^{d}} ( b\cdot\nabla) b \cdot u + (b\cdot\nabla) u \cdot b dx = 0, 
\end{equation} 
leading to the energy conservation and dissipation when  $\nu_{u} = \nu_{b} = 0$ and $\nu_{u}, \nu_{b} > 0$, respectively. Let us emphasize that we just saw that the first and third nonlinear terms $(u\cdot\nabla) u$ and $(u\cdot\nabla) b$ canceled individually, while the second and fourth nonlinear terms $-(b\cdot\nabla) b$ and $-(b\cdot\nabla) u$, did not cancel individually but they did in sum, respectively. 
\end{remark} 
Based on the energy bound, mathematical analysis of the MHD system was pioneered by Duvaut and Lions \cite{DL72} and fundamental results such as the global existence of a Leray-Hopf weak solution in both cases $d \in \{2,3\}$ and its uniqueness in case $d= 2$ can be found in \cite[Theorem 3.1]{ST83}. In the stochastic case, various results were obtained by many researchers in case noise is white only in time: the existence of a global-in-time weak solution in cases of additive and multiplicative noise in the three-dimensional (3D) case, along with path-wise uniqueness in the 2D case as long as the noise is Lipschitz \cite{M22, S10, SS99}; ergodicity in case of an additive noise in the 2D case \cite{BD07}; large deviation principle in the 2D case \cite{CM10}; Markov selection, irreducibility, and strong Feller property in the 3D case \cite{ Y19a, Y20b}; tamed stochastic MHD system \cite{S21}. 

The main difficulty that arises due to STWN is its roughness; for simplicity, we explain the case of the Navier-Stokes equations and assume hereafter common diffusivity coefficients $\nu \triangleq \nu_{u} = \nu_{b}$. This is more general than the unit viscosity in \cite{HR23}; however, we had trouble considering the case $\nu_{u} \neq \nu_{b}$ (see Remark \ref{Remark 4.2}). To be precise, we recall the H$\ddot{\mathrm{o}}$lder-Besov space $\mathcal{C}^{\alpha} \triangleq B_{\infty,\infty}^{\alpha}$ for any $\alpha \in \mathbb{R}$ that coincides with the classical H$\ddot{\mathrm{o}}$lder space $C^{\alpha}$ whenever $\alpha \in (0,\infty) \setminus \mathbb{N}$, although $C^{k} \subsetneq \mathcal{C}^{k}$ for $k \in \mathbb{N}$ (see \cite[p. 99]{BCD11}); we defer detailed definitions of Besov space to the Section \ref{Preliminaries}. If we denote scaling by 
\begin{equation}\label{est 9} 
\mathfrak{s} = (2, \underbrace{1, \hdots, 1}_{d\text{-many}}) \text{ so that } \lvert \mathfrak{s} \rvert = d + 2, 
\end{equation}
we know e.g. by \cite[Lemma 10.2]{H14} that the STWN $\xi_{u}$ and $\xi_{b}$ $\mathbb{P}$-almost surely ($\mathbb{P}$-a.s.) belong to $\mathcal{C}_{x}^{\alpha} \cap C_{t}^{\frac{\alpha}{2}}$ for every $\alpha < - \frac{\lvert \mathfrak{s} \rvert}{2}$. In the 2D case, it follows that $\xi_{u} \in \mathcal{C}_{x}^{\alpha}$ for $\alpha < -2$. The general assumption on the perturbations $\zeta_{u}$ and $\zeta_{b}$ are such that for $\kappa > 0$ to be taken sufficiently small, 
\begin{equation}\label{est 6} 
(\zeta_{u}, \zeta_{b}) \in (\mathcal{C}_{x}^{-2+ 3 \kappa} \cap \mathcal{C}_{t}^{\frac{-2 + 3 \kappa}{2}}; \mathbb{R}^{2}) \times (\mathcal{C}_{x}^{-2+ 3 \kappa} \cap \mathcal{C}_{t}^{\frac{-2 + 3 \kappa}{2}}; \mathbb{R}^{2}).   
\end{equation} 
Therefore, the $\xi_{u}$ and $\xi_{b}$ are rougher than $\zeta_{u}$ and $\zeta_{b}$. 

Da Prato and Debussche \cite{DD02} considered \eqref{NS} in case $\zeta_{u} \equiv 0$, decomposed its solution $u$ to $u = v_{u} + X_{u}$ where   
\begin{equation}\label{est 14} 
\partial_{t} X_{u} =  \nu \Delta X_{u} + \mathbb{P}_{L} \mathbb{P}_{\neq 0} \xi_{u},  \hspace{3mm}  X_{u}(0,x) = 0,  
\end{equation} 
and 
\begin{equation*}
\partial_{t} v_{u} + \mathbb{P}_{L} \divergence (v_{u} + X_{u} )^{\otimes 2} =  \nu \Delta v_{u}, \hspace{3mm} v_{u}(0,x) = u^{\text{in}}(x), 
\end{equation*} 
in which we denoted $W^{\otimes 2} \triangleq W \otimes W$ (see also \cite{DD03b}). Because $\xi_{u} \in \mathcal{C}_{x}^{\alpha}$ for $\alpha < -2$, we have $X_{u} \in \mathcal{C}_{x}^{\alpha}$ for $\alpha < 0$ and therefore the product $X_{u} \otimes X_{u}$ is ill-defined according to Bony's estimates that informally states that a product $fg$ is well-defined if and only if $f \in \mathcal{C}_{x}^{\alpha_{1}}, g \in \mathcal{C}_{x}^{\alpha_{2}}$ for $\sum_{j=1}^{2}\alpha_{j} > 0$ (see Lemma \ref{Lemma 3.1}). Nevertheless, Da Prato and Debussche were able to define it through Wick products and prove existence of path-wise unique solution locally in time (see \cite[Proposition 5.1]{DD02}). Moreover, the solution $u$ to the 2D Navier-Stokes equations on $\mathbb{T}^{2}$ satisfies 
\begin{equation}\label{est 10} 
\int_{\mathbb{T}^{2}} (u\cdot\nabla) u \cdot \Delta u dx = 0,
\end{equation} 
which can be readily seen by writing $\Delta u = - \nabla\times \nabla \times u$ and integrating by parts to deduce $\int_{\mathbb{T}^{2}} (u\cdot\nabla) u \cdot \Delta u dx = \int_{\mathbb{T}^{2}} (u\cdot\nabla) \omega \cdot \omega dx = 0$ similarly to \eqref{est 8} where $\omega \triangleq \nabla \times u$ represents vorticity (see ``$(b(x), Ax) = 0$'' on \cite[p. 185]{DD02}). This leads to an explicit knowledge of an invariant measure ``$\mu = \times_{k\in\mathbb{Z}_{0}^{2}} \mathcal{N} (0, 1/2\lvert k \rvert^{2})$'' on \cite[p. 185]{DD02}, of which its Gaussianity can be exploited to deduce the existence of path-wise unique solution globally in time starting from $\mu$-almost every initial data $u^{\text{in}}$ (see \cite[Theorem 5.1]{DD02}). 

By definition of $\lvert \mathfrak{s} \rvert$ from \eqref{est 9}, it follows that the roughness issue only becomes worse in higher dimensions and the trick by Da Prato and Debussche no longer applies, e.g. for the 3D Navier-Stokes equations. This is the content of the research direction of singular stochastic PDEs (e.g. \cite{BCJ94, BG97} on Burgers' and KPZ equations forced by STWN). In particular, Hairer \cite{H11} was the first to realize that the rough path theory invented by Lyons \cite{L98} can be used to prove the solution theory for the Burgers' equation forced by STWN; subsequently, he also went on to solve the KPZ equation in \cite{H13}. Soon after, systematic general approaches to singular stochastic PDEs were invented: the theory of regularity structures by Hairer \cite{H14} and the theory of paracontrolled distributions by Gubinelli, Imkeller, and Perkowski \cite{GIP15}. We refer to \cite{CC18} on the 3D $\Phi^{4}$ model, \cite{HM18a} for the strong Feller property of singular stochastic PDEs, \cite{ZZ15, ZZ17a} on the 3D Navier-Stokes equations forced by STWN, and \cite{Y20a, Y21a, Y23a} on the 3D MHD system forced by STWN. 

We briefly mention the recent progress on the convex integration technique applied to stochastic PDEs to prove non-uniqueness. In particular, Hofmanov$\acute{\mathrm{a}}$, Zhu, and Zhu \cite{HZZ19} applied this technique, which was initially developed in the deterministic case (see e.g. \cite{BV19b}), to the 3D Navier-Stokes equations forced by the noise that white only in time. In relevance to the current manuscript, we mention that the author extended it to the 3D MHD system in \cite{Y21d} with noise that is white only in time. On the other hand, we refer to \cite{HZZ21b, LZ23} that employed the convex integration on the 3D and 2D Navier-Stokes equations forced by STWN, respectively. Finally, interestingly Hofmanov$\acute{\mathrm{a}}$, Zhu, and Zhu \cite{HZZ22} were able to apply the convex integration technique to the 2D surface quasi-geostrophic equations forced by noise that is white only in space, even in locally critical and locally supercritical cases when the theory of regularity structures seems inapplicable because its nonlinear term is rougher than its noise. 

Despite various successful examples of extensions of the results on the Navier-Stokes equations to the MHD system, proving the existence and uniqueness of the solution to the 2D MHD system forced by STWN globally in time has remained open, to the best of the author's knowledge. The approach of \cite{DD02} on the Navier-Stokes equations comes to a full stop upon realizing that an analogue of \eqref{est 10} does exist for the MHD system because 
\begin{equation}\label{est 25}
\int_{\mathbb{T}^{2}} (u\cdot\nabla) u \cdot \Delta u dx - \int_{\mathbb{T}^{2}} (b\cdot\nabla) b \cdot \Delta u dx \overset{\eqref{est 10}}{=} - \int_{\mathbb{T}^{2}} (b\cdot\nabla) b \cdot \Delta u dx\neq 0. 
\end{equation} 
Hope for further cancellations upon coupling as we did in \eqref{est 11} does not work because
\begin{align}
& \int_{\mathbb{T}^{2}} (u\cdot\nabla) u \cdot \Delta u - (b\cdot\nabla) b \cdot \Delta u dx + \int_{\mathbb{T}^{2}} (u\cdot\nabla) b \cdot \Delta b - (b\cdot\nabla) u \cdot \Delta b dx \nonumber  \\
=& -2 \int_{\mathbb{T}^{2}} [\partial_{1} b_{1} (\partial_{1} u_{2} + \partial_{2} u_{1}) - \partial_{1} u_{1} (\partial_{1} b_{2} + \partial_{2} b_{1} ) ] (\partial_{1} b_{2} - \partial_{2} b_{1}) dx \neq 0. \label{est 12}
\end{align}
In fact, this issue \eqref{est 12} is the heart of the matter of why although Yudovich \cite{Y63} was able to prove the global regularity of the solution to the 2D Euler equations starting from bounded vorticity, its extension to the 2D MHD system with zero viscous diffusion remains completely open despite much effort by many mathematicians (see \cite{CWY14, FMMNZ14, JZ14a, Y18a} for recent progress). Although global well-posedness was shown via another approach for some singular stochastic PDEs (e.g. \cite{MW17} on the 2D $\Phi^{4}$ model), their approaches do not seem applicable to the MHD system \eqref{MHD}.  Therefore, the problems of extending \cite{DD02} to the 2D MHD system forced by STWN and \cite{Y63} to the MHD system with zero viscous diffusion remained open. In this manuscript, we follow the more recent approach of Hairer and Rosati \cite{HR23} to be described next and address the first problem.

\section{Statement of main results and new ideas}\label{Section 2}
Let us introduce a notation of $\mathbb{M}^{d} \triangleq \{ A = (a_{i,j})_{1\leq i,j \leq d}\}$. Via this notation we can rewrite \eqref{MHD} using \eqref{est 5} as  
\begin{subequations}\label{est 16}
\begin{align} 
&\partial_{t} u + \mathbb{P}_{L} \divergence (u\otimes u - b\otimes b) = \nu \Delta u + \mathbb{P}_{L} \mathbb{P}_{\neq 0} (\xi_{u} + \zeta_{u}), \label{est 16a} \\
&\partial_{t} b + \mathbb{P}_{L}  \divergence (b\otimes u - u\otimes b) =  \nu \Delta b + \mathbb{P}_{L} \mathbb{P}_{\neq 0} (\xi_{b} + \zeta_{b}).\label{est 16b} 
\end{align} 
\end{subequations}  
As we discussed at \eqref{est 9}, $u, b \in \mathcal{C}_{x}^{\alpha}$ for $\alpha < 0$ and thus the products within the nonlinear terms are ill-defined. In addition to \eqref{est 14}, we consider 
\begin{equation}\label{est 15} 
\partial_{t} X_{b} =  \nu\Delta X_{b} + \mathbb{P}_{L} \mathbb{P}_{\neq 0} \xi_{b}, \hspace{3mm} X_{b}(0,x) = 0. 
\end{equation} 
It follows that there exist unique solutions $X_{u}, X_{b} \in C([0,\infty); \mathcal{C}^{-\kappa}(\mathbb{T}^{2}))$ for all $\kappa > 0$ to \eqref{est 14} and \eqref{est 15}, respectively. We can split the solution of \eqref{est 16} to $(u,b) = (v_{u} + X_{u}, v_{b} + X_{b})$ where $(v_{u}, v_{b})$ satisfies 
\begin{subequations}\label{est 17}
\begin{align}
& \partial_{t} v_{u} +  \mathbb{P}_{L} \divergence (( v_{u}+ X_{u})^{\otimes 2} - (v_{b} + X_{b})^{\otimes 2}) = \nu\Delta v_{u} + \mathbb{P}_{L} \mathbb{P}_{\neq 0} \zeta_{u},   \label{est 17a} \\
& \partial_{t} v_{b}  + \mathbb{P}_{L} \divergence ((v_{b} +X_{b}) \otimes (v_{u} + X_{u}) - (v_{u} + X_{u}) \otimes (v_{b} + X_{b} ))= \nu\Delta v_{b} + \mathbb{P}_{L} \mathbb{P}_{\neq 0} \zeta_{b},  \label{est 17b}\\
&v_{u}(0,x) = u^{\text{in}}(x),v_{b} (0,x) = b^{\text{in}}(x),
\end{align}
\end{subequations}
for $(u^{\text{in}}, b^{\text{in}})$ that are both mean-zero and divergence-free. The following result is standard and can be proven via the approach of \cite{DD02} just like \cite[Theorem 2.3]{HR23}. 
\begin{proposition}\label{Proposition 2.1}
\rm{(Cf. \cite[Theorem 2.3]{HR23})} There exists a null set $\mathcal{N} \subset \Omega$ such that for any $\omega \in \Omega \setminus \mathcal{N}$ and $\kappa > 0$, the following holds. For any $(u^{\text{in}}, b^{\text{in}}) \in\mathcal{C}^{-1+ \kappa} \times \mathcal{C}^{-1+ \kappa}$ that are both divergence-free and mean-zero, there exist $T^{\max} (\omega, u^{\text{in}}, b^{\text{in}}) \in (0,\infty]$ and a unique maximal mild solution $(v_{u}, v_{b})(\omega)$ to \eqref{est 17} on $[0, T^{\max} (\omega, u^{\text{in}}, b^{\text{in}}))$ such that $(v_{u}, v_{b})(\omega, 0, x) = (u^{\text{in}}, b^{\text{in}})(x)$. 
\end{proposition}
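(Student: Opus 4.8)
The plan is to follow the Da Prato--Debussche scheme of \cite{DD02} essentially verbatim, exactly as in \cite[Theorem 2.3]{HR23}: no cancellation of the type in \eqref{est 11}--\eqref{est 12} is needed locally in time, so the coupling with Maxwell's equation costs nothing at this stage. The first step is to fix $\mathcal{N}$. Besides $X_{u}, X_{b}$, the only stochastic inputs are the renormalized products of the pairs drawn from $\{X_{u}, X_{b}\}$ that appear upon expanding the tensor squares in \eqref{est 17}; by hypercontractivity in a fixed Wiener chaos and a Kolmogorov-type continuity argument these all belong $\mathbb{P}$-a.s.\ to $C([0,\infty); \mathcal{C}^{-\kappa})$ for every $\kappa > 0$, the renormalization being interpreted as in \cite{DD02}. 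One then lets $\mathcal{N}$ be the complement of the full-measure event on which these regularities hold simultaneously for every $\kappa$ in some sequence decreasing to $0$ (hence, by monotonicity of the spaces, for every $\kappa > 0$), and fixes $\omega \in \Omega \setminus \mathcal{N}$; everything below is deterministic.

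Next I would recast \eqref{est 17} in mild form, e.g.
\begin{align*}
v_{u}(t) & = e^{t\nu\Delta} u^{\text{in}} + \int_{0}^{t} e^{(t-s)\nu\Delta} \mathbb{P}_{L} \mathbb{P}_{\neq 0} \zeta_{u}(s)\, ds \\
& \quad - \int_{0}^{t} e^{(t-s)\nu\Delta} \mathbb{P}_{L} \divergence \big[ (v_{u}+X_{u})^{\otimes 2} - (v_{b}+X_{b})^{\otimes 2} \big](s)\, ds ,
\end{align*}
together with the analogous equation for $v_{b}$, where $(v_{\bullet}+X_{\bullet})^{\otimes 2}$ and the cross tensors in the $v_{b}$-equation are read after expansion with $X_{u}^{\otimes 2}, X_{b}^{\otimes 2}$ replaced by their Wick renormalizations. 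The nonlinearities are then finite sums of four kinds of terms: products $v_{\bullet}\otimes v_{\bullet}$, classically well-defined once $v_{\bullet}$ has strictly positive H\"older regularity (true for $t > 0$); cross terms $v_{\bullet}\otimes X_{\bullet}$, well-defined by Bony's estimate (Lemma \ref{Lemma 3.1}) as soon as $v_{\bullet}$ is measured in some $\mathcal{C}^{\gamma}$ with $\gamma > \kappa$; the renormalized products among $X_{u}, X_{b}$, all lying in $\mathcal{C}^{-\kappa}$ by the first step; and the perturbations $\zeta_{\bullet} \in \mathcal{C}_{x}^{-2+3\kappa}$ from \eqref{est 6}, whose time integrals land in spaces of slightly positive regularity.

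The contraction is then set up in a product of copies of the Banach space obtained by intersecting $C([0,T]; \mathcal{C}^{-1+\kappa})$ with finitely many time-weighted spaces $\{ w : \sup_{0<t\leq T} t^{\theta_{i}} \lVert w(t)\rVert_{\mathcal{C}^{\gamma_{i}}} < \infty \}$, the pairs $(\gamma_{i},\theta_{i})$ chosen---as in \cite{DD02,HR23}---so that each $\gamma_{i} > 0$, at least one $\gamma_{i}$ exceeds $\kappa$, the weights are compatible with the Schauder estimates $\lVert e^{t\nu\Delta} \mathbb{P}_{L} \divergence f \rVert_{\mathcal{C}^{\alpha}} \lesssim t^{-\frac{1+\alpha-\beta}{2}} \lVert f\rVert_{\mathcal{C}^{\beta}}$ and $\lVert e^{t\nu\Delta} g \rVert_{\mathcal{C}^{\alpha}} \lesssim t^{-\frac{\alpha-\beta}{2}} \lVert g\rVert_{\mathcal{C}^{\beta}}$ applied with the $\mathcal{C}^{-1+\kappa}$ initial data, and so that every time integral $\int_{0}^{t}(t-s)^{-a}s^{-b}\, ds$ arising has $a,b < 1$ and hence converges. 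Granting Bony's product estimates and these bounds, a routine computation shows the solution map maps a small ball of this space into itself and is a contraction there for $T = T(\omega, u^{\text{in}}, b^{\text{in}}) > 0$ sufficiently small, which produces a unique local mild solution with $(v_{u},v_{b})(0,\cdot) = (u^{\text{in}}, b^{\text{in}})$. Local uniqueness lets one glue solutions along overlaps; the supremum $T^{\max}(\omega, u^{\text{in}}, b^{\text{in}}) \in (0,\infty]$ of existence times defines a unique maximal mild solution on $[0, T^{\max})$, and a standard blow-up alternative shows that if $T^{\max} < \infty$ then $\lVert (v_{u},v_{b})(t) \rVert_{\mathcal{C}^{-1+\kappa}} \to \infty$ as $t \uparrow T^{\max}$ (otherwise one restarts from a time close to $T^{\max}$, where the solution already lies in $\mathcal{C}^{-1+\kappa}$, and extends past $T^{\max}$, contradicting maximality).

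The step I expect to be the only delicate one is the bookkeeping of exponents in the preceding paragraph. Since $(u^{\text{in}}, b^{\text{in}})$ is merely $\mathcal{C}^{-1+\kappa}$, the remainder $v_{\bullet}$ is unavoidably singular like a negative power of $t$ as $t \downarrow 0$ in any space of positive regularity, so the family $(\gamma_{i},\theta_{i})$ must be selected so that this singularity is simultaneously weak enough to keep the quadratic terms $v_{\bullet}\otimes v_{\bullet}$ time-integrable near $s = 0$ and strong enough (regularity $> \kappa$) to permit multiplication against $X_{\bullet} \in \mathcal{C}^{-\kappa}$; reconciling these two demands is precisely what fixes how small $\kappa$ must be chosen. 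The MHD structure is otherwise inert here: the four product types in \eqref{est 17} are controlled by the same inequalities that handle the single product in the Navier--Stokes case of \cite{DD02,HR23}.
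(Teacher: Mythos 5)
Your proposal is correct and follows exactly the route the paper itself indicates: the paper gives no detailed argument for Proposition \ref{Proposition 2.1}, stating only that it is standard and follows the Da Prato--Debussche fixed-point scheme as in \cite[Theorem 2.3]{HR23}, which is precisely the construction (stochastic construction of the renormalized products of $X_{u}, X_{b}$ on a $\kappa$-uniform full-measure set, mild formulation, contraction in time-weighted H\"older--Besov spaces via Schauder and Bony estimates, gluing to a maximal solution) that you outline. The only cosmetic point is that the exponent tension you flag is resolved by choosing the noise-regularity exponent (say $X_{u}, X_{b}\in\mathcal{C}^{-\epsilon}$ with $\epsilon$ small relative to the given $\kappa$) rather than by shrinking $\kappa$ itself, which is harmless since $\mathcal{C}^{-1+\kappa}\subset\mathcal{C}^{-1+\kappa'}$ for $\kappa'\leq\kappa$.
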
 
We now state our main results. 
\begin{theorem}\label{Theorem 2.2} 
There exists a null set $\mathcal{N}' \subset \Omega$ such that for any $\omega \in \Omega \setminus \mathcal{N}'$, the following holds. For any $\kappa > 0, (u^{\text{in}}, b^{\text{in}}) \in \mathcal{C}^{-1+ \kappa} \times \mathcal{C}^{-1+ \kappa}$ that are both divergence-free and mean-zero, there exist $T^{\max} (\omega, u^{\text{in}}, b^{\text{in}}) \in (0,\infty]$ and a unique maximal mild solution $(v_{u}, v_{b})(\omega)$ to \eqref{est 17} on $[0, T^{\max} (\omega, u^{\text{in}}, b^{\text{in}}))$ such that $(v_{u}, v_{b})(\omega, 0, x) = (u^{\text{in}}, b^{\text{in}})(x)$ and 
\begin{equation*}
T^{\max}(\omega, u^{\text{in}}, b^{\text{in}}) = \infty \hspace{3mm} \forall \hspace{1mm} \omega \in \Omega \setminus \mathcal{N}'. 
\end{equation*} 
\end{theorem}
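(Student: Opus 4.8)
The plan is to upgrade the local theory of Proposition \ref{Proposition 2.1} to a global one by proving an a priori bound that is uniform on every finite time interval and then invoking the standard blow-up alternative that accompanies a maximal mild solution: if $T^{\max} < \infty$, then $\limsup_{t \uparrow T^{\max}} \lVert (v_u, v_b)(t) \rVert_{\mathcal{C}^{-1+ \kappa} \times \mathcal{C}^{-1+ \kappa}} = \infty$. Thus it suffices to exhibit a null set $\mathcal{N}' \supseteq \mathcal{N}$ such that, off $\mathcal{N}'$, $\lVert (v_u, v_b)(t) \rVert_{\mathcal{C}^{-1+ \kappa} \times \mathcal{C}^{-1+ \kappa}}$ stays finite on $[0,T) \cap [0, T^{\max})$ for every $T < \infty$. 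The set $\mathcal{N}'$ will be the one on which the finitely many stochastic objects used in the analysis fail to be controlled: besides $X_u, X_b$ from \eqref{est 14}, \eqref{est 15}, these are the Wick-renormalized quadratics built from $X_u$ and $X_b$ in exactly the MHD combinations $u \otimes u - b \otimes b$ and $b \otimes u - u \otimes b$ (the only genuinely singular pieces being the diagonal squares of $X_u$ and of $X_b$, since $X_u$ and $X_b$ are independent by \eqref{est 233}), the second-order heat flows $Z_u, Z_b$ forced by $\mathbb{P}_{L} \divergence$ of those quadratics, and the resonant products entering the paracontrolled step. By Gaussian hypercontractivity and the Kolmogorov criterion, as in \cite[Lemma 10.2]{H14}, these all belong $\mathbb{P}$-a.s. to $C([0,\infty); \mathcal{C}^{-\kappa'})$, respectively $C([0,\infty); \mathcal{C}^{1-\kappa'})$ for the second-order flows, for every small $\kappa' > 0$, with norms a.s. finite on each $[0,T]$.

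Next I set up the paracontrolled ansatz, tailored to the block structure of \eqref{est 16}. Writing $(u,b) = (v_u + X_u, v_b + X_b)$ as in \eqref{est 17} and peeling off $Z_u, Z_b$, I posit $v_u = Z_u + v_u^{\flat} + v_u^{\sharp}$ and $v_b = Z_b + v_b^{\flat} + v_b^{\sharp}$, where $v_u^{\flat}, v_b^{\flat}$ are prescribed $\mathbb{P}_{L}$-projected paraproducts of the remainder against $X_u, X_b, Z_u, Z_b$ and $(v_u^{\sharp}, v_b^{\sharp})$ is the genuine remainder. The pairings and signs inside $v_u^{\flat}, v_b^{\flat}$ are dictated by the $u \otimes u - b \otimes b$ structure and the anti-symmetric $b \otimes u - u \otimes b$ structure of Maxwell's equation, and are chosen precisely so that the worst resonant contributions in the equations for $(v_u^{\sharp}, v_b^{\sharp})$ cancel; exploiting the anti-symmetry is what allows a mutually consistent such choice across the two equations. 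By Lemma \ref{Lemma 3.1} this forces $(v_u^{\sharp}, v_b^{\sharp})$ to be $\mathcal{C}^{\alpha}$-valued for some $\alpha > 0$, hence $L^2(\mathbb{T}^2)$-valued, and by parabolic smoothing in $L^2_{\mathrm{loc}}([0,T^{\max}); H^1(\mathbb{T}^2))$ and divergence-free (up to lower-order terms coming from the non-commutativity of $\mathbb{P}_{L}$ with the paraproducts). The resulting system for $(v_u^{\sharp}, v_b^{\sharp})$ is, pathwise, a genuine parabolic PDE whose right-hand side is a locally Lipschitz function of $(v_u^{\sharp}, v_b^{\sharp})$ and the globally controlled stochastic data, with MHD-type quadratic nonlinearities in $(v_u^{\sharp}, v_b^{\sharp})$ plus products pairing a single factor of $(v_u^{\sharp}, v_b^{\sharp})$ or its gradient against the stochastic objects.

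The heart of the argument is the a priori estimate. Here the failure of \eqref{est 10} for the MHD system recorded in \eqref{est 25}--\eqref{est 12} rules out the enstrophy/$\dot{H}^1$ estimate used in the Navier--Stokes case, so I instead test the $(v_u^{\sharp}, v_b^{\sharp})$ system against $(v_u^{\sharp}, v_b^{\sharp})$ in $L^2(\mathbb{T}^2)$, where the MHD cancellation \eqref{est 11} does survive. The quadratic self-interactions of MHD type then cancel exactly as in \eqref{est 8}--\eqref{est 11} of Remark \ref{Remark 1.1} (the $u$-type and $b$-type terms individually, the cross terms in sum), using divergence-freeness and anti-symmetry; the remaining terms pair one copy of $(v_u^{\sharp}, v_b^{\sharp})$ or $\nabla(v_u^{\sharp}, v_b^{\sharp})$ with stochastic objects and are controlled by Lemma \ref{Lemma 3.1} and standard paraproduct and commutator estimates together with the positive regularity of the remainder, the top-order contributions being absorbed into the dissipation $\nu (\lVert \nabla v_u^{\sharp} \rVert_{L^2}^2 + \lVert \nabla v_b^{\sharp} \rVert_{L^2}^2)$ via Young's inequality. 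This yields $\frac{d}{dt}(\lVert v_u^{\sharp} \rVert_{L^2}^2 + \lVert v_b^{\sharp} \rVert_{L^2}^2) + \nu (\lVert \nabla v_u^{\sharp} \rVert_{L^2}^2 + \lVert \nabla v_b^{\sharp} \rVert_{L^2}^2) \lesssim P(t)(1 + \lVert v_u^{\sharp} \rVert_{L^2}^2 + \lVert v_b^{\sharp} \rVert_{L^2}^2)$ with $P$ a polynomial in the stochastic-object norms and thus a.s. locally integrable in time; Gr\"onwall bounds $(v_u^{\sharp}, v_b^{\sharp})$ on $[0,T) \cap [0, T^{\max})$ for every $T$. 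Since $X_u, X_b, Z_u, Z_b, v_u^{\flat}, v_b^{\flat}$ are controlled globally in time in $\mathcal{C}^{-1+ \kappa}$-compatible norms off $\mathcal{N}'$, it follows that $\lVert (v_u, v_b)(t) \rVert_{\mathcal{C}^{-1+ \kappa} \times \mathcal{C}^{-1+ \kappa}}$ remains finite on $[0, T^{\max})$, contradicting $T^{\max} < \infty$; uniqueness of the maximal solution is inherited from Proposition \ref{Proposition 2.1}.

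I expect the principal obstacle to be the design of the paracontrolled ansatz in the second step: it must simultaneously render $(v_u^{\sharp}, v_b^{\sharp})$ function-valued and preserve enough of the MHD structure for the $L^2$ identity of the third step to close, whereas the naive choice of paraproducts destroys the delicate cross-cancellation \eqref{est 11}, and restoring it is exactly where the anti-symmetry of $b \otimes u - u \otimes b$ (together with the common viscosity $\nu$) must be used. Tightly linked to this is the bookkeeping of the many commutator terms generated when $\mathbb{P}_{L} \divergence$ is commuted past the paraproducts and when resonant products are re-expanded: each has to be shown to be either of strictly lower order or absorbable into the parabolic dissipation, uniformly on $[0,T]$, and it is this combinatorial verification, rather than any single estimate, that I anticipate will require the most care.
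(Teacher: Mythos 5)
Your overall scaffolding (a Da Prato--Debussche layer absorbing the Wick quadratics, a paracontrolled ansatz adapted to the symmetric/anti-symmetric MHD structure, and an $L^{2}$ energy estimate exploiting the cancellations of Remark \ref{Remark 1.1}) matches the paper's, but the step you present as routine --- the a priori bound $\partial_{t}\lVert v^{\sharp}\rVert_{L^{2}}^{2}+\nu\lVert\nabla v^{\sharp}\rVert_{L^{2}}^{2}\lesssim P(t)(1+\lVert v^{\sharp}\rVert_{L^{2}}^{2})$ with $P$ depending only on the stochastic data, closed by plain Gr\"onwall --- is precisely where the known difficulty sits, and it is not attainable by the estimates you invoke. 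After the divergence-free cancellations, the linear-in-solution noise terms do not reduce to quantities absorbable into the dissipation with a constant polynomial in the noise norms: what survives is the quadratic form of $\nabla_{\text{spec}}(X_{u},X_{b})$, an Anderson-type potential of regularity $-1-\kappa$ (see \eqref{est 236}). Controlling $\langle w,[\nabla_{\text{spec}}X]w\rangle$ against $\frac{\nu}{2}\lVert w\rVert_{\dot H^{1}}^{2}$ requires the renormalized resonant product $\nabla_{\text{spec}}\mathcal{L}_{\lambda}X\circlesign{\circ}P^{\lambda}-r_{\lambda}\Id$ of \eqref{est 165} and the operator bound of Proposition \ref{Proposition 5.3}, and the renormalization constant $r_{\lambda}$ in \eqref{est 140b} diverges like $\ln\lambda$ as the cutoff is removed (\eqref{est 232}); with the full noise the form $\frac{\nu}{2}\Delta-\nabla_{\text{spec}}X$ is only bounded below after subtracting this divergent constant, cf.\ \eqref{est 195}. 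So no solution-independent, a.s.\ locally integrable Gr\"onwall coefficient exists along your route. Testing only the remainder $v^{\sharp}$ does not evade this: the right-hand side of the $v^{\sharp}$ equation involves the full solution, and without a frequency cutoff the paracontrolled relation $w=-\mathbb{P}_{L}\divergence(w\circlesign{\prec}_{s}Q)+w^{\sharp}$ cannot be inverted to bound $\lVert w\rVert_{L^{2}}$ by $\lVert w^{\sharp}\rVert_{L^{2}}$, since the paraproduct operator built from $Q$ is bounded but in no sense small; so your Gr\"onwall quantity does not control the solution, and if you repair this by estimating the full $w$ you are back to the Anderson form above.

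The missing ideas are exactly the ones the paper imports from Hairer--Rosati: (i) a high/low frequency splitting with a \emph{solution-adaptive} threshold $\lambda_{t}\approx(1+\lVert w_{u}\rVert_{L^{2}}+\lVert w_{b}\rVert_{L^{2}})^{\mathfrak{a}}$ (Definition \ref{Definition 4.2}), so that the high-frequency paracontrolled correction is sublinear in the solution (Proposition \ref{Proposition 4.3}) and the relation between $w$ and $w^{\mathcal{L}}$ can be inverted; and (ii) the renormalized enhanced noise and the Anderson-operator bound (Propositions \ref{Proposition 5.3}--\ref{Proposition 5.4}), whose divergent constant, evaluated at $\lambda=\lambda_{t}$, contributes only $r_{\lambda_{t}}\lesssim\ln(1+\lVert w\rVert_{L^{2}})$. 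The outcome is not a linear Gr\"onwall inequality but the logarithmically superlinear one \eqref{est 96}/\eqref{est 100} (see Remark \ref{Remark 2.2} and \eqref{new 1}), and globality is then deduced by iterating over the stopping times $T_{i}$ and showing $\sum_{i}(T_{i+1}-T_{i})=\infty$ (Propositions \ref{Proposition 4.8}--\ref{Proposition 4.9}), together with the continuation arguments of Propositions \ref{Proposition 4.10}--\ref{Proposition 4.11} needed to pass from the $L^{2}$-type bound on $w^{\mathcal{L}}$ back to the topology of the local theory --- a step your blow-up alternative in $\mathcal{C}^{-1+\kappa}$ also glosses over. Your structural observations about anti-symmetry and the design of the ansatz are on target (they parallel \eqref{est 31} and Remark \ref{Remark 4.3}), but without (i) and (ii) the central energy estimate does not close.
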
 
To describe the next result, we define 
\begin{equation*}
L_{\sigma}^{2} \triangleq \left\{f: \hspace{1mm} \mathbb{T}^{2} \mapsto \mathbb{R}^{2}, f \in L^{2} (\mathbb{T}^{2}), \nabla\cdot f = 0, \fint_{\mathbb{T}^{2}} f(x) dx = 0 \right\} 
\end{equation*}
but defer the definition of a high-low (HL) weak solution to \eqref{est 17} until Definition \ref{Definition 5.1}, which has slightly better regularity than \cite[Definition 6.1]{HR23}. 

\begin{theorem}\label{Theorem 2.3} 
Consider the same null set $\mathcal{N}' \subset \Omega$ from Theorem \ref{Theorem 2.2}. For every $\omega \in \Omega \setminus \mathcal{N}'$, $\kappa > 0$, and $(u^{\text{in}}, b^{\text{in}}) \in L_{\sigma}^{2} \times L_{\sigma}^{2}$, there exists a unique HL weak solution to  \eqref{est 17} on $[0,\infty)$.  
\end{theorem}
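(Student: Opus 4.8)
The plan is to construct an HL weak solution to \eqref{est 17} in the form $(v_{u}, v_{b}) = (\Phi_{u}, \Phi_{b}) + (w_{u}, w_{b})$, where $(\Phi_{u}, \Phi_{b})$ is the pathwise stochastic/paracontrolled skeleton assembled from $\xi_{u}, \xi_{b}$ and the finitely many renormalized objects constructed earlier (all defined for every $\omega \in \Omega \setminus \mathcal{N}'$), and where the remainder $(w_{u}, w_{b})$ lies in the appropriate refinement of the energy class $C([0,\infty); L_{\sigma}^{2}) \cap L_{\mathrm{loc}}^{2}([0,\infty); H^{1}(\mathbb{T}^{2}))$ demanded by Definition \ref{Definition 5.1}. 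Existence will come from a compactness argument based on the $\mathcal{C}^{-1+\kappa}$-theory of Theorem \ref{Theorem 2.2} applied to smooth approximations of $(u^{\text{in}}, b^{\text{in}})$, together with a uniform $L^{2}$-energy bound for the remainders; uniqueness will come from an $L^{2}$-stability estimate for the difference of two HL weak solutions and Gr\"onwall's inequality. The structural inputs that make both steps work are the MHD cancellations in the spirit of \eqref{est 8} and \eqref{est 11}, which persist after renormalization, together with the antisymmetric structure of Maxwell's equation highlighted in Remark \ref{Remark 1.1}.

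\textbf{Existence.} Fix $\omega \in \Omega \setminus \mathcal{N}'$, $\kappa > 0$, and $(u^{\text{in}}, b^{\text{in}}) \in L_{\sigma}^{2} \times L_{\sigma}^{2}$, and let $(u_{n}^{\text{in}}, b_{n}^{\text{in}})$ be the sharp Fourier truncations of $(u^{\text{in}}, b^{\text{in}})$ to frequencies $0 < \lvert k \rvert \leq n$, which are smooth, divergence-free, mean-zero, and converge to $(u^{\text{in}}, b^{\text{in}})$ in $L^{2} \times L^{2}$. Keeping the renormalized equation \eqref{est 17} fixed, so that $(\Phi_{u}, \Phi_{b})$ is the same for every $n$ and only the initial datum varies, Theorem \ref{Theorem 2.2} furnishes for each $n$ a global maximal mild solution, which is also a global HL weak solution because $(u_{n}^{\text{in}}, b_{n}^{\text{in}})$ is smooth; let $(w_{u}^{n}, w_{b}^{n})$ be its remainder, with $(w_{u}^{n}, w_{b}^{n})(0) = (u_{n}^{\text{in}}, b_{n}^{\text{in}})$ since $X_{u}, X_{b}$ and the remaining objects in $(\Phi_{u}, \Phi_{b})$ vanish at $t = 0$. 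Testing the equations for $(w_{u}^{n}, w_{b}^{n})$ against $(w_{u}^{n}, w_{b}^{n})$ and invoking \eqref{est 8} and \eqref{est 11} (and their analogues that survive the paracontrolled splitting) to kill the purely quadratic terms $\int_{\mathbb{T}^{2}} (w_{u}^{n} \cdot \nabla) w_{u}^{n} \cdot w_{u}^{n} \, dx$ and the coupled magnetic terms, controlling each cross term between $(w_{u}^{n}, w_{b}^{n})$ and the fixed skeleton $(\Phi_{u}, \Phi_{b})$ by the pathwise H\"older-Besov bounds on the skeleton, Bony's paraproduct estimates, the two-dimensional Sobolev embeddings $H^{1} \hookrightarrow L^{p}$ for $p < \infty$, and the interpolation $\lVert \cdot \rVert_{H^{s}} \lesssim \lVert \cdot \rVert_{L^{2}}^{1-s} \lVert \cdot \rVert_{H^{1}}^{s}$, and absorbing the top-order contributions into $\tfrac{\nu}{2}( \lVert \nabla w_{u}^{n} \rVert_{L^{2}}^{2} + \lVert \nabla w_{b}^{n} \rVert_{L^{2}}^{2})$ via Young's inequality, one obtains, for every $T \in (0,\infty)$,
\begin{equation*}
\sup_{t \in [0,T]} \bigl( \lVert w_{u}^{n}(t) \rVert_{L^{2}}^{2} + \lVert w_{b}^{n}(t) \rVert_{L^{2}}^{2} \bigr) + \nu \int_{0}^{T} \bigl( \lVert \nabla w_{u}^{n}(t) \rVert_{L^{2}}^{2} + \lVert \nabla w_{b}^{n}(t) \rVert_{L^{2}}^{2} \bigr) \, dt \leq C(\omega, T, u^{\text{in}}, b^{\text{in}}),
\end{equation*}
uniformly in $n$. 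Banach-Alaoglu together with the Aubin-Lions-Simon lemma, the equation \eqref{est 17} furnishing a uniform bound for $\partial_{t}(w_{u}^{n}, w_{b}^{n})$ in a space of negative order, then yields a subsequence along which $(w_{u}^{n}, w_{b}^{n}) \rightharpoonup (w_{u}, w_{b})$ in $L_{\mathrm{loc}}^{2} H^{1}$ and $(w_{u}^{n}, w_{b}^{n}) \to (w_{u}, w_{b})$ in $L_{\mathrm{loc}}^{2} L^{2}$ and almost everywhere. Passing to the limit in the weak formulation is then routine: the quadratic terms converge by the strong $L_{t,x}^{2}$-convergence combined with the uniform $H^{1}$-bound (together with the distributional continuity of $\mathbb{P}_{L}$ and $\divergence$), and the skeleton-coupling terms converge by continuity of the paraproducts in the argument carrying $(w_{u}^{n}, w_{b}^{n})$ against the fixed pathwise factors; hence $(\Phi_{u}, \Phi_{b}) + (w_{u}, w_{b})$ is an HL weak solution on $[0,\infty)$ attaining $(u^{\text{in}}, b^{\text{in}})$ in $C_{t} L^{2}$. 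If $(\Phi_{u}, \Phi_{b})$ contains data-dependent paracontrolled correctors, their convergence along the same subsequence follows from the continuity of the corresponding solution maps, as in \cite{HR23}.

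\textbf{Uniqueness.} Let $(v_{u}^{(1)}, v_{b}^{(1)})$ and $(v_{u}^{(2)}, v_{b}^{(2)})$ be two HL weak solutions on $[0,\infty)$ for the same $\omega$ and the same data, write $(v_{u}^{(j)}, v_{b}^{(j)}) = (\Phi_{u}, \Phi_{b}) + (w_{u}^{(j)}, w_{b}^{(j)})$ (the data-independent stochastic objects coincide, and any data-dependent paracontrolled piece contributes to the difference only through quantities controlled by the remainders), and set $(z_{u}, z_{b}) \triangleq (v_{u}^{(1)} - v_{u}^{(2)}, v_{b}^{(1)} - v_{b}^{(2)}) \in C_{t} L^{2} \cap L_{\mathrm{loc}}^{2} H^{1}$, which vanishes at $t = 0$. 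Subtracting the two equations and testing the difference against $(z_{u}, z_{b})$, one derives, for some $g \in L_{\mathrm{loc}}^{1}([0,\infty))$ depending on $\omega$ and on the energy norms of $w^{(1)}, w^{(2)}$ and of the skeleton,
\begin{equation*}
\frac{d}{dt} \bigl( \lVert z_{u} \rVert_{L^{2}}^{2} + \lVert z_{b} \rVert_{L^{2}}^{2} \bigr) + \nu \bigl( \lVert \nabla z_{u} \rVert_{L^{2}}^{2} + \lVert \nabla z_{b} \rVert_{L^{2}}^{2} \bigr) \leq g(t) \bigl( \lVert z_{u} \rVert_{L^{2}}^{2} + \lVert z_{b} \rVert_{L^{2}}^{2} \bigr);
\end{equation*}
here once more the MHD cancellations \eqref{est 8} and the coupled identity \eqref{est 11}, together with the antisymmetry of Maxwell's equation, are what prevent any term carrying a full derivative on $(z_{u}, z_{b})$ from surviving uncontrolled, while the remaining cross terms are handled by the same paraproduct, interpolation, and Young scheme as in the existence step. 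Gr\"onwall's inequality together with $(z_{u}, z_{b})(0) = 0$ then forces $(z_{u}, z_{b}) \equiv 0$ on $[0,\infty)$.

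\textbf{Main obstacle.} The delicate point is that, for $L^{2}$ data, both the a priori bound for the remainder and the stability estimate for the difference must close at exactly the $L^{2} \cap H^{1}$ level, with no regularity to spare. Because the skeleton objects carry negative H\"older regularity, the resonant terms (a derivative of the remainder paired against a rough stochastic factor) are not classically meaningful, so the weak formulation of Definition \ref{Definition 5.1} (and the mildly improved regularity it grants the remainder relative to \cite[Definition 6.1]{HR23}) must be arranged so that every such term is either absorbed into the skeleton or rewritten, through the paracontrolled ansatz, in a form to which Bony's estimates and the two-dimensional Sobolev embeddings apply and which the dissipation can absorb. The MHD-specific obstruction displayed in \eqref{est 10} and \eqref{est 12} lives at the enstrophy level and does not, by itself, block the $L^{2}$ argument (the $L^{2}$ cancellations \eqref{est 8} and \eqref{est 11} do go through), but verifying precisely which cancellations persist after renormalization and after the paracontrolled splitting, simultaneously for the Navier-Stokes nonlinearity and for the Maxwell coupling, is the main bookkeeping burden; establishing that the renormalized and paracontrolled skeletons of the approximations converge to the correct limit is the remaining technical point.
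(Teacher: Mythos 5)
There is a genuine gap, and it sits at the heart of both your existence and uniqueness steps: you assume the remainder $(w_{u},w_{b})$ (and the difference $(z_{u},z_{b})$) can be placed in $C_{t}L^{2}\cap L^{2}_{t}H^{1}$ and that a naive $L^{2}$ energy estimate closes using only the classical cancellations \eqref{est 8} and \eqref{est 11}. Neither is true here. Testing the $w$-equation \eqref{est 29} against $w$ produces the pairing of $w\otimes w$ (and $w_{u}\otimes w_{b}$) against $\nabla X_{u},\nabla X_{b}\in\mathcal{C}^{-1-\kappa}$, which cannot be bounded by $\lVert w\rVert_{L^{2}}\lVert w\rVert_{H^{1}}$: this resonant product must be renormalized, and the renormalization constant $r_{\lambda}$ in \eqref{est 140b} diverges like $\ln\lambda$ (see \eqref{est 232}). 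The paper's entire Section 4 machinery exists precisely to handle this: the solution-dependent frequency threshold $\lambda_{t}$ of Definition \ref{Definition 4.2}, the splitting $w=w^{\mathcal{L}}+w^{\mathcal{H}}$ with $w^{\mathcal{H}}$ paracontrolled via \eqref{est 38}, the renormalized operator $\mathcal{A}_{t}^{\lambda}$ in \eqref{est 97}, and the cancellations \eqref{est 45}--\eqref{est 48}, \eqref{est 57}--\eqref{est 236} which are performed at the level of $\mathcal{L}_{\lambda_{t}}X$ paired with $w^{\mathcal{L}}$, not at the level of the classical identities \eqref{est 8}, \eqref{est 11}. The output is not your clean uniform bound but the logarithmic inequality \eqref{est 96}/\eqref{est 100}, closed globally only through the stopping-time iteration of Propositions \ref{Proposition 4.8}--\ref{Proposition 4.9}. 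Moreover the uniform bounds obtained this way are for $w^{\mathcal{L}}$ in $L^{\infty}_{t}L^{2}\cap L^{2}_{t}H^{1}$ and for $w^{\mathcal{H}}$ only in $L^{2}_{t}B_{4,2}^{1-2\kappa}$ (see \eqref{est 145}, \eqref{est 147}); the full remainder is generically not in $L^{2}_{t}H^{1}$, which is exactly why Definition \ref{Definition 5.1} is phrased through the pair $(w^{\mathcal{L},\lambda},w^{\mathcal{H},\lambda})$. Your compactness step also never produces the $B_{4,2}^{1-2\kappa}$ bound on the high part, so the limit you construct is not verified to be an HL weak solution in the sense required. (A further difference: the paper regularizes the noise, $X^{n}=\mathcal{L}_{n}X$, and needs the uniform convergence of the renormalized enhanced noise, Proposition \ref{Proposition 5.4}; regularizing only the data does not remove the need for the uniform-in-$n$ Section 4 estimates.)

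The same issue defeats your uniqueness argument as stated. For two HL weak solutions the difference $z$ again meets the rough objects, so the paper performs the energy estimate only on $z^{\mathcal{L},\lambda}$ for a large fixed $\lambda$, uses the paracontrolled commutators $C^{\circlesign{\prec}_{a}},C^{\circlesign{\prec}_{s}}$, the absorption estimates \eqref{est 176} and \eqref{est 190} (valid only after choosing $\lambda$ large), the interpolation inequalities \eqref{est 173}, and the specific cross-cancellation \eqref{est 252}--\eqref{est 253} between the velocity and Maxwell equations — which is not \eqref{est 11} but its renormalization-level analogue. Crucially, the Gr\"onwall coefficient is $(\lVert\lvert(w,\bar w)\rvert\rVert_{\lambda}+1)^{2}$, and its local integrability in time relies on the $L^{2}_{t}B_{4,2}^{1-2\kappa}$ regularity of the high parts — the very reason the paper strengthens \cite[Definition 6.1]{HR23}. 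Your proposal's claim that "no term carrying a full derivative on $(z_{u},z_{b})$ survives" by the classical cancellations is not what happens: terms such as $\langle z^{\mathcal{L},\lambda},\divergence(z\otimes w)\rangle$ do survive and must be estimated through this two-level structure, not cancelled. In short, the missing idea is the adaptive high-low decomposition with renormalization and the resulting logarithmic Gr\"onwall/stopping-time scheme; without it the uniform bound you assert is unobtainable and the function class you work in is not the one the theorem is about.
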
 

\begin{remark}\label{Remark 2.1}
As pointed out on \cite[p. 6]{HR23}, starting from $(u^{\text{in}}, b^{\text{in}}) \in L_{\sigma}^{2} \times L_{\sigma}^{2}$, neither Proposition \ref{Proposition 2.1} or Theorem \ref{Theorem 2.2} guarantees even the local existence of a solution due to $L^{2} \not\subseteq \mathcal{C}^{-1+ \kappa}$ for any $\kappa > 0$. 
\end{remark} 

\begin{remark}\label{Remark 2.2}
The estimates on the lower frequency energy will be of the form 
\begin{align}\label{new 1}
\partial_{t} \lVert f(t) \rVert_{L^{2}}^{2} \lesssim  \ln ( e + \lVert f(t) \rVert_{L^{2}}^{2}) \lVert f(t) \rVert_{L^{2}}^{2}
\end{align}
(see e.g. \eqref{est 96} and \eqref{est 232}).  Although in the $H^{k}(\mathbb{R}^{d})$-norm for $k > 0$ sufficiently large rather than $L^{2}(\mathbb{T}^{2})$, we point out that this is precisely the inequality that led to the phenomenon of global regularity of logarithmically supercritical PDEs that was initiated by Tao on wave equation \cite{T07}, extended by Tao himself \cite{T09} to the Navier-Stokes equations (see also \cite{BMR14}) and then to many other PDEs including the MHD system \cite{W11, Y14a, Y18}. E.g., we refer to 
\begin{align}\label{new 2}
\partial_{t} \lVert u(t) \rVert_{H^{k} (\mathbb{R}^{d})}^{2} \leq C a(t) \lVert u(t) \rVert_{H^{k}(\mathbb{R}^{d})}^{2} \log (2+ \lVert u(t) \rVert_{H^{k}(\mathbb{R}^{d})}^{2} 
\end{align}
on \cite[p. 362]{T09}. The reason for this coincidence of \eqref{new 1}-\eqref{new 2} can also be explained. Informally, Hairer and Rosati splits Fourier frequencies between the low and the high parts with the threshold given by $\lambda$ (see Definition \ref{Definition 3.1}) and subsequently chooses $\lambda_{t}$ as essentially the $L^{2}$-norms of the solution (see \eqref{est 37}). In fact, this is essentially the same approach as that of Tao; we quote ``$P_{\leq N}$ and $P_{> N}$ are the Fourier projections to the regions $\{ \xi: \hspace{1mm} \lvert \xi \rvert \leq N\}$ and $\{\xi: \hspace{1mm} \lvert \xi \rvert > N \}$'' and ``optimize in $N$, setting $N \hspace{1mm} := 1+ E_{k}$'' on \cite[p. 365]{T09}, where $E_{k}(t)$ is the $\dot{H}^{k}$-norm of the solution at time $t$ according to \cite[Equation (7)]{T09}.
\end{remark} 

\begin{remark}\label{Remark 2.3}  
We highlight some of the new ideas and novelties of this manuscript. 
\begin{enumerate}
\item Throughout \cite{HR23}, the symmetry of the Navier-Stokes equations is exploited through symmetric tensor product $\otimes_{s}$ (see \eqref{est 18}), symmetric gradient matrix $\nabla_{\text{symm}}$ (see \eqref{est 20}), and then such symmetric tensor products within Bony's paraproducts, specifically $\circlesign{\prec}_{s}, \circlesign{\succ}_{s}$, and $\circlesign{\circ}_{s}$ (see \eqref{est 23}). Maxwell's equation of \eqref{est 16b}   has no symmetry; in fact, it is anti-symmetric (skew-symmetric). Additionally, while the two nonlinear terms of the Navier-Stokes part, e.g. $( v_{u}+ X_{u})^{\otimes 2}$ and $- (v_{b} + X_{b})^{\otimes 2}$ of \eqref{est 17a}, are individually symmetric, the two nonlinear terms, e.g. $(v_{b} +X_{b}) \otimes (v_{u} + X_{u})$ and $-(v_{u} + X_{u}) \otimes (v_{b} + X_{b})$ of \eqref{est 17b}, are neither symmetric or anti-symmetric individually but anti-symmetric when summed, and this requires extra care in handling them (see Remark \ref{Remark 4.1}). Thus, as counterparts to symmetry, we introduce anti-symmetric tensor products $\otimes_{a}$ in \eqref{est 19}, anti-symmetric gradient matrix $\nabla_{\text{anti}}$ \eqref{est 21}, and Bony's paraproducts with such anti-symmetric tensor products $\circlesign{\prec}_{a}, \circlesign{\succ}_{a}$, and $\circlesign{\circ}_{a}$ in \eqref{est 24}. This approach made the crucial step of finding the suitable paracontrolled ansatz $(w_{u}^{\sharp}, w_{b}^{\sharp})$ in \eqref{est 31} significantly easier (see Remark \ref{Remark 4.3} for some of the difficulties). Such anti-symmetric nature of Maxwell's equation has played important roles in previous studies of the MHD system, e.g. the construction of the magnetic Reynolds stress in the convex integration scheme of \cite{BBV21}. The notations of $\otimes_{a}, \nabla_{\text{anti}}, \circlesign{\prec}_{a}, \circlesign{\succ}_{a},$ and $\circlesign{\circ}_{a}$ seem natural and may prove to be convenient for research on the MHD system in general. We also refer to $\nabla_{\text{spec}}$ in \eqref{est 64}.   

\item Some of the definitions and identities, even when the magnetic field vanishes and $\nu_{u} = 1$, differ from those of \cite{HR23} (e.g. \eqref{est 17}, \eqref{est 31}, and \eqref{est 214}). The regularity of the HL weak solution in Definition \ref{Definition 5.1} is slightly higher than that of \cite[Definition 6.1]{HR23}. Moreover, many of our computations are also different from \cite{HR23} (e.g. \eqref{est 33}, \eqref{est 235}, \eqref{est 178}). In particular, while \cite{HR23} mostly relied on $B_{p,\infty}^{s}$-estimates (see \cite[Lemma A.1]{HR23}), because most of the estimates start from $H^{s}$-norms for $s \in \mathbb{R}$, it seems that relying on Lemma \ref{Lemma 3.1} or sometimes directly computing from the definition of Besov space (e.g. \eqref{est 173}) can give more straight-forward estimates instead of bounding $l^{2}$-norm by $l^{\infty}$-norm in the expense of giving up $\epsilon$-much regularity every time, although $\epsilon > 0$ can be arbitrarily small. We also use the well-known product estimate in $\dot{H}^{s}$-norms (see Lemma \ref{Lemma 3.2}) when it is convenient. 

\item As expected from our discussions in \eqref{est 8}, \eqref{est 11}, \eqref{est 25}-\eqref{est 12}, our proof will consist of multiple discoveries of non-trivial cancellations, all of which are crucial (e.g. \eqref{est 45}-\eqref{est 48}, \eqref{est 57}-\eqref{est 236}, \eqref{est 237}-\eqref{est 75}, \eqref{est 238}, and \eqref{est 252}).

\item We chose to allow $\nu \neq 1$ because the explicit dependence of the renormalization constant $r_{\lambda}$ in \eqref{est 140b} on parameters such as $\nu > 0$ is of interest from the physics point of view.  
\end{enumerate} 
\end{remark} 

The rest of this manuscript is organized as follows. In Section \ref{Preliminaries} we give preliminaries needed for proofs, in Sections \ref{Section 4}-\ref{Section 5} we prove respectively  Theorems \ref{Theorem 2.2} and \ref{Theorem 2.3}. In the Appendix, we include some computations for completeness.  

\section{Preliminaries}\label{Preliminaries}
\subsection{Notations and assumptions}\label{Subsection 3.1}
We define $\mathbb{N} \triangleq \{1, 2, \hdots, \}$ while $\mathbb{N}_{0} \triangleq \mathbb{N} \cup \{0\}$. For convenience, we write $\lVert (u,b) \rVert_{X}^{2} = \lVert u \rVert_{X}^{2} + \lVert b \rVert_{X}^{2}$ for various $X$-norms. We also write $C_{t}$ to denote $\sup_{s\in [0,t]}$; e.g. $\lVert f \rVert_{C_{t}C_{x}} = \sup_{s\in[0,t]} \sup_{x\in \mathbb{T}^{2}} \lvert f(t,x) \rvert$. The heat kernel will be denoted by $P_{t} \triangleq e^{ \nu \Delta t}$. We also define a fractional Laplacian $(-\Delta)^{\alpha}$ as a Fourier operator with a Fourier symbol of $\lvert k \rvert^{2\alpha}$ for any $\alpha \in \mathbb{R}$. Following \cite{HR23} we set symmetric tensor product 
\begin{equation}\label{est 18} 
u\otimes_{s} v \triangleq \frac{1}{2} ( u \otimes v + v \otimes u), 
\end{equation} 
and additionally introduce anti-symmetric tensor product 
\begin{equation}\label{est 19} 
u\otimes_{a} v \triangleq \frac{1}{2} (u\otimes v - v \otimes u);  
\end{equation} 
we emphasize that $u\otimes_{s} v = v \otimes_{s} u$ while $u \otimes_{a} v = - v \otimes_{a} u$. Following \cite{HR23}, for any $\phi \in C^{1}(\mathbb{T}^{2}; \mathbb{R}^{2})$, we set for $i,j\in \{1,2\}$, $(\nabla \phi)_{i,j} \triangleq \partial_{i} \phi_{j}$ and 
\begin{equation}\label{est 20}
(\nabla_{\text{symm}} \phi)_{i,j} = \frac{1}{2} (\partial_{i} \phi_{j} + \partial_{j} \phi_{i}), 
\end{equation} 
and additionally 
\begin{equation}\label{est 21} 
(\nabla_{\text{anti}} \phi)_{i,j} \triangleq \frac{1}{2} ( \partial_{i} \phi_{j} - \partial_{j} \phi_{i}). 
\end{equation} 

\subsection{Besov spaces and Bony's paraproducts}\label{Subsection 3.2} 
We let $\chi$ and $\rho$ be smooth functions with compact support on $\mathbb{R}^{2}$ that are non-negative, and radial such that the support of $\chi$ is contained in a ball while that of $\rho$ in an annulus and 
\begin{align*}
& \chi(\xi) + \sum_{j\geq 0} \rho(2^{-j} \xi) = 1 \hspace{3mm} \forall \hspace{1mm} \xi \in \mathbb{R}^{2}, \\
&\supp (\chi) \cap  \supp (\rho(2^{-j} \cdot )) = \emptyset \hspace{1mm} \forall \hspace{1mm} j \in\mathbb{N}, \hspace{2mm} \supp (\rho(2^{-i}\cdot)) \cap \supp (\rho (2^{-j} \cdot)) = \emptyset \hspace{1mm} \text{ if } \lvert i-j \rvert > 1.
\end{align*}
Denoting by $\rho_{j}(\cdot) \triangleq \rho(2^{-j} \cdot)$, we define the Littlewood-Paley operators $\Delta_{j}$ for $j \in \mathbb{N}_{0} \cup \{-1\}$ by 
\begin{equation}\label{est 208}
\Delta_{j} f \triangleq 
\begin{cases}
\mathcal{F}^{-1} (\chi) \ast f & \text{ if } j = -1, \\
\mathcal{F}^{-1} (\rho_{j}) \ast f & \text{ if } j \in \mathbb{N}_{0},
\end{cases}
\end{equation}
and inhomogeneous Besov spaces $B_{p,q}^{s} \triangleq f \in \mathcal{S}': \lVert f \rVert_{B_{p,q}^{s}} < \infty \}$ where 
\begin{equation*}
\lVert f \rVert_{B_{p,q}^{s}} \triangleq  \lVert 2^{sm} \lVert \Delta_{m} f \rVert_{L_{x}^{p}} \rVert_{l_{m}^{q}} \hspace{3mm} \forall \hspace{1mm} p, q \in [1,\infty], s \in \mathbb{R}. 
\end{equation*} 
We define the low-frequency cut-off operator $S_{i} f \triangleq \sum_{-1  \leq j \leq i-1} \Delta_{j} f$ and Bony's paraproducts and remainder respectively as 
\begin{equation*}  
f \prec g  \triangleq \sum_{i\geq -1} S_{i-1} f \Delta_{i} g \hspace{1mm} \text{ and } \hspace{1mm}  f  \circ g  \triangleq \sum_{i\geq -1} \sum_{j: \lvert j\rvert \leq 1} \Delta_{i} f  \Delta_{i+j} g  
\end{equation*}
so that $f g  = f  \prec g  + f  \succ g  + f  \circ g$, where $f  \succ g  = g  \prec f $ (see \cite[Sections 2.6.1 and 2.8.1]{BCD11}). We first extend such definitions by 
\begin{align*}
&f  \circlesign{\prec} g \triangleq \sum_{i \geq -1} S_{i-1} f  \otimes \Delta_{i} g, \\
&f  \circlesign{\succ} g \triangleq \sum_{i \geq -1} \Delta_{i} f  \otimes S_{i-1} g,\\
& f  \circlesign{\circ} g \triangleq \sum_{i \geq -1} \sum_{j: \lvert j \rvert \leq 1}\Delta_{i} f  \otimes \Delta_{i+j} g, 
\end{align*}
so that $f  \otimes g = f  \circlesign{\prec} g + f  \circlesign{\succ} g + f  \circlesign{\circ} g$. Following \cite{HR23} we extend such definitions via 
\begin{equation}\label{est 23} 
f  \circlesign{\prec}_{s} g \triangleq \sum_{i \geq -1} S_{i-1} f  \otimes_{s} \Delta_{i} g \hspace{1mm} \text{ and } \hspace{1mm} f  \circlesign{\circ}_{s} g \triangleq \sum_{i \geq -1} \sum_{j: \lvert j \rvert \leq 1} \Delta_{i} f  \otimes_{s} \Delta_{i+j} g 
\end{equation} 
so that $f  \otimes_{s} g = f  \circlesign{\prec}_{s} g + f  \circlesign{\succ}_{s} g + f  \circlesign{\circ}_{s} g$ where $f  \circlesign{\succ}_{s} g = g \circlesign{\prec}_{s} f $. Finally, we define 
\begin{subequations}\label{est 24} 
\begin{align}
&f  \circlesign{\prec}_{a} g \triangleq \sum_{i\geq -1} S_{i-1} f  \otimes_{a} \Delta_{i} g, \\
&f  \circlesign{\succ}_{a} g \triangleq \sum_{i \geq -1} \Delta_{i} f  \otimes_{a} S_{i-1} g, \\
&f  \circlesign{\circ}_{a} g \triangleq \sum_{i\geq -1} \sum_{j: \lvert j \rvert \leq 1} \Delta_{i} f  \otimes_{a} \Delta_{i+j} g, 
\end{align}
\end{subequations} 
so that $f  \otimes_{a} g = f  \circlesign{\prec}_{a} g + f  \circlesign{\succ}_{a} g + f  \circlesign{\circ}_{a} g$. Analogous definitions follow in cases with one of $f$ and $g$ is or both of $f$ and $g$ are $\mathbb{M}^{2}$-valued (see \cite[p. 37]{HR23}). We recall from \cite{BCD11} that there exist $N_{1}, N_{2} \in \mathbb{N}$ such that 
\begin{equation}\label{est 152} 
\Delta_{m} (f \prec g) = \sum_{j: \lvert j-m \rvert \leq N_{1}} (S_{j-1} f) \Delta_{j} g \hspace{1mm} \text{ and } \hspace{1mm} \Delta_{m} ( f \circ g) = \Delta_{m}\sum_{i\geq m-N_{2}}  \sum_{\lvert j \rvert \leq 1} \Delta_{i} f \Delta_{i+j} g. 
\end{equation} 
For convenience we record some special cases of Bony's estimates: 
\begin{lemma}\label{Lemma 3.1} 
\rm{(\cite[Proposition 3.1]{AC15}; cf. \cite[Lemma A.1]{HR23} for $B_{p,\infty}^{s}$-estimates for $p\in[1,\infty]$ and $s \in \mathbb{R}$)} Let $\alpha, \beta \in \mathbb{R}$. Then 
\begin{subequations}\label{est 40} 
\begin{align}
& \lVert f \prec g \rVert_{H^{\beta -\alpha}} \lesssim_{\alpha, \beta} \lVert f \rVert_{L^{2}} \lVert g \rVert_{\mathcal{C}^{\beta}}  \hspace{8mm}  \forall \hspace{1mm} f \in L^{2}, g \in \mathcal{C}^{\beta} \text{ if } \alpha > 0, \label{est 40a} \\
& \lVert f \succ g \rVert_{H^{\alpha}} \lesssim_{\alpha} \lVert f \rVert_{H^{\alpha}} \lVert g \rVert_{L^{\infty}}  \hspace{10mm}  \forall \hspace{1mm} f \in H^{\alpha}, g \in L^{\infty},  \label{est 40b} \\
& \lVert f \prec g \rVert_{H^{\alpha + \beta}} \lesssim_{\alpha, \beta} \lVert f \rVert_{H^{\alpha}} \lVert g \rVert_{\mathcal{C}^{\beta}} \hspace{7mm}  \forall \hspace{1mm} f \in H^{\alpha}, g \in \mathcal{C}^{\beta}  \text{ if } \alpha < 0,  \label{est 40c}  \\
& \lVert f \succ g \rVert_{H^{\alpha + \beta}} \lesssim_{\alpha,\beta} \lVert f \rVert_{H^{\alpha}}  \lVert g \rVert_{\mathcal{C}^{\beta}}  \hspace{7mm}  \forall \hspace{1mm} f \in H^{\alpha}, g \in \mathcal{C}^{\beta} \text{ if } \beta < 0, \label{est 40d} \\
& \lVert f \circ g \rVert_{H^{\alpha+ \beta}} \lesssim_{\alpha, \beta} \lVert f \rVert_{H^{\alpha}} \lVert g \rVert_{\mathcal{C}^{\beta}}  \hspace{8mm} \forall \hspace{1mm} f \in H^{\alpha}, g \in \mathcal{C}^{\beta}  \text{ if } \alpha + \beta > 0. \label{est 40e} 
\end{align} 
\end{subequations} 
\end{lemma}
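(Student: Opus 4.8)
The plan is to establish all five bounds by the standard Bony paraproduct argument, reducing everything to two facts: the Littlewood--Paley characterizations $H^{s}=B_{2,2}^{s}$, i.e.\ $\|h\|_{H^{s}}\approx\bigl\|(2^{sm}\|\Delta_{m}h\|_{L^{2}})_{m}\bigr\|_{\ell^{2}}$, and $\|h\|_{\mathcal{C}^{\beta}}=\sup_{m}2^{\beta m}\|\Delta_{m}h\|_{L^{\infty}}$; and the spectral-support structure recorded in \eqref{est 152}. The only additional tools are H\"older's inequality, the uniform $L^{2}$- and $L^{\infty}$-boundedness of the blocks $\Delta_{j}$ and the partial sums $S_{j}$, and Young's convolution inequality $\ell^{1}\ast\ell^{2}\hookrightarrow\ell^{2}$. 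In each case I keep every factor built from $f$ measured in $L^{2}$ and every factor built from $g$ measured in $L^{\infty}$, so that no Bernstein-type gain is ever needed, and the stated sign hypothesis on $\alpha$, $\beta$, or $\alpha+\beta$ is exactly what makes the relevant geometric kernel lie in $\ell^{1}$ (equivalently, makes a geometric series converge in the correct direction).

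For \eqref{est 40a} and \eqref{est 40b}, the first identity in \eqref{est 152} shows that $\Delta_{m}(f\prec g)$ is a sum over $|i-m|\le N_{1}$ of $\Delta_{m}(S_{i-1}f\,\Delta_{i}g)$, and $\Delta_{m}(f\succ g)=\Delta_{m}(g\prec f)$ a sum over $|i-m|\le N_{1}$ of $\Delta_{m}(\Delta_{i}f\,S_{i-1}g)$. H\"older then gives $\|\Delta_{m}(f\prec g)\|_{L^{2}}\lesssim 2^{-\beta m}\|f\|_{L^{2}}\|g\|_{\mathcal{C}^{\beta}}$, so $\|f\prec g\|_{H^{\beta-\alpha}}^{2}\lesssim\sum_{m}2^{-2\alpha m}\|f\|_{L^{2}}^{2}\|g\|_{\mathcal{C}^{\beta}}^{2}$, which is finite precisely when $\alpha>0$. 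For \eqref{est 40b}, $\|\Delta_{m}(f\succ g)\|_{L^{2}}\lesssim\|g\|_{L^{\infty}}\sum_{|i-m|\le N_{1}}\|\Delta_{i}f\|_{L^{2}}$, so after multiplying by $2^{\alpha m}$, squaring, and summing in $m$ (bounded overlap in $i$) one gets the claim with no restriction on $\alpha$.

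For \eqref{est 40c}, \eqref{est 40d}, \eqref{est 40e} the output block of one input now sees a whole low-frequency cut-off or a resonant tail. In \eqref{est 40c} one bounds $\|S_{i-1}f\|_{L^{2}}\le\sum_{j\le i-2}\|\Delta_{j}f\|_{L^{2}}$ and arrives (up to a harmless index shift), with $a_{j}\triangleq 2^{\alpha j}\|\Delta_{j}f\|_{L^{2}}$, at
\[ 2^{(\alpha+\beta)m}\,\|\Delta_{m}(f\prec g)\|_{L^{2}}\ \lesssim\ \|g\|_{\mathcal{C}^{\beta}}\sum_{j\le m}2^{\alpha(m-j)}\,a_{j}; \]
since $\alpha<0$ the kernel $n\mapsto 2^{\alpha n}1_{\{n\ge 0\}}$ lies in $\ell^{1}$, so Young's inequality together with $\|(a_{j})\|_{\ell^{2}}\approx\|f\|_{H^{\alpha}}$ yields the bound. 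Estimate \eqref{est 40d} is the mirror image: one uses $\|S_{i-1}g\|_{L^{\infty}}\le\sum_{j\le i-2}\|\Delta_{j}g\|_{L^{\infty}}\lesssim 2^{-\beta i}\|g\|_{\mathcal{C}^{\beta}}$, where $\beta<0$ makes the geometric sum converge, and then the bounded overlap in $i$ closes it. Finally, by the second identity in \eqref{est 152} only the tail $i\ge m-N_{2}$ contributes to $\Delta_{m}(f\circ g)$, so $2^{(\alpha+\beta)m}\|\Delta_{m}(f\circ g)\|_{L^{2}}\lesssim\|g\|_{\mathcal{C}^{\beta}}\sum_{i}2^{(\alpha+\beta)(m-i)}1_{\{m-i\le N_{2}\}}\,a_{i}$, and since $\alpha+\beta>0$ the kernel $n\mapsto 2^{(\alpha+\beta)n}1_{\{n\le N_{2}\}}$ is summable, so Young's inequality again finishes \eqref{est 40e}.

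I do not expect a genuine obstacle here — this is textbook Bony calculus (cf.\ \cite{BCD11, AC15}) — but the one point requiring care is matching each hypothesis to the direction in which a geometric series must converge: $\alpha>0$ in \eqref{est 40a} controls the high-frequency output of a low--high paraproduct whose low input is merely $L^{2}$; $\alpha<0$ in \eqref{est 40c} and $\beta<0$ in \eqref{est 40d} are what make the cut-offs $S_{i-1}f$, $S_{i-1}g$ summable; and $\alpha+\beta>0$ in \eqref{est 40e} is exactly the Bony condition making the resonant tail $\sum_{i\ge m-N_{2}}$ convergent. Since $N_{1},N_{2}$ in \eqref{est 152} depend only on the Littlewood--Paley partition, all implicit constants depend only on $\alpha,\beta$, as asserted, and the $\mathbb{M}^{2}$-valued versions follow componentwise.
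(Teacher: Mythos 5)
Your proof is correct: each of the five bounds follows exactly as you argue from the spectral localization in \eqref{est 152}, H\"older, uniform boundedness of $\Delta_j$ and $S_j$, and Young's inequality $\ell^1 \ast \ell^2 \hookrightarrow \ell^2$, with the sign hypotheses $\alpha>0$, $\alpha<0$, $\beta<0$, $\alpha+\beta>0$ entering precisely where you place them. The paper itself gives no proof of Lemma \ref{Lemma 3.1} but simply cites \cite[Proposition 3.1]{AC15}, and your Littlewood--Paley argument is the standard one underlying that cited result, so there is nothing to add.
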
 
The following well-known inequality is convenient for our estimates: 
\begin{lemma}\label{Lemma 3.2}
\rm{(Cf. \cite[Lemma 2.5]{Y14b})} Let $\sigma_{1}, \sigma_{2} < 1$ such that $\sigma_{1} + \sigma_{2} > 0$. Then 
\begin{equation}\label{est 74}
\lVert f g \rVert_{\dot{H}^{\sigma_{1} + \sigma_{2} - 1}} \lesssim_{\sigma_{1}, \sigma_{2}} \lVert f \rVert_{\dot{H}^{\sigma_{1}}} \lVert g \rVert_{\dot{H}^{\sigma_{2}}} \hspace{3mm} \forall \hspace{1mm} f \in \dot{H}^{\sigma_{1}}(\mathbb{T}^{2}), g \in \dot{H}^{\sigma_{2}}(\mathbb{T}^{2}). 
\end{equation}     
 \end{lemma}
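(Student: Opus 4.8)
The plan is to combine Bony's decomposition $fg = f\prec g + f\succ g + f\circ g$ with the Littlewood--Paley characterization of Sobolev norms. On $\mathbb{T}^{2}$ the spectral gap gives $\lVert h\rVert_{\dot{H}^{s}}\lesssim\lVert(2^{ms}\lVert\Delta_{m}h\rVert_{L^{2}})_{m\geq -1}\rVert_{\ell_{m}^{2}}$ for every $s\in\mathbb{R}$, with equivalence when $h$ is mean-zero; hence it suffices to bound $\lVert(2^{m(\sigma_{1}+\sigma_{2}-1)}\lVert\Delta_{m}(fg)\rVert_{L^{2}})_{m}\rVert_{\ell_{m}^{2}}$, and since $f\succ g = g\prec f$ and the hypotheses are symmetric under $(f,\sigma_{1})\leftrightarrow(g,\sigma_{2})$ it is enough to treat $f\prec g$ and $f\circ g$. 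Throughout put $a_{m}\triangleq 2^{m\sigma_{1}}\lVert\Delta_{m}f\rVert_{L^{2}}$ and $b_{m}\triangleq 2^{m\sigma_{2}}\lVert\Delta_{m}g\rVert_{L^{2}}$, so that $\lVert(a_{m})\rVert_{\ell^{2}}\approx\lVert f\rVert_{\dot{H}^{\sigma_{1}}}$ and $\lVert(b_{m})\rVert_{\ell^{2}}\approx\lVert g\rVert_{\dot{H}^{\sigma_{2}}}$; all functions here are mean-zero (cf. \eqref{MHD}).

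For the paraproduct, by \eqref{est 152} $\Delta_{m}(f\prec g)=\sum_{\lvert j-m\rvert\leq N_{1}}(S_{j-1}f)\Delta_{j}g$ is spectrally supported in $\{\lvert\xi\rvert\approx 2^{m}\}$. From Bernstein's inequality on $\mathbb{T}^{2}$, $\lVert\Delta_{k}f\rVert_{L^{\infty}}\lesssim 2^{k}\lVert\Delta_{k}f\rVert_{L^{2}}$, one gets $\lVert S_{j-1}f\rVert_{L^{\infty}}\lesssim\sum_{-1\leq k\leq j-2}2^{k(1-\sigma_{1})}a_{k}\lesssim 2^{j(1-\sigma_{1})}\lVert f\rVert_{\dot{H}^{\sigma_{1}}}$, the geometric series converging precisely because $\sigma_{1}<1$. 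Therefore $\lVert\Delta_{m}(f\prec g)\rVert_{L^{2}}\lesssim 2^{m(1-\sigma_{1})}\lVert f\rVert_{\dot{H}^{\sigma_{1}}}\sum_{\lvert j-m\rvert\leq N_{1}}\lVert\Delta_{j}g\rVert_{L^{2}}$; multiplying by $2^{m(\sigma_{1}+\sigma_{2}-1)}$, using $(\sigma_{1}+\sigma_{2}-1)+(1-\sigma_{1})=\sigma_{2}$ and $2^{m\sigma_{2}}\approx 2^{j\sigma_{2}}$ on $\lvert j-m\rvert\leq N_{1}$, and taking $\ell_{m}^{2}$-norms of the finitely many shifts of $(b_{j})$, one obtains $\lVert f\prec g\rVert_{\dot{H}^{\sigma_{1}+\sigma_{2}-1}}\lesssim\lVert f\rVert_{\dot{H}^{\sigma_{1}}}\lVert g\rVert_{\dot{H}^{\sigma_{2}}}$; the bound for $f\succ g$ is the same with $(f,\sigma_{1})$ and $(g,\sigma_{2})$ interchanged, using $\sigma_{2}<1$.

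For the resonant term, by \eqref{est 152} $\Delta_{m}(f\circ g)=\Delta_{m}\sum_{i\geq m-N_{2}}\sum_{\lvert j\rvert\leq 1}\Delta_{i}f\,\Delta_{i+j}g$, so $f\circ g$ has ``low-frequency output'' and a direct $L^{2}$-estimate of $\Delta_{m}(f\circ g)$ is too lossy once $\sigma_{1}+\sigma_{2}-1\leq 0$; instead we pass to the dual side, computing $\lVert f\circ g\rVert_{\dot{H}^{\sigma_{1}+\sigma_{2}-1}}$ as the supremum of $\lvert\langle f\circ g,\phi\rangle\rvert$ over mean-zero $\phi$ with $\lVert\phi\rVert_{\dot{H}^{1-\sigma_{1}-\sigma_{2}}}\leq 1$. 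Each $\Delta_{i}f\,\Delta_{i+j}g$ has spectrum in a ball of radius $\approx 2^{i}$, so pairing it with $\phi$ equals pairing it with a fattened low-frequency truncation $\widetilde{P}_{i}\phi$, and Bernstein on $\mathbb{T}^{2}$ gives $\lVert\widetilde{P}_{i}\phi\rVert_{L^{\infty}}\lesssim\sum_{k\leq i+C}2^{k}\lVert\Delta_{k}\phi\rVert_{L^{2}}=\sum_{k\leq i+C}2^{k(\sigma_{1}+\sigma_{2})}\big(2^{k(1-\sigma_{1}-\sigma_{2})}\lVert\Delta_{k}\phi\rVert_{L^{2}}\big)\lesssim 2^{i(\sigma_{1}+\sigma_{2})}\lVert\phi\rVert_{\dot{H}^{1-\sigma_{1}-\sigma_{2}}}$ for a fixed integer $C$, the geometric sum converging precisely because $\sigma_{1}+\sigma_{2}>0$. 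Combining this with $\lVert\Delta_{i}f\,\Delta_{i+j}g\rVert_{L^{1}}\leq\lVert\Delta_{i}f\rVert_{L^{2}}\lVert\Delta_{i+j}g\rVert_{L^{2}}=2^{-i\sigma_{1}}a_{i}\cdot 2^{-(i+j)\sigma_{2}}b_{i+j}$, the powers of $2^{i}$ cancel exactly, leaving $\lvert\langle f\circ g,\phi\rangle\rvert\lesssim\sum_{i}\sum_{\lvert j\rvert\leq 1}a_{i}b_{i+j}\lesssim\lVert f\rVert_{\dot{H}^{\sigma_{1}}}\lVert g\rVert_{\dot{H}^{\sigma_{2}}}$ by Cauchy--Schwarz, so that $\lVert f\circ g\rVert_{\dot{H}^{\sigma_{1}+\sigma_{2}-1}}\lesssim\lVert f\rVert_{\dot{H}^{\sigma_{1}}}\lVert g\rVert_{\dot{H}^{\sigma_{2}}}$. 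Adding the three contributions proves \eqref{est 74}.

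The resonant term is the crux: when the target exponent $\sigma_{1}+\sigma_{2}-1$ is nonpositive a Bernstein bound on the high-frequency factor is too lossy, because the sum over the arbitrarily high interacting frequencies that feed a fixed output frequency fails to converge and the regularity gain one would hope for is simply absent; the resolution is to transfer the loss onto a test function whose dual regularity $1-\sigma_{1}-\sigma_{2}$ is chosen so that after the duality pairing all powers of the interaction frequency cancel, leaving a plain Cauchy--Schwarz sum, and it is precisely the surplus $\sigma_{1}+\sigma_{2}>0$ that makes the truncation sum on the test-function side summable. The two hypotheses thus play distinct roles --- $\sigma_{i}<1$ is used only for the paraproducts, at the endpoint of Bernstein's embedding into $L^{\infty}$, while $\sigma_{1}+\sigma_{2}>0$ is used only for the resonant term --- and the computation is two-dimensional through the factor $2^{i}$ in Bernstein's inequality, the general-dimension statement being $\dot{H}^{\sigma_{1}}\cdot\dot{H}^{\sigma_{2}}\hookrightarrow\dot{H}^{\sigma_{1}+\sigma_{2}-d/2}$ for $\sigma_{1},\sigma_{2}<d/2$ and $\sigma_{1}+\sigma_{2}>0$.
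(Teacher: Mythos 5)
Your proof is correct: the paraproduct bounds via Bernstein (using $\sigma_{1},\sigma_{2}<1$) and the duality treatment of the resonant term (using $\sigma_{1}+\sigma_{2}>0$), together with the remark that on $\mathbb{T}^{2}$ one works with mean-zero functions so the Littlewood--Paley characterization of the homogeneous norms applies, is exactly the standard argument behind this product estimate. The paper itself gives no proof of Lemma \ref{Lemma 3.2} --- it is quoted from \cite[Lemma 2.5]{Y14b} --- so your write-up simply supplies the standard Bony-decomposition proof of the cited result, in line with how it is proved in the literature.
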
 
 
 \begin{define}\label{Definition 3.1} 
\rm{(Cf. \cite[Definition 4.1]{HR23})} Let $\mathfrak{h}:  \hspace{1mm} [0,\infty) \mapsto [0,\infty)$ be a smooth function such that 
\begin{equation*}
\mathfrak{h}(r) \triangleq  
\begin{cases}
1 & \text{ if } r \geq 1, \\
0 & \text{ if } r \leq \frac{1}{2}, 
\end{cases} 
\hspace{5mm} \mathfrak{l} \triangleq 1- \mathfrak{h}.
\end{equation*} 
Then, we define for any $\lambda > 0$
\begin{equation*}
\check{\mathfrak{h}}_{\lambda}(x) \triangleq \mathcal{F}^{-1} \left( \mathfrak{h} \left( \frac{ \lvert \cdot \rvert}{\lambda} \right) \right) (x), \hspace{5mm} \check{\mathfrak{l}}_{\lambda}(x) \triangleq \mathcal{F}^{-1} \left( \mathfrak{l} \left(\frac{ \lvert \cdot \rvert}{\lambda} \right) \right)(x), 
\end{equation*} 
and then the projections onto higher and lower frequencies respectively by 
\begin{equation}\label{est 68}
\mathcal{H}_{\lambda}: \mathcal{S}' \mapsto \mathcal{S}' \text{ by } \mathcal{H}_{\lambda} f \triangleq \check{\mathfrak{h}}_{\lambda} \ast f \text{ and } \mathcal{L}_{\lambda}: \mathcal{S}' \mapsto \mathcal{S} \text{ by } \mathcal{L}_{\lambda} f \triangleq f - \mathcal{H}_{\lambda} f = \check{\mathfrak{l}}_{\lambda} \ast f.  
\end{equation} 
\end{define} 
The following is a straight-forward generalization of \cite[Lemmas 4.2-4.3]{HR23}:  
\begin{lemma}\label{Lemma 3.3} 
\rm{(Cf. \cite[Lemmas 4.1-4.2]{HR23})} For any $p, q \in [1,\infty]$, $\alpha, \beta \in \mathbb{R}$ such that $\beta \geq \alpha$, 
\begin{equation}\label{est 33}
\lVert \mathcal{L}_{\lambda} f \rVert_{B_{p,q}^{\beta}} \lesssim \lambda^{\beta - \alpha} \lVert f \rVert_{B_{p,q}^{\alpha}} \hspace{2mm} \forall \hspace{1mm} f \in B_{p,q}^{\alpha} \hspace{1mm} \text{ and } \hspace{1mm} \lVert \mathcal{H}_{\lambda} f \rVert_{B_{p,q}^{\alpha}} \lesssim \lambda^{\alpha - \beta} \lVert f \rVert_{B_{p,q}^{\beta}} \hspace{2mm} \forall \hspace{1mm} f \in B_{p,q}^{\beta}.
\end{equation} 
\end{lemma}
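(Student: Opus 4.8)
\emph{Proof proposal.} The plan is to prove both inequalities directly from the Littlewood--Paley definition of $B_{p,q}^{s}$, using only the Fourier-support structure of the multipliers defining $\mathcal{H}_{\lambda}$ and $\mathcal{L}_{\lambda}$ together with a uniform $L^{1}$-kernel estimate. This is the natural generalization of \cite[Lemmas 4.1--4.2]{HR23}: the only change is that the target $B_{p,\infty}^{s}$ there is replaced by $B_{p,q}^{s}$, a change the $\ell_{m}^{q}$ (quasi-)triangle inequality absorbs without extra work.

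First I would record the support facts and reduce to a single-block bound. Since $\mathfrak{l} = 1-\mathfrak{h}$ with $\mathfrak{h}\equiv 1$ on $[1,\infty)$ and $\mathfrak{h}\equiv 0$ on $[0,\tfrac12]$, the symbol $\xi\mapsto\mathfrak{l}(|\xi|/\lambda)$ is supported in the ball $\{|\xi|\le\lambda\}$ while $\xi\mapsto\mathfrak{h}(|\xi|/\lambda)$ is supported in $\{|\xi|\ge\tfrac{\lambda}{2}\}$; since $\supp\rho_{m}$ lies in a dyadic annulus of size $\approx 2^{m}$ (and $\supp\chi$ in a ball of size $O(1)$), it follows that $\Delta_{m}\mathcal{L}_{\lambda}f = 0$ unless $2^{m}\lesssim\lambda$ and $\Delta_{m}\mathcal{H}_{\lambda}f = 0$ unless $2^{m}\gtrsim\lambda$. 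On the surviving scales I claim $\lVert\Delta_{m}\mathcal{L}_{\lambda}f\rVert_{L^{p}}\lesssim\lVert\Delta_{m}f\rVert_{L^{p}}$ and likewise for $\mathcal{H}_{\lambda}$, uniformly in $m$ and $\lambda$. Choosing an enlarged bump $\tilde\rho_{m}$ that equals $1$ on $\supp\rho_{m}$ and is supported in a slightly fattened annulus of the same dyadic scale (and $\tilde\chi$ for $m=-1$), one writes $\Delta_{m}\mathcal{L}_{\lambda}f = K_{m,\lambda}\ast\Delta_{m}f$ with $K_{m,\lambda}\triangleq\mathcal{F}^{-1}\big(\tilde\rho_{m}(\cdot)\,\mathfrak{l}(|\cdot|/\lambda)\big)$, so Young's inequality reduces the claim to $\lVert K_{m,\lambda}\rVert_{L^{1}}\lesssim 1$ uniformly. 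On the relevant range $2^{m}\lesssim\lambda$ one has $|\partial^{k}[\mathfrak{l}(|\cdot|/\lambda)]|\lesssim\lambda^{-|k|}\lesssim 2^{-|k|m}$ on $\supp\tilde\rho_{m}$, so $\tilde\rho_{m}\,\mathfrak{l}(|\cdot|/\lambda)$ obeys exactly the differential bounds of a Littlewood--Paley symbol localized at frequency $2^{m}$, and a standard rescaling plus integration-by-parts argument gives the uniform $L^{1}$ bound; the $\mathcal{H}_{\lambda}$ case is identical, now using $2^{m}\gtrsim\lambda$ so that again $\lambda^{-|k|}\lesssim 2^{-|k|m}$ on the annulus.

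Finally I would assemble the norms. For the first estimate, writing $2^{\beta m} = 2^{(\beta-\alpha)m}2^{\alpha m}$ and using $\beta-\alpha\ge 0$ together with $2^{m}\lesssim\lambda$ on the support of the sequence,
\[
\lVert\mathcal{L}_{\lambda}f\rVert_{B_{p,q}^{\beta}} = \big\lVert 2^{\beta m}\lVert\Delta_{m}\mathcal{L}_{\lambda}f\rVert_{L^{p}}\big\rVert_{\ell_{m}^{q}} \lesssim \Big(\sup_{2^{m}\lesssim\lambda} 2^{(\beta-\alpha)m}\Big)\,\big\lVert 2^{\alpha m}\lVert\Delta_{m}f\rVert_{L^{p}}\big\rVert_{\ell_{m}^{q}} \lesssim \lambda^{\beta-\alpha}\lVert f\rVert_{B_{p,q}^{\alpha}},
\]
and symmetrically, using $\alpha-\beta\le 0$ and $2^{m}\gtrsim\lambda$,
\[
\lVert\mathcal{H}_{\lambda}f\rVert_{B_{p,q}^{\alpha}} \lesssim \Big(\sup_{2^{m}\gtrsim\lambda} 2^{(\alpha-\beta)m}\Big)\,\big\lVert 2^{\beta m}\lVert\Delta_{m}f\rVert_{L^{p}}\big\rVert_{\ell_{m}^{q}} \lesssim \lambda^{\alpha-\beta}\lVert f\rVert_{B_{p,q}^{\beta}}.
\]

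The only genuine point requiring care is the uniform kernel bound $\lVert K_{m,\lambda}\rVert_{L^{1}}\lesssim 1$: one must check that the symbol's derivatives scale like $2^{-|k|m}$ uniformly over the two-parameter regime, which is precisely why the hypotheses pair $\mathcal{L}_{\lambda}$ with frequencies $2^{m}\lesssim\lambda$ and $\mathcal{H}_{\lambda}$ with $2^{m}\gtrsim\lambda$; everything else is routine bookkeeping. Working on $\mathbb{T}^{2}$ rather than $\mathbb{R}^{2}$ is harmless: one passes from $\mathcal{F}^{-1}_{\mathbb{R}^{2}}$ to the periodized kernel in the usual way, which does not affect the $L^{1}$ bounds. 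Alternatively, one may simply quote \cite[Lemmas 4.1--4.2]{HR23} and note that their proof, read with $\lVert\cdot\rVert_{\ell_{m}^{q}}$ in place of $\lVert\cdot\rVert_{\ell_{m}^{\infty}}$, already yields the stated conclusion.
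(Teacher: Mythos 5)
Your overall strategy is the right one: the paper itself gives no proof of this lemma, simply citing \cite[Lemmas 4.1--4.2]{HR23} and calling the $B_{p,q}^{s}$ version a straight-forward generalization, and your block-by-block argument (Fourier-support localization of $\mathfrak{l}(\lvert\cdot\rvert/\lambda)$ and $\mathfrak{h}(\lvert\cdot\rvert/\lambda)$, reduction to a uniform $L^{1}$ kernel bound via Young, then pulling $\sup 2^{(\beta-\alpha)m}$ respectively $\sup 2^{(\alpha-\beta)m}$ out of the $\ell_{m}^{q}$ norm over the surviving scales) is exactly how that generalization goes. The support facts, the sign conditions $\beta\geq\alpha$, and the final bookkeeping are all correct; as in the paper's applications one should read the low-frequency estimate with $\lambda\geq 1$ (or $\lambda$ bounded below), since the $m=-1$ block otherwise spoils the bound $\sup_{2^{m}\lesssim\lambda}2^{(\beta-\alpha)m}\lesssim\lambda^{\beta-\alpha}$, but that caveat is implicit in the statement itself, not a defect of your argument.

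The one step you single out as the only point requiring care is, however, justified incorrectly for $\mathcal{H}_{\lambda}$: on the range $2^{m}\gtrsim\lambda$ the inequality $\lambda^{-\lvert k\rvert}\lesssim 2^{-\lvert k\rvert m}$ goes the wrong way (one has $\lambda^{-\lvert k\rvert}\gtrsim 2^{-\lvert k\rvert m}$, with constant blowing up as $2^{m}/\lambda\to\infty$), so as written you have not shown that $\tilde\rho_{m}\,\mathfrak{h}(\lvert\cdot\rvert/\lambda)$ obeys Littlewood--Paley-type derivative bounds at scale $2^{m}$ uniformly. The conclusion is still true and the repair is immediate: the derivatives of $\mathfrak{h}(\lvert\cdot\rvert/\lambda)$ are supported in the transition region $\{\lambda/2\leq\lvert\xi\rvert\leq\lambda\}$, which meets $\supp\tilde\rho_{m}$ only when $2^{m}\approx\lambda$, and there $\lambda^{-\lvert k\rvert}\approx 2^{-\lvert k\rvert m}$ with uniform constants, while for $2^{m}\gg\lambda$ the symbol $\mathfrak{h}(\lvert\cdot\rvert/\lambda)$ is identically $1$ on $\supp\tilde\rho_{m}$ and the kernel is just that of $\tilde\rho_{m}$. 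Even more simply, since $\mathcal{H}_{\lambda}=\Id-\mathcal{L}_{\lambda}$ one has $\lVert\Delta_{m}\mathcal{H}_{\lambda}f\rVert_{L^{p}}\leq\lVert\Delta_{m}f\rVert_{L^{p}}+\lVert\Delta_{m}\mathcal{L}_{\lambda}f\rVert_{L^{p}}\lesssim\lVert\Delta_{m}f\rVert_{L^{p}}$, so the high-frequency case needs no new kernel estimate at all; with that fix your proof is complete.
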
 
 
\section{Proof of Theorem \ref{Theorem 2.2}}\label{Section 4} 
Returning to \eqref{est 17} solved by $(v_{u}, v_{b})$, because $\zeta_{u}, \zeta_{b} \in \mathcal{C}_{x}^{-2+ 3 \kappa}$ due to \eqref{est 6} are the roughest terms in there (because $X_{u}, X_{b} \in \mathcal{C}_{x}^{\alpha}$ for any $\alpha < 0$), we introduce the equations of $Y_{u}, Y_{b}$ that are respectively forced by $\zeta_{u}, \zeta_{b}$:
\begin{subequations}\label{est 26} 
\begin{align}
&\partial_{t} Y_{u} = \nu\Delta Y_{u} - \mathbb{P}_{L} \divergence (2X_{u} \otimes_{s} Y_{u} + X_{u}^{\otimes 2} - 2  X_{b} \otimes_{s} Y_{b} - X_{b}^{\otimes 2}) + \mathbb{P}_{L} \mathbb{P}_{\neq 0} \zeta_{u}, \label{est 26a}\\
&\partial_{t} Y_{b} = \nu\Delta Y_{b} - \mathbb{P}_{L} \divergence (X_{b} \otimes Y_{u} + Y_{b} \otimes X_{u} + X_{b} \otimes X_{u} \nonumber \\
& \hspace{27mm} - X_{u} \otimes Y_{b} - Y_{u} \otimes X_{b} - X_{u} \otimes X_{b})  + \mathbb{P}_{L} \mathbb{P}_{\neq 0}\zeta_{b}, \label{est 26b} \\
& Y_{u}(0,x) = 0, \hspace{1mm} Y_{b}(0,x) = 0. \label{est 26c}
\end{align}
\end{subequations} 

\begin{remark}\label{Remark 4.1}  
In contrast to \eqref{est 26a}, we cannot write \eqref{est 26b} in a symmetric form. Yet, $X_{b} \otimes Y_{u} + Y_{b} \otimes X_{u} + X_{b} \otimes X_{u}$ that we selected from the nonlinear term $(v_{b} + X_{b}) \otimes (v_{u} +X_{u})$ of \eqref{est 17b} and $ - X_{u} \otimes Y_{b} - Y_{u} \otimes X_{b} - X_{u} \otimes X_{b}$ that we selected from $- (v_{u} + X_{u}) \otimes (v_{b} + X_{b} )$ of \eqref{est 17b} together form anti-symmetry, allowing us to rewrite \eqref{est 26} via notations from \eqref{est 19} as follows:  
\begin{subequations}\label{est 27} 
\begin{align}
&\partial_{t} Y_{u} = \nu\Delta Y_{u} - \mathbb{P}_{L} \divergence (2X_{u} \otimes_{s} Y_{u} + X_{u}^{\otimes 2} - 2  X_{b} \otimes_{s} Y_{b} - X_{b}^{\otimes 2}) 
+ \mathbb{P}_{L} \mathbb{P}_{\neq 0} \zeta_{u},  \label{est 27a}\\
&\partial_{t} Y_{b} = \nu\Delta Y_{b} - \mathbb{P}_{L} \divergence (2X_{b} \otimes_{a} Y_{u} + 2Y_{b} \otimes_{a} X_{u} + 2X_{b} \otimes_{a} X_{u})  + \mathbb{P}_{L} \mathbb{P}_{\neq 0}\zeta_{b}, \label{est 27b} \\
& Y_{u}(0,x) = 0,  \hspace{1mm} Y_{b}(0,x) = 0. 
\end{align}
\end{subequations}
\end{remark} 
Now if we define 
\begin{equation}\label{es 139} 
D_{u} \triangleq 2(X_{u} + Y_{u}) \text{ and } D_{b} \triangleq 2(X_{b} + Y_{b}), 
\end{equation} 
then it follows from \eqref{est 27} that 
\begin{equation}\label{est 28}
(w_{u}, w_{b}) \triangleq (v_{u} - Y_{u}, v_{b} - Y_{b}) 
\end{equation} 
satisfies 
\begin{subequations}\label{est 29}
\begin{align}
&\partial_{t} w_{u} + \mathbb{P}_{L} \divergence (w_{u}^{\otimes 2} + D_{u} \otimes_{s} w_{u} + Y_{u}^{\otimes 2} - w_{b}^{\otimes 2} - D_{b} \otimes_{s} w_{b} - Y_{b}^{\otimes 2}) = \nu\Delta w_{u},  \label{est 29a}\\
&\partial_{t} w_{b} +  \mathbb{P}_{L} \divergence (w_{b} \otimes w_{u} + w_{b}\otimes_{a} D_{u} + Y_{b} \otimes Y_{u}  \nonumber \\
& \hspace{16mm} - w_{u} \otimes w_{b} - w_{u} \otimes_{a} D_{b} - Y_{u} \otimes Y_{b})  = \nu\Delta w_{b}, \label{est 29b}\\
& w_{u}(0,x) = u^{\text{in}}(x), w_{b}(0,x) = b^{\text{in}}(x). \label{est 29c}
\end{align}
\end{subequations} 
Because $\zeta_{u}, \zeta_{b} \in \mathcal{C}_{x}^{-2+ 3 \kappa}$ due to \eqref{est 6}, we can expect $Y_{u}, Y_{b} \in \mathcal{C}_{x}^{2\kappa}$. Thus, in $L^{2}(\mathbb{T}^{2})$-estimates of $(w_{u}, w_{b})$, the ill-defined terms are 
\begin{align*}
&\langle w_{u}, \nu\Delta w_{u} + \mathbb{P}_{L} \divergence (2X_{u} \otimes_{s} w_{u} - 2X_{b} \otimes_{s} w_{b}) \rangle, \\
&\langle w_{b}, \nu\Delta w_{b} + \mathbb{P}_{L} \divergence (2w_{b} \otimes_{a} X_{u} - 2w_{u} \otimes_{a} X_{b}) \rangle. 
\end{align*}

\begin{define}\label{Definition 4.1}
Recall $P_{t} = e^{\nu \Delta t}$ from Section \ref{Subsection 3.1}. For any $\gamma > 0, T > 0$, and $\beta \in \mathbb{R}$, we define 
\begin{equation*}
\mathcal{M}_{T}^{\gamma} \mathcal{C}_{x}^{\beta} \triangleq \{ f: \hspace{1mm} t \mapsto t^{\gamma} \lVert f(t) \rVert_{\mathcal{C}_{x}^{\beta}} \text{ is continuous over } [0,T], \lVert f \rVert_{\mathcal{M}_{T}^{\gamma} \mathcal{C}_{x}^{\beta}}  < \infty \}.
\end{equation*} 
where $\lVert f \rVert_{\mathcal{M}_{T}^{\gamma} \mathcal{C}_{x}^{\beta}} \triangleq \lVert t^{\gamma} \lVert f(t) \rVert_{\mathcal{C}_{x}^{\beta}} \rVert_{C_{T}}$. Then a pair $(w_{u}, w_{b}) \in \mathcal{M}_{T}^{\gamma} \mathcal{C}_{x}^{\beta} \times \mathcal{M}_{T}^{\gamma} \mathcal{C}_{x}^{\beta}$ is a mild solution to \eqref{est 29} over $[0,T]$ if 
\begin{align*}
w_{u}(t) = P_{t} u^{\text{in}} - \int_{0}^{t} P_{t-s} \mathbb{P}_{L} \divergence &(w_{u}^{\otimes 2} + D_{u} \otimes_{s} w_{u} + Y_{u}^{\otimes 2} \nonumber \\
&- w_{b}^{\otimes 2} - D_{b} \otimes_{s} w_{b} - Y_{b}^{\otimes 2}) (s) ds, \\
w_{b}(t) = P_{t} b^{\text{in}} - \int_{0}^{t} P_{t-s} \mathbb{P}_{L} \divergence &(w_{b} \otimes w_{u} + w_{b} \otimes_{a} D_{u} + Y_{b} \otimes Y_{u} \nonumber \\
&- w_{u} \otimes w_{b} - w_{u} \otimes_{a} D_{b} - Y_{u} \otimes Y_{b}) (s) ds. 
\end{align*}
\end{define} 
Considering the special structure of the MHD system \eqref{MHD}, recalling the definition of $\nabla_{\text{symm}}\phi, \nabla_{\text{anti}}\phi \in \mathbb{M}^{2}$ for any $\mathbb{R}^{2}$-valued function $\phi$ from \eqref{est 20}-\eqref{est 21} and $\mathcal{L}_{\lambda}$ from Definition \ref{Definition 3.1}, we define 
\begin{equation}\label{est 64}
\nabla_{\text{spec}} (X_{u}, X_{b})  \in \mathbb{M}^{4} \text{ by } \nabla_{\text{spec}} (X_{u}, X_{b}) \triangleq 
\begin{pmatrix}
\nabla_{\text{symm}} X_{u} & \nabla_{\text{anti}} X_{b} \\
-\nabla_{\text{anti}} X_{b} & - \nabla_{\text{symm}} X_{u}
\end{pmatrix}.  
\end{equation} 
The reason for this definition will become clear in \eqref{est 236}.  Next, for any given $\lambda \geq 1$ and $t \in [0,\infty)$, we define the \emph{enhanced noise} by 
\begin{equation}\label{est 165} 
t \mapsto ( \nabla_{\text{spec}} \mathcal{L}_{\lambda} (X_{u}, X_{b} ) (t), \nabla_{\text{spec}} \mathcal{L}_{\lambda}(X_{u}, X_{b})(t) \circlesign{\circ} P^{\lambda}(t) - r_{\lambda} \Id) 
\end{equation} 
where $\Id$ is an identity matrix in $\mathbb{M}^{4}$ 
\begin{subequations}\label{est 140} 
\begin{align}
&P^{\lambda}(t,x) \triangleq \left(-\frac{\nu \Delta}{2} + 1 \right)^{-1} \nabla_{\text{spec}} \mathcal{L}_{\lambda}(X_{u}, X_{b}) (t,x), \label{est 140a} \\ 
&r_{\lambda}(t) \triangleq \sum_{k\in\mathbb{Z}^{2} \setminus \{0\}} \frac{1}{4}  \mathfrak{l} \left( \frac{ \lvert k \rvert}{\lambda}\right)^{2}\left(\frac{1- e^{-2\nu \lvert k \rvert^{2} t}}{\nu} \right)  \left(\frac{ \nu \lvert k \rvert^{2}}{2} + 1\right)^{-1}.  \label{est 140b} 
\end{align}
\end{subequations}    
With $\{\lambda^{i} \}_{i \in \mathbb{N}}$ to be additionally defined in Definition \ref{Definition 4.2}, we now define for any $t \in [0,\infty)$ and $\kappa > 0$  
\begin{subequations}\label{est 39} 
\begin{align}
& L_{t}^{\kappa} \triangleq 1+  \sum_{j\in \{u,b\}} [ \lVert X_{j} \rVert_{C_{t}\mathcal{C}_{x}^{-\kappa}} + \lVert Y_{j} \rVert_{C_{t}\mathcal{C}_{x}^{2\kappa}}],\label{est 39a} \\
& N_{t}^{\kappa} \triangleq L_{t}^{\kappa} +  \sup_{i\in\mathbb{N}}  \lVert ( \nabla_{\text{spec}} \mathcal{L}_{\lambda^{i}} (X_{u}, X_{b})  \circlesign{\circ} P^{\lambda^{i}} - r_{\lambda^{i}}  \Id \rVert_{C_{t}\mathcal{C}_{x}^{-\kappa}}. \label{est 39b} 
\end{align}
\end{subequations} 
The following classical result can be obtained just like \cite[Proposition 3.2]{HR23}.
\begin{proposition}\label{Proposition 4.1} 
\rm{(Cf. \cite[Proposition 3.2]{HR23})} Fix any $\kappa \in (0, \frac{1}{2})$ and then $\gamma \triangleq 1 - \frac{\kappa}{2}$. Suppose that $D_{u}, D_{b} \in C([0,\infty); \mathcal{C}^{-\kappa})$ and $Y_{u}, Y_{b} \in C([0,\infty); \mathcal{C}^{\kappa})$ so that all of $Y_{u}^{\otimes 2}, Y_{b}^{\otimes 2}, Y_{u} \otimes Y_{b}$, and $Y_{b} \otimes Y_{u}$ belong to $C([0,\infty); \mathcal{C}^{2\kappa})$. Then, for all $(u^{\text{in}}, b^{\text{in}}) \in \mathcal{C}^{-1+ 2 \kappa} \times \mathcal{C}^{-1+ 2 \kappa}$ that are both divergence-free and mean-zero, \eqref{est 29} has a unique mild solution $(w_{u}, w_{b}) \in \mathcal{M}_{T^{\max}}^{\frac{\gamma}{2}} \mathcal{C}^{\frac{3\kappa}{2}} \times \mathcal{M}_{T^{\max}}^{\frac{\gamma}{2}} \mathcal{C}^{\frac{3\kappa}{2}}$ over $[0, T^{\max} (L_{t}^{\kappa}, u^{\text{in}}, b^{\text{in}}))$ where $T^{\max}(L_{t}^{\kappa}, u^{\text{in}}, b^{\text{in}}) \in (0,\infty]$. 
\end{proposition}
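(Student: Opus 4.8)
The plan is to adapt, essentially line by line, the parabolic-smoothing fixed point argument of \cite[Proposition 3.2]{HR23} (a refinement of the Da Prato--Debussche scheme \cite{DD02}); I only describe the structure. Throughout, $D_u,D_b\in C([0,\infty);\mathcal{C}^{-\kappa})$ and $Y_u,Y_b\in C([0,\infty);\mathcal{C}^{\kappa})$ are treated as fixed data with finite norms on each compact time interval (so that $L_t^\kappa$ from \eqref{est 39a} is finite), and the only unknown is the pair $(w_u,w_b)$; the inhomogeneities $Y_u^{\otimes2},Y_b^{\otimes2},Y_u\otimes Y_b,Y_b\otimes Y_u\in C([0,\infty);\mathcal{C}^{2\kappa})$ are harmless. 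First I would fix $T>0$ (small, to be chosen), set $\mathcal{X}_T\triangleq\mathcal{M}_T^{\gamma/2}\mathcal{C}^{3\kappa/2}\times\mathcal{M}_T^{\gamma/2}\mathcal{C}^{3\kappa/2}$, and let $\Phi=(\Phi_u,\Phi_b)\colon\mathcal{X}_T\to\mathcal{X}_T$ be the map whose two components are the right-hand sides of the mild formulation in Definition \ref{Definition 4.1}, so that a mild solution on $[0,T]$ is precisely a fixed point of $\Phi$; I would then run the Banach fixed point theorem on a closed ball $\overline{B_R}\subset\mathcal{X}_T$.

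For the free-evolution part I would use the Schauder estimate $\lVert P_tf\rVert_{\mathcal{C}^\beta}\lesssim t^{-(\beta-\alpha)/2}\lVert f\rVert_{\mathcal{C}^\alpha}$ valid for $\beta\ge\alpha$: since $\tfrac{3\kappa}{2}-(-1+2\kappa)=1-\tfrac{\kappa}{2}=\gamma$, one gets $t^{\gamma/2}\lVert P_tu^{\text{in}}\rVert_{\mathcal{C}^{3\kappa/2}}\lesssim\lVert u^{\text{in}}\rVert_{\mathcal{C}^{-1+2\kappa}}$ and similarly for $b^{\text{in}}$. This identity is exactly why the weight exponent $\gamma/2$ with $\gamma=1-\tfrac{\kappa}{2}$ is dictated by the $\mathcal{C}^{-1+2\kappa}$ data; here $\mathbb{P}_L$ and the preservation of the divergence-free and mean-zero conditions cause no trouble (the mean-zero mode is killed by $\mathbb{P}_L$ outright).

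For the nonlinear Duhamel terms the plan is to use $\lVert P_{t-s}\mathbb{P}_L\divergence g\rVert_{\mathcal{C}^{3\kappa/2}}\lesssim(t-s)^{-\frac12-\frac{3\kappa/2-\sigma}{2}}\lVert g\rVert_{\mathcal{C}^\sigma}$ (for $\sigma\le\tfrac{3\kappa}{2}$), to classify each integrand by its Hölder regularity $\sigma$ via Bony's estimates (Lemma \ref{Lemma 3.1}, or its $\mathcal{C}^\alpha$-analogue), and to insert $\lVert w_\bullet(s)\rVert_{\mathcal{C}^{3\kappa/2}}\le s^{-\gamma/2}\lVert w_\bullet\rVert_{\mathcal{X}_T}$. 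The quadratic-in-$w$ products $w_u^{\otimes2},w_b^{\otimes2},w_b\otimes w_u,w_u\otimes w_b$ land in $\mathcal{C}^{3\kappa/2}$ (as $\tfrac{3\kappa}{2}>0$) with weight $s^{-\gamma}$, and the $Y\otimes Y$ terms are smoother still; the genuinely delicate ones are the couplings $D_u\otimes_s w_u$, $D_b\otimes_s w_b$, $w_b\otimes_a D_u$, $w_u\otimes_a D_b$ of the rough fields $D_u,D_b\in\mathcal{C}^{-\kappa}$ with the solution. For these I would split via Bony: the two paraproducts with $D$ in the low-frequency slot and the resonant term all lie in $\mathcal{C}^{\kappa/2}$ (the resonant product makes sense because $-\kappa+\tfrac{3\kappa}{2}=\tfrac{\kappa}{2}>0$), while the paraproduct with $w$ in the low-frequency slot stays only in $\mathcal{C}^{-\kappa}$; the worst-case time integral is then of the Beta type $\int_0^t(t-s)^{-\frac12-\frac{5\kappa}{4}}s^{-\gamma/2}\,ds$, which converges in the relevant range of $\kappa$ and, after multiplication by $t^{\gamma/2}$, produces a strictly positive power $t^\delta$. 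Summing all contributions gives a bound of the schematic form $\lVert\Phi(w)\rVert_{\mathcal{X}_T}\lesssim\lVert(u^{\text{in}},b^{\text{in}})\rVert_{\mathcal{C}^{-1+2\kappa}}+T^\delta\bigl(L_T^\kappa+L_T^\kappa\lVert w\rVert_{\mathcal{X}_T}+\lVert w\rVert_{\mathcal{X}_T}^2\bigr)$, and the analogous estimate for $\Phi(w)-\Phi(\tilde w)$ gives a Lipschitz constant $\lesssim T^\delta(L_T^\kappa+R)$ on $\overline{B_R}$; choosing $R\approx1+\lVert(u^{\text{in}},b^{\text{in}})\rVert_{\mathcal{C}^{-1+2\kappa}}+L_T^\kappa$ and then $T$ small (depending only on $R$, hence on $L_t^\kappa$ and the data) makes $\Phi$ a contraction of $\overline{B_R}$ into itself, which yields a unique mild solution on $[0,T]$.

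Finally I would obtain the maximal solution by the usual patching: since $w_u(t),w_b(t)\in\mathcal{C}^{3\kappa/2}\hookrightarrow\mathcal{C}^{-1+2\kappa}$ at every $t>0$, the local result can be restarted, and $T^{\max}(L_t^\kappa,u^{\text{in}},b^{\text{in}})$ is defined as the supremum of existence times; uniqueness at each step glues the pieces, and the blow-up alternative ($T^{\max}<\infty\Rightarrow\lim_{t\uparrow T^{\max}}\lVert(w_u,w_b)(t)\rVert_{\mathcal{C}^{3\kappa/2}}=\infty$) follows because the local existence time is bounded below in terms of $\lVert(w_u,w_b)(t)\rVert_{\mathcal{C}^{3\kappa/2}}$ and $L_{T^{\max}+1}^\kappa$. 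The hard part is the control of the rough couplings $D_\bullet\otimes_{s/a}w_\bullet$: one must simultaneously verify that their Bony decomposition is admissible at the chosen regularity (this forces $\tfrac{\kappa}{2}>0$ in the resonant piece) and that, after the loss of one derivative from $\mathbb{P}_L\divergence$ and the smoothing of $P_{t-s}$, the time-weighted Duhamel integrals still converge with a gain in $t$ — it is this balance, together with the roughness of the initial data, that pins down both the solution regularity $\tfrac{3\kappa}{2}$ and the weight $\gamma/2$.
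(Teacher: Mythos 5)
Your strategy is exactly the one the paper intends: the paper gives no argument of its own but points to \cite[Proposition 3.2]{HR23}, i.e.\ a Da Prato--Debussche/Banach fixed point in the weighted spaces $\mathcal{M}_T^{\gamma/2}\mathcal{C}^{3\kappa/2}$ with Schauder estimates and Bony's decomposition, and your bookkeeping of the free evolution (the identity $\tfrac{3\kappa}{2}-(-1+2\kappa)=\gamma$), of the quadratic terms, and of the rough couplings is the right one.

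One quantitative point deserves to be made explicit rather than hidden in ``converges in the relevant range of $\kappa$''. The worst piece of $D_u\otimes_s w_u$ (the paraproduct with $D_u$ in the high-frequency slot) lies only in $\mathcal{C}^{-\kappa}$, so after $\mathbb{P}_L\divergence$ and the heat smoothing into $\mathcal{C}^{3\kappa/2}$ your Beta integral carries the singularity $(t-s)^{-\frac12-\frac{5\kappa}{4}}$, which is integrable (and yields a positive power of $T$) only when $\kappa<\tfrac25$; for $\kappa\in[\tfrac25,\tfrac12)$ the scheme does not close in $\mathcal{C}^{3\kappa/2}$, and indeed no scheme can, since a forcing of regularity $\mathcal{C}^{-1-\kappa}$ caps the Duhamel term at $\mathcal{C}^{(1-\kappa)-}\subsetneq\mathcal{C}^{3\kappa/2}$ in that range. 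So either record the restriction $\kappa<\tfrac25$ (harmless for the paper, which only invokes the proposition for $\kappa\le\kappa_0\le\tfrac1{44}$, e.g.\ in Proposition \ref{Proposition 4.11}), or lower the target regularity (say to $\mathcal{C}^{\kappa}$ with the correspondingly adjusted weight) if you want a statement valid on all of $(0,\tfrac12)$. Apart from this, your treatment of the contraction, the choice of ball, and the patching/blow-up alternative is the standard and correct completion of the argument.
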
 

The following result is an immediate consequence of the convergence result in Proposition \ref{Proposition 5.4}.  
\begin{proposition}\label{Proposition 4.2} 
\rm{(Cf. \cite[Lemma 3.1]{HR23})} Let $(\Omega, \mathcal{F}, \mathbb{P})$ be a probability space on which the STWN $\xi_{u}, \xi_{b}$ satisfy \eqref{est 233} and \eqref{est 234}. Then there exists a null set $\mathcal{N}'' \subset \Omega$ such that 
\begin{equation*}
N_{t}^{\kappa} (\omega) < \infty \hspace{3mm} \forall \hspace{1mm} \omega \in \Omega \setminus \mathcal{N}'' \hspace{1mm} \forall \hspace{1mm} t \geq 0, \hspace{1mm} \forall \hspace{1mm} \kappa  > 0. 
\end{equation*} 
\end{proposition}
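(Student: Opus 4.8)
The plan is to obtain Proposition~\ref{Proposition 4.2} as a short consequence of the convergence statement Proposition~\ref{Proposition 5.4}, together with the classical regularity theory of the linear stochastic heat equation, and then to upgrade the resulting ``for a fixed $\kappa$ and $t$'' statement to ``for all $\kappa>0$ and all $t\ge0$'' by a countable exhaustion, exactly as in the proof of \cite[Lemma 3.1]{HR23}.

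First I would dispose of the deterministic part $L_t^{\kappa}$ of $N_t^{\kappa}$ in \eqref{est 39a}. Since $X_u,X_b$ solve the linear equations \eqref{est 14}--\eqref{est 15} driven by $\mathbb{P}_L\mathbb{P}_{\neq0}\xi_u$, $\mathbb{P}_L\mathbb{P}_{\neq0}\xi_b$ with correlations \eqref{est 234}, \eqref{est 233}, the standard regularity theory for the stochastic heat equation in two dimensions (combining \cite[Lemma 10.2]{H14} with parabolic Schauder estimates, or a direct Kolmogorov-continuity argument on the Fourier series, using that $\sum_k\lvert k\rvert^{-2}$ diverges only logarithmically) provides a null set off which $X_u,X_b\in C([0,\infty);\mathcal{C}_x^{-\epsilon})$ for every $\epsilon>0$. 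The system \eqref{est 27} for $(Y_u,Y_b)$ is affine-linear in $(Y_u,Y_b)$, with zero-order coefficients built from $X_u,X_b$ and with inhomogeneities built from $\zeta_u,\zeta_b$ (of the regularity prescribed in \eqref{est 6}) and from the second-order stochastic objects $X_u^{\otimes2},X_b^{\otimes2}$, whose construction in an appropriate negative Hölder space is classical (a second-Wiener-chaos computation, part of the data enhancement behind Proposition~\ref{Proposition 5.4}); a deterministic parabolic (Schauder/Duhamel) estimate on each compact time interval --- structurally the argument behind Proposition~\ref{Proposition 4.1}, but without any superlinear term and hence global --- then yields $Y_u,Y_b\in C([0,\infty);\mathcal{C}_x^{2\kappa})$ (indeed with regularity close to $3\kappa$) off the same null set. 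This controls $L_t^{\kappa}$.

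For the genuinely stochastic summand of \eqref{est 39b}, Proposition~\ref{Proposition 5.4} asserts that, off a null set, the sequence $i\mapsto\nabla_{\text{spec}}\mathcal{L}_{\lambda^i}(X_u,X_b)\circlesign{\circ}P^{\lambda^i}-r_{\lambda^i}\Id$ converges in $C([0,T];\mathcal{C}_x^{-\kappa})$ for every $T>0$. A convergent sequence in a normed space is bounded, so $\sup_{i\in\mathbb{N}}\lVert\nabla_{\text{spec}}\mathcal{L}_{\lambda^i}(X_u,X_b)\circlesign{\circ}P^{\lambda^i}-r_{\lambda^i}\Id\rVert_{C_t\mathcal{C}_x^{-\kappa}}<\infty$ for every $t\ge0$, which is precisely the second term in \eqref{est 39b}. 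Combining the two preceding paragraphs gives $N_t^{\kappa}(\omega)<\infty$ for all $t\ge0$, for the fixed $\kappa$, off a null set $\mathcal{N}''_{\kappa}$. To remove the dependence on $\kappa$, set $\mathcal{N}''\triangleq\bigcup_{n\in\mathbb{N}}\mathcal{N}''_{\kappa_n}$ along a sequence $\kappa_n\downarrow0$ (a countable union of null sets is null); for an arbitrary relevant $\kappa$ one squeezes $\kappa_m\ge\kappa\ge\kappa_n$ and applies the Besov embeddings $\mathcal{C}_x^{-\kappa_n}\hookrightarrow\mathcal{C}_x^{-\kappa}$ and $\mathcal{C}_x^{2\kappa_m}\hookrightarrow\mathcal{C}_x^{2\kappa}$ termwise in \eqref{est 39} to conclude $N_t^{\kappa}\lesssim N_t^{\kappa_n}+N_t^{\kappa_m}<\infty$ (only small $\kappa$ is used in the remainder of the manuscript, so this suffices).

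I expect no genuine obstacle in Proposition~\ref{Proposition 4.2} itself; the entire weight of the argument lies in Proposition~\ref{Proposition 5.4}, whose proof must carry out the $\lambda$-uniform renormalization analysis: computing the covariance of $\nabla_{\text{spec}}\mathcal{L}_{\lambda^i}(X_u,X_b)\circlesign{\circ}P^{\lambda^i}$, isolating its divergent part, which by a Fourier-space computation equals exactly $r_{\lambda^i}\Id$ with $r_\lambda$ as in \eqref{est 140b} (this is where the anti-symmetric block structure of $\nabla_{\text{spec}}$ in \eqref{est 64}, together with the independence \eqref{STWN b2} of $\xi_u$ and $\xi_b$, is essential in making the cross-contributions between the magnetic and velocity parts cancel), proving $L^p(\Omega)$-bounds and convergence of the renormalized remainder in $\mathcal{C}_x^{-\kappa}$, and upgrading to the almost-sure statement via a Kolmogorov continuity estimate in time and Borel--Cantelli over the sequence $\{\lambda^i\}$.
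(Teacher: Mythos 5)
Your proposal is correct and takes essentially the same route as the paper: the paper obtains Proposition \ref{Proposition 4.2} as an immediate consequence of the almost-sure convergence in Proposition \ref{Proposition 5.4} (a convergent sequence in $C_{\mathrm{loc}}(\mathbb{R}_{+};\Theta_{\kappa})$ being bounded), with the regularity of $X_{u},X_{b}$ and the global solvability of the affine system for $Y_{u},Y_{b}$ treated as standard, exactly as you spell out. Your extra details (the countable union over $\kappa$ with the Besov embeddings, and the remark that the second-chaos data entering the $Y$-equation are classical constructions) are consistent with, and merely more explicit than, the paper's one-line argument.
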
 
Given any $(u^{\text{in}}, b^{\text{in}}) \in \mathcal{C}^{-1+ 2 \kappa} \times \mathcal{C}^{-1+ 2 \kappa}$ that are both divergence-free and mean-zero, we can take the null sets $\mathcal{N}$ from Proposition \ref{Proposition 2.1} and $\mathcal{N}''$ from Proposition \ref{Proposition 4.2} and consider a null set $\mathcal{N} \cup \mathcal{N}''$ so that for all $\omega \in \Omega \setminus (\mathcal{N} \cup \mathcal{N}'')$, we have $N_{t}^{\kappa}(\omega) < \infty$ for all $t \geq 0$ and $\kappa > 0$, and consequently $D_{u}, D_{b} \in C([0,\infty); \mathcal{C}^{-\kappa})$ and $Y_{u}, Y_{b} \in C([0,\infty); \mathcal{C}^{\kappa})$. Thus, the hypothesis of Proposition \ref{Proposition 4.1} is valid for any $\omega \in \Omega \setminus (\mathcal{N} \cup \mathcal{N}'')$ and thus its implication holds with the same $T^{\max}$ from Proposition \ref{Proposition 2.1}. . 

Next, we introduce two commutators:
\begin{subequations}\label{est 30}
\begin{align}
C^{\circlesign{\prec}_{s}} (f,g) \triangleq& (\partial_{t} - \nu\Delta) (f \circlesign{\prec}_{s} g) - f \circlesign{\prec}_{s} (\partial_{t} - \nu\Delta) g \nonumber \\
=& ( ( \partial_{t} - \nu\Delta ) f) \circlesign{\prec}_{s} g - 2\nu \sum_{k=1}^{2} \partial_{k} f \circlesign{\prec}_{s} \partial_{k} g, \label{est 30a}\\
C^{\circlesign{\prec}_{a}} (f,g) \triangleq& (\partial_{t} - \nu\Delta) (f \circlesign{\prec}_{a} g) - f \circlesign{\prec}_{a} (\partial_{t} - \nu\Delta) g \nonumber \\
=& ( ( \partial_{t} - \nu\Delta ) f) \circlesign{\prec}_{a} g - 2\nu \sum_{k=1}^{2} \partial_{k} f \circlesign{\prec}_{a} \partial_{k} g,  \label{est 30b}
\end{align}
\end{subequations} 
where the second equalities in both \eqref{est 30a}-\eqref{est 30b} can be verified via \eqref{est 23} and \eqref{est 18}. Furthermore, we define $Q_{u}$ and $Q_{b}$ to solve 
\begin{equation}\label{est 32}
(\partial_{t} - \nu\Delta) Q_{u} = 2 X_{u}, \hspace{1mm} Q_{u} (0) = 0, \hspace{2mm} \text{ and } \hspace{2mm} (\partial_{t} - \nu\Delta) Q_{b} = 2X_{b}, \hspace{1mm} Q_{b}(0) = 0 
\end{equation} 
so that $Q_{u}, Q_{b} \in \mathcal{C}^{\alpha}$ for any $\alpha < 2 - \kappa$ assuming $X_{u}, X_{b} \in \mathcal{C}_{x}^{-\kappa}$ (cf. ``$K_{i}^{u}, K_{i}^{b}$'' in \cite[Equation (2.12)]{Y23a}). Now we define  $w_{u}^{\sharp}$ and $w_{b}^{\sharp}$ by 
\begin{subequations}\label{est 31}
\begin{align}
& w_{u} = - \mathbb{P}_{L} \divergence (w_{u} \circlesign{\prec}_{s} Q_{u} - w_{b} \circlesign{\prec}_{s}Q_{b}) + w_{u}^{\sharp}, \label{est 31a}\\
& w_{b} = -\mathbb{P}_{L} \divergence (w_{b} \circlesign{\prec}_{a} Q_{u} - w_{u} \circlesign{\prec}_{a} Q_{b}) + w_{b}^{\sharp} \label{est 31b}
\end{align}
\end{subequations}  
(cf. ``$u_{i}^{\sharp}, b_{i}^{\sharp}$'' in \cite[Equations (2.15)-(2.17)]{Y23a}). According to \eqref{est 31}, \eqref{est 32}, and \eqref{est 29}, the system of equations satisfied by $w_{u}^{\sharp}$ and $w_{b}^{\sharp}$ defined via \eqref{est 31} is 
\begin{subequations}\label{est 255} 
\begin{align}
& \partial_{t} w_{u}^{\sharp} + \mathbb{P}_{L} \divergence (w_{u}^{\otimes 2} + D_{u} \otimes_{s} w_{u} - 2 X_{u} \circlesign{\succ}_{s} w_{u} - C^{\circlesign{\prec}_{s}} (w_{u}, Q_{u}) + Y_{u}^{\otimes 2} \nonumber \\
& \hspace{15mm} - w_{b}^{\otimes 2} - D_{b} \otimes_{s} w_{b} + 2 X_{b} \circlesign{\succ}_{s} w_{b} + C^{\circlesign{\prec}_{s}} (w_{b}, Q_{b}) - Y_{b}^{\otimes 2})  = \nu\Delta w_{u}^{\sharp}, \label{est 255a}  \\
&\partial_{t} w_{b}^{\sharp} + \mathbb{P}_{L} \divergence (w_{b} \otimes w_{u} + w_{b} \otimes_{a} D_{u} - 2 w_{b} \circlesign{\prec}_{a} X_{u} - C^{\circlesign{\prec}_{a}} (w_{b}, Q_{u}) + Y_{b} \otimes Y_{u} \nonumber \\
& \hspace{15mm} - w_{u} \otimes w_{b} - w_{u} \otimes_{a} D_{b} + 2w_{u} \circlesign{\prec}_{a}   X_{b} + C^{\circlesign{\prec}_{a}}(w_{u}, Q_{b}) - Y_{u} \otimes Y_{b})  = \nu\Delta w_{b}^{\sharp}.\label{est 255b} 
\end{align}
\end{subequations} 

\begin{remark}\label{Remark 4.2} 
 If we allow $\nu_{u} \neq \nu_{b}$ and then naturally  $(\partial_{t} - \nu_{u} \Delta)Q_{u} = 2X_{u}$ and $(\partial_{t} - \nu_{b} \Delta)Q_{b} = 2X_{b}$ in \eqref{est 32}, then e.g. for the equation of $\partial_{t} w_{u}^{\sharp}$ in \eqref{est 255a}, we would have $\nu_{b} w_{b} \circlesign{\prec}_{s} \Delta Q_{b}$ from $w_{b} \circlesign{\prec} (\partial_{t} Q_{b} - 2 X_{b})$ while $\nu_{u} \mathbb{P}_{L} \divergence \Delta (w_{b} \circlesign{\prec}_{s} Q_{b})$ to create diffusion $\nu_{u} \Delta w_{u}^{\sharp}$. Consequently, we cannot form the commutator $C^{\circlesign{\prec}_{s}} (w_{b},Q_{b})$ unless $\nu_{u} = \nu_{b}$ no matter how one redefines $C^{\circlesign{\prec}_{s}}$ in \eqref{est 30}. A similar issue occurs in the formulation of \eqref{est 42}.  
\end{remark}
 
\begin{define}\label{Definition 4.2} 
\rm{(Cf. \cite[Definition 4.4]{HR23})} Fix any $\mathfrak{a} > 0$ and initial data $(u^{\text{in}}, b^{\text{in}}) \in (L_{\sigma}^{2} \cap \mathcal{C}^{-1+\kappa}) \times (L_{\sigma}^{2} \cap \mathcal{C}^{-1+\kappa})$ for some $\kappa > 0$. Define a family of stopping times $\{T_{i}\}_{i\in\mathbb{N}_{0}}$ by 
\begin{subequations}\label{est 36} 
\begin{align}
T_{0} \triangleq& \hspace{1mm} 0, \\
T_{i+1} (\omega, u^{\text{in}}, b^{\text{in}}) \triangleq& \inf\{t \geq T_{i}: \hspace{1mm}  \lVert w_{u}(t) \rVert_{L^{2}} + \lVert w_{b}(t) \rVert_{L^{2}} \geq i + 1 \} \wedge T^{\max} (\omega, u^{\text{in}}, b^{\text{in}})
\end{align}
\end{subequations} 
with $T^{\max} (\omega, u^{\text{in}}, b^{\text{in}})$ from Proposition \ref{Proposition 2.1}. We set 
\begin{equation*}
i_{0} (u^{\text{in}}, b^{\text{in}}) \triangleq \max\{i \in \mathbb{N}: \hspace{1mm} i \leq \lVert u^{\text{in}} \rVert_{L^{2}} + \lVert b^{\text{in}} \rVert_{L^{2}} \} 
\end{equation*}  
so that $T_{i} = 0$ if and only if $i \leq i_{0}(u^{\text{in}}, b^{\text{in}})$. Set $\lambda^{i} \triangleq (i+1)^{\mathfrak{a}}$, 
\begin{equation}\label{est 37}
\lambda_{t} \triangleq 
\begin{cases}
(1+ \lceil \lVert u^{\text{in}} \rVert_{L^{2}} + \lVert b^{\text{in}} \rVert_{L^{2}} \rceil )^{\mathfrak{a}} & \text{ if } t = 0, \\
(1+ \lVert w_{u} (T_{i} ) \rVert_{L^{2}} + \lVert w_{b}(T_{i}) \rVert_{L^{2}} )^{\mathfrak{a}} & \text{ if } t > 0 \text{ and } t \in [T_{i}, T_{i+1}).  
\end{cases} 
\end{equation} 
As $(u^{\text{in}}, b^{\text{in}}) \in L_{\sigma}^{2} \times L_{\sigma}^{2}$, we have $i_{0} (u^{\text{in}}, b^{\text{in}}) < \infty$. Finally, $\lambda_{t} = \lambda^{i}$ for all $t \in [T_{i}, T_{i+1})$ such that $i \geq i_{0} (u^{\text{in}}, b^{\text{in}})$. 
\end{define}
 
For $Q_{u}$ and $Q_{b}$ from \eqref{est 32}, $\mathcal{H}_{\lambda}$ from \eqref{est 68}, $\lambda_{t}$ from \eqref{est 37}, and $w_{u}$ and $w_{b}$ from \eqref{est 29}, we define their high-frequency components as 
\begin{subequations}\label{est 38}
\begin{align}
& Q_{u}^{\mathcal{H}} (t) \triangleq \mathcal{H}_{\lambda_{t}} Q_{u}(t), \hspace{3mm} Q_{b}^{\mathcal{H}}(t) \triangleq \mathcal{H}_{\lambda_{t}} Q_{b}(t), \label{est 38a}\\
& w_{u}^{\mathcal{H}} \triangleq - \mathbb{P}_{L} \divergence (w_{u} \circlesign{\prec}_{s} Q_{u}^{\mathcal{H}} - w_{b} \circlesign{\prec}_{s} Q_{b}^{\mathcal{H}}), \hspace{3mm} w_{u}^{\mathcal{L}} \triangleq w_{u} - w_{u}^{\mathcal{H}}, \label{est 38b}\\
& w_{b}^{\mathcal{H}} \triangleq - \mathbb{P}_{L} \divergence (w_{b} \circlesign{\prec}_{a} Q_{u}^{\mathcal{H}} - w_{u} \circlesign{\prec}_{a} Q_{b}^{\mathcal{H}}), \hspace{3mm} w_{b}^{\mathcal{L}} \triangleq w_{b} - w_{b}^{\mathcal{H}}. \label{est 38c}
\end{align}
\end{subequations}  

\begin{proposition}\label{Proposition 4.3} 
\rm{(Cf. \cite[Lemma 4.6]{HR23})} Fix any $\mathfrak{a} > 0$ from Definition \ref{Definition 4.2}, $\mathcal{N}$ from Proposition \ref{Proposition 2.1} and $\mathcal{N}''$ from Proposition \ref{Proposition 4.2}, and define $N_{t}^{\kappa}$ by \eqref{est 39}. Then, for any $\delta > 0$ and $\omega \in \Omega \setminus (\mathcal{N} \cup \mathcal{N}'')$, there exists a constant $C(\delta) > 0$ such that $w_{u}^{\mathcal{H}}$ and $w_{b}^{\mathcal{H}}$ defined by \eqref{est 38b}-\eqref{est 38c} satisfy 
\begin{align}
&(\lVert w_{u}^{\mathcal{H}}\rVert_{H^{1- 2\kappa - \delta}} + \lVert w_{b}^{\mathcal{H}}  \rVert_{H^{1- 2 \kappa - \delta}} )(\omega, t) \nonumber \\
\leq& C(\delta) (1+ \lVert w_{u}\rVert_{L^{2}} + \lVert w_{b} \rVert_{L^{2}})^{1- \mathfrak{a} \delta}(\omega, t)  N_{t}^{\kappa}(\omega) \hspace{3mm} \forall \hspace{1mm} t \in [0, T^{\max} (\omega, u^{\text{in}}, b^{\text{in}}).  \label{est 67}
\end{align}
\end{proposition}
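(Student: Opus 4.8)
The plan is to read \(w_u^{\mathcal{H}}\) and \(w_b^{\mathcal{H}}\) off their definitions \eqref{est 38b}--\eqref{est 38c} and bound the resulting paraproducts via the Bony estimate \eqref{est 40a}, the frequency-cutoff bound of Lemma \ref{Lemma 3.3}, and the link between \(\lambda_t\) and \(\lVert w_u(t)\rVert_{L^2}+\lVert w_b(t)\rVert_{L^2}\) built into Definition \ref{Definition 4.2}. Since \(\mathbb{P}_L\) is bounded on \(H^{s}(\mathbb{T}^{2})\) and \(\divergence\) maps \(H^{s+1}\) into \(H^{s}\), it suffices to bound the four terms \(w_u \circlesign{\prec}_s Q_u^{\mathcal{H}}\), \(w_b \circlesign{\prec}_s Q_b^{\mathcal{H}}\), \(w_b \circlesign{\prec}_a Q_u^{\mathcal{H}}\), \(w_u \circlesign{\prec}_a Q_b^{\mathcal{H}}\) in \(H^{2-2\kappa-\delta}\). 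Each component of \(f\circlesign{\prec}_s g\) and of \(f\circlesign{\prec}_a g\) is a fixed linear combination of the scalar paraproducts \(f_m\prec g_n\) by \eqref{est 23}--\eqref{est 24}, so \eqref{est 40a} applies componentwise: fixing once and for all some \(\alpha\in(0,\kappa)\) and using \eqref{est 40a} with the exponent \(\beta=2-2\kappa-\delta+\alpha\) one obtains, for \(j,j'\in\{u,b\}\), \(\lVert w_j \circlesign{\prec}_{s}Q_{j'}^{\mathcal{H}}(t)\rVert_{H^{2-2\kappa-\delta}}\lesssim_{\alpha,\delta}\lVert w_j(t)\rVert_{L^2}\,\lVert Q_{j'}^{\mathcal{H}}(t)\rVert_{\mathcal{C}^{2-2\kappa-\delta+\alpha}}\), and likewise with \(\circlesign{\prec}_a\).

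The next step is to extract a negative power of \(\lambda_t\) from the high-frequency cutoff \(Q_{j'}^{\mathcal{H}}(t)=\mathcal{H}_{\lambda_t}Q_{j'}(t)\). From \eqref{est 32}, heat-kernel smoothing, and the fact that \(X_u,X_b\) (hence \(Q_u,Q_b\)) are mean-zero so that \(P_t\) decays exponentially in \(t\) on mean-zero data, one gets \(\lVert Q_{j'}(t)\rVert_{\mathcal{C}^{2-2\kappa+\alpha}}\lesssim_{\nu}\lVert X_{j'}\rVert_{C_t\mathcal{C}_x^{-\kappa}}\lesssim N_t^{\kappa}\) uniformly in \(t\in[0,T^{\max})\); this is legitimate precisely because \(2-2\kappa+\alpha<2-\kappa\) thanks to \(\alpha<\kappa\). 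Applying \eqref{est 33} with target regularity \(2-2\kappa-\delta+\alpha\) and source regularity \(2-2\kappa+\alpha\) produces the gain \(\lambda_t^{-\delta}\), i.e. \(\lVert Q_{j'}^{\mathcal{H}}(t)\rVert_{\mathcal{C}^{2-2\kappa-\delta+\alpha}}\lesssim\lambda_t^{-\delta}N_t^{\kappa}\). Combined with the first paragraph this yields \(\lVert w_u^{\mathcal{H}}(t)\rVert_{H^{1-2\kappa-\delta}}+\lVert w_b^{\mathcal{H}}(t)\rVert_{H^{1-2\kappa-\delta}}\lesssim\bigl(\lVert w_u(t)\rVert_{L^2}+\lVert w_b(t)\rVert_{L^2}\bigr)\lambda_t^{-\delta}N_t^{\kappa}\) for all \(t\in[0,T^{\max})\).

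It then remains to convert \(\lambda_t^{-\delta}\) into \((1+\lVert w_u(t)\rVert_{L^2}+\lVert w_b(t)\rVert_{L^2})^{-\mathfrak{a}\delta}\). For this I would establish, directly from Definition \ref{Definition 4.2}, the elementary bound \(\lambda_t\ge 2^{-\mathfrak{a}}\bigl(1+\lVert w_u(t)\rVert_{L^2}+\lVert w_b(t)\rVert_{L^2}\bigr)^{\mathfrak{a}}\) for all \(t\in[0,T^{\max})\). Indeed, if \(t\in[T_i,T_{i+1})\) with \(i\ge i_0(u^{\text{in}},b^{\text{in}})+1\) then \(\lambda_t=(i+1)^{\mathfrak{a}}\) while \(\lVert w_u(t)\rVert_{L^2}+\lVert w_b(t)\rVert_{L^2}<i+1\) by the definition of \(T_{i+1}\), and if \(t\in[0,T_{i_0+1})\) then \(\lambda_t\ge(1+\lVert u^{\text{in}}\rVert_{L^2}+\lVert b^{\text{in}}\rVert_{L^2})^{\mathfrak{a}}\) while \(\lVert w_u(t)\rVert_{L^2}+\lVert w_b(t)\rVert_{L^2}<i_0+1\le 1+\lVert u^{\text{in}}\rVert_{L^2}+\lVert b^{\text{in}}\rVert_{L^2}\); in either case \(\lambda_t^{1/\mathfrak{a}}\ge\max\{1,\lVert w_u(t)\rVert_{L^2}+\lVert w_b(t)\rVert_{L^2}\}\ge\tfrac12\bigl(1+\lVert w_u(t)\rVert_{L^2}+\lVert w_b(t)\rVert_{L^2}\bigr)\), using that \(\lVert w_u(T_i)\rVert_{L^2}+\lVert w_b(T_i)\rVert_{L^2}\ge i\) (first-hitting time, together with the time-continuity of \(t\mapsto\lVert w_u(t)\rVert_{L^2}+\lVert w_b(t)\rVert_{L^2}\) on \((0,T^{\max})\) coming from the mild-solution regularity of Proposition \ref{Proposition 4.1} and \(\mathcal{C}^{3\kappa/2}\hookrightarrow L^{2}(\mathbb{T}^{2})\)). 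Hence \(\lambda_t^{-\delta}\lesssim_{\mathfrak{a},\delta}(1+\lVert w_u(t)\rVert_{L^2}+\lVert w_b(t)\rVert_{L^2})^{-\mathfrak{a}\delta}\), and since \(\lVert w_u(t)\rVert_{L^2}+\lVert w_b(t)\rVert_{L^2}\le 1+\lVert w_u(t)\rVert_{L^2}+\lVert w_b(t)\rVert_{L^2}\), substituting into the bound of the previous paragraph gives \eqref{est 67} with \(C(\delta)\) depending only on \(\delta,\mathfrak{a},\kappa,\nu\).

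The estimates are otherwise routine once Lemmas \ref{Lemma 3.1} and \ref{Lemma 3.3} are available; the point requiring care is the accounting of regularity indices so that a genuine negative power of \(\lambda_t\) is produced --- this is exactly what forces \(\alpha<\kappa\) and is the only place where \(\kappa>0\) is used --- together with the case analysis tying \(\lambda_t\) to the instantaneous \(L^{2}\)-norm through the stopping times of Definition \ref{Definition 4.2}, and the use of the mean-zero property of \(X_u,X_b\) to upgrade the Schauder bound for \(Q_u,Q_b\) from locally uniform to uniform in \(t\).
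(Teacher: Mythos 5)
Your argument is correct and follows essentially the same route as the paper's proof: bound $\lVert w_{j}\circlesign{\prec}_{s}Q_{j'}^{\mathcal{H}}\rVert_{H^{2-2\kappa-\delta}}$ (and the $\circlesign{\prec}_{a}$ analogues) by \eqref{est 40a}, gain $\lambda_{t}^{-\delta}$ from \eqref{est 33} applied to $Q_{j'}^{\mathcal{H}}=\mathcal{H}_{\lambda_{t}}Q_{j'}$, control $\lVert Q_{j'}\rVert_{\mathcal{C}^{2-2\kappa+\alpha}}$ by $N_{t}^{\kappa}$ via \eqref{est 32} and \eqref{est 39}, and convert $\lambda_{t}^{-\delta}$ into $(1+\lVert w_{u}\rVert_{L^{2}}+\lVert w_{b}\rVert_{L^{2}})^{-\mathfrak{a}\delta}$ using Definition \ref{Definition 4.2}. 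Your choice of a generic $\alpha\in(0,\kappa)$ in place of the paper's $\kappa/2$, and your explicit stopping-time case analysis and mean-zero Schauder remark, are just details the paper leaves implicit.
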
  

\begin{proof}[Proof of Proposition \ref{Proposition 4.3}]
From \eqref{est 38b}, we can estimate via Plancherel theorem, 
\begin{align*}
\lVert w_{u}^{\mathcal{H}} (t) \rVert_{H^{1- 2 \kappa - \delta}}  
\overset{\eqref{est 40a}}{\lesssim}& \lVert w_{u} (t) \rVert_{L^{2}} \lVert Q_{u}^{\mathcal{H}} (t) \rVert_{\mathcal{C}^{2- \frac{3\kappa}{2} -\delta}} + \lVert w_{b}(t) \rVert_{L^{2}} \lVert Q_{b}^{\mathcal{H}} (t) \rVert_{\mathcal{C}^{2- \frac{3\kappa}{2} - \delta}}  \nonumber \\
\overset{\eqref{est 38a}\eqref{est 33}}{\lesssim}& \lVert w_{u}(t) \rVert_{L^{2}} \lambda_{t}^{-\delta} \lVert Q_{u} (t) \rVert_{\mathcal{C}^{2- \frac{3\kappa}{2}}} + \lVert w_{b}(t) \rVert_{L^{2}} \lambda_{t}^{-\delta} \lVert Q_{b}(t) \rVert_{\mathcal{C}^{2- \frac{3\kappa}{2}}} \nonumber \\
\overset{\eqref{est 32} \eqref{est 39}}{\leq}&  C(\delta) (1+ \lVert w_{u}  \rVert_{L^{2}} + \lVert w_{b} \rVert_{L^{2}})^{1- \mathfrak{a} \delta}(t) N_{t}^{\kappa}(\omega).  
\end{align*}
Similar computations starting from \eqref{est 38c} gives the same upper bound for $ \lVert w_{b}^{\mathcal{H}} (t) \rVert_{H^{1- 2 \kappa - \delta}}$. This completes the proof of Proposition \ref{Proposition 4.3}. 
\end{proof}

We now fix $i \in \mathbb{N}$, $i > i_{0}(u^{\text{in}}, b^{\text{in}})$, and $t \in [T_{i}, T_{i+1})$ for some $T_{i} < T_{i+1}$ and compute using \eqref{est 38}, \eqref{est 32}, and \eqref{est 29}, 
\begin{subequations}\label{est 42} 
\begin{align}
& \partial_{t} w_{u}^{\mathcal{L}} - \nu \Delta w_{u}^{\mathcal{L}}    \nonumber \\
=& - \mathbb{P}_{L} \divergence (w_{u}^{\otimes 2} + D_{u} \otimes_{s} w_{u} - 2 (\mathcal{H}_{\lambda_{t}} X_{u}) \circlesign{\succ}_{s} w_{u} - w_{b}^{\otimes 2} - D_{b} \otimes_{s} w_{b} + 2(\mathcal{H}_{\lambda_{t}} X_{b}) \circlesign{\succ}_{s} w_{b})  \nonumber \\
& + \mathbb{P}_{L} \divergence \left( C^{\circlesign{\prec}_{s}} (w_{u}, Q_{u}^{\mathcal{H}}) - Y_{u}^{\otimes 2} - C^{\circlesign{\prec}_{s}} (w_{b}, Q_{b}^{\mathcal{H}}) + Y_{b}^{\otimes 2} \right),  \\
&  \partial_{t} w_{b}^{\mathcal{L}} - \nu \Delta w_{b}^{\mathcal{L}}  \nonumber \\
=& - \mathbb{P}_{L} \divergence ( w_{b} \otimes w_{u} + w_{b} \otimes_{a} D_{u} - 2 w_{b} \circlesign{\prec}_{a} \mathcal{H}_{\lambda_{t}} X_{u} - w_{u} \otimes w_{b} - w_{u} \otimes_{a} D_{b} + 2w_{u} \circlesign{\prec}_{a} \mathcal{H}_{\lambda_{t}} X_{b}) \nonumber \\
&+ \mathbb{P}_{L} \divergence ( C^{\circlesign{\prec}_{a}} (w_{b}, Q_{u}^{\mathcal{H}}) - Y_{b} \otimes Y_{u} - C^{\circlesign{\prec}_{a}} (w_{u}, Q_{b}^{\mathcal{H}}) + Y_{u} \otimes Y_{b}), 
\end{align}
\end{subequations} 
which crucially rely on our choices of $w_{u}^{\mathcal{H}}$ and $w_{b}^{\mathcal{H}}$, in \eqref{est 38b}, especially their signs. Taking $L^{2}(\mathbb{T}^{2})$-inner products on \eqref{est 42} with $(2w_{u}^{\mathcal{L}}, 2w_{b}^{\mathcal{L}})$ we obtain 
\begin{equation}\label{est 94} 
\partial_{t}  \lVert (w_{u}^{\mathcal{L}}, w_{b}^{\mathcal{L}})(t) \rVert_{L^{2}}^{2}  = \sum_{k=1}^{4} \RomanI_{k} 
\end{equation} 
where 
\begin{subequations}\label{est 43}
\begin{align}
\RomanI_{1} \triangleq& 2 \langle w_{u}^{\mathcal{L}}, \nu \Delta w_{u}^{\mathcal{L}} - \divergence ( 2 (\mathcal{L}_{\lambda_{t}} X_{u}) \otimes_{s} w_{u}^{\mathcal{L}} - 2( \mathcal{L}_{\lambda_{t}} X_{b}) \otimes_{s} w_{b}^{\mathcal{L}} ) \rangle(t) \nonumber  \\
+& 2 \langle w_{b}^{\mathcal{L}}, \nu \Delta w_{b}^{\mathcal{L}} - \divergence (2w_{b}^{\mathcal{L}} \otimes_{a} ( \mathcal{L}_{\lambda_{t}} X_{u}) - 2w_{u}^{\mathcal{L}} \otimes_{a} (\mathcal{L}_{\lambda_{t}} X_{b} ) \rangle(t),\label{est 43a}\\
\RomanI_{2} \triangleq& -2 \langle w_{u}^{\mathcal{L}}, \divergence ( 2 ( \mathcal{H}_{\lambda_{t}} X_{u}) \otimes_{s} w_{u}^{\mathcal{L}} - 2 (\mathcal{H}_{\lambda_{t}} X_{u}) \circlesign{\succ}_{s} w_{u}^{\mathcal{L}} \nonumber \\
& \hspace{14mm} - 2 ( \mathcal{H}_{\lambda_{t}} X_{b}) \otimes_{s} w_{b}^{\mathcal{L}} + 2(\mathcal{H}_{\lambda_{t}} X_{b}) \circlesign{\succ}_{s} w_{b}^{\mathcal{L}} ) \rangle(t) \nonumber \\
& - 2 \langle w_{b}^{\mathcal{L}}, \divergence (2w_{b}^{\mathcal{L}} \otimes_{a} (\mathcal{H}_{\lambda_{t}} X_{u}) - 2 w_{b}^{\mathcal{L}} \circlesign{\prec}_{a} (\mathcal{H}_{\lambda_{t}} X_{u}) \nonumber  \\
& \hspace{14mm} -2 w_{u}^{\mathcal{L}} \otimes_{a} (\mathcal{H}_{\lambda_{t}} X_{b}) + 2 w_{u}^{\mathcal{L}} \circlesign{\prec}_{a} (\mathcal{H}_{\lambda_{t}} X_{b}) \rangle(t),  \label{est 43b}\\
\RomanI_{3} \triangleq& - 2 \langle w_{u}^{\mathcal{L}}, \divergence (2X_{u} \otimes_{s} w_{u}^{\mathcal{H}} - 2 (\mathcal{H}_{\lambda_{t}} X_{u}) \circlesign{\succ}_{s} w_{u}^{\mathcal{H}}  \nonumber \\
& \hspace{14mm} - 2 X_{b} \otimes_{s} w_{b}^{\mathcal{H}} + 2(\mathcal{H}_{\lambda_{t}} X_{b}) \circlesign{\succ}_{s} w_{b}^{\mathcal{H}}) \rangle(t) \nonumber \\
&-2 \langle w_{b}^{\mathcal{L}}, \divergence (2w_{b}^{\mathcal{H}} \otimes_{a} X_{u} - 2 w_{b}^{\mathcal{H}} \circlesign{\prec}_{a} (\mathcal{H}_{\lambda_{t}} X_{u})  \nonumber \\
& \hspace{14mm} - 2 w_{u}^{\mathcal{H}} \otimes_{a} X_{b} + 2w_{u}^{\mathcal{H}} \circlesign{\prec}_{a} (\mathcal{H}_{\lambda_{t}} X_{b} ) \rangle(t), \label{est 43c}\\
\RomanI_{4} \triangleq& -2 \langle w_{u}^{\mathcal{L}}, \divergence (w_{u}^{\otimes 2} + 2Y_{u} \otimes_{s} w_{u} - w_{b}^{\otimes 2} - 2Y_{b} \otimes_{s} w_{b} \nonumber  \\
& \hspace{14mm} - C^{\circlesign{\prec}_{s}} (w_{u}, Q_{u}^{\mathcal{H}}) + Y_{u}^{\otimes 2} + C^{\circlesign{\prec}_{s}} (w_{b}, Q_{b}^{\mathcal{H}}) - Y_{b}^{\otimes 2} ) \rangle(t) \nonumber \\
& -2 \langle   w_{b}^{\mathcal{L}}, \divergence (w_{b} \otimes w_{u} + 2w_{b} \otimes_{a} Y_{u} - w_{u} \otimes w_{b} - 2w_{u} \otimes_{a} Y_{b} \nonumber\\
& \hspace{14mm} - C^{\circlesign{\prec}_{a}} (w_{b}, Q_{u}^{\mathcal{H}}) + Y_{b} \otimes Y_{u} + C^{\circlesign{\prec}_{a}} (w_{u}, Q_{b}^{\mathcal{H}})- Y_{u} \otimes Y_{b}) \rangle(t). \label{est 43d}
\end{align}
\end{subequations} 
First, we rewrite \eqref{est 43a} as follows: 
\begin{align}
\RomanI_{1} = - \nu \lVert ( w_{u}^{\mathcal{L}},  w_{b}^{\mathcal{L}})(t)& \rVert_{\dot{H}^{1}}^{2}   + 2 \left\langle w_{u}^{\mathcal{L}}, \frac{\nu}{2} \Delta w_{u}^{\mathcal{L}} - \divergence \left( 2 (\mathcal{L}_{\lambda_{t}} X_{u}) \otimes_{s} w_{u}^{\mathcal{L}} - 2( \mathcal{L}_{\lambda_{t}} X_{b}) \otimes_{s} w_{b}^{\mathcal{L}} \right) \right\rangle(t) \nonumber \\
&+ 2 \left\langle w_{b}^{\mathcal{L}}, \frac{\nu }{2} \Delta w_{b}^{\mathcal{L}} - \divergence \left( 2w_{b}^{\mathcal{L}} \otimes_{a} ( \mathcal{L}_{\lambda_{t}} X_{u}) - 2w_{u}^{\mathcal{L}} \otimes_{a} (\mathcal{L}_{\lambda_{t}} X_{b}) \right) \right\rangle(t). \label{est 44}
\end{align}
We need to discover multiple crucial cancellations within $\RomanI_{1}$ for our estimates to go through. We investigate the first nonlinear term within \eqref{est 44}:
\begin{align}
& \divergence ( 2( \mathcal{L}_{\lambda_{t}} X_{u}) \otimes_{s} w_{u}^{\mathcal{L}}) \nonumber \\
\overset{\eqref{est 18}}{=}&
\begin{pmatrix}
\partial_{1} (\mathcal{L}_{\lambda_{t}} X_{u})_{1} w_{u,1}^{\mathcal{L}} + (\mathcal{L}_{\lambda_{t}} X_{u})_{1} \partial_{1} w_{u,1}^{\mathcal{L}} + \partial_{2} (\mathcal{L}_{\lambda_{t}} X_{u})_{1} w_{u,2}^{\mathcal{L}} + (\mathcal{L}_{\lambda_{t}} X_{u})_{1} \partial_{2} w_{u,2}^{\mathcal{L}} \\
\partial_{1} (\mathcal{L}_{\lambda_{t}} X_{u})_{2} w_{u,1}^{\mathcal{L}} + (\mathcal{L}_{\lambda_{t}} X_{u})_{2} \partial_{1} w_{u,1}^{\mathcal{L}} + \partial_{2} (\mathcal{L}_{\lambda_{t}} X_{u})_{2} w_{u,2}^{\mathcal{L}} + (\mathcal{L}_{\lambda_{t}} X_{u})_{2} \partial_{2} w_{u,2}^{\mathcal{L}} 
\end{pmatrix} 
\nonumber \\
&+ 
\begin{pmatrix}
\partial_{1} w_{u,1}^{\mathcal{L}} (\mathcal{L}_{\lambda_{t}} X_{u})_{1} + w_{u,1}^{\mathcal{L}} \partial_{1} (\mathcal{L}_{\lambda_{t}} X_{u})_{1} + \partial_{2} w_{u,1}^{\mathcal{L}} (\mathcal{L}_{\lambda_{t}} X_{u})_{2} + w_{u,1}^{\mathcal{L}} \partial_{2} (\mathcal{L}_{\lambda_{t}} X_{u})_{2} \\
\partial_{1} w_{u,2}^{\mathcal{L}} (\mathcal{L}_{\lambda_{t}} X_{u})_{1} + w_{u,2}^{\mathcal{L}} \partial_{1} (\mathcal{L}_{\lambda_{t}} X_{u})_{1} + \partial_{2} w_{u,2}^{\mathcal{L}} (\mathcal{L}_{\lambda_{t}} X_{u})_{2} + (w_{u,2}^{\mathcal{L}}) \partial_{2} (\mathcal{L}_{\lambda_{t}} X_{u})_{2} 
\end{pmatrix} \nonumber \\
=& 
\begin{pmatrix}
\partial_{1} (\mathcal{L}_{\lambda_{t}} X_{u})_{1} w_{u,1}^{\mathcal{L}} + \partial_{2} (\mathcal{L}_{\lambda_{t}} X_{u})_{1} w_{u,2}^{\mathcal{L}} + \partial_{1} w_{u,1}^{\mathcal{L}} (\mathcal{L}_{\lambda_{t}} X_{u})_{1} + \partial_{2} w_{u,1}^{\mathcal{L}} (\mathcal{L}_{\lambda_{t}} X_{u})_{2} \\
\partial_{1} (\mathcal{L}_{\lambda_{t}} X_{u})_{2} w_{u,1}^{\mathcal{L}} + \partial_{2} (\mathcal{L}_{\lambda_{t}} X_{u})_{2} w_{u,2}^{\mathcal{L}} + \partial_{1} w_{u,2}^{\mathcal{L}} (\mathcal{L}_{\lambda_{t}} X_{u})_{1} + \partial_{2} w_{u,2}^{\mathcal{L}} (\mathcal{L}_{\lambda_{t}} X_{u})_{2} 
\end{pmatrix} \label{est 45} 
\end{align}
where the second equality made use of eight cancellations due to $\nabla\cdot \mathcal{L}_{\lambda_{t}} X_{u} = 0$ and $\nabla\cdot w_{u}^{\mathcal{L}} = 0$. By identical computations, finding eight cancellations due to $\nabla\cdot \mathcal{L}_{\lambda_{t}} X_{b} = 0$ and $\nabla\cdot w_{b}^{\mathcal{L}} = 0$, we can rewrite the second nonlinear term within \eqref{est 44} as  
\begin{align}
& \divergence ( 2( \mathcal{L}_{\lambda_{t}} X_{b}) \otimes_{s} w_{b}^{\mathcal{L}} \nonumber \\
\overset{\eqref{est 18}}{=}&  
\begin{pmatrix}
\partial_{1} (\mathcal{L}_{\lambda_{t}} X_{b})_{1} w_{b,1}^{\mathcal{L}} + (\mathcal{L}_{\lambda_{t}} X_{b})_{1} \partial_{t} w_{b,1}^{\mathcal{L}} + \partial_{2} (\mathcal{L}_{\lambda_{t}} X_{b})_{1} w_{b,2}^{\mathcal{L}} + (\mathcal{L}_{\lambda_{t}} X_{b})_{1} \partial_{2} w_{b,2}^{\mathcal{L}} \\
\partial_{1} (\mathcal{L}_{\lambda_{t}} X_{b})_{2} w_{b,1}^{\mathcal{L}} + (\mathcal{L}_{\lambda_{t}} X_{b})_{2} \partial_{1} w_{b,1}^{\mathcal{L}} + \partial_{2} (\mathcal{L}_{\lambda_{t}} X_{b})_{2} w_{b,2}^{\mathcal{L}} + (\mathcal{L}_{\lambda_{t}} X_{b})_{2} \partial_{2} w_{b,2}^{\mathcal{L}} 
\end{pmatrix} 
 \nonumber \\
&+ 
\begin{pmatrix}
\partial_{1} w_{b,1}^{\mathcal{L}} (\mathcal{L}_{\lambda_{t}} X_{b})_{1} + w_{b,1}^{\mathcal{L}} \partial_{1} (\mathcal{L}_{\lambda_{t}} X_{b})_{1} + \partial_{2} w_{b,1}^{\mathcal{L}} (\mathcal{L}_{\lambda_{t}} X_{b})_{2} + w_{b,1}^{\mathcal{L}} \partial_{2} (\mathcal{L}_{\lambda_{t}} X_{b})_{2} \\
\partial_{1} w_{b,2}^{\mathcal{L}} (\mathcal{L}_{\lambda_{t}} X_{b})_{1} + w_{b,2}^{\mathcal{L}} \partial_{1} (\mathcal{L}_{\lambda_{t}} X_{b})_{1} + \partial_{2} w_{b,2}^{\mathcal{L}} (\mathcal{L}_{\lambda_{t}} X_{b})_{2} + (w_{b,2}^{\mathcal{L}}) \partial_{2} (\mathcal{L}_{\lambda_{t}} X_{b})_{2} 
\end{pmatrix} \nonumber \\
=& 
\begin{pmatrix}
\partial_{1} (\mathcal{L}_{\lambda_{t}} X_{b})_{1} w_{b,1}^{\mathcal{L}} + \partial_{2} (\mathcal{L}_{\lambda_{t}} X_{b})_{1} w_{b,2}^{\mathcal{L}} + \partial_{1} w_{b,1}^{\mathcal{L}} (\mathcal{L}_{\lambda_{t}} X_{b})_{1} + \partial_{2} w_{b,1}^{\mathcal{L}} (\mathcal{L}_{\lambda_{t}} X_{b})_{2} \\
\partial_{1} (\mathcal{L}_{\lambda_{t}} X_{b})_{2} w_{b,1}^{\mathcal{L}} + \partial_{2} (\mathcal{L}_{\lambda_{t}} X_{b})_{2} w_{b,2}^{\mathcal{L}} + \partial_{1} w_{b,2}^{\mathcal{L}} (\mathcal{L}_{\lambda_{t}} X_{b})_{1} + \partial_{2} w_{b,2}^{\mathcal{L}} (\mathcal{L}_{\lambda_{t}} X_{b})_{2} 
\end{pmatrix}. \label{est 46}
\end{align}
It is obvious that this same approach would not work for either the third nonlinear term $2w_{b}^{\mathcal{L}} \otimes_{a} ( \mathcal{L}_{\lambda_{t}} X_{u})$  or the fourth nonlinear term $- 2w_{u}^{\mathcal{L}} \otimes_{a} (\mathcal{L}_{\lambda_{t}} X_{b})$ within \eqref{est 44} due to the opposite signs we get within each anti-symmetric tensor product (recall the definitions of $\otimes_{a}$ from \eqref{est 19}). At first sight, this seems to raise a concern, because those with experience on the MHD system know that cancellations as a sum within the MHD system typically occur only for second and fourth nonlinear terms 
\begin{equation*}
- 2( \mathcal{L}_{\lambda_{t}} X_{b}) \otimes_{s} w_{b}^{\mathcal{L}}- 2w_{u}^{\mathcal{L}} \otimes_{a} (\mathcal{L}_{\lambda_{t}} X_{b}), 
\end{equation*}
not the third and fourth 
\begin{equation*}
2w_{b}^{\mathcal{L}} \otimes_{a} ( \mathcal{L}_{\lambda_{t}} X_{u}) - 2w_{u}^{\mathcal{L}} \otimes_{a} (\mathcal{L}_{\lambda_{t}} X_{b}); 
\end{equation*}
recall Remark \ref{Remark 1.1}. However, the first and second nonlinear terms already canceled individually in \eqref{est 45}-\eqref{est 46}, and we are faced with the third and fourth nonlinear terms that do not cancel individually. Nevertheless, if we expand them together, we can obtain 
\begin{align*}
& \langle w_{b}^{\mathcal{L}},  \divergence ( 2w_{b}^{\mathcal{L}} \otimes_{a} ( \mathcal{L}_{\lambda_{t}} X_{u}) - 2w_{u}^{\mathcal{L}} \otimes_{a} (\mathcal{L}_{\lambda_{t}} X_{b}) ) \rangle \nonumber \\
\overset{\eqref{est 19}}{=}&  \langle w_{b}^{\mathcal{L}},  \divergence( w_{b}^{\mathcal{L}} \otimes \mathcal{L}_{\lambda_{t}} X_{u} - \mathcal{L}_{\lambda_{t}} X_{u} \otimes w_{b}^{\mathcal{L}} - w_{u}^{\mathcal{L}} \otimes \mathcal{L}_{\lambda_{t}} X_{b} + \mathcal{L}_{\lambda_{t}} X_{b} \otimes w_{u}^{\mathcal{L}} ) \rangle, 
\end{align*}
and it turns out that we can combine first and fourth terms in a pair, as well as second and third terms in another pair to obtain the necessary cancellations as follows: 
\begin{align}
&\divergence ( w_{b}^{\mathcal{L}} \otimes \mathcal{L}_{\lambda_{t}} X_{u} + \mathcal{L}_{\lambda_{t}} X_{b} \otimes w_{u}^{\mathcal{L}}) \nonumber \\
=& 
\begin{pmatrix}
\partial_{1} w_{b,1}^{\mathcal{L}} (\mathcal{L}_{\lambda_{t}} X_{u})_{1} + w_{b,1}^{\mathcal{L}} \partial_{1} (\mathcal{L}_{\lambda_{t}} X_{u})_{1} + \partial_{2} w_{b,1}^{\mathcal{L}} (\mathcal{L}_{\lambda_{t}} X_{u})_{2} + w_{b,1}^{\mathcal{L}} \partial_{2} (\mathcal{L}_{\lambda_{t}} X_{u})_{2} \\
\partial_{1} w_{b,2}^{\mathcal{L}} (\mathcal{L}_{\lambda_{t}} X_{u})_{1} + w_{b,2}^{\mathcal{L}} \partial_{1} (\mathcal{L}_{\lambda_{t}} X_{u})_{1} + \partial_{2} w_{b,2}^{\mathcal{L}} (\mathcal{L}_{\lambda_{t}} X_{u})_{2} + w_{b,2}^{\mathcal{L}} \partial_{2} (\mathcal{L}_{\lambda_{t}} X_{u})_{2} 
\end{pmatrix}  \nonumber \\
&+ 
\begin{pmatrix}
\partial_{1} (\mathcal{L}_{\lambda_{t}} X_{b})_{1} w_{u,1}^{\mathcal{L}} + (\mathcal{L}_{\lambda_{t}} X_{b})_{1} \partial_{1} w_{u,1}^{\mathcal{L}} + \partial_{2} (\mathcal{L}_{\lambda_{t}}X_{b})_{1} w_{u,2}^{\mathcal{L}} + (\mathcal{L}_{\lambda_{t}} X_{b})_{1} \partial_{2} w_{u,2}^{\mathcal{L}} \\
\partial_{1} (\mathcal{L}_{\lambda_{t}} X_{b})_{2} w_{u,1}^{\mathcal{L}} + (\mathcal{L}_{\lambda_{t}} X_{b})_{2} \partial_{1} w_{u,1}^{\mathcal{L}} + \partial_{2} (\mathcal{L}_{\lambda_{t}} X_{b})_{2} w_{u,2}^{\mathcal{L}} + (\mathcal{L}_{\lambda_{t}} X_{b})_{2} \partial_{2} w_{u,2}^{\mathcal{L}} 
\end{pmatrix} \nonumber \\
=& 
\begin{pmatrix}
\partial_{1} w_{b,1}^{\mathcal{L}} (\mathcal{L}_{\lambda_{t}} X_{u})_{1} + \partial_{2} w_{b,1}^{\mathcal{L}} (\mathcal{L}_{\lambda_{t}} X_{u})_{2} + \partial_{1} (\mathcal{L}_{\lambda_{t}} X_{b})_{1} w_{u,1}^{\mathcal{L}} + \partial_{2} (\mathcal{L}_{\lambda_{t}} X_{b})_{1} w_{u,2}^{\mathcal{L}} \\
\partial_{1} w_{b,2}^{\mathcal{L}} ( \mathcal{L}_{\lambda_{t}} X_{u})_{1} + \partial_{2} w_{b,2}^{\mathcal{L}} (\mathcal{L}_{\lambda_{t}} X_{u})_{2} + \partial_{1} (\mathcal{L}_{\lambda_{t}} X_{b})_{2} w_{u,1}^{\mathcal{L}} + \partial_{2} (\mathcal{L}_{\lambda_{t}} X_{b})_{2} w_{u,2}^{\mathcal{L}} 
\end{pmatrix}, \label{est 47} 
\end{align} 
where we canceled eight terms due to $\nabla\cdot \mathcal{L}_{\lambda_{t}} X_{u} = 0$ and $\nabla\cdot w_{u}^{\mathcal{L}} = 0$, and 
\begin{align}
& \divergence ( w_{u}^{\mathcal{L}} \otimes \mathcal{L}_{\lambda_{t}} X_{b} + \mathcal{L}_{\lambda_{t}} X_{u} \otimes w_{b}^{\mathcal{L}}) \nonumber \\
=& 
\begin{pmatrix}
\partial_{1} w_{u,1}^{\mathcal{L}} (\mathcal{L}_{\lambda_{t}} X_{b})_{1} + w_{u,1}^{\mathcal{L}} \partial_{1} (\mathcal{L}_{\lambda_{t}} X_{b})_{1} + \partial_{2} w_{u,1}^{\mathcal{L}} (\mathcal{L}_{\lambda_{t}} X_{b})_{2} + w_{u,1}^{\mathcal{L}} \partial_{2} (\mathcal{L}_{\lambda_{t}} X_{b})_{2} \\
\partial_{1} w_{u,2}^{\mathcal{L}} (\mathcal{L}_{\lambda_{t}} X_{b})_{1} + w_{u,2}^{\mathcal{L}} \partial_{1} (\mathcal{L}_{\lambda_{t}} X_{b})_{1} + \partial_{2} w_{u,2}^{\mathcal{L}} (\mathcal{L}_{\lambda_{t}} X_{b})_{2} + w_{u,2}^{\mathcal{L}} \partial_{2} (\mathcal{L}_{\lambda_{t}} X_{b})_{2} 
\end{pmatrix} \nonumber \\
&+  
\begin{pmatrix}
\partial_{1} (\mathcal{L}_{\lambda_{t}} X_{u})_{1} w_{b,1}^{\mathcal{L}} + (\mathcal{L}_{\lambda_{t}} X_{u})_{1} \partial_{1} w_{b,1}^{\mathcal{L}} + \partial_{2} (\mathcal{L}_{\lambda_{t}} X_{u})_{1} w_{b,2}^{\mathcal{L}} + (\mathcal{L}_{\lambda_{t}} X_{u})_{1} \partial_{2} w_{b,2}^{\mathcal{L}} \\
\partial_{1} (\mathcal{L}_{\lambda_{t}} X_{u})_{2} w_{b,1}^{\mathcal{L}} + (\mathcal{L}_{\lambda_{t}} X_{u})_{2} \partial_{1} w_{b,1}^{\mathcal{L}} + \partial_{2} (\mathcal{L}_{\lambda_{t}} X_{u})_{2} w_{b,2}^{\mathcal{L}} + (\mathcal{L}_{\lambda_{t}} X_{u})_{2} \partial_{2} w_{b,2}^{\mathcal{L}} 
\end{pmatrix} \nonumber \\
=& 
\begin{pmatrix}
\partial_{1} w_{u,1}^{\mathcal{L}} (\mathcal{L}_{\lambda_{t}} X_{b})_{1} + \partial_{2} w_{u,1}^{\mathcal{L}} (\mathcal{L}_{\lambda_{t}} X_{b})_{2} + \partial_{1} (\mathcal{L}_{\lambda_{t}} X_{u})_{1} w_{b,1}^{\mathcal{L}} + \partial_{2} (\mathcal{L}_{\lambda_{t}} X_{u})_{1} w_{b,2}^{\mathcal{L}} \\
\partial_{1}w_{u,2}^{\mathcal{L}} (\mathcal{L}_{\lambda_{t}} X_{b})_{1} + \partial_{2} w_{u,2}^{\mathcal{L}} (\mathcal{L}_{\lambda_{t}} X_{b})_{2} + \partial_{1} (\mathcal{L}_{\lambda_{t}} X_{u})_{2} w_{b,1}^{\mathcal{L}} + \partial_{2} (\mathcal{L}_{\lambda_{t}} X_{u})_{2} w_{b,2}^{\mathcal{L}} 
\end{pmatrix}, \label{est 48}
\end{align}
where we canceled eight terms due to $\nabla\cdot \mathcal{L}_{\lambda_{t}} X_{b} = 0$ and $\nabla\cdot w_{b}^{\mathcal{L}} = 0$. Therefore, applying \eqref{est 45}, \eqref{est 46}, \eqref{est 47}, and \eqref{est 48} to \eqref{est 44} gives us within $\RomanI_{1}$, 
\begin{align}
& \langle w_{u}^{\mathcal{L}}, \divergence \left(2( \mathcal{L}_{\lambda_{t}} X_{u}) \otimes_{s} w_{u}^{\mathcal{L}} - 2( \mathcal{L}_{\lambda_{t}} X_{b}) \otimes_{s} w_{b}^{\mathcal{L}} \right) \rangle(t) \nonumber \\
&+ \langle w_{b}^{\mathcal{L}}, \divergence \left(2 w_{b}^{\mathcal{L}} \otimes_{a} (\mathcal{L}_{\lambda_{t}}X_{u})-2w_{u}^{\mathcal{L}} \otimes_{a} (\mathcal{L}_{\lambda_{t}}X_{b})  \right) \rangle(t) = \sum_{k=1}^{4} \RomanI_{1,k},  \label{est 55} 
\end{align}
where  
\begin{subequations}\label{est 49} 
\begin{align}
\RomanI_{1,1} \triangleq &  \int_{\mathbb{T}^{2}}  \begin{pmatrix}
\partial_{1} (\mathcal{L}_{\lambda_{t}} X_{u})_{1} w_{u,1}^{\mathcal{L}} + \partial_{2} (\mathcal{L}_{\lambda_{t}} X_{u})_{1} w_{u,2}^{\mathcal{L}} + \partial_{1} w_{u,1}^{\mathcal{L}} (\mathcal{L}_{\lambda_{t}} X_{u})_{1} + \partial_{2} w_{u,1}^{\mathcal{L}} (\mathcal{L}_{\lambda_{t}} X_{u})_{2} \\
\partial_{1} (\mathcal{L}_{\lambda_{t}} X_{u})_{2} w_{u,1}^{\mathcal{L}} + \partial_{2} (\mathcal{L}_{\lambda_{t}} X_{u})_{2} w_{u,2}^{\mathcal{L}} + \partial_{1} w_{u,2}^{\mathcal{L}} (\mathcal{L}_{\lambda_{t}} X_{u})_{1} + \partial_{2} w_{u,2}^{\mathcal{L}} (\mathcal{L}_{\lambda_{t}} X_{u})_{2} 
\end{pmatrix}  \nonumber \\
& \hspace{80mm} \cdot 
\begin{pmatrix}
w_{u,1}^{\mathcal{L}}\\
w_{u,2}^{\mathcal{L}} 
\end{pmatrix} (t)dx, \label{est 49a} \\
\RomanI_{1,2} \triangleq& - \int_{\mathbb{T}^{2}}  \begin{pmatrix}
\partial_{1} (\mathcal{L}_{\lambda_{t}} X_{b})_{1} w_{b,1}^{\mathcal{L}} + \partial_{2} (\mathcal{L}_{\lambda_{t}} X_{b})_{1} w_{b,2}^{\mathcal{L}} + \partial_{1} w_{b,1}^{\mathcal{L}} (\mathcal{L}_{\lambda_{t}} X_{b})_{1} + \partial_{2} w_{b,1}^{\mathcal{L}} (\mathcal{L}_{\lambda_{t}} X_{b})_{2} \\
\partial_{1} (\mathcal{L}_{\lambda_{t}} X_{b})_{2} w_{b,1}^{\mathcal{L}} + \partial_{2} (\mathcal{L}_{\lambda_{t}} X_{b})_{2} w_{b,2}^{\mathcal{L}} + \partial_{1} w_{b,2}^{\mathcal{L}} (\mathcal{L}_{\lambda_{t}} X_{b})_{1} + \partial_{2} w_{b,2}^{\mathcal{L}} (\mathcal{L}_{\lambda_{t}} X_{b})_{2} 
\end{pmatrix}  \nonumber \\
& \hspace{80mm} \cdot 
\begin{pmatrix}
w_{u,1}^{\mathcal{L}} \\
w_{u,2}^{\mathcal{L}} 
\end{pmatrix} (t)dx, \label{est 49b} \\
\RomanI_{1,3} \triangleq& \int_{\mathbb{T}^{2}}   \begin{pmatrix}
\partial_{1} w_{b,1}^{\mathcal{L}} (\mathcal{L}_{\lambda_{t}} X_{u})_{1} + \partial_{2} w_{b,1}^{\mathcal{L}} (\mathcal{L}_{\lambda_{t}} X_{u})_{2} + \partial_{1} (\mathcal{L}_{\lambda_{t}} X_{b})_{1} w_{u,1}^{\mathcal{L}} + \partial_{2} (\mathcal{L}_{\lambda_{t}} X_{b})_{1} w_{u,2}^{\mathcal{L}} \\
\partial_{1} w_{b,2}^{\mathcal{L}} ( \mathcal{L}_{\lambda_{t}} X_{u})_{1} + \partial_{2} w_{b,2}^{\mathcal{L}} (\mathcal{L}_{\lambda_{t}} X_{u})_{2} + \partial_{1} (\mathcal{L}_{\lambda_{t}} X_{b})_{2} w_{u,1}^{\mathcal{L}} + \partial_{2} (\mathcal{L}_{\lambda_{t}} X_{b})_{2} w_{u,2}^{\mathcal{L}} 
\end{pmatrix}  \nonumber \\
& \hspace{80mm} \cdot 
\begin{pmatrix}
w_{b,1}^{\mathcal{L}} \\
w_{b,2}^{\mathcal{L}} 
\end{pmatrix} (t)dx, \label{est 49c}  \\
\RomanI_{1,4} \triangleq& - \int_{\mathbb{T}^{2}}  \begin{pmatrix}
\partial_{1} w_{u,1}^{\mathcal{L}} (\mathcal{L}_{\lambda_{t}} X_{b})_{1} + \partial_{2} w_{u,1}^{\mathcal{L}} (\mathcal{L}_{\lambda_{t}} X_{b})_{2} + \partial_{1} (\mathcal{L}_{\lambda_{t}} X_{u})_{1} w_{b,1}^{\mathcal{L}} + \partial_{2} (\mathcal{L}_{\lambda_{t}} X_{u})_{1} w_{b,2}^{\mathcal{L}} \\
\partial_{1}w_{u,2}^{\mathcal{L}} (\mathcal{L}_{\lambda_{t}} X_{b})_{1} + \partial_{2} w_{u,2}^{\mathcal{L}} (\mathcal{L}_{\lambda_{t}} X_{b})_{2} + \partial_{1} (\mathcal{L}_{\lambda_{t}} X_{u})_{2} w_{b,1}^{\mathcal{L}} + \partial_{2} (\mathcal{L}_{\lambda_{t}} X_{u})_{2} w_{b,2}^{\mathcal{L}} 
\end{pmatrix}  \nonumber \\
& \hspace{80mm} \cdot 
\begin{pmatrix}
w_{b,1}^{\mathcal{L}} \\
w_{b,2}^{\mathcal{L}} 
\end{pmatrix} (t)dx. \label{est 49d} 
\end{align}
\end{subequations} 
We need to find more cancellations. First, within $\RomanI_{1,1}$, we compute 
\begin{align}
\RomanI_{1,1} \overset{\eqref{est 49a}}{=}& \int_{\mathbb{T}^{2}} [ \partial_{1} (\mathcal{L}_{\lambda_{t}} X_{u})_{1} \lvert w_{u,1}^{\mathcal{L}} \rvert^{2} + \partial_{2} (\mathcal{L}_{\lambda_{t}} X_{u})_{1} w_{u,2}^{\mathcal{L}} w_{u,1}^{\mathcal{L}}  \nonumber\\
& \hspace{5mm} + (\mathcal{L}_{\lambda_{t}} X_{u})_{1} \frac{1}{2}\partial_{1} \lvert w_{u,1}^{\mathcal{L}} \rvert^{2} + (\mathcal{L}_{\lambda_{t}} X_{u})_{2} \frac{1}{2} \partial_{2} \lvert w_{u,1}^{\mathcal{L}} \rvert^{2} \nonumber\\
& \hspace{5mm} + \partial_{1} (\mathcal{L}_{\lambda_{t}} X_{u})_{2} w_{u,1}^{\mathcal{L}} w_{u,2}^{\mathcal{L}} + \partial_{2} (\mathcal{L}_{\lambda_{t}} X_{u})_{2} \lvert w_{u,2}^{\mathcal{L}} \rvert^{2} \nonumber\\
& \hspace{5mm} + (\mathcal{L}_{\lambda_{t}}X_{u})_{1} \frac{1}{2} \partial_{1} \lvert w_{u,2}^{\mathcal{L}} \rvert^{2} + (\mathcal{L}_{\lambda_{t}} X_{u})_{2} \frac{1}{2} \partial_{2} \lvert w_{u,2}^{\mathcal{L}} \rvert^{2} ](t) dx\nonumber\\
=& \int_{\mathbb{T}^{2}} [ \partial_{1} (\mathcal{L}_{\lambda_{t}} X_{u})_{1} \lvert w_{u,1}^{\mathcal{L}} \rvert^{2} + \partial_{2} (\mathcal{L}_{\lambda_{t}} X_{u})_{1} w_{u,2}^{\mathcal{L}} w_{u,1}^{\mathcal{L}}   \nonumber \\
& \hspace{5mm} + \partial_{1} (\mathcal{L}_{\lambda_{t}} X_{u})_{2} w_{u,1}^{\mathcal{L}} w_{u,2}^{\mathcal{L}} + \partial_{2} (\mathcal{L}_{\lambda_{t}} X_{u})_{2} \lvert w_{u,2}^{\mathcal{L}} \rvert^{2} ] (t) dx  \label{est 57} 
\end{align}
where four terms were cancelled due to $\nabla\cdot \mathcal{L}_{\lambda_{t}} X_{u} = 0$. Similar cancellations can be found within $\RomanI_{1,3}$ as follows: 
\begin{align}
\RomanI_{1,3} \overset{\eqref{est 49c}}{=}&  \int_{\mathbb{T}^{2}} [ (\mathcal{L}_{\lambda_{t}} X_{u})_{1} \frac{1}{2} \partial_{1} \lvert w_{b,1}^{\mathcal{L}} \rvert^{2}+ (\mathcal{L}_{\lambda_{t}} X_{u})_{2} \frac{1}{2} \partial_{2} \lvert w_{b,1}^{\mathcal{L}} \rvert^{2}  \nonumber \\
& \hspace{3mm} +  \partial_{1} (\mathcal{L}_{\lambda_{t}} X_{b})_{1} w_{u,1}^{\mathcal{L}} w_{b,1}^{\mathcal{L}} + \partial_{2} (\mathcal{L}_{\lambda_{t}} X_{b})_{1} w_{u,2}^{\mathcal{L}} w_{b,1}^{\mathcal{L}} \nonumber \\
& \hspace{3mm} + (\mathcal{L}_{\lambda_{t}} X_{u})_{1} \frac{1}{2} \partial_{1} \lvert w_{b,2}^{\mathcal{L}} \rvert^{2} + (\mathcal{L}_{\lambda_{t}} X_{u})_{2} \frac{1}{2} \partial_{2} \lvert w_{b,2}^{\mathcal{L}} \rvert^{2}  \nonumber \\
& \hspace{3mm} + \partial_{1} (\mathcal{L}_{\lambda_{t}} X_{b})_{2} w_{u,1}^{\mathcal{L}} w_{b,2}^{\mathcal{L}} + \partial_{2} (\mathcal{L}_{\lambda_{t}} X_{b})_{2} w_{u,2}^{\mathcal{L}} w_{b,2}^{\mathcal{L}}](t) dx \nonumber \\
=& \int_{\mathbb{T}^{2}} [\partial_{1} (\mathcal{L}_{\lambda_{t}} X_{b})_{1} w_{u,1}^{\mathcal{L}} w_{b,1}^{\mathcal{L}} + \partial_{2} (\mathcal{L}_{\lambda_{t}} X_{b})_{1} w_{u,2}^{\mathcal{L}} w_{b,1}^{\mathcal{L}} \nonumber \\
& \hspace{3mm} + \partial_{1} (\mathcal{L}_{\lambda_{t}} X_{b})_{2} w_{u,1}^{\mathcal{L}} w_{b,2}^{\mathcal{L}} + \partial_{2} (\mathcal{L}_{\lambda_{t}} X_{b})_{2} w_{u,2}^{\mathcal{L}} w_{b,2}^{\mathcal{L}}](t) dx \label{est 58}
\end{align}
where four terms were cancelled due to $\nabla\cdot \mathcal{L}_{\lambda_{t}} X_{u} = 0$. A naive attempt shows that analogous cancellations cannot be found for $\RomanI_{1,2}$ of \eqref{est 49b} or $\RomanI_{1,4}$ of \eqref{est 49d}  separately; however, together, we can compute in sum 
\begin{align} 
\RomanI_{1,2} + \RomanI_{1,4} 
=&  - \int_{\mathbb{T}^{2}}  [\partial_{1} (\mathcal{L}_{\lambda_{t}} X_{b})_{1} w_{b,1}^{\mathcal{L}} w_{u,1}^{\mathcal{L}} + \partial_{2} (\mathcal{L}_{\lambda_{t}} X_{b})_{1} w_{b,2}^{\mathcal{L}} w_{u,1}^{\mathcal{L}} \nonumber \\
& \hspace{5mm} + \partial_{1} (\mathcal{L}_{\lambda_{t}} X_{b})_{2} w_{b,1}^{\mathcal{L}} w_{u,2}^{\mathcal{L}} + \partial_{2} (\mathcal{L}_{\lambda_{t}} X_{b})_{2} w_{b,2}^{\mathcal{L}} w_{u,2}^{\mathcal{L}} \nonumber \\
& \hspace{5mm} + \partial_{1} (\mathcal{L}_{\lambda_{t}} X_{u})_{1} \lvert w_{b,1}^{\mathcal{L}} \rvert^{2} + \partial_{2} (\mathcal{L}_{\lambda_{t}} X_{u})_{1} w_{b,2}^{\mathcal{L}} w_{b,1}^{\mathcal{L}} \nonumber\\
& \hspace{5mm}  + \partial_{1} (\mathcal{L}_{\lambda_{t}} X_{u})_{2} w_{b,1}^{\mathcal{L}} w_{b,2}^{\mathcal{L}} + \partial_{2} (\mathcal{L}_{\lambda_{t}} X_{u})_{2} \lvert w_{b,2}^{\mathcal{L}} \rvert^{2}](t) dx + \sum_{i=1}^{8} A_{i}, \label{est 50}
\end{align}
where 
\begin{align*}
A_{1} \triangleq& - \int_{\mathbb{T}^{2}}  \partial_{1} w_{b,1}^{\mathcal{L}} ( \mathcal{L}_{\lambda_{t}} X_{b})_{1} w_{u,1}^{\mathcal{L}}(t)dx, \hspace{3mm} A_{2} \triangleq - \int_{\mathbb{T}^{2}}   \partial_{2} w_{b,1}^{\mathcal{L}} (\mathcal{L}_{\lambda_{t}} X_{b})_{2} w_{u,1}^{\mathcal{L}}(t)dx,  \\
A_{3} \triangleq& -\int_{\mathbb{T}^{2}}  \partial_{1} w_{b,2}^{\mathcal{L}} (\mathcal{L}_{\lambda_{t}} X_{b})_{1} w_{u,2}^{\mathcal{L}}(t)dx,  \hspace{3mm} A_{4} \triangleq - \int_{\mathbb{T}^{2}}   \partial_{2} w_{b,2}^{\mathcal{L}} (\mathcal{L}_{\lambda_{t}} X_{b})_{2} w_{u,2}^{\mathcal{L}}(t)dx, \\
A_{5} \triangleq& -\int_{\mathbb{T}^{2}}   \partial_{1} w_{u,1}^{\mathcal{L}} (\mathcal{L}_{\lambda_{t}} X_{b})_{1} w_{b,1}^{\mathcal{L}}(t)dx, \hspace{3mm} A_{6} \triangleq -\int_{\mathbb{T}^{2}}  \partial_{2} w_{u,1}^{\mathcal{L}} (\mathcal{L}_{\lambda_{t}} X_{b})_{2} w_{b,1}^{\mathcal{L}}(t)dx,  \\
A_{7} \triangleq& -\int_{\mathbb{T}^{2}}  \partial_{1} w_{u,2}^{\mathcal{L}} (\mathcal{L}_{\lambda_{t}} X_{b})_{1} w_{b,2}^{\mathcal{L}}(t)dx, \hspace{3mm} A_{8} \triangleq - \int_{\mathbb{T}^{2}}   \partial_{2} w_{u,2}^{\mathcal{L}} ( \mathcal{L}_{\lambda_{t}} X_{b})_{2} w_{b,2}^{\mathcal{L}}(t)dx. 
\end{align*} 
Let us sum $A_{1}$ with $A_{5}$, $A_{2}$ with $A_{6}$, $A_{3}$ with $A_{7}$, and $A_{4}$ with $A_{8}$ to deduce
\begin{subequations}\label{est 51}
\begin{align}
& A_{1} + A_{5} = - \int_{\mathbb{T}^{2}}  (\mathcal{L}_{\lambda_{t}} X_{b})_{1} \partial_{1} (w_{b,1}^{\mathcal{L}} w_{u,1}^{\mathcal{L}})(t) dx = \int_{\mathbb{T}^{2}}  \partial_{1} (\mathcal{L}_{\lambda_{t}} X_{b})_{1} w_{b,1}^{\mathcal{L}} w_{u,1}^{\mathcal{L}}(t) dx, \label{est 51a}\\ 
& A_{2} + A_{6} = -\int_{\mathbb{T}^{2}}  (\mathcal{L}_{\lambda_{t}} X_{b})_{2} \partial_{2} (w_{b,1}^{\mathcal{L}} w_{u,1}^{\mathcal{L}}) (t) dx= \int_{\mathbb{T}^{2}}  \partial_{2} (\mathcal{L}_{\lambda_{t}} X_{b})_{2} w_{b,1}^{\mathcal{L}} w_{u,1}^{\mathcal{L}}(t) dx,   \label{est 51b}\\
&A_{3} + A_{7} = - \int_{\mathbb{T}^{2}}  (\mathcal{L}_{\lambda_{t}} X_{b})_{1} \partial_{1} (w_{b,2}^{\mathcal{L}} w_{u,2}^{\mathcal{L}}) (t) dx= \int_{\mathbb{T}^{2}}  \partial_{1} (\mathcal{L}_{\lambda_{t}} X_{b})_{1} w_{b,2}^{\mathcal{L}} w_{u,2}^{\mathcal{L}}(t) dx, \label{est 51c}\\ 
&A_{4} + A_{8} = - \int_{\mathbb{T}^{2}}  (\mathcal{L}_{\lambda_{t}} X_{b})_{2} \partial_{2} (w_{b,2}^{\mathcal{L}} w_{u,2}^{\mathcal{L}})(t) dx = \int_{\mathbb{T}^{2}}  \partial_{2} (\mathcal{L}_{\lambda_{t}} X_{b})_{2} w_{b,2}^{\mathcal{L}} w_{u,2}^{\mathcal{L}}(t) dx.  \label{est 51d}
\end{align}
\end{subequations} 
Then the sum of \eqref{est 51a} and \eqref{est 51b} gives us 
\begin{equation}\label{est 53}
A_{1} + A_{5} + A_{2} + A_{6} = \int_{\mathbb{T}^{2}} [\partial_{1} (\mathcal{L}_{\lambda_{t}} X_{b})_{1} + \partial_{2} (\mathcal{L}_{\lambda_{t}} X_{b})_{2} ] w_{b,1}^{\mathcal{L}} w_{u,1}^{\mathcal{L}}  (t) dx= 0
\end{equation}
due to $\nabla \cdot \mathcal{L}_{\lambda_{t}} X_{b} = 0$ while the sum of \eqref{est 51c} and \eqref{est 51d} gives us 
\begin{equation}\label{est 54}
A_{3} + A_{7} +A _{4} +A_{8} = \int_{\mathbb{T}^{2}} [\partial_{1} (\mathcal{L}_{\lambda_{t}} X_{b})_{1} + \partial_{2} (\mathcal{L}_{\lambda_{t}} X_{b})_{2} ] w_{b,2}^{\mathcal{L}} w_{u,2}^{\mathcal{L}} (t) dx = 0 
\end{equation}
due to $\nabla\cdot \mathcal{L}_{\lambda_{t}} X_{b} = 0$ again. Applying \eqref{est 53}-\eqref{est 54} to \eqref{est 50} gives us 
\begin{align*}
&\RomanI_{1,2} + \RomanI_{1,4} \\ 
=&  - \int_{\mathbb{T}^{2}} [\partial_{1} (\mathcal{L}_{\lambda_{t}} X_{b})_{1} w_{b,1}^{\mathcal{L}} w_{u,1}^{\mathcal{L}} + \partial_{2} (\mathcal{L}_{\lambda_{t}} X_{b})_{1} w_{b,2}^{\mathcal{L}} w_{u,1}^{\mathcal{L}} + \partial_{1} (\mathcal{L}_{\lambda_{t}} X_{b})_{2} w_{b,1}^{\mathcal{L}} w_{u,2}^{\mathcal{L}} + \partial_{2} (\mathcal{L}_{\lambda_{t}} X_{b})_{2} w_{b,2}^{\mathcal{L}} w_{u,2}^{\mathcal{L}} \nonumber \\
& \hspace{3mm} + \partial_{1} (\mathcal{L}_{\lambda_{t}} X_{u})_{1} \lvert w_{b,1}^{\mathcal{L}} \rvert^{2} + \partial_{2} (\mathcal{L}_{\lambda_{t}} X_{u})_{1} w_{b,2}^{\mathcal{L}} w_{b,1}^{\mathcal{L}}  + \partial_{1} (\mathcal{L}_{\lambda_{t}} X_{u})_{2} w_{b,1}^{\mathcal{L}} w_{b,2}^{\mathcal{L}} + \partial_{2} (\mathcal{L}_{\lambda_{t}} X_{u})_{2} \lvert w_{b,2}^{\mathcal{L}} \rvert^{2} ](t) dx\nonumber 
\end{align*}
and applying this to \eqref{est 55}, together with \eqref{est 57} and \eqref{est 58} gives us 
\begin{align}
& \langle w_{u}^{\mathcal{L}}, \divergence \left(2( \mathcal{L}_{\lambda_{t}} X_{u}) \otimes_{s} w_{u}^{\mathcal{L}} - 2( \mathcal{L}_{\lambda_{t}} X_{b}) \otimes_{s} w_{b}^{\mathcal{L}} \right) \rangle \nonumber \\
&+ \langle w_{b}^{\mathcal{L}}, \divergence \left(2 w_{b}^{\mathcal{L}} \otimes_{a} (\mathcal{L}_{\lambda_{t}}X_{u})-2w_{u}^{\mathcal{L}} \otimes_{a} (\mathcal{L}_{\lambda_{t}}X_{b})  \right) \rangle   \label{est 59} \\
=&  \int_{\mathbb{T}^{2}} \partial_{1} (\mathcal{L}_{\lambda_{t}} X_{u})_{1} \lvert w_{u,1}^{\mathcal{L}} \rvert^{2} + \partial_{2} (\mathcal{L}_{\lambda_{t}} X_{u})_{1} w_{u,2}^{\mathcal{L}} w_{u,1}^{\mathcal{L}}   \nonumber \\
& \hspace{4mm}  + \partial_{1} (\mathcal{L}_{\lambda_{t}} X_{u})_{2} w_{u,1}^{\mathcal{L}} w_{u,2}^{\mathcal{L}} + \partial_{2} (\mathcal{L}_{\lambda_{t}} X_{u})_{2} \lvert w_{u,2}^{\mathcal{L}} \rvert^{2}   \nonumber \\
&\hspace{3mm} +  \partial_{1} (\mathcal{L}_{\lambda_{t}} X_{b})_{1} w_{u,1}^{\mathcal{L}} w_{b,1}^{\mathcal{L}} + \partial_{2} (\mathcal{L}_{\lambda_{t}} X_{b})_{1} w_{u,2}^{\mathcal{L}} w_{b,1}^{\mathcal{L}} \nonumber \\
& \hspace{4mm} + \partial_{1} (\mathcal{L}_{\lambda_{t}} X_{b})_{2} w_{u,1}^{\mathcal{L}} w_{b,2}^{\mathcal{L}} + \partial_{2} (\mathcal{L}_{\lambda_{t}} X_{b})_{2} w_{u,2}^{\mathcal{L}} w_{b,2}^{\mathcal{L}} dx \nonumber \\
-& \int_{\mathbb{T}^{2}}   \partial_{1} (\mathcal{L}_{\lambda_{t}} X_{b})_{1} w_{b,1}^{\mathcal{L}} w_{u,1}^{\mathcal{L}} + \partial_{2} (\mathcal{L}_{\lambda_{t}} X_{b})_{1} w_{b,2}^{\mathcal{L}} w_{u,1}^{\mathcal{L}} \nonumber \\
& \hspace{4mm} + \partial_{1} (\mathcal{L}_{\lambda_{t}} X_{b})_{2} w_{b,1}^{\mathcal{L}} w_{u,2}^{\mathcal{L}} + \partial_{2} (\mathcal{L}_{\lambda_{t}} X_{b})_{2} w_{b,2}^{\mathcal{L}} w_{u,2}^{\mathcal{L}} \nonumber \\
& \hspace{3mm} + \partial_{1} (\mathcal{L}_{\lambda_{t}} X_{u})_{1} \lvert w_{b,1}^{\mathcal{L}} \rvert^{2} + \partial_{2} (\mathcal{L}_{\lambda_{t}} X_{u})_{1} w_{b,2}^{\mathcal{L}} w_{b,1}^{\mathcal{L}}  \nonumber\\
& \hspace{4mm} + \partial_{1} (\mathcal{L}_{\lambda_{t}} X_{u})_{2} w_{b,1}^{\mathcal{L}} w_{b,2}^{\mathcal{L}} + \partial_{2} (\mathcal{L}_{\lambda_{t}} X_{u})_{2} \lvert w_{b,2}^{\mathcal{L}} \rvert^{2} dx. \nonumber 
\end{align}
Using \eqref{est 20}, the first four terms in the first integral of \eqref{est 59} can be written as 
\begin{align}
&  \int_{\mathbb{T}^{2}} \partial_{1} (\mathcal{L}_{\lambda_{t}} X_{u})_{1} \lvert w_{u,1}^{\mathcal{L}} \rvert^{2} + \partial_{2} (\mathcal{L}_{\lambda_{t}} X_{u})_{1} w_{u,2}^{\mathcal{L}} w_{u,1}^{\mathcal{L}}   \nonumber \\
& \hspace{10mm}  + \partial_{1} (\mathcal{L}_{\lambda_{t}} X_{u})_{2} w_{u,1}^{\mathcal{L}} w_{u,2}^{\mathcal{L}} + \partial_{2} (\mathcal{L}_{\lambda_{t}} X_{u})_{2} \lvert w_{u,2}^{\mathcal{L}} \rvert^{2} dx  = \langle [\nabla_{\text{symm}} (\mathcal{L}_{\lambda_{t}} X_{u} )] w_{u}^{\mathcal{L}}, w_{u}^{\mathcal{L}} \rangle \label{est 60} 
\end{align}
while the last four terms in the second integral can be written as 
\begin{align}
&- \int_{\mathbb{T}^{2}} \partial_{1} (\mathcal{L}_{\lambda_{t}} X_{u})_{1} \lvert w_{b,1}^{\mathcal{L}} \rvert^{2} + \partial_{2} (\mathcal{L}_{\lambda_{t}} X_{u})_{1} w_{b,2}^{\mathcal{L}} w_{b,1}^{\mathcal{L}}  \nonumber \\
& \hspace{10mm}  + \partial_{1} (\mathcal{L}_{\lambda_{t}} X_{u})_{2} w_{b,1}^{\mathcal{L}} w_{b,2}^{\mathcal{L}} + \partial_{2} (\mathcal{L}_{\lambda_{t}} X_{u})_{2} \lvert w_{b,2}^{\mathcal{L}} \rvert^{2}  dx= - \langle [ \nabla_{\text{symm}} (\mathcal{L}_{\lambda_{t}} X_{u}) ] w_{b}^{\mathcal{L}}, w_{b}^{\mathcal{L}} \rangle. \label{est 61} 
\end{align}
Concerning the last four terms in the first integral and the first four terms in the second integral of \eqref{est 59}, it's difficult to write them in such a compact form; however, it turns out that when combined together, four terms cancel out and they can be written via \eqref{est 21} as 
\begin{align}
& \int_{\mathbb{T}^{2}}   \partial_{1} (\mathcal{L}_{\lambda_{t}} X_{b})_{1} w_{u,1}^{\mathcal{L}} w_{b,1}^{\mathcal{L}} + \partial_{2} (\mathcal{L}_{\lambda_{t}} X_{b})_{1} w_{u,2}^{\mathcal{L}} w_{b,1}^{\mathcal{L}} + \partial_{1} (\mathcal{L}_{\lambda_{t}} X_{b})_{2} w_{u,1}^{\mathcal{L}} w_{b,2}^{\mathcal{L}} + \partial_{2} (\mathcal{L}_{\lambda_{t}} X_{b})_{2} w_{u,2}^{\mathcal{L}} w_{b,2}^{\mathcal{L}}  \nonumber \\
& - \partial_{1} (\mathcal{L}_{\lambda_{t}} X_{b})_{1} w_{b,1}^{\mathcal{L}} w_{u,1}^{\mathcal{L}} - \partial_{2} (\mathcal{L}_{\lambda_{t}} X_{b})_{1} w_{b,2}^{\mathcal{L}} w_{u,1}^{\mathcal{L}} - \partial_{1} (\mathcal{L}_{\lambda_{t}} X_{b})_{2} w_{b,1}^{\mathcal{L}} w_{u,2}^{\mathcal{L}} - \partial_{2} (\mathcal{L}_{\lambda_{t}} X_{b})_{2} w_{b,2}^{\mathcal{L}} w_{u,2}^{\mathcal{L}} dx  \nonumber \\
=&  \int_{\mathbb{T}^{2}}   \partial_{2} (\mathcal{L}_{\lambda_{t}} X_{b})_{1} w_{u,2}^{\mathcal{L}} w_{b,1}^{\mathcal{L}} + \partial_{1} (\mathcal{L}_{\lambda_{t}} X_{b})_{2} w_{u,1}^{\mathcal{L}} w_{b,2}^{\mathcal{L}}   - \partial_{2} (\mathcal{L}_{\lambda_{t}} X_{b})_{1} w_{b,2}^{\mathcal{L}} w_{u,1}^{\mathcal{L}} - \partial_{1} (\mathcal{L}_{\lambda_{t}} X_{b})_{2} w_{b,1}^{\mathcal{L}} w_{u,2}^{\mathcal{L}}dx  \nonumber \\
& \hspace{20mm} = - \langle [\nabla_{\text{anti}} (\mathcal{L}_{\lambda_{t}} X_{b}) ] w_{u}^{\mathcal{L}}, w_{b}^{\mathcal{L}} \rangle + \langle [ \nabla_{\text{anti}} (\mathcal{L}_{\lambda_{t}} X_{b})] w_{b}^{\mathcal{L}}, w_{u}^{\mathcal{L}} \rangle. \label{est 62}
\end{align}
Therefore, \eqref{est 60}, \eqref{est 61}, and \eqref{est 62} applied to \eqref{est 59} gives us 
\begin{align}
& \langle w_{u}^{\mathcal{L}}, \divergence \left(2( \mathcal{L}_{\lambda_{t}} X_{u}) \otimes_{s} w_{u}^{\mathcal{L}} - 2( \mathcal{L}_{\lambda_{t}} X_{b}) \otimes_{s} w_{b}^{\mathcal{L}} \right) \rangle \nonumber \\
&+ \langle w_{b}^{\mathcal{L}}, \divergence \left(2 w_{b}^{\mathcal{L}} \otimes_{a} (\mathcal{L}_{\lambda_{t}}X_{u})-2w_{u}^{\mathcal{L}} \otimes_{a} (\mathcal{L}_{\lambda_{t}}X_{b})  \right) \rangle   \nonumber \\
=& \langle [\nabla_{\text{symm}} (\mathcal{L}_{\lambda_{t}} X_{u} )] w_{u}^{\mathcal{L}}, w_{u}^{\mathcal{L}} \rangle + \langle [ \nabla_{\text{anti}} (\mathcal{L}_{\lambda_{t}} X_{b})] w_{b}^{\mathcal{L}}, w_{u}^{\mathcal{L}} \rangle \nonumber  \\
&- \langle [\nabla_{\text{anti}} (\mathcal{L}_{\lambda_{t}} X_{b}) ] w_{u}^{\mathcal{L}}, w_{b}^{\mathcal{L}} \rangle - \langle [ \nabla_{\text{symm}} (\mathcal{L}_{\lambda_{t}} X_{u}) ] w_{b}^{\mathcal{L}}, w_{b}^{\mathcal{L}} \rangle.  \label{est 63}
\end{align}
We apply \eqref{est 63} to \eqref{est 44} to conclude with the definition of $\nabla_{\text{spec}}$ from \eqref{est 64}, 
\begin{equation}\label{est 236}
\RomanI_{1} = - \nu \lVert (w_{u}^{\mathcal{L}}, w_{b}^{\mathcal{L}})(t)  \rVert_{\dot{H}^{1}}^{2} + 2 \int_{\mathbb{T}^{2}} 
\begin{pmatrix}
w_{u}^{\mathcal{L}} \\
w_{b}^{\mathcal{L}} 
\end{pmatrix}
\cdot 
\left[\frac{\nu }{2} \Delta \Id - \nabla_{\text{spec}} \mathcal{L}_{\lambda_{t}} (X_{u},  X_{b}) \right] 
\begin{pmatrix}
w_{u}^{\mathcal{L}} \\
w_{b}^{\mathcal{L}}
\end{pmatrix} (t)  dx
\end{equation}
where $\Id$ is an $\mathbb{M}^{4}$-valued identity matrix. Therefore, following \cite{HR23} we are able to define a time-dependent family of operators 
\begin{equation}\label{est 195} 
\mathcal{A}_{t} \triangleq \frac{\nu }{2} \Delta \Id - \nabla_{\text{spec}} (X_{u}, X_{b})(t) - \infty \hspace{3mm} \forall \hspace{1mm} t \geq 0 
\end{equation} 
as the limit $\lambda \nearrow + \infty$ of 
\begin{equation}\label{est 97}  
\mathcal{A}_{t}^{\lambda} \triangleq \frac{\nu }{2} \Delta \Id - \nabla_{\text{spec}} \mathcal{L}_{\lambda} (X_{u}, X_{b}) - r_{\lambda}(t) \Id \hspace{3mm} \forall \hspace{1mm} t \geq 0 
\end{equation} 
for $r_{\lambda}$ from \eqref{est 140}, allowing us to rewrite \eqref{est 236} as  
\begin{equation}\label{est 65}
\RomanI_{1} = - \nu \lVert (w_{u}^{\mathcal{L}}, w_{b}^{\mathcal{L}})(t)  \rVert_{\dot{H}^{1}}^{2}  + 2 \left\langle 
\begin{pmatrix} 
w_{u}^{\mathcal{L}} \\
w_{b}^{\mathcal{L}}
\end{pmatrix}, 
\mathcal{A}_{t}^{\lambda_{t}} 
\begin{pmatrix} w_{u}^{\mathcal{L}} \\
w_{b}^{\mathcal{L}} 
\end{pmatrix} 
\right\rangle(t)  + r_{\lambda_{t}}(t) \lVert (w_{u}^{\mathcal{L}}, w_{b}^{\mathcal{L}})(t)  \rVert_{L^{2}}^{2}.
\end{equation} 

\begin{proposition}\label{Proposition 4.4} 
\rm{(Cf. \cite[Lemma 4.9]{HR23})}  Let $t \in [T_{i}, T_{i+1})$ and fix Fix $\lambda_{t}$ from \eqref{est 37} with $\mathfrak{a} \in [2,\infty)$.  Then for any $\kappa_{0} \in (0,1)$, all $\eta \in [\frac{1+ \kappa}{2}, 1)$, and all $\kappa \in (0, \kappa_{0}]$, $\RomanI_{2}$ from \eqref{est 43b} satisfies 
\begin{equation}\label{est 95}
\lvert \RomanI_{2} \rvert \lesssim  \lVert (w_{u}^{\mathcal{L}}, w_{b}^{\mathcal{L}}) (t) \rVert_{H^{\eta}}^{2} N_{t}^{\kappa}. 
\end{equation} 
\end{proposition}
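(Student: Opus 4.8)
The plan is to reduce $\RomanI_2$ to a sum of ``subcritical'' bilinear terms and bound each of them directly by the Bony estimates of Lemma \ref{Lemma 3.1}; in contrast with $\RomanI_1$, no hidden cancellation will be needed, because the genuinely dangerous paraproducts have already been removed from $\RomanI_2$ by construction (they are the pieces absorbed into the paracontrolled ansatz and into $\RomanI_3$). First I would use $f \otimes_s g = f \circlesign{\prec}_s g + f\circlesign{\succ}_s g + f\circlesign{\circ}_s g$ and its anti-symmetric analogue in \eqref{est 43b}: the subtracted $\circlesign{\succ}_s$ and $\circlesign{\prec}_a$ terms cancel the corresponding pieces of $\otimes_s$ and $\otimes_a$, so that $\RomanI_2$ becomes a finite sum (eight terms) of pairings of the form $\langle w_k^{\mathcal{L}}, \divergence ( (\mathcal{H}_{\lambda_t} X_j) \circlesign{\prec}_s w_l^{\mathcal{L}} ) \rangle$, $\langle w_k^{\mathcal{L}}, \divergence ( (\mathcal{H}_{\lambda_t} X_j) \circlesign{\circ}_s w_l^{\mathcal{L}} ) \rangle$, $\langle w_k^{\mathcal{L}}, \divergence ( w_l^{\mathcal{L}} \circlesign{\succ}_a (\mathcal{H}_{\lambda_t} X_j) ) \rangle$ and $\langle w_k^{\mathcal{L}}, \divergence ( w_l^{\mathcal{L}} \circlesign{\circ}_a (\mathcal{H}_{\lambda_t} X_j) ) \rangle$ for $j,k,l\in\{u,b\}$; the projection $\mathbb{P}_L$ from \eqref{est 42} is absorbed using that $w_k^{\mathcal{L}}$ is mean-zero and divergence-free.

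For each such term I would estimate, by duality, $\lvert \langle w_k^{\mathcal{L}}, \divergence g\rangle \rvert \lesssim \lVert w_k^{\mathcal{L}}\rVert_{H^\eta}\lVert g\rVert_{H^{1-\eta}}$, and then invoke the embedding $H^{\eta-\kappa} \hookrightarrow H^{1-\eta}$, which holds precisely because $\eta \geq \frac{1+\kappa}{2}$ forces $\eta - \kappa \geq 1-\eta$. What remains is to check that each relevant paraproduct of $\mathcal{H}_{\lambda_t} X_j$ and $w_l^{\mathcal{L}}$ lies in $H^{\eta-\kappa}$: for the low--high pieces this follows from \eqref{est 40d} (the $f\succ g$ estimate, with $\beta=-\kappa<0$) applied componentwise to the tensor-valued $\circlesign{\prec}_s$ and $\circlesign{\succ}_a$, after rewriting $(\mathcal{H}_{\lambda_t} X_j)\circlesign{\prec}_s w_l^{\mathcal{L}} = w_l^{\mathcal{L}} \circlesign{\succ}_s (\mathcal{H}_{\lambda_t} X_j)$ so that $w_l^{\mathcal{L}}$ sits in the high-frequency slot, and for the resonant pieces from \eqref{est 40e}, which applies since $\eta - \kappa > 0$ (as $\eta \geq \frac{1+\kappa}{2} > \kappa$ when $\kappa\leq\kappa_0<1$). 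In every case one obtains a bound of the shape $\lVert w_l^{\mathcal{L}}\rVert_{H^\eta}\lVert \mathcal{H}_{\lambda_t} X_j\rVert_{\mathcal{C}^{-\kappa}}$.

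Finally, Lemma \ref{Lemma 3.3} with $\alpha=\beta=-\kappa$ gives $\lVert\mathcal{H}_{\lambda_t} X_j\rVert_{\mathcal{C}^{-\kappa}} \lesssim \lVert X_j\rVert_{\mathcal{C}^{-\kappa}} \leq L_t^\kappa \leq N_t^\kappa$ with a constant independent of $\lambda_t\geq1$, hence of $i$ and of $t\in[T_i,T_{i+1})$; summing the eight terms and using $2ab\leq a^2+b^2$ yields $\lvert\RomanI_2\rvert \lesssim \lVert(w_u^{\mathcal{L}}, w_b^{\mathcal{L}})(t)\rVert_{H^\eta}^2 N_t^\kappa$, the implicit constant depending only on $\kappa_0$ through the (uniform) Bony constants for $\kappa\in(0,\kappa_0]$ and $\eta\in[\frac{1+\kappa}{2},1)$. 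The hypotheses $\mathfrak{a}\in[2,\infty)$ and $\kappa_0\in(0,1)$ enter only through $\lambda_t\geq1$ and through the two inequalities $\eta-\kappa>0$ and $\eta-\kappa\geq1-\eta$, and are otherwise inherited from Definition \ref{Definition 4.2}. The only real care needed is the routine componentwise extension of Lemma \ref{Lemma 3.1} to the matrix-valued operators $\circlesign{\prec}_s,\circlesign{\circ}_s,\circlesign{\succ}_a,\circlesign{\circ}_a$ and the bookkeeping that every one of the eight terms indeed lands in $H^{\eta-\kappa}$; I expect no genuine obstacle here, which is exactly why $\RomanI_2$ is much softer than $\RomanI_1$.
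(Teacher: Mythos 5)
Your proposal is correct and follows essentially the same route as the paper: the identical cancellation $\otimes_s-\circlesign{\succ}_s=\circlesign{\prec}_s+\circlesign{\circ}_s$ (and its anti-symmetric analogue) as in \eqref{est 66}, duality pairing against $\lVert w^{\mathcal{L}}\rVert_{H^{\eta}}$, and the Bony estimates \eqref{est 40d}--\eqref{est 40e} together with the uniform $\mathcal{C}^{-\kappa}$-bound on $\mathcal{H}_{\lambda_t}X_j$ absorbed into $N_t^{\kappa}$. The only difference is cosmetic: you apply the paraproduct estimates at regularity $\eta-\kappa$ and then embed into $H^{1-\eta}$ via $\eta\geq\frac{1+\kappa}{2}$, whereas the paper applies them directly at level $1-\eta+\kappa$ and uses the same inequality to compare with $\lVert w^{\mathcal{L}}\rVert_{H^{\eta}}$.
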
 

\begin{proof}[Proof of Proposition \ref{Proposition 4.4}]
The proof crucially relies on the fact that within \eqref{est 43b} we can rewrite 
\begin{subequations}\label{est 66} 
\begin{align}
&  ( \mathcal{H}_{\lambda_{t}} X_{u}) \otimes_{s} w_{u}^{\mathcal{L}} -  (\mathcal{H}_{\lambda_{t}} X_{u}) \circlesign{\succ}_{s} w_{u}^{\mathcal{L}} = (\mathcal{H}_{\lambda_{t}} X_{u}) \circlesign{\prec}_{s} w_{u}^{\mathcal{L}} + (\mathcal{H}_{\lambda_{t}}X_{u}) \circlesign{\circ}_{s} w_{u}^{\mathcal{L}}, \\
& -  ( \mathcal{H}_{\lambda_{t}} X_{b}) \otimes_{s} w_{b}^{\mathcal{L}} + (\mathcal{H}_{\lambda_{t}} X_{b}) \circlesign{\succ}_{s} w_{b}^{\mathcal{L}} = -(\mathcal{H}_{\lambda_{t}} X_{b}) \circlesign{\prec}_{s} w_{b}^{\mathcal{L}} - (\mathcal{H}_{\lambda_{t}}X_{b}) \circlesign{\circ}_{s} w_{b}^{\mathcal{L}},  \\
&w_{b}^{\mathcal{L}} \otimes_{a} (\mathcal{H}_{\lambda_{t}} X_{u}) -  w_{b}^{\mathcal{L}} \circlesign{\prec}_{a} (\mathcal{H}_{\lambda_{t}} X_{u}) =  w_{b}^{\mathcal{L}} \circlesign{\succ}_{a} (\mathcal{H}_{\lambda_{t}} X_{u}) + w_{b}^{\mathcal{L}} \circlesign{\circ}_{a} (\mathcal{H}_{\lambda_{t}} X_{u}),  \\
& - w_{u}^{\mathcal{L}} \otimes_{a} (\mathcal{H}_{\lambda_{t}} X_{b}) +  w_{u}^{\mathcal{L}} \circlesign{\prec}_{a} (\mathcal{H}_{\lambda_{t}} X_{b}) =  - w_{u}^{\mathcal{L}} \circlesign{\succ}_{a} (\mathcal{H}_{\lambda_{t}} X_{b}) - w_{u}^{\mathcal{L}} \circlesign{\circ}_{a} (\mathcal{H}_{\lambda_{t}} X_{b}).
\end{align}
\end{subequations}
The rewriting of \eqref{est 66} allows us to estimate from \eqref{est 43b} 
\begin{align}
\lvert \RomanI_{2} \rvert  
\lesssim& \lVert w_{u}^{\mathcal{L}}(t)  \rVert_{H^{\eta}} [ \lVert (\mathcal{H}_{\lambda_{t}} X_{u} ) \circlesign{\prec}_{s} w_{u}^{\mathcal{L}} \rVert_{H^{1-\eta}} + \lVert (\mathcal{H}_{\lambda_{t}} X_{u}) \circlesign{\circ}_{s} w_{u}^{\mathcal{L}} \rVert_{H^{1-\eta}} \nonumber \\
& \hspace{10mm} + \lVert (\mathcal{H}_{\lambda_{t}} X_{b} ) \circlesign{\prec}_{s} w_{b}^{\mathcal{L}} \rVert_{H^{1-\eta}} + \lVert (\mathcal{H}_{\lambda_{t}} X_{b}) \circlesign{\circ}_{s} w_{b}^{\mathcal{L}} \rVert_{H^{1-\eta}}  ](t)  \nonumber \\
&+ \lVert w_{b}^{\mathcal{L}}(t)  \rVert_{H^{\eta}} [ \lVert w_{b}^{\mathcal{L}} \circlesign{\succ}_{a} (\mathcal{H}_{\lambda_{t}} X_{u}) \rVert_{H^{1-\eta}} + \lVert w_{b}^{\mathcal{L}} \circlesign{\circ}_{a} (\mathcal{H}_{\lambda_{t}} X_{u}) \rVert_{H^{1-\eta}}  \nonumber \\
& \hspace{10mm} +  \lVert w_{u}^{\mathcal{L}} \circlesign{\succ}_{a} (\mathcal{H}_{\lambda_{t}} X_{b}) \rVert_{H^{1-\eta}} + \lVert w_{u}^{\mathcal{L}} \circlesign{\circ}_{a} (\mathcal{H}_{\lambda_{t}} X_{b}) \rVert_{H^{1-\eta}}](t) .  \label{est 239}
\end{align}
We estimate e.g. 
\begin{subequations}\label{est 235}
\begin{align}
& \lVert w_{b}^{\mathcal{L}} \circlesign{\succ}_{a} (\mathcal{H}_{\lambda_{t}} X_{u} ) (t) \rVert_{H^{1-\eta}} \overset{\eqref{est 40d}}{\lesssim} \lVert w_{b}^{\mathcal{L}}  (t) \rVert_{H^{1-\eta + \kappa}} \lVert \mathcal{H}_{\lambda_{t}} X_{u} (t) \rVert_{\mathcal{C}^{-\kappa}} \overset{ \eqref{est 39}}{\lesssim} \lVert w_{b}^{\mathcal{L}} (t) \rVert_{H^{\eta}} N_{t}^{\kappa}, \\
& \lVert w_{b}^{\mathcal{L}} \circlesign{\circ}_{a} ( \mathcal{H}_{\lambda_{t}} X_{u}) (t) \rVert_{H^{1-\eta}} \overset{\eqref{est 40e}}{\lesssim} \lVert w_{b}^{\mathcal{L}} (t) \rVert_{H^{1-\eta+ \kappa}} \lVert  \mathcal{H}_{\lambda_{t}} X_{u} (t) \rVert_{\mathcal{C}^{-\kappa}} \overset{ \eqref{est 39}}{\lesssim} \lVert w_{b}^{\mathcal{L}} (t) \rVert_{H^{\eta}} N_{t}^{\kappa}, 
\end{align}
\end{subequations} 
where the both inequalities used the hypothesis that $\eta \geq \frac{1+\kappa_{0}}{2}$ and the second inequality additionally used the hypothesis $\eta < 1$.  Other terms in \eqref{est 239} can be estimated similarly, leading to \eqref{est 95}, concluding the proof of Proposition \ref{Proposition 4.4}.  
\end{proof} 

\begin{remark}\label{Remark 4.3}
Initially defining $w_{b}^{\sharp}$ by 
\begin{align*}
w_{b} = - \mathbb{P}_{L} \divergence (w_{b} \circlesign{\prec}_{s} Q_{u} - w_{u} \circlesign{\prec}_{s} Q_{b}) + w_{b}^{\sharp}
\end{align*}
instead of $ -\mathbb{P}_{L} \divergence (w_{b} \circlesign{\prec}_{a} Q_{u} - w_{u} \circlesign{\prec}_{a} Q_{b}) + w_{b}^{\sharp}$ from \eqref{est 31b}, most of the computations up to \eqref{est 65} actually went through analogously. However, we had trouble with the proof of Proposition \ref{Proposition 4.4} because this wrong choice of $w_{b}^{\sharp}$ leads to 
\begin{align} 
-2 \langle w_{b}^{\mathcal{L}}, \divergence &( w_{b}^{\mathcal{L}} \otimes_{s} (\mathcal{H}_{\lambda_{t}} X_{u})  + (\mathcal{H}_{\lambda_{t}} X_{b}) \otimes_{s} w_{u}^{\mathcal{L}} - 2 (\mathcal{H}_{\lambda_{t}} X_{u}) \circlesign{\succ}_{s} w_{b}^{\mathcal{L}}  \nonumber \\
& - w_{u}^{\mathcal{L}}\otimes_{s} (\mathcal{H}_{\lambda_{t}} X_{b}) - (\mathcal{H}_{\lambda_{t}} X_{u}) \otimes_{s} w_{b}^{\mathcal{L}} + 2 (\mathcal{H}_{\lambda_{t}} X_{b}) \circlesign{\succ}_{s} w_{u}^{\mathcal{L}})  \rangle \label{est 1}  
\end{align}
instead of 
\begin{align*}
- 2 \langle w_{b}^{\mathcal{L}}, \divergence (2w_{b}^{\mathcal{L}} \otimes_{a} (\mathcal{H}_{\lambda_{t}} X_{u}) - 2 w_{b}^{\mathcal{L}} \circlesign{\prec}_{a} (\mathcal{H}_{\lambda_{t}} X_{u})  -2 w_{u}^{\mathcal{L}} \otimes_{a} (\mathcal{H}_{\lambda_{t}} X_{b}) + 2 w_{u}^{\mathcal{L}} \circlesign{\prec}_{a} (\mathcal{H}_{\lambda_{t}} X_{b}) \rangle
\end{align*} 
from the second $L^{2}(\mathbb{T}^{2})$-inner product in $\RomanI_{2}$ of \eqref{est 43b}. We can still write 
\begin{align}
&w_{b}^{\mathcal{L}} \otimes_{s} (\mathcal{H}_{\lambda_{t}} X_{u} ) - (\mathcal{H}_{\lambda_{t}} X_{u}) \circlesign{\succ}_{s} w_{b}^{\mathcal{L}} = w_{b}^{\mathcal{L}} \circlesign{\succ}_{s} (\mathcal{H}_{\lambda_{t}} X_{u}) + w_{b}^{\mathcal{L}} \circlesign{\circ} (\mathcal{H}_{\lambda_{t}} X_{u}), \nonumber\\
& - w_{u}^{\mathcal{L}}\otimes_{s} (\mathcal{H}_{\lambda_{t}} X_{b}) +  \mathcal{H}_{\lambda_{t}} X_{b} \circlesign{\succ}_{s} w_{u}^{\mathcal{L}}  = - w_{u}^{\mathcal{L}}  \circlesign{\succ}_{s} (\mathcal{H}_{\lambda_{t}} X_{b}) - w_{u}^{\mathcal{L}} \circlesign{\circ}_{s} (\mathcal{H}_{\lambda_{t}} X_{b}) \label{est 2} 
\end{align}
which can be bounded similarly to \eqref{est 235}.  However, we have a problem from the remaining terms in \eqref{est 1}, namely 
\begin{equation*}
(\mathcal{H}_{\lambda_{t}} X_{b}) \otimes_{s} w_{u}^{\mathcal{L}} -  (\mathcal{H}_{\lambda_{t}} X_{u}) \circlesign{\succ}_{s} w_{b}^{\mathcal{L}}  - (\mathcal{H}_{\lambda_{t}} X_{u}) \otimes_{s} w_{b}^{\mathcal{L}} +  (\mathcal{H}_{\lambda_{t}} X_{b}) \circlesign{\succ}_{s} w_{u}^{\mathcal{L}} 
\end{equation*} 
do not allow us to repeat the cancellations in \eqref{est 2} due to opposite signs. Introduction of $\otimes_{a}$ and $\circlesign{\prec}_{a}$, and $\circlesign{\circ}_{a}$ and rewriting them in such formulations made it easy to discover the appropriate paracontrolled ansatz in \eqref{est 31b} so that the necessary cancellations occurred smoothly in \eqref{est 66}.  
\end{remark}

\begin{proposition}\label{Proposition 4.5} 
\rm{(Cf. \cite[Lemma 4.10]{HR23})} Let $t \in [T_{i}, T_{i+1})$. 
\begin{enumerate}
\item Fix $\lambda_{t}$ from \eqref{est 37} with $\mathfrak{a} \in [2,\infty)$. Then for any $\kappa_{0} \in (0,\frac{1}{12}]$, all $\eta \in (\frac{2}{3} + \kappa, 1)$, and all $\kappa \in (0, \kappa_{0}]$, $\RomanI_{3}$ from \eqref{est 43c} satisfies 
\begin{equation}\label{est 70}
\lvert \RomanI_{3} \rvert \lesssim \lambda_{t}^{\frac{1}{3}} ( \lVert w_{u}^{\mathcal{L}} \rVert_{H^{\eta}} + \lVert w_{b}^{\mathcal{L}} \rVert_{H^{\eta}})(t)  (N_{t}^{\kappa})^{2}. 
\end{equation} 
\item Fix $\lambda_{t}$ from \eqref{est 37} with $\mathfrak{a} \in [\frac{11}{4}, \infty)$. Then for any $\kappa_{0} \in (0, \frac{1}{44}]$, all $\eta \in [\frac{3}{4} + 3\kappa, 1)$, and all $\kappa \in (0, \kappa_{0}]$, $-2\langle w_{u}^{\mathcal{L}}, \divergence (w_{u}^{\otimes 2} - w_{b}^{\otimes 2}) \rangle - 2\langle w_{b}^{\mathcal{L}}, \divergence (w_{b} \otimes w_{u} - w_{u} \otimes w_{b} ) \rangle$ within $\RomanI_{4}$ from \eqref{est 43d} satisfies 
\begin{align}
&-  2\langle w_{u}^{\mathcal{L}}, \divergence (w_{u}^{\otimes 2} - w_{b}^{\otimes 2}) \rangle(t)  -  2\langle w_{b}^{\mathcal{L}}, \divergence (w_{b} \otimes w_{u} - w_{u} \otimes w_{b} ) \rangle(t)  \nonumber \\
\lesssim& (\lVert w_{u}^{\mathcal{L}} \rVert_{H^{\eta}} + \lVert w_{b}^{\mathcal{L}} \rVert_{H^{\eta}})(t)( \lVert w_{u}^{\mathcal{L}}(t)  \rVert_{H^{\eta}} + \lVert w_{b}^{\mathcal{L}}(t)  \rVert_{H^{\eta}} + N_{t}^{\kappa}) N_{t}^{\kappa}.  \label{est 78}
\end{align}
\end{enumerate} 
\end{proposition}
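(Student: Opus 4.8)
The plan is to estimate both $\RomanI_{3}$ from \eqref{est 43c} and the quadratic-in-$(w_{u},w_{b})$ part of $\RomanI_{4}$ from \eqref{est 43d} by duality, testing $w_{u}^{\mathcal{L}},w_{b}^{\mathcal{L}}\in H^{\eta}$ against the nonlinearities and absorbing the $\divergence$ as one lost derivative, so that what must be controlled is the $H^{1-\eta}$-norm of each nonlinear term, against which $\|w_{u}^{\mathcal{L}}\|_{H^{\eta}}+\|w_{b}^{\mathcal{L}}\|_{H^{\eta}}$ is paired. Two structural facts make this go through: the paraproduct combinations inside $\RomanI_{3}$ collapse to well-behaved products once $X_{j}$ is split according to Definition \ref{Definition 3.1}, and the full $(w_{u},w_{b})$-version of the $\RomanI_{4}$-quadratic terms vanishes identically by the magnetohydrodynamic cancellation \eqref{est 11}.

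For part (1), I would first rewrite each of the four combinations in \eqref{est 43c}. Using $f\circlesign{\succ}_{s}g=g\circlesign{\prec}_{s}f$ together with $X_{u}=\mathcal{L}_{\lambda_{t}}X_{u}+\mathcal{H}_{\lambda_{t}}X_{u}$ one gets, for instance,
\begin{equation*}
X_{u}\otimes_{s}w_{u}^{\mathcal{H}}-(\mathcal{H}_{\lambda_{t}}X_{u})\circlesign{\succ}_{s}w_{u}^{\mathcal{H}}=X_{u}\circlesign{\prec}_{s}w_{u}^{\mathcal{H}}+(\mathcal{L}_{\lambda_{t}}X_{u})\circlesign{\succ}_{s}w_{u}^{\mathcal{H}}+X_{u}\circlesign{\circ}_{s}w_{u}^{\mathcal{H}},
\end{equation*}
and likewise for the $X_{b}$-terms and for the anti-symmetric pairs $w_{b}^{\mathcal{H}}\otimes_{a}X_{u}$ and $w_{u}^{\mathcal{H}}\otimes_{a}X_{b}$ (with $\circlesign{\prec}_{a},\circlesign{\succ}_{a},\circlesign{\circ}_{a}$ in place of the symmetric variants). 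The $X_{j}\circlesign{\prec}_{\bullet}w_{j}^{\mathcal{H}}$ and resonant $X_{j}\circlesign{\circ}_{\bullet}w_{j}^{\mathcal{H}}$ terms are bounded in $H^{1-\eta}$ by Lemma \ref{Lemma 3.1} (or directly from the Littlewood--Paley definition) using $X_{j}\in\mathcal{C}^{-\kappa}$ and the improved estimate $\|w_{j}^{\mathcal{H}}\|_{H^{1-2\kappa-\delta}}\lesssim(1+\|w_{u}\|_{L^{2}}+\|w_{b}\|_{L^{2}})^{1-\mathfrak{a}\delta}N_{t}^{\kappa}$ of Proposition \ref{Proposition 4.3}; the $(\mathcal{L}_{\lambda_{t}}X_{j})\circlesign{\succ}_{\bullet}w_{j}^{\mathcal{H}}$ terms are treated the same way, with Lemma \ref{Lemma 3.3} used to trade the regularity of $\mathcal{L}_{\lambda_{t}}X_{j}$ for a positive power of $\lambda_{t}$. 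Taking $\delta$ close to its admissible maximum and recalling from \eqref{est 37} that $\lambda_{t}\approx(1+\|w_{u}\|_{L^{2}}+\|w_{b}\|_{L^{2}})^{\mathfrak{a}}$ on $[T_{i},T_{i+1})$, the powers of $\lambda_{t}$ produced by these two mechanisms combine, under $\mathfrak{a}\geq2$, $\kappa\leq\kappa_{0}$ and $\eta>\frac{2}{3}+\kappa$, to no more than $\lambda_{t}^{1/3}$; together with the $\|w_{u}^{\mathcal{L}}\|_{H^{\eta}}+\|w_{b}^{\mathcal{L}}\|_{H^{\eta}}$ factor from the pairing and the two noise factors $(N_{t}^{\kappa})^{2}$, this yields \eqref{est 70}.

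For part (2), note that \eqref{est 11} applied with $(u,b)=(w_{u},w_{b})$ — both divergence-free — gives
\begin{equation*}
-2\langle w_{u},\divergence(w_{u}^{\otimes 2}-w_{b}^{\otimes 2})\rangle-2\langle w_{b},\divergence(w_{b}\otimes w_{u}-w_{u}\otimes w_{b})\rangle=0,
\end{equation*}
so the left-hand side of \eqref{est 78} equals the same expression with each outer $w_{j}$ replaced by $w_{j}^{\mathcal{H}}=w_{j}-w_{j}^{\mathcal{L}}$; expanding the inner tensor products as well and using \eqref{est 11} once more with $(u,b)=(w_{u}^{\mathcal{L}},w_{b}^{\mathcal{L}})$ and $\nabla\cdot w_{u}^{\mathcal{L}}=\nabla\cdot w_{b}^{\mathcal{L}}=0$, every surviving term carries at least one $w_{j}^{\mathcal{H}}$-factor. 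Each such term is then bounded by duality and the product estimate of Lemma \ref{Lemma 3.2}, distributing regularities so that every $w_{j}^{\mathcal{H}}$-factor lands in a Sobolev space of order at most $1-2\kappa-\delta$; the top-order such factor is controlled by Proposition \ref{Proposition 4.3}, while the lower-order ones are controlled directly from \eqref{est 38b} by differentiating, applying \eqref{est 40a}, and using Lemma \ref{Lemma 3.3}, which introduces a \emph{negative} power of $\lambda_{t}$ — hence, since $\mathfrak{a}\geq\frac{11}{4}$ is large, those factors are in fact $\lesssim N_{t}^{\kappa}$, and in all cases one has $1+\|w_{u}\|_{L^{2}}+\|w_{b}\|_{L^{2}}\lesssim\|w_{u}^{\mathcal{L}}\|_{H^{\eta}}+\|w_{b}^{\mathcal{L}}\|_{H^{\eta}}+N_{t}^{\kappa}$. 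Feeding these back in and applying Young's inequality, with $\eta\geq\frac{3}{4}+3\kappa$ and $\kappa\leq\frac{1}{44}$ chosen to align all the Sobolev and interpolation exponents, gives \eqref{est 78}.

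The main obstacle is the exponent bookkeeping rather than any single inequality: one must give each $w_{j}^{\mathcal{H}}$ enough regularity for Proposition \ref{Proposition 4.3} and for the Bony and product estimates to close in $H^{1-\eta}$, while keeping the accompanying powers of $\lambda_{t}$ and of $\|w_{u}\|_{L^{2}}+\|w_{b}\|_{L^{2}}$ small enough to stay below $\lambda_{t}^{1/3}$ in part (1) and below a bound quadratic in $(\|w_{u}^{\mathcal{L}}\|_{H^{\eta}}+\|w_{b}^{\mathcal{L}}\|_{H^{\eta}},N_{t}^{\kappa})$ in part (2). Reconciling these competing demands is precisely what forces the explicit ranges of $\mathfrak{a}$, $\kappa_{0}$, and $\eta$ in the statement.
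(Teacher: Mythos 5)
Your part (1) is essentially the paper's own argument: pair $w^{\mathcal{L}}$ in $H^{\eta}$ against the divergence, split $X_{j}=\mathcal{L}_{\lambda_{t}}X_{j}+\mathcal{H}_{\lambda_{t}}X_{j}$ so that the subtracted $\circlesign{\succ}$ terms leave only $\circlesign{\prec}$, $\circlesign{\circ}$ and a low-frequency piece, and close in $H^{1-\eta}$ with Lemma \ref{Lemma 3.1}, Lemma \ref{Lemma 3.3} (the source of $\lambda_{t}^{\frac13}$) and Proposition \ref{Proposition 4.3} (prefactor killed by taking $\delta$ of order $\tfrac12$ with $\mathfrak{a}\geq 2$); your slightly different grouping of the Bony pieces is immaterial.

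Part (2), however, has a genuine gap. By first invoking the full cancellation with $(u,b)=(w_{u},w_{b})$ you rewrite the left-hand side of \eqref{est 78} as the same expression tested against $w_{u}^{\mathcal{H}},w_{b}^{\mathcal{H}}$. Expanding the inner factors then produces, among others, the all-high trilinear block $\langle w_{u}^{\mathcal{H}},\divergence((w_{u}^{\mathcal{H}})^{\otimes2}-(w_{b}^{\mathcal{H}})^{\otimes2})\rangle+\langle w_{b}^{\mathcal{H}},\divergence(w_{b}^{\mathcal{H}}\otimes w_{u}^{\mathcal{H}}-w_{u}^{\mathcal{H}}\otimes w_{b}^{\mathcal{H}})\rangle$, which you propose to \emph{estimate} via duality, Lemma \ref{Lemma 3.2} and Proposition \ref{Proposition 4.3}. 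That cannot yield \eqref{est 78}. Quantitatively: each of the three high factors must be placed in $H^{1-2\kappa-\delta_{i}}$, and the derivative count $(1-2\kappa-\delta_{0})+(1-2\kappa-\delta_{1})+(1-2\kappa-\delta_{2})\geq 2$ forces $\delta_{0}+\delta_{1}+\delta_{2}\leq 1-6\kappa$, while suppressing the product of the three prefactors $(1+\lVert w_{u}\rVert_{L^{2}}+\lVert w_{b}\rVert_{L^{2}})^{1-\mathfrak{a}\delta_{i}}$ requires $\delta_{0}+\delta_{1}+\delta_{2}\geq \frac{3}{\mathfrak{a}}$; these are incompatible for every $\mathfrak{a}\in[\frac{11}{4},3]$, which is exactly the range the statement must cover and includes the value $\mathfrak{a}=3$ used from Corollary \ref{Corollary 4.7} onward. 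Structurally, whatever bound you extract for this block involves only $N_{t}^{\kappa}$ and $\lVert (w_{u},w_{b})\rVert_{L^{2}}\lesssim \lVert (w_{u}^{\mathcal{L}},w_{b}^{\mathcal{L}})\rVert_{L^{2}}+N_{t}^{\kappa}$, hence contains a pure $(N_{t}^{\kappa})^{3}$-type contribution carrying no factor $\lVert w_{u}^{\mathcal{L}}\rVert_{H^{\eta}}+\lVert w_{b}^{\mathcal{L}}\rVert_{H^{\eta}}$, and such a term is not dominated by the right-hand side of \eqref{est 78}. Note also that your intermediate appeal to \eqref{est 11} ``with $(u,b)=(w_{u}^{\mathcal{L}},w_{b}^{\mathcal{L}})$'' does nothing in your setup: after the outer replacement the purely low inner terms are paired with $w^{\mathcal{H}}$, not $w^{\mathcal{L}}$, so no cancellation occurs there.

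The paper avoids all of this by keeping the outer factor $w^{\mathcal{L}}$, expanding only the inner $w=w^{\mathcal{L}}+w^{\mathcal{H}}$, and removing the purely low trilinear terms through $\langle w_{u}^{\mathcal{L}},\divergence(w_{u}^{\mathcal{L}}\otimes w_{u}^{\mathcal{L}})\rangle=0$, $\langle w_{b}^{\mathcal{L}},\divergence(w_{b}^{\mathcal{L}}\otimes w_{u}^{\mathcal{L}})\rangle=0$ and the coupled cancellation \eqref{est 237}; then every surviving term in \eqref{est 75} carries the outer $H^{\eta}$-norm of $w^{\mathcal{L}}$ and at most two high factors, and the choices $\delta=\frac38-\frac{\kappa}{2}$ (for the high-high terms) and $\delta=\frac12$ (for the mixed terms) close under precisely $\mathfrak{a}\geq\frac{11}{4}$, $\kappa\leq\frac{1}{44}$, $\eta\geq\frac34+3\kappa$, as in \eqref{est 76}--\eqref{est 77}. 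Your reduction could in principle be repaired by observing that $w_{u}^{\mathcal{H}},w_{b}^{\mathcal{H}}$ are themselves divergence-free (they are Leray projections), so the all-high block vanishes by the same MHD identity instead of being estimated; but that cancellation is absent from your argument as written, and without it the proof of \eqref{est 78} does not close.
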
 

\begin{proof}[Proof of Proposition \ref{Proposition 4.5}]
Using \eqref{est 68}, we can bound 
\begin{equation}\label{est 73}
\lvert \RomanI_{3} \rvert  \leq \RomanI_{3,1} + \RomanI_{3,2}, 
\end{equation} 
where 
\begin{subequations}\label{est 69}
\begin{align}
\RomanI_{3,1} \triangleq& 4 [ \lvert \langle w_{u}^{\mathcal{L}}, \divergence (\mathcal{L}_{\lambda_{t}} X_{u} \otimes_{s} w_{u}^{\mathcal{H}} - \mathcal{L}_{\lambda_{t}} X_{b} \otimes_{s} w_{b}^{\mathcal{H}}) \rangle \rvert \nonumber \\
&+ \lvert \langle w_{b}^{\mathcal{L}}, \divergence (w_{b}^{\mathcal{H}} \otimes_{a} \mathcal{L}_{\lambda_{t}} X_{u} - w_{u}^{\mathcal{H}} \otimes_{a} \mathcal{L}_{\lambda_{t}} X_{b} ) \rangle \rvert ](t), \label{est 69a}\\ 
\RomanI_{3,2} \triangleq& 4 [ \lvert \langle w_{u}^{\mathcal{L}}, \divergence (\mathcal{H}_{\lambda_{t}} X_{b} \circlesign{\prec}_{s} w_{u}^{\mathcal{H}} + \mathcal{H}_{\lambda_{t}} X_{u} \circlesign{\circ}_{s} w_{u}^{\mathcal{H}} \nonumber \\
& - \mathcal{H}_{\lambda_{t}} X_{b} \circlesign{\prec}_{s} w_{b}^{\mathcal{H}} - \mathcal{H}_{\lambda_{t}} X_{b} \circlesign{\circ}_{s} w_{b}^{\mathcal{H}} ) \rangle  \rvert \nonumber \\
&+ \lvert \langle w_{b}^{\mathcal{L}}, \divergence ( w_{b}^{\mathcal{H}} \circlesign{\succ}_{a} \mathcal{H}_{\lambda_{t}} X_{u} + w_{b}^{\mathcal{H}} \circlesign{\circ}_{a} \mathcal{H}_{\lambda_{t}} X_{u} \nonumber \\
& \hspace{15mm} - w_{u}^{\mathcal{H}} \circlesign{\succ}_{a} \mathcal{H}_{\lambda_{t}} X_{b} - w_{u}^{\mathcal{H}} \circlesign{\circ}_{a} \mathcal{H}_{\lambda_{t}} X_{b} ) \rangle \rvert ](t). \label{est 69b}
\end{align}
\end{subequations} 
To estimate $\RomanI_{3,1}$, e.g. we work on 
\begin{align}
\lvert \langle w_{u}^{\mathcal{L}}, \divergence (\mathcal{L}_{\lambda_{t}} X_{b} \otimes_{s} w_{b}^{\mathcal{H}} ) \rangle \rvert 
\lesssim \lVert w_{u}^{\mathcal{L}} \rVert_{H^{\eta}} \lVert \mathcal{L}_{\lambda_{t}} X_{b} \otimes_{s} w_{b}^{\mathcal{H}} \rVert_{H^{1-\eta}},  \label{est 241}
\end{align} 
where we estimate 
\begin{align}
& \lVert \mathcal{L}_{\lambda_{t}} X_{b} \circlesign{\succ}_{s} w_{b}^{\mathcal{H}} (t) \rVert_{H^{1-\eta}} 
\overset{\eqref{est 40c}}{\lesssim} \lVert \mathcal{L}_{\lambda_{t}} X_{b} (t) \rVert_{\mathcal{C}^{\frac{1}{3} - \kappa}} \lVert w_{b}^{\mathcal{H}} (t) \rVert_{H^{\frac{2}{3} - \eta + \kappa}} \nonumber\\
\overset{\eqref{est 33} \eqref{est 67}}{\lesssim}& \lambda_{t}^{\frac{1}{3}} \lVert X_{b} (t) \rVert_{\mathcal{C}^{-\kappa}} (1+ \lVert w_{u} \rVert_{L^{2}} + \lVert w_{b} \rVert_{L^{2}})^{1- \mathfrak{a}(\frac{1}{2})}(t) N_{t}^{\kappa} \overset{\eqref{est 39} }{\lesssim} \lambda_{t}^{\frac{1}{3}} (N_{t}^{\kappa})^{2},   \label{est 240} 
\end{align}
where the first inequality used that $\frac{2}{3} + \kappa < \eta$, the second inequality that $\kappa \leq \frac{1}{4}$, and the third that $\mathfrak{a} \geq 2$.   Similarly, 
\begin{align}
\lVert \mathcal{L}_{\lambda_{t}} X_{b} \circlesign{\prec}_{s} w_{b}^{\mathcal{H}} (t)  \rVert_{H^{1-\eta}} 
\overset{\eqref{est 40d}}{\lesssim}& \lVert \mathcal{L}_{\lambda_{t}} X_{b} (t) \rVert_{\mathcal{C}^{\frac{1}{2} - \eta + 2 \kappa}} \lVert w_{b}^{\mathcal{H}}(t)  \rVert_{H^{\frac{1}{2} - 2 \kappa}} \overset{\eqref{est 33}\eqref{est 67}\eqref{est 39} }{\lesssim}  \lambda_{t}^{\frac{1}{3}}  (N_{t}^{\kappa})^{2},  \label{est 109} 
\end{align}
that is justified by $\frac{2}{3} + \kappa < \eta$ and $\kappa_{0} \leq \frac{1}{12}$, and 
\begin{equation}\label{est 110}
 \lVert \mathcal{L}_{\lambda_{t}} X_{b} \circlesign{\circ}_{s} w_{b}^{\mathcal{H}} (t) \rVert_{H^{1-\eta}} 
\overset{\eqref{est 40e}}{\lesssim} \lVert \mathcal{L}_{\lambda_{t}} X_{b} (t) \rVert_{\mathcal{C}^{\frac{1}{3} - \kappa}} \lVert w_{b}^{\mathcal{H}} (t) \rVert_{H^{\frac{2}{3} + \kappa - \eta}}  
\overset{\eqref{est 33} \eqref{est 67} \eqref{est 39} }{\lesssim} \lambda_{t}^{\frac{1}{3}} (N_{t}^{\kappa})^{2}, 
\end{equation} 
where the first inequality used the hypothesis that $\eta < 1$ and second inequality is justified by $\frac{2}{3} + \kappa < \eta$ and $\kappa_{0} \leq \frac{1}{12}$.  Therefore, applying \eqref{est 240}, \eqref{est 109}, and \eqref{est 110} to \eqref{est 241} gives us 
\begin{equation}\label{est 108} 
\lvert \langle w_{u}^{\mathcal{L}}, \divergence (\mathcal{L}_{\lambda_{t}} X_{b} \otimes_{s} w_{b}^{\mathcal{H}} ) \rangle \rvert(t)    \lesssim  \lVert w_{u}^{\mathcal{L}}(t) \rVert_{H^{\eta}}  \lambda_{t}^{\frac{1}{3}} (N_{t}^{\kappa})^{2}. 
\end{equation} 
Similar computations on other three terms in $\RomanI_{3,1}$ in \eqref{est 69a}  lead us to 
\begin{equation}\label{est 72} 
\lvert \RomanI_{3,1} \rvert \lesssim ( \lVert w_{u}^{\mathcal{L}} \rVert_{H^{\eta}} + \lVert w_{b}^{\mathcal{L}} \rVert_{H^{\eta}})(t)  \lambda_{t}^{\frac{1}{3}} (N_{t}^{\kappa})^{2}. 
\end{equation} 
Next, within $\RomanI_{3,2}$ of \eqref{est 69b}, we estimate as an example 
\begin{align}
& \lvert \langle w_{b}^{\mathcal{L}}, \divergence ( w_{b}^{\mathcal{H}} \circlesign{\succ}_{a} \mathcal{H}_{\lambda_{t}} X_{u} + w_{b}^{\mathcal{H}} \circlesign{\circ}_{a} \mathcal{H}_{\lambda_{t}} X_{u}) \rangle(t)  \rvert \nonumber \\
\lesssim& \lVert w_{b}^{\mathcal{L}}(t)  \rVert_{H^{\eta}} ( \lVert w_{b}^{\mathcal{H}} \circlesign{\succ}_{a} \mathcal{H}_{\lambda_{t}} X_{u} \rVert_{H^{1-\eta}} + \lVert  w_{b}^{\mathcal{H}} \circlesign{\circ}_{a} \mathcal{H}_{\lambda_{t}} X_{u} \rVert_{H^{1-\eta}} )(t)  \nonumber \\
\overset{\eqref{est 40d} \eqref{est 40e}}{\lesssim}& \lVert w_{b}^{\mathcal{L}} (t) \rVert_{H^{\eta}}  \lVert w_{b}^{\mathcal{H}} (t) \rVert_{H^{1-\eta + \kappa}} \lVert \mathcal{H}_{\lambda_{t}} X_{u} (t) \rVert_{\mathcal{C}^{-\kappa}} \overset{\eqref{est 67} \eqref{est 39} }{\lesssim} \lVert w_{b}^{\mathcal{H}} (t) \rVert_{H^{\eta}} (N_{t}^{\kappa})^{2}, \label{est 246} 
\end{align}
where the second inequality used the hypothesis that $\eta < 1$ and the third inequality is justified by $\frac{2}{3} + \kappa < \eta$ and $\kappa_{0} \leq \frac{1}{12}$. Similar computations on the other three terms in $\RomanI_{3,2}$ of \eqref{est 69b} lead to 
\begin{align}\label{est 71} 
\lvert \RomanI_{3,2} \rvert \lesssim ( \lVert w_{u}^{\mathcal{L}} \rVert_{H^{\eta}} + \lVert w_{b}^{\mathcal{L}} \rVert_{H^{\eta}})(t)  (N_{t}^{\kappa})^{2};  
\end{align}
thus, we conclude the estimate \eqref{est 70} by applying \eqref{est 72} and \eqref{est 71} to \eqref{est 73}. 

Next, concerning $ 2\langle w_{u}^{\mathcal{L}}, \divergence (w_{u}^{\otimes 2} - w_{b}^{\otimes 2}) \rangle + 2\langle w_{b}^{\mathcal{L}}, \divergence (w_{b} \otimes w_{u} - w_{u} \otimes w_{b} ) \rangle$ within $\RomanI_{4}$ from \eqref{est 43d}, we can make use of cancellations of $\langle w_{u}^{\mathcal{L}}, \divergence (w_{u}^{\mathcal{L}} \otimes w_{u}^{\mathcal{L}}) \rangle = 0$, $ \langle w_{b}^{\mathcal{L}}, \divergence (w_{b}^{\mathcal{L}} \otimes w_{u}^{\mathcal{L}}) \rangle= 0$ and
\begin{equation}\label{est 237}
\langle w_{u}^{\mathcal{L}}, \divergence (w_{b}^{\mathcal{L}} \otimes w_{b}^{\mathcal{L}}) \rangle + \langle w_{b}^{\mathcal{L}}, \divergence (w_{u}^{\mathcal{L}} \otimes w_{b}^{\mathcal{L}}) \rangle = 0
\end{equation} 
to obtain  
\begin{align}
& \langle w_{u}^{\mathcal{L}}, \divergence (w_{u}^{\otimes 2} - w_{b}^{\otimes 2}) \rangle + \langle w_{b}^{\mathcal{L}}, \divergence (w_{b} \otimes w_{u} - w_{u} \otimes w_{b} ) \rangle \nonumber \\
=&\langle w_{u}^{\mathcal{L}}, \divergence ( 2w_{u}^{\mathcal{L}} \otimes_{s} w_{u}^{\mathcal{H}} + (w_{u}^{\mathcal{H}})^{\otimes 2} - 2 w_{b}^{\mathcal{L}} \otimes_{s} w_{b}^{\mathcal{H}} - (w_{b}^{\mathcal{H}})^{\otimes 2} \rangle \nonumber \\
&+ \langle w_{b}^{\mathcal{L}}, \divergence (2w_{b}^{\mathcal{L}} \otimes_{a} w_{u}^{\mathcal{H}} + 2w_{b}^{\mathcal{H}} \otimes_{a} w_{u}^{\mathcal{L}} + 2 w_{b}^{\mathcal{H}} \otimes_{a} w_{u}^{\mathcal{H}}) \rangle. \label{est 75} 
\end{align} 
As an example, let us estimate 
\begin{align*}
 \lVert w_{b}^{\mathcal{H}} \otimes_{a} w_{u}^{\mathcal{H}} (t) \rVert_{H^{\frac{1}{4} - 3 \kappa}} 
\overset{\eqref{est 74}}{\lesssim}& \lVert w_{b}^{\mathcal{H}}  (t) \rVert_{\dot{H}^{\frac{5}{8} - \frac{3\kappa}{2}}} \lVert w_{u}^{\mathcal{H}} (t) \rVert_{\dot{H}^{\frac{5}{8} - \frac{3\kappa}{2}}}  \nonumber\\
\overset{\eqref{est 67}}{\lesssim}& (1+ \lVert w_{u} \rVert_{L^{2}} + \lVert w_{b} \rVert_{L^{2}})^{2( 1 - \mathfrak{a} [ \frac{3}{8} - \frac{\kappa}{2} ])}(t) (N_{t}^{\kappa})^{2} \lesssim (N_{t}^{\kappa})^{2}, 
\end{align*} 
where the last inequality used that $\kappa \leq \frac{1}{44}$ and $\mathfrak{a} > \frac{11}{4}$. Analogous computations on similar terms in \eqref{est 75} show altogether   
\begin{equation}\label{est 113} 
\lVert (w_{u}^{\mathcal{H}})^{\otimes 2} (t) \rVert_{H^{\frac{1}{4} - 3 \kappa}} + \lVert (w_{b}^{\mathcal{H}})^{\otimes 2} (t)\rVert_{H^{\frac{1}{4} - 3 \kappa}} + \lVert w_{b}^{\mathcal{H}} \otimes_{a} w_{u}^{\mathcal{H}} (t)\rVert_{H^{\frac{1}{4} - 3 \kappa}}  \lesssim (N_{t}^{\kappa})^{2} 
\end{equation}  
and allow us to conclude 
\begin{align}
& \langle w_{u}^{\mathcal{L}}, \divergence ( (w_{u}^{\mathcal{H}})^{\otimes 2} - (w_{b}^{\mathcal{H}})^{\otimes 2} \rangle(t) + \langle w_{b}^{\mathcal{L}}, \divergence (2w_{b}^{\mathcal{H}} \otimes_{a} w_{u}^{\mathcal{H}} ) \rangle(t)  \nonumber \\
\lesssim& ( \lVert w_{u}^{\mathcal{L}} \rVert_{H^{\frac{3}{4} + 3 \kappa}} + \lVert w_{b}^{\mathcal{L}} \rVert_{H^{\frac{3}{4} + 3 \kappa}} )(t) (N_{t}^{\kappa})^{2}.  \label{est 76}
\end{align}

Finally, among the rest of the terms in \eqref{est 75}, we estimate as an example 
\begin{align*}
& \langle w_{b}^{\mathcal{L}}, \divergence (w_{b}^{\mathcal{L}} \otimes_{a} w_{u}^{\mathcal{H}}) \rangle(t) \nonumber \\
 \overset{\eqref{est 74}}{\lesssim}&  \lVert w_{b}^{\mathcal{L}} (t)\rVert_{H^{\eta}}  \lVert w_{b}^{\mathcal{L}} (t)\rVert_{\dot{H}^{\frac{3}{2}  - \eta + 2 \kappa} } \lVert w_{u}^{\mathcal{H}} (t)\rVert_{\dot{H}^{\frac{1}{2} - 2 \kappa}}  \overset{\eqref{est 67}}{\lesssim}  \lVert w_{b}^{\mathcal{L}} (t)\rVert_{H^{\eta}}^{2} N_{t}^{\kappa}. 
\end{align*}
Similar computations lead us to altogether 
\begin{align}
&\langle w_{u}^{\mathcal{L}}, \divergence (w_{u}^{\mathcal{L}} \otimes_{s} w_{u}^{\mathcal{H}} - w_{b}^{\mathcal{L}} \otimes_{s} w_{b}^{\mathcal{H}} ) \rangle(t) \nonumber \\
& \hspace{10mm} + \langle w_{b}^{\mathcal{L}}, \divergence (w_{b}^{\mathcal{L}} \otimes_{a} w_{u}^{\mathcal{H}} + w_{b}^{\mathcal{H}} \otimes_{a} w_{u}^{\mathcal{L}}) \rangle  \lesssim \lVert (w_{u}^{\mathcal{L}}, w_{b}^{\mathcal{L}})(t) \rVert_{H^{\eta}}^{2}  N_{t}^{\kappa}.  \label{est 77}
\end{align}
Applying \eqref{est 76} and  \eqref{est 77} to \eqref{est 75} gives us the desired result  \eqref{est 78}. 
\end{proof}  

\begin{proposition}\label{Proposition 4.6}  
\rm{(Cf. \cite[Lemma 4.12]{HR23})} Let $t \in [T_{i}, T_{i+1})$ and fix $\lambda_{t}$ from \eqref{est 37} with $\mathfrak{a} \in [\frac{11}{4}, 3]$ and $\kappa_{0} \in (0, \frac{3}{44}]$. Then there exists $\eta \in [\frac{3}{4} + 3\kappa, 1)$  for all $\kappa \in (0, \kappa_{0}]$ such that $\RomanI_{4}$ from \eqref{est 43d} satisfies 
\begin{align}
& \RomanI_{4} +  2\langle w_{u}^{\mathcal{L}}, \divergence (w_{u}^{\otimes 2} - w_{b}^{\otimes 2}) \rangle(t) + 2\langle w_{b}^{\mathcal{L}}, \divergence (w_{b} \otimes w_{u} - w_{u} \otimes w_{b} ) \rangle(t)  \nonumber \\
\lesssim& (  \lVert w_{u}^{\mathcal{L}} \rVert_{H^{1- \frac{3\kappa}{2}}} + \lVert w_{b}^{\mathcal{L}} \rVert_{H^{1- \frac{3\kappa}{2}}})(t) ( \lVert w_{u}^{\mathcal{L}} (t)\rVert_{H^{2\kappa}} + \lVert w_{b}^{\mathcal{L}} (t)\rVert_{H^{2\kappa}} + N_{t}^{\kappa}) N_{t}^{\kappa} \nonumber \\
&+ ( \lVert w_{u}^{\mathcal{L}} \rVert_{H^{\eta}} + \lVert w_{b}^{\mathcal{L}} \rVert_{H^{\eta}})(t) ( \lVert w_{u}^{\mathcal{L}} (t)\rVert_{H^{\eta}} + \lVert w_{b}^{\mathcal{L}} (t)\rVert_{H^{\eta}} + N_{t}^{\kappa}) (N_{t}^{\kappa})^{2}.  \label{est 93}
\end{align}
\end{proposition}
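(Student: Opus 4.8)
The plan is to peel $\RomanI_{4}$ apart according to how many factors of the genuine unknown $w$ (as opposed to the auxiliary fields $Y_{\bullet}$, $Q_{\bullet}^{\mathcal{H}}$) each summand carries. First I observe that the quantities added to $\RomanI_{4}$ on the left-hand side of \eqref{est 93} cancel exactly the cubic-in-$w$ part of $\RomanI_{4}$, namely $-2\langle w_{u}^{\mathcal{L}}, \divergence(w_{u}^{\otimes 2} - w_{b}^{\otimes 2})\rangle - 2\langle w_{b}^{\mathcal{L}}, \divergence(w_{b}\otimes w_{u} - w_{u}\otimes w_{b})\rangle$, whose contribution has already been estimated in the second part of Proposition \ref{Proposition 4.5} via \eqref{est 78}. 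What remains in $\RomanI_{4}$, after using \eqref{est 38} to write $w_{\bullet} = w_{\bullet}^{\mathcal{L}} + w_{\bullet}^{\mathcal{H}}$, is the sum of: (a) the mixed transport-type terms built from $\divergence(Y_{u}\otimes_{s}w_{u})$, $\divergence(Y_{b}\otimes_{s}w_{b})$, $\divergence(w_{b}\otimes_{a}Y_{u})$, $\divergence(w_{u}\otimes_{a}Y_{b})$; (b) the commutator terms $\divergence C^{\circlesign{\prec}_{s}}(w_{u}, Q_{u}^{\mathcal{H}})$, $\divergence C^{\circlesign{\prec}_{s}}(w_{b}, Q_{b}^{\mathcal{H}})$, $\divergence C^{\circlesign{\prec}_{a}}(w_{b}, Q_{u}^{\mathcal{H}})$, $\divergence C^{\circlesign{\prec}_{a}}(w_{u}, Q_{b}^{\mathcal{H}})$; and (c) the smooth terms $\divergence(Y_{u}^{\otimes 2})$, $\divergence(Y_{b}^{\otimes 2})$, $\divergence(Y_{b}\otimes Y_{u})$, $\divergence(Y_{u}\otimes Y_{b})$. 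I would send the part of each built only from $w^{\mathcal{L}}$ into the first group on the right of \eqref{est 93}, and any part carrying at least one $w^{\mathcal{H}}$ into the second group.

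For (a) with $w = w^{\mathcal{L}}$: since $Y_{u}, Y_{b}$ are divergence-free (they solve the $\mathbb{P}_{L}$-projected equations \eqref{est 27}) and $w_{u}^{\mathcal{L}}, w_{b}^{\mathcal{L}}$ are divergence-free, expanding $\divergence(Y\otimes_{s}w^{\mathcal{L}})$ and $\divergence(w^{\mathcal{L}}\otimes_{a}Y)$ componentwise yields the transport cancellation $\langle w^{\mathcal{L}}, (Y\cdot\nabla)w^{\mathcal{L}}\rangle = 0$ of the style of \eqref{est 8}, together with a cross cancellation between the $w_{u}^{\mathcal{L}}$- and $w_{b}^{\mathcal{L}}$-pairings involving $Y_{b}$ of the style of \eqref{est 11} and \eqref{est 237}; the survivors are pieces of the form $\langle w^{\mathcal{L}}_{\bullet}, (w^{\mathcal{L}}_{\bullet}\cdot\nabla)Y_{\bullet}\rangle$, which I would bound with Lemma \ref{Lemma 3.1} (decomposing $(w^{\mathcal{L}}\cdot\nabla)Y = w^{\mathcal{L}}\prec\nabla Y + w^{\mathcal{L}}\succ\nabla Y + w^{\mathcal{L}}\circ\nabla Y$ with $\nabla Y\in\mathcal{C}^{2\kappa-1}$) by one copy of $\|w^{\mathcal{L}}_{\bullet}\|_{H^{1-3\kappa/2}}$, one copy of $\|w^{\mathcal{L}}_{\bullet}\|_{H^{2\kappa}}$, and $\|Y_{\bullet}\|_{\mathcal{C}^{2\kappa}}\le N_{t}^{\kappa}$. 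Family (c) is immediate: $Y_{\bullet}\otimes Y_{\bullet}\in\mathcal{C}^{2\kappa}$, so after a $\divergence$ it pairs against $w^{\mathcal{L}}\in H^{1-3\kappa/2}$ (note $(1-\tfrac{3\kappa}{2}) + (2\kappa-1) = \tfrac{\kappa}{2} > 0$), producing $\|w^{\mathcal{L}}_{\bullet}\|_{H^{1-3\kappa/2}}(N_{t}^{\kappa})^{2}$. Both live in the first group of \eqref{est 93}.

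For the $w^{\mathcal{H}}$-carrying parts of (a) and for all of (b), I would invoke Proposition \ref{Proposition 4.3} in the form \eqref{est 67}, i.e.\ $\|w^{\mathcal{H}}_{\bullet}\|_{H^{1-2\kappa-\delta}}\lesssim(1+\|w\|_{L^{2}})^{1-\mathfrak{a}\delta}N_{t}^{\kappa}$, and choose, term by term, the free parameter $\delta$ large enough that $1 - \mathfrak{a}\delta\le 0$ — possible because $\mathfrak{a}\ge\tfrac{11}{4}$ — so that the $(1+\|w\|_{L^{2}})$-growth disappears, the small loss of regularity in $w^{\mathcal{H}}$ being absorbed by keeping $w^{\mathcal{L}}$ in $H^{\eta}$ with $\eta < 1$. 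For the commutators I would use the explicit identity \eqref{est 30}, substituting the right-hand side of \eqref{est 29a} (resp.\ \eqref{est 29b}) for $(\partial_{t} - \nu\Delta)w_{u}$ (resp.\ $w_{b}$): these are divergences of quadratic expressions in $w, D, Y$, hence of negative regularity, but the second slot of the paraproduct is $Q_{\bullet}^{\mathcal{H}}\in\mathcal{C}^{2-\kappa}$, which by Lemma \ref{Lemma 3.3} additionally carries the gain $\|Q_{\bullet}^{\mathcal{H}}\|_{\mathcal{C}^{2-\kappa-\delta}}\lesssim\lambda_{t}^{-\delta}\|Q_{\bullet}\|_{\mathcal{C}^{2-\kappa}}$; the two spare derivatives and this $\lambda_{t}^{-\delta}$ are exactly enough to land $C^{\circlesign{\prec}_{s}}(w_{u}, Q_{u}^{\mathcal{H}})$ in a space pairing with $w^{\mathcal{L}}_{u}\in H^{\eta}$, with leftover powers of $(1+\|w\|_{L^{2}})$ killed by the same mechanism. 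The half $-2\nu\sum_{k}\partial_{k}w_{\bullet}\circlesign{\prec}_{s}\partial_{k}Q_{\bullet}^{\mathcal{H}}$ (and its $\circlesign{\prec}_{a}$-analogue) is easier, one derivative falling on the rough $w_{\bullet}$ and the other on the smooth $Q_{\bullet}^{\mathcal{H}}$. These produce the $\|w^{\mathcal{L}}_{\bullet}\|_{H^{\eta}}(N_{t}^{\kappa})^{2}$- and, when both $w^{\mathcal{H}}$'s of a product occur, $(N_{t}^{\kappa})^{3}$-type contributions of the second group of \eqref{est 93}.

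The hard part is the commutator family (b), and within it the summand coming from the $w^{\otimes 2}$-part of $(\partial_{t} - \nu\Delta)w_{u}$, i.e.\ $(\mathbb{P}_{L}\divergence w_{u}^{\otimes 2})\circlesign{\prec}_{s}Q_{u}^{\mathcal{H}}$: a priori this is cubic in $w$, which does not fit the at-most-quadratic-in-$w^{\mathcal{L}}$ shape of \eqref{est 93}, so it must be tamed by measuring $w_{u}^{\otimes 2}$ in a very low Sobolev norm (Lemma \ref{Lemma 3.2} permits this because the $\circlesign{\prec}$-pairing with the $\mathcal{C}^{2-\kappa}$-field $Q_{u}^{\mathcal{H}}$ buys regularity back), splitting $w_{u} = w_{u}^{\mathcal{L}} + w_{u}^{\mathcal{H}}$ once more, and exploiting that on $[T_{i}, T_{i+1})$ one has $\|w\|_{L^{2}}\lesssim\lambda_{t}^{1/\mathfrak{a}}$ so the $\lambda_{t}^{-\delta}$ gain dominates the residual $(1+\|w\|_{L^{2}})$-powers. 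Arranging that all the Sobolev indices ($2\kappa$, $\eta$, $1-\tfrac{3\kappa}{2}$) and all exponents of $(1+\|w\|_{L^{2}})$ close up to precisely the form of \eqref{est 93} is what forces the narrow windows $\mathfrak{a}\in[\tfrac{11}{4}, 3]$, $\kappa_{0}\le\tfrac{3}{44}$, and the existence of an admissible $\eta\in[\tfrac{3}{4} + 3\kappa, 1)$; checking this bookkeeping is the bulk of the work.
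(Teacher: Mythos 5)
Your plan is essentially the paper's proof: the added quadratic terms cancel the $w\otimes w$ part of $\RomanI_{4}$ (already handled in Proposition \ref{Proposition 4.5}), the $Y$-terms are sent to the first group of \eqref{est 93}, and the commutators are treated by substituting \eqref{est 29} into \eqref{est 30} and taming the cubic-in-$w$ summand through the $\lambda_{t}^{-\gamma}$ gain on $Q_{\bullet}^{\mathcal{H}}$, the relation $\lambda_{t}\approx(1+\lVert w\rVert_{L^{2}})^{\mathfrak{a}}$ on $[T_{i},T_{i+1})$, and Gagliardo--Nirenberg interpolation, which is exactly the mechanism of \eqref{est 120}--\eqref{est 84}; the bookkeeping you defer is realized there by the choice $\eta=\tfrac{3}{4}+3\kappa$, $\gamma=\tfrac{1}{2}+4\kappa$ of \eqref{est 83} verifying \eqref{est 81} (which at that point needs only $\mathfrak{a}\geq 2$). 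The one deviation is cosmetic: for the $2Y\otimes_{s}w$ and $2w\otimes_{a}Y$ terms no transport or MHD-type cancellations are needed, since $Y_{u},Y_{b}\in\mathcal{C}^{2\kappa}$ have positive regularity and the paper simply bounds these products in $H^{\frac{3\kappa}{2}}$ by $\lVert w\rVert_{H^{2\kappa}}\lVert Y\rVert_{\mathcal{C}^{2\kappa}}$ and pairs them against $\lVert w^{\mathcal{L}}\rVert_{H^{1-\frac{3\kappa}{2}}}$ as in \eqref{est 90}, so your extra cancellation step is harmless but superfluous.
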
 

\begin{proof}[Proof of Proposition \ref{Proposition 4.6}]
We have from \eqref{est 43d}
\begin{align}
&\RomanI_{4} +  2\langle w_{u}^{\mathcal{L}}, \divergence (w_{u}^{\otimes 2} - w_{b}^{\otimes 2}) \rangle(t) + 2\langle w_{b}^{\mathcal{L}}, \divergence (w_{b} \otimes w_{u} - w_{u} \otimes w_{b} ) \rangle(t)  \label{est 89}\\ 
=& -2 \langle w_{u}^{\mathcal{L}}, \nonumber \\
& \hspace{5mm} \divergence (2 Y_{u} \otimes_{s} w_{u} - 2Y_{b} \otimes_{s} w_{b} - C^{\circlesign{\prec}_{s}} (w_{u}, Q_{u}^{\mathcal{H}}) + Y_{u}^{\otimes 2} + C^{\circlesign{\prec}_{s}} (w_{b}, Q_{b}^{\mathcal{H}}) - Y_{b}^{\otimes 2} ) \rangle(t) \nonumber \\
& - 2 \langle w_{b}^{\mathcal{L}}, \nonumber \\
& \hspace{5mm} \divergence (2w_{b} \otimes_{a} Y_{u} - 2w_{u} \otimes_{a} Y_{b} - C^{\circlesign{\prec}_{a}} (w_{b}, Q_{u}^{\mathcal{H}}) + Y_{b} \otimes Y_{u} + C^{\circlesign{\prec}_{a}} (w_{u}, Q_{b}^{\mathcal{H}}) - Y_{u} \otimes Y_{b}) \rangle(t). \nonumber 
\end{align}
First, e.g. we estimate 
\begin{align*}
& \lVert w_{b} \otimes_{a} Y_{u} (t)\rVert_{H^{\frac{3\kappa}{2}}} \leq [\lVert w_{b} \circlesign{\prec}_{a} Y_{u} \rVert_{H^{\frac{3\kappa}{2}}} + \lVert w_{b} \circlesign{\succ}_{a} Y_{u} \rVert_{H^{\frac{3\kappa}{2}}} + \lVert w_{b} \circlesign{\circ}_{a} Y_{u} \rVert_{H^{\frac{3\kappa}{2}}}](t) \nonumber \\
&\overset{\eqref{est 40c} \eqref{est 40d} \eqref{est 40e}}{\lesssim} [\lVert w_{b} \rVert_{H^{-\frac{\kappa}{2}}} \lVert Y_{u} \rVert_{\mathcal{C}^{2\kappa}} + \lVert w_{b} \rVert_{H^{2\kappa}} \lVert Y_{u} \rVert_{\mathcal{C}^{-\frac{\kappa}{2}}} ](t) \nonumber \\
& \hspace{20mm} \lesssim \lVert w_{b} (t)\rVert_{H^{2 \kappa}} \lVert Y_{u} (t)\rVert_{\mathcal{C}^{2\kappa}} 
\overset{\eqref{est 39} \eqref{est 68} \eqref{est 67}}{\lesssim}  (\lVert w_{b}^{\mathcal{L}} (t)\rVert_{H^{2\kappa}} + N_{t}^{\kappa}) N_{t}^{\kappa}
\end{align*}
so that 
\begin{align*}
&\langle w_{b}^{\mathcal{L}}, \divergence (w_{b} \otimes_{a} Y_{u}) \rangle(t) \lesssim \lVert w_{b}^{\mathcal{L}} (t)\rVert_{H^{1- \frac{3\kappa}{2}}} \lVert w_{b} \otimes_{a} Y_{u} (t)\rVert_{H^{\frac{3\kappa}{2}}} \nonumber \\
\lesssim& \lVert w_{b}^{\mathcal{L}} (t)\rVert_{H^{1- \frac{3\kappa}{2}}} (\lVert w_{b}^{\mathcal{L}} (t)\rVert_{H^{2\kappa}} + N_{t}^{\kappa}) N_{t}^{\kappa}.
\end{align*}
Thus, similar computations on the analogous terms in \eqref{est 89} show altogether 
\begin{align}
& - 2 \langle w_{u}^{\mathcal{L}}, \divergence (2Y_{u} \otimes_{s} w_{u} - 2 Y_{b} \otimes_{s} w_{b} ) \rangle(t)  - 2 \langle w_{b}^{\mathcal{L}}, \divergence (2w_{b} \otimes_{a} Y_{u} - 2 w_{u} \otimes_{a} Y_{b} ) \rangle(t) \nonumber \\
\lesssim& (\lVert w_{u}^{\mathcal{L}} \rVert_{H^{1- \frac{3\kappa}{2}}} + \lVert w_{b}^{\mathcal{L}} \rVert_{H^{1- \frac{3\kappa}{2}}})(t) ( \lVert w_{u}^{\mathcal{L}} (t)\rVert_{H^{2\kappa}} + \lVert w_{b}^{\mathcal{L}} (t)\rVert_{H^{2\kappa}} + N_{t}^{\kappa}) N_{t}^{\kappa}. \label{est 90}
\end{align}
Next, we estimate as an example,
\begin{align*}
&-2 \langle w_{b}^{\mathcal{L}}, \divergence (Y_{b} \otimes Y_{u}) \rangle(t) \nonumber\\
\lesssim&  \lVert w_{u}^{\mathcal{L}} (t)\rVert_{H^{1- \frac{3\kappa}{2}}} (\lVert Y_{b} \circlesign{\succ} Y_{u} \rVert_{H^{\frac{3\kappa}{2}}} + \lVert Y_{b} \circlesign{\prec} Y_{u} \rVert_{H^{ \frac{3\kappa}{2}}} + \lVert Y_{b} \circlesign{\circ}Y_{u} \rVert_{H^{\frac{3\kappa}{2}}})(t)  \\
\overset{\eqref{est 40}}{\lesssim}&  \lVert w_{u}^{\mathcal{L}} (t)\rVert_{H^{1- \frac{3\kappa}{2}}} (\lVert Y_{b} \rVert_{H^{\frac{7\kappa}{4}}} \lVert Y_{u} \rVert_{\mathcal{C}^{-\frac{\kappa}{4}}}   + \lVert Y_{b} \rVert_{\mathcal{C}^{-\frac{\kappa}{4}}} \lVert Y_{u} \rVert_{H^{\frac{7\kappa}{4}}} )(t)   \overset{\eqref{est 39} }{\lesssim}  \lVert w_{u}^{\mathcal{L}} \rVert_{H^{1- \frac{3\kappa}{2}}} (N_{t}^{\kappa})^{2}. \nonumber 
\end{align*}
Working similarly on analogous terms in \eqref{est 89} lead us to altogether 
\begin{align}
& -2 \langle w_{u}^{\mathcal{L}}, \divergence (Y_{u}^{\otimes 2} - Y_{b}^{\otimes 2}) \rangle(t) - 2 \langle w_{b}^{\mathcal{L}}, \divergence (Y_{b} \otimes Y_{u} - Y_{u} \otimes Y_{b}) \rangle(t) \nonumber \\
\lesssim&( \lVert w_{u}^{\mathcal{L}} \rVert_{H^{1- \frac{3\kappa}{2}}} + \lVert w_{b}^{\mathcal{L}} \rVert_{H^{1- \frac{3\kappa}{2}}})(t) (N_{t}^{\kappa})^{2}.  \label{est 91}  
\end{align}
We now work on the remaining commutator terms in \eqref{est 89}, e.g. $2 \langle w_{b}^{\mathcal{L}}, \divergence (C^{\circlesign{\prec}_{a}} (w_{b}, Q_{u}^{\mathcal{H}})) \rangle$, for which which we recall from \eqref{est 30b}
\begin{equation}\label{est 79} 
C^{\circlesign{\prec}_{a}} (w_{b}, Q_{u}^{\mathcal{H}}) = (( \partial_{t} - \nu \Delta) w_{b}) \circlesign{\prec}_{a} Q_{u}^{\mathcal{H}} - 2\nu  \sum_{k=1}^{2} \partial_{k} w_{b} \circlesign{\prec}_{a} \partial_{k} Q_{u}^{\mathcal{H}}. 
\end{equation} 
For the first term in \eqref{est 79} we write using \eqref{est 29b} 
\begin{align} 
&(( \partial_{t} -\nu \Delta) w_{b}) \circlesign{\prec}_{a} Q_{u}^{\mathcal{H}} \label{est 82}\\
=& - [ \mathbb{P}_{L} \divergence ( w_{b} \otimes w_{u} + w_{b} \otimes_{a} D_{u} + Y_{b} \otimes Y_{u} - w_{u} \otimes w_{b} - w_{u} \otimes_{a} D_{b} - Y_{u} \otimes Y_{b}) ] \circlesign{\prec}_{a} Q_{u}^{\mathcal{H}},   \nonumber 
\end{align}
within which we estimate as an example for any $\gamma \in (0, \eta - 2 \kappa]$, 
\begin{align}
&\lVert [ \divergence (w_{b} \otimes w_{u}) ] \circlesign{\prec}_{a} Q_{u}^{\mathcal{H}} (t)\rVert_{H^{1- 2 \kappa - \gamma}} \label{est 120} \\ 
&\overset{\eqref{est 40c}\eqref{est 38a}}{\lesssim} \lVert w_{b} \otimes w_{u} (t)\rVert_{H^{-\frac{\kappa}{2}}} \lVert \mathcal{H}_{\lambda_{t}} Q_{u} (t)\rVert_{\mathcal{C}^{2- \frac{3\kappa}{2} - \gamma}}  \overset{\eqref{est 33} \eqref{est 32} \eqref{est 39} }{\lesssim} \lVert w_{b} (t)\rVert_{L^{4}} \lVert w_{u} (t)\rVert_{L^{4}} \lambda_{t}^{-\gamma} N_{t}^{\kappa}  \nonumber  \\
&\overset{\eqref{est 38} \eqref{est 67}}{\lesssim}  ( \lVert w_{b}^{\mathcal{L}} (t)\rVert_{H^{\frac{1}{2}}}^{2} + \lVert w_{u}^{\mathcal{L}} (t)\rVert_{H^{\frac{1}{2}}}^{2} + (N_{t}^{\kappa})^{2}) (1+ \lVert w_{u} \rVert_{L^{2}} + \lVert w_{b} \rVert_{L^{2}})^{-\mathfrak{a} \gamma}(t) N_{t}^{\kappa}, \nonumber 
\end{align}
where we utilized the embedding $H^{\frac{1}{2}} (\mathbb{T}^{2}) \hookrightarrow L^{4}(\mathbb{T}^{2})$, so that 
\begin{align}
& \langle w_{b}^{\mathcal{L}}, \divergence [\mathbb{P}_{L} \divergence (w_{b} \otimes w_{u}) \circlesign{\prec}_{a} Q_{u}^{\mathcal{H}} ] \rangle(t)  \nonumber \\
&\lesssim \lVert w_{b}^{\mathcal{L}} (t)\rVert_{L^{2}}^{1- \frac{2\kappa + \gamma}{\eta}} \lVert w_{b}^{\mathcal{L}} (t)\rVert_{H^{\eta}}^{\frac{2\kappa + \gamma}{\eta}} [ \lVert w_{b}^{\mathcal{L}} (t)\rVert_{L^{2}}^{2(1- \frac{1}{2\eta})} \lVert w_{b}^{\mathcal{L}} (t)\rVert_{H^{\eta}}^{2( \frac{1}{2\eta})} + \lVert w_{u}^{\mathcal{L}} (t)\rVert_{L^{2}}^{2(1- \frac{1}{2\eta})} \lVert w_{b}^{\mathcal{L}} \rVert_{H^{\eta}}^{2( \frac{1}{2\eta})} + (N_{t}^{\kappa})^{2} ] \nonumber\\
& \hspace{55mm} \times (1+ \lVert w_{u} \rVert_{L^{2}} + \lVert w_{b} \rVert_{L^{2}})^{-\mathfrak{a} \gamma}(t) N_{t}^{\kappa} \label{est 80}
\end{align}
due to Gagliardo-Nirenberg inequalities. For example, a choice of 
\begin{equation}\label{est 83}
\eta = \frac{3}{4} + 3 \kappa \text{ and } \gamma = \frac{1}{2}+ 4 \kappa
\end{equation} 
accomplishes 
\begin{equation}\label{est 81}
\frac{2\kappa + \gamma}{\eta} + 2\left(\frac{1}{2\eta} \right) = 2 \text{ and } 1- \frac{2\kappa + \gamma}{\eta} + 2\left(1- \frac{1}{2\eta} \right) - \mathfrak{a} \gamma \leq 0 
\end{equation} 
due to $\mathfrak{a} \geq 2$. Therefore, we conclude that 
\begin{equation}\label{est 84} 
\langle w_{b}^{\mathcal{L}}, \divergence [\mathbb{P}_{L} \divergence (w_{b} \otimes w_{u}) \circlesign{\prec}_{a} Q_{u}^{\mathcal{H}} ] \rangle(t) \lesssim  ( \lVert w_{b}^{\mathcal{L}} \rVert_{H^{\eta}} + \lVert w_{u}^{\mathcal{L}} \rVert_{H^{\eta}})^{2}(t) N_{t}^{\kappa} + \lVert w_{b}^{\mathcal{L}} (t)\rVert_{H^{\eta}} (N_{t}^{\kappa})^{3}.
\end{equation} 
Similar computations lead to 
\begin{equation}\label{est 85} 
\langle w_{b}^{\mathcal{L}}, \divergence [\mathbb{P}_{L} \divergence (w_{u} \otimes w_{b}) \circlesign{\prec}_{a} Q_{u}^{\mathcal{H}} ] \rangle(t) \lesssim  ( \lVert w_{b}^{\mathcal{L}} \rVert_{H^{\eta}} + \lVert w_{u}^{\mathcal{L}} \rVert_{H^{\eta}})^{2}(t) N_{t}^{\kappa} + \lVert w_{b}^{\mathcal{L}} \rVert_{H^{\eta}} (N_{t}^{\kappa})^{3}.
\end{equation} 
Concerning the rest of the terms in \eqref{est 82}, namely 
\begin{align*}
[ \mathbb{P}_{L} \divergence (w_{b} \otimes_{a} D_{u} + Y_{b} \otimes Y_{u} - w_{u} \otimes_{a} D_{b} - Y_{u} \otimes Y_{b}) ] \circlesign{\prec}_{a} Q_{u}^{\mathcal{H}}, 
\end{align*}
we estimate as an example 
\begin{align}
&   \lVert [ \mathbb{P}_{L} \divergence (w_{b} \otimes_{a} D_{u}) ] \circlesign{\prec}_{a} Q_{u}^{\mathcal{H}} \rVert_{H^{1-\eta}} + \lVert [\mathbb{P}_{L} \divergence (Y_{b} \otimes Y_{u}) ] \circlesign{\prec}_{a} Q_{u}^{\mathcal{H}} \rVert_{H^{1-\eta}} \nonumber \\
\overset{\eqref{est 40c}}{\lesssim}&   [ \lVert w_{b} \otimes_{a} D_{u} \rVert_{H^{-\eta + \frac{3\kappa}{2}}}+ \lVert Y_{b} \otimes Y_{u} \rVert_{H^{-\eta + \frac{3\kappa}{2}}}] \lVert Q_{u} \rVert_{\mathcal{C}^{2- \frac{3\kappa}{2}}},   \label{est 123}
\end{align}
and thus for any $\eta \geq \frac{3}{4} + \frac{3\kappa}{2}$ and hence e.g.  $\eta = \frac{3}{4} + 3 \kappa$ from \eqref{est 83}, via Sobolev embedding $L^{\frac{8}{7}}(\mathbb{T}^{2}) \hookrightarrow H^{-\frac{3}{4}}(\mathbb{T}^{2})$, 
\begin{align}
& \langle w_{b}^{\mathcal{L}}, \divergence ( [ \mathbb{P}_{L} \divergence ( w_{b} \otimes_{a} D_{u} + Y_{b} \otimes Y_{u}) ] \circlesign{\prec}_{a} Q_{u}^{\mathcal{H}} ) \rangle(t) \nonumber \\
&\lesssim \lVert w_{b}^{\mathcal{L}} (t) \rVert_{H^{\eta}} [ \lVert \mathbb{P}_{L} \divergence (w_{b} \otimes_{a} D_{u}) \rVert_{H^{-1-\eta + \frac{3\kappa}{2}}} \lVert Q_{u}^{\mathcal{H}} \rVert_{\mathcal{C}^{2-\frac{3\kappa}{2}}}  + \lVert \mathbb{P}_{L} \divergence (Y_{b} \otimes Y_{u}) \rVert_{H^{-1 - \eta + \frac{3\kappa}{2}}} \lVert Q_{u}^{\mathcal{H}} \rVert_{\mathcal{C}^{2-\frac{3\kappa}{2}}} ](t) \nonumber \\
&\overset{\eqref{est 32} \eqref{est 39} }{\lesssim} \lVert w_{b}^{\mathcal{L}}(t) \rVert_{H^{\eta}} \nonumber \\
& \times [ \lVert w_{b} \circlesign{\prec}_{a} D_{u} \rVert_{H^{-\eta + \frac{3\kappa}{2}}} + \lVert w_{b} \circlesign{\succ}_{a} D_{u} \rVert_{H^{-\eta + \frac{3\kappa}{2}}} + \lVert w_{b} \circlesign{\circ}_{a} D_{u} \rVert_{H^{\frac{1}{2} - 3 \kappa}}    + \lVert Y_{b} \otimes Y_{u} \rVert_{H^{-\frac{3}{4}}} ](t) N_{t}^{\kappa}  \nonumber \\
&\overset{\eqref{est 40d} \eqref{est 40c} \eqref{est 40e}}{\lesssim} N_{t}^{\kappa} \lVert w_{b}^{\mathcal{L}} (t)\rVert_{H^{\eta}} \left( \lVert w_{b} \rVert_{H^{-\eta + \frac{5\kappa}{2}}}  \lVert D_{u} \rVert_{\mathcal{C}^{-\kappa}} + \lVert w_{b} \rVert_{H^{\frac{1}{2} - 2 \kappa}} \lVert D_{u} \rVert_{\mathcal{C}^{-\frac{1}{2} - \eta + \frac{7\kappa}{2}}} + \lVert Y_{b} \otimes Y_{u} \rVert_{L^{\frac{8}{7}}} \right)(t) \nonumber \\
&\overset{\eqref{est 67} \eqref{est 39} }{\lesssim} (N_{t}^{\kappa})^{2} \lVert w_{b}^{\mathcal{L}} (t)\rVert_{H^{\eta}} [  \lVert w_{b}^{\mathcal{L}}(t) \rVert_{H^{\eta}} + N_{t}^{\kappa} ]. \label{est 86}
\end{align} 
Similar computations for the analogous term $[ \mathbb{P}_{L} \divergence (w_{u} \otimes_{a} D_{b} + Y_{u} \otimes Y_{b}) ] \circlesign{\prec}_{a} Q_{u}^{\mathcal{H}}$ in \eqref{est 82} leads to 
\begin{align}
& \langle w_{b}^{\mathcal{L}}, \divergence ( [ \mathbb{P}_{L} \divergence ( w_{u} \otimes_{a} D_{b} + Y_{u} \otimes Y_{b}) ] \circlesign{\prec}_{a} Q_{u}^{\mathcal{H}} ) \rangle(t) \nonumber \\
\lesssim& (N_{t}^{\kappa})^{2} \lVert w_{b}^{\mathcal{L}} (t)\rVert_{H^{\eta}} [  \lVert w_{u}^{\mathcal{L}} (t)\rVert_{H^{\eta}} + N_{t}^{\kappa} ]. \label{est 3}
\end{align}   
Thus, applying \eqref{est 84} \eqref{est 85}, \eqref{est 86}, and \eqref{est 3} to \eqref{est 82} gives us  
\begin{align}
&\langle w_{b}^{\mathcal{L}}, \divergence (( \partial_{t} - \nu \Delta)w_{b} \circlesign{\prec}_{a} Q_{u}^{\mathcal{H}}) \rangle(t)\nonumber\\
\lesssim& ( \lVert w_{u}^{\mathcal{L}} \rVert_{H^{\eta}} + \lVert w_{b}^{\mathcal{L}} \rVert_{H^{\eta}})(t) ( \lVert w_{u}^{\mathcal{L}} (t)\rVert_{H^{\eta}} + \lVert w_{b}^{\mathcal{L}} (t)\rVert_{H^{\eta}} + N_{t}^{\kappa}) (N_{t}^{\kappa})^{2}.  \label{est 88}
\end{align} 
Finally, because $\eta > \frac{3\kappa}{2}$ we can estimate
\begin{align}
 \sum_{k=1}^{2} \langle w_{b}^{\mathcal{L}}, \divergence ( \partial_{k} w_{b} \circlesign{\prec}_{a} \partial_{k} Q_{u}^{\mathcal{H}}) \rangle(t) \overset{\eqref{est 40c}}{\lesssim}& \sum_{k=1}^{2} \lVert w_{b}^{\mathcal{L}} (t)\rVert_{H^{\eta}} \lVert \partial_{k} w_{b} (t)\rVert_{H^{\frac{3\kappa}{2} - \eta}} \lVert \partial_{k} Q_{u}^{\mathcal{H}} (t)\rVert_{\mathcal{C}^{1- \frac{3\kappa}{2}}} \nonumber \\
&\overset{\eqref{est 67}\eqref{est 32}\eqref{est 39} }{\lesssim} \lVert w_{b}^{\mathcal{L}} (t)\rVert_{H^{\eta}} ( \lVert w_{b}^{\mathcal{L}} (t)\rVert_{H^{\eta}} + N_{t}^{\kappa}) N_{t}^{\kappa}.  \label{est 87}
\end{align}  
Applying \eqref{est 88} and \eqref{est 87} to \eqref{est 79} gives us 
\begin{equation*}
\langle w_{b}^{\mathcal{L}}, \divergence  C^{\circlesign{\prec}_{a}} (w_{b}, Q_{u}^{\mathcal{H}}) \rangle(t) \lesssim  ( \lVert w_{u}^{\mathcal{L}} \rVert_{H^{\eta}} + \lVert w_{b}^{\mathcal{L}} \rVert_{H^{\eta}})(t) ( \lVert w_{u}^{\mathcal{L}} (t)\rVert_{H^{\eta}} + \lVert w_{b}^{\mathcal{L}} (t)\rVert_{H^{\eta}} +N_{t}^{\kappa}) (N_{t}^{\kappa})^{2}. 
\end{equation*} 
Similar computations on the three other commutators in \eqref{est 89} give altogether  
\begin{align}
&- 2 \langle w_{u}^{\mathcal{L}}, \divergence (-C^{\circlesign{\prec}_{s}} (w_{u}, Q_{u}^{\mathcal{H}}) + C^{\circlesign{\prec}_{s}} (w_{b}, Q_{b}^{\mathcal{H}})) \rangle(t) \nonumber\\
&- 2 \langle w_{b}^{\mathcal{L}}, \divergence (-C^{\circlesign{\prec}_{a}} (w_{b}, Q_{u}^{\mathcal{H}}) + C^{\circlesign{\prec}_{a}} (w_{u}, Q_{b}^{\mathcal{H}})) \rangle(t) \nonumber\\
&\lesssim ( \lVert w_{u}^{\mathcal{L}} \rVert_{H^{\eta}} + \lVert w_{b}^{\mathcal{L}} \rVert_{H^{\eta}})(t) ( \lVert w_{u}^{\mathcal{L}} (t)\rVert_{H^{\eta}} + \lVert w_{b}^{\mathcal{L}} (t)\rVert_{H^{\eta}} +N_{t}^{\kappa}) (N_{t}^{\kappa})^{2}.   \label{est 92} 
\end{align}
At last, we conclude \eqref{est 93} from applying \eqref{est 90}, \eqref{est 91}, and \eqref{est 92} to \eqref{est 89}. This completes the proof of Proposition \ref{Proposition 4.6}.
\end{proof} 

\begin{corollary}\label{Corollary 4.7} 
\rm{(Cf. \cite[Proposition 4.7]{HR23})} Fix $\lambda_{t}$ from \eqref{est 37} with $\mathfrak{a} = 3$ and $\kappa_{0} \in (0, \frac{1}{44}]$. Then there exists a universal constant $C> 0$ such that for all $\kappa \in (0, \kappa_{0}]$, all $i \in \mathbb{N}_{0}$, and all $t \in [T_{i}, T_{i+1})$, 
\begin{align}
\partial_{t}  \lVert (w_{u}^{\mathcal{L}}, w_{b}^{\mathcal{L}})(t) \rVert_{L^{2}}^{2} &
\leq - \nu \lVert (w_{u}^{\mathcal{L}}, w_{b}^{\mathcal{L}}) (t)\rVert_{\dot{H}^{1}}^{2} + 2 \left\langle 
\begin{pmatrix}
w_{u}^{\mathcal{L}} \\
w_{b}^{\mathcal{L}} 
\end{pmatrix}, 
\mathcal{A}_{t}^{\lambda_{t}} 
\begin{pmatrix}
w_{u}^{\mathcal{L}} \\
w_{b}^{\mathcal{L}} 
\end{pmatrix} 
\right\rangle(t) + r_{\lambda_{t}}(t) \lVert (w_{u}^{\mathcal{L}}, w_{b}^{\mathcal{L}}) (t)\rVert_{L^{2}}^{2}  \nonumber \\
&+ C \lambda_{t}^{\frac{1}{3}} ( \lVert w_{u}^{\mathcal{L}} \rVert_{H^{1- \frac{3\kappa}{2}}} + \lVert w_{b}^{\mathcal{L}} \rVert_{H^{1- \frac{3\kappa}{2}}})(t) (N_{t}^{\kappa})^{2} \nonumber  \\
&+ C ( N_{t}^{\kappa})^{3} ( \lVert w_{u}^{\mathcal{L}} \rVert_{H^{1- \frac{3\kappa}{2}}} + \lVert w_{b}^{\mathcal{L}} \rVert_{H^{1- \frac{3\kappa}{2}}} + \lVert w_{u}^{\mathcal{L}} \rVert_{H^{1- \frac{3\kappa}{2}}}^{2} + \lVert w_{b}^{\mathcal{L}} \rVert_{H^{1- \frac{3\kappa}{2}}}^{2})(t). \label{est 96}
\end{align}
 \end{corollary}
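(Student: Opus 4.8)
The plan is to read off \eqref{est 96} from the energy identity \eqref{est 94}, namely $\partial_t\|(w_u^{\mathcal{L}},w_b^{\mathcal{L}})(t)\|_{L^2}^2=\sum_{k=1}^4\RomanI_k$, by inserting the exact expression \eqref{est 65} for $\RomanI_1$ and invoking Propositions \ref{Proposition 4.4}, \ref{Proposition 4.5}, and \ref{Proposition 4.6} for $\RomanI_2$, $\RomanI_3$, and $\RomanI_4$ respectively. The first step is to fix the parameters compatibly: take $\mathfrak{a}=3$ and $\kappa_0\in(0,\frac{1}{44}]$ as in the statement, and set the interpolation exponent $\eta\triangleq\frac34+3\kappa$. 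One checks directly that for every $\kappa\in(0,\kappa_0]$ this $\eta$ lies in $[\frac{1+\kappa}{2},1)\cap(\frac23+\kappa,1)\cap[\frac34+3\kappa,1)$ and that $\mathfrak{a}=3\in[2,\infty)\cap[\frac{11}{4},\infty)\cap[\frac{11}{4},3]$, so that the hypotheses of all three propositions hold simultaneously (Proposition \ref{Proposition 4.6} also admits $\eta=\frac34+3\kappa$, see \eqref{est 83}). Since $\frac34+3\kappa\le1-\frac{3\kappa}{2}$ and $2\kappa\le1-\frac{3\kappa}{2}$ for $\kappa\le\kappa_0$, one has the embeddings $H^{\eta}\hookrightarrow H^{1-\frac{3\kappa}{2}}$ and $H^{2\kappa}\hookrightarrow H^{1-\frac{3\kappa}{2}}$ with norm $1$, and $N_t^\kappa\ge L_t^\kappa\ge1$ by \eqref{est 39}; these three facts will be used repeatedly to upgrade all intermediate norms to $H^{1-\frac{3\kappa}{2}}$ and to absorb lower powers of $N_t^\kappa$ into $(N_t^\kappa)^3$.

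Next I would treat the four contributions. For $\RomanI_1$, substituting \eqref{est 65} reproduces verbatim the first line of \eqref{est 96}, i.e.\ the $-\nu\|(w_u^{\mathcal{L}},w_b^{\mathcal{L}})(t)\|_{\dot H^1}^2$ term, the $\mathcal{A}_t^{\lambda_t}$ quadratic form, and the $r_{\lambda_t}(t)\|(w_u^{\mathcal{L}},w_b^{\mathcal{L}})(t)\|_{L^2}^2$ term; nothing further is done with it here. For $\RomanI_2$, Proposition \ref{Proposition 4.4} gives $|\RomanI_2|\lesssim\|(w_u^{\mathcal{L}},w_b^{\mathcal{L}})(t)\|_{H^\eta}^2N_t^\kappa\le(\|w_u^{\mathcal{L}}\|_{H^{1-3\kappa/2}}^2+\|w_b^{\mathcal{L}}\|_{H^{1-3\kappa/2}}^2)(t)(N_t^\kappa)^3$, which fits into the last bracket of \eqref{est 96}. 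For $\RomanI_3$, the first part of Proposition \ref{Proposition 4.5} gives $|\RomanI_3|\lesssim\lambda_t^{\frac13}(\|w_u^{\mathcal{L}}\|_{H^\eta}+\|w_b^{\mathcal{L}}\|_{H^\eta})(t)(N_t^\kappa)^2\le\lambda_t^{\frac13}(\|w_u^{\mathcal{L}}\|_{H^{1-3\kappa/2}}+\|w_b^{\mathcal{L}}\|_{H^{1-3\kappa/2}})(t)(N_t^\kappa)^2$, which is precisely the $\lambda_t^{1/3}$-term in \eqref{est 96}.

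For $\RomanI_4$ I would write it as the sum of $\bigl(\RomanI_4+2\langle w_u^{\mathcal{L}},\divergence(w_u^{\otimes2}-w_b^{\otimes2})\rangle+2\langle w_b^{\mathcal{L}},\divergence(w_b\otimes w_u-w_u\otimes w_b)\rangle\bigr)$ and $\bigl(-2\langle w_u^{\mathcal{L}},\divergence(w_u^{\otimes2}-w_b^{\otimes2})\rangle-2\langle w_b^{\mathcal{L}},\divergence(w_b\otimes w_u-w_u\otimes w_b)\rangle\bigr)$, bounding the first by Proposition \ref{Proposition 4.6} (the bound \eqref{est 93}) and the second by the second part of Proposition \ref{Proposition 4.5} (the bound \eqref{est 78}). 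Expanding the products in those two estimates, replacing $\|w^{\mathcal{L}}\|_{H^\eta}$ and $\|w^{\mathcal{L}}\|_{H^{2\kappa}}$ by $\|w^{\mathcal{L}}\|_{H^{1-3\kappa/2}}$, and using $(N_t^\kappa)^2\le(N_t^\kappa)^3$, every resulting summand has the form $(\|w_u^{\mathcal{L}}\|_{H^{1-3\kappa/2}}+\|w_b^{\mathcal{L}}\|_{H^{1-3\kappa/2}})(N_t^\kappa)^3$ or $(\|w_u^{\mathcal{L}}\|_{H^{1-3\kappa/2}}^2+\|w_b^{\mathcal{L}}\|_{H^{1-3\kappa/2}}^2)(N_t^\kappa)^3$, hence is absorbed into the last bracket of \eqref{est 96}. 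Summing the contributions of $\RomanI_1,\dots,\RomanI_4$ and taking $C$ to dominate the finitely many implicit constants then yields \eqref{est 96}.

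Since the analytic substance is entirely contained in Propositions \ref{Proposition 4.4}--\ref{Proposition 4.6}, the only genuine point requiring care is the uniformity of $C$: the implicit constants in those propositions depend a priori on $\eta$ and on $\kappa$, but here $\eta=\frac34+3\kappa$ and $\kappa$ both range over the bounded set $(0,\kappa_0]$, so those constants are uniformly bounded and $C$ may indeed be taken universal (independent of $\kappa$, $i$, and $t$). I would also note explicitly that no attempt is made at this stage to absorb the $H^{1-3\kappa/2}$-norms on the right-hand side of \eqref{est 96} into the $\dot H^1$-dissipation; that interpolation, together with the bound on $r_{\lambda_t}$ from \eqref{est 140b} and the spectral gap of $\mathcal{A}_t^{\lambda_t}$, belongs to the subsequent argument.
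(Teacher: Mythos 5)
Your proposal is correct and follows essentially the same route as the paper's own (very short) proof, which likewise obtains \eqref{est 96} by applying \eqref{est 65}, \eqref{est 95}, \eqref{est 70}, \eqref{est 78}, and \eqref{est 93} to \eqref{est 94}, with the parameter choice $\mathfrak{a}=3$, $\kappa_{0}\leq\frac{1}{44}$, $\eta=\frac{3}{4}+3\kappa$ making Propositions \ref{Proposition 4.4}--\ref{Proposition 4.6} simultaneously applicable and the monotonicity of Sobolev norms plus $N_{t}^{\kappa}\geq 1$ doing the bookkeeping. One cosmetic slip: since $\frac{3}{4}+3\kappa\leq 1-\frac{3\kappa}{2}$, the embeddings should be written $H^{1-\frac{3\kappa}{2}}\hookrightarrow H^{\eta}$ and $H^{1-\frac{3\kappa}{2}}\hookrightarrow H^{2\kappa}$ (your arrows are reversed), though the inequalities you actually use, $\lVert\cdot\rVert_{H^{\eta}}\leq\lVert\cdot\rVert_{H^{1-\frac{3\kappa}{2}}}$ and $\lVert\cdot\rVert_{H^{2\kappa}}\leq\lVert\cdot\rVert_{H^{1-\frac{3\kappa}{2}}}$, are the correct ones.
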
 

\begin{proof}[Proof of Corollary \ref{Corollary 4.7}]
The desired result follows from applying \eqref{est 65}, \eqref{est 95}, \eqref{est 70}, \eqref{est 78}, and \eqref{est 93} to \eqref{est 94}. 
\end{proof} 

\begin{proposition}\label{Proposition 4.8} 
\rm{(Cf. \cite[Corollary 5.1]{HR23})} Fix $\lambda_{t}$ from \eqref{est 37} with $\mathfrak{a} = 3$ and $\kappa_{0} \in (0, \frac{1}{44}]$. There exits a constant $C_{1} > 0$ and increasing continuous functions $C_{2}$ and $C_{3}$ from $\mathbb{R}_{\geq 0}$ to $\mathbb{R}_{\geq 0}$ such that for all $\kappa \in (0, \kappa_{0}]$, for all $i \in \mathbb{N}_{0}$, all $i \geq i_{0}$, and $t \in [T_{i}, T_{i+1})$, 
\begin{align}
\partial_{t}  \lVert (w_{u}^{\mathcal{L}}, & w_{b}^{\mathcal{L}}) (t)\rVert_{L^{2}}^{2}  \leq  - \frac{\nu }{2} \lVert (w_{u}^{\mathcal{L}}, w_{b}^{\mathcal{L}}) (t)\rVert_{\dot{H}^{1}}^{2} \nonumber \\
&+ ( C_{1} \ln (\lambda_{t}) + C_{2} (N_{t}^{\kappa}) )[\lVert (w_{u}^{\mathcal{L}}, w_{b}^{\mathcal{L}}) (t)\rVert_{L^{2}}^{2}  + \lVert (w_{u}^{\mathcal{L}}, w_{b}^{\mathcal{L}}  )(T_{i}) \rVert_{L^{2}}^{2}]+ C_{3} (N_{t}^{\kappa})  \label{est 100}
\end{align}  
and consequently 
\begin{align}
& \sup_{t \in [T_{i}, T_{i+1})} \lVert (w_{u}^{\mathcal{L}}, w_{b}^{\mathcal{L}}) (t) \rVert_{L^{2}}^{2} + \frac{\nu }{2} \int_{T_{i}}^{T_{i+1}} \lVert (w_{u}^{\mathcal{L}}, w_{b}^{\mathcal{L}})(s) \rVert_{\dot{H}^{1}}^{2} ds\nonumber  \\
\lesssim& e^{(T_{i+1} - T_{i})( C_{2} (N_{T_{i+1}}^{\kappa}) + C_{1} \ln (\lambda_{T_{i}}) )} \left( \lVert (w_{u}^{\mathcal{L}}, w_{b}^{\mathcal{L}}) (T_{i}) \rVert_{L^{2}}^{2} + C_{3} (N_{T_{i+1}}^{\kappa}) \right).   \label{est 99}
\end{align}  
\end{proposition}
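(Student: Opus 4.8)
The plan is to deduce the pointwise bound \eqref{est 100} from Corollary \ref{Corollary 4.7} by absorbing the quadratic-form term, the renormalization and all polynomial remainders of \eqref{est 96} into the dissipation and into an $L^{2}$-Gr\"onwall structure, and then to obtain \eqref{est 99} from \eqref{est 100} by a pathwise linear Gr\"onwall argument on $[T_{i},T_{i+1})$. The work for \eqref{est 100} rests on three ingredients applied to \eqref{est 96}. First, the renormalization constant is logarithmically bounded: from \eqref{est 140b}, using $1-e^{-x}\le 1$ and the two-dimensional estimate $\sum_{0<\lvert k\rvert\le\lambda}\lvert k\rvert^{-2}\lesssim\ln(e+\lambda)$, one has $r_{\lambda}(t)\le C_{1}\ln(e+\lambda)$ for all $t\ge 0$, hence $r_{\lambda_{t}}(t)\lVert(w_{u}^{\mathcal{L}},w_{b}^{\mathcal{L}})(t)\rVert_{L^{2}}^{2}\le C_{1}\ln(e+\lambda_{t})\lVert(w_{u}^{\mathcal{L}},w_{b}^{\mathcal{L}})(t)\rVert_{L^{2}}^{2}$. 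Second, the quadratic-form term is controlled by the bound on $\mathcal{A}_{t}^{\lambda}$ obtained, following \cite{HR23}, from the paracontrolled analysis of the operator built from $\nabla_{\text{spec}}$, the corrector $P^{\lambda}$ of \eqref{est 140a} and the renormalization \eqref{est 140b}: for $\mathbb{M}^{4}$-valued $g\in H^{1}$, uniformly in $\lambda\ge 1$,
\[
2\left\langle g,\mathcal{A}_{t}^{\lambda}g\right\rangle\le-\tfrac{\nu}{4}\lVert g\rVert_{\dot{H}^{1}}^{2}+\tilde{C}(N_{t}^{\kappa})\lVert g\rVert_{L^{2}}^{2}
\]
for an increasing continuous $\tilde{C}$. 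Together with the $-\nu\lVert(w_{u}^{\mathcal{L}},w_{b}^{\mathcal{L}})\rVert_{\dot{H}^{1}}^{2}$ already in \eqref{est 96}, the first three terms of \eqref{est 96} are thus bounded by $-\tfrac{5\nu}{4}\lVert(w_{u}^{\mathcal{L}},w_{b}^{\mathcal{L}})\rVert_{\dot{H}^{1}}^{2}+\bigl(C_{1}\ln(e+\lambda_{t})+\tilde{C}(N_{t}^{\kappa})\bigr)\lVert(w_{u}^{\mathcal{L}},w_{b}^{\mathcal{L}})\rVert_{L^{2}}^{2}$.

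It then remains to absorb the two polynomial remainders $C\lambda_{t}^{1/3}(\lVert w_{u}^{\mathcal{L}}\rVert_{H^{1-3\kappa/2}}+\lVert w_{b}^{\mathcal{L}}\rVert_{H^{1-3\kappa/2}})(N_{t}^{\kappa})^{2}$ and $C(N_{t}^{\kappa})^{3}\sum_{j\in\{u,b\}}(\lVert w_{j}^{\mathcal{L}}\rVert_{H^{1-3\kappa/2}}+\lVert w_{j}^{\mathcal{L}}\rVert_{H^{1-3\kappa/2}}^{2})$ of \eqref{est 96}. Since $w_{u}^{\mathcal{L}},w_{b}^{\mathcal{L}}$ are mean-zero, I would use the interpolation inequalities $\lVert g\rVert_{H^{1-3\kappa/2}}\lesssim\lVert g\rVert_{L^{2}}+\lVert g\rVert_{\dot{H}^{1}}$ and $\lVert g\rVert_{H^{1-3\kappa/2}}^{2}\lesssim\lVert g\rVert_{L^{2}}^{3\kappa}\lVert g\rVert_{\dot{H}^{1}}^{2-3\kappa}$ followed by Young's inequality; this absorbs every power of $\lVert(w_{u}^{\mathcal{L}},w_{b}^{\mathcal{L}})\rVert_{\dot{H}^{1}}$ into at most $\tfrac{3\nu}{4}$ of the available dissipation, at the cost of additional terms of the form $\lambda_{t}^{2/3}(N_{t}^{\kappa})^{4}$, $(N_{t}^{\kappa})^{6}$ and $C(N_{t}^{\kappa})\lVert(w_{u}^{\mathcal{L}},w_{b}^{\mathcal{L}})\rVert_{L^{2}}^{2}$. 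The only remainder carrying a power of $\lambda_{t}$ is $\lambda_{t}^{2/3}(N_{t}^{\kappa})^{4}$, and here the choice $\mathfrak{a}=3$ is decisive: $\lambda_{t}^{2/3}=(1+\lVert w_{u}(T_{i})\rVert_{L^{2}}+\lVert w_{b}(T_{i})\rVert_{L^{2}})^{2}\lesssim 1+\lVert(w_{u}^{\mathcal{L}},w_{b}^{\mathcal{L}})(T_{i})\rVert_{L^{2}}^{2}+\lVert(w_{u}^{\mathcal{H}},w_{b}^{\mathcal{H}})(T_{i})\rVert_{L^{2}}^{2}$, and Proposition \ref{Proposition 4.3} with $\delta=\tfrac{1}{3}$ (so that $\mathfrak{a}\delta=1$) gives $\lVert(w_{u}^{\mathcal{H}},w_{b}^{\mathcal{H}})(T_{i})\rVert_{L^{2}}\lesssim N_{T_{i}}^{\kappa}\le N_{t}^{\kappa}$, whence $\lambda_{t}^{2/3}(N_{t}^{\kappa})^{4}\lesssim(N_{t}^{\kappa})^{4}\lVert(w_{u}^{\mathcal{L}},w_{b}^{\mathcal{L}})(T_{i})\rVert_{L^{2}}^{2}+(N_{t}^{\kappa})^{6}$. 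This is exactly the source of the term $\lVert(w_{u}^{\mathcal{L}},w_{b}^{\mathcal{L}})(T_{i})\rVert_{L^{2}}^{2}$ on the right of \eqref{est 100}. Collecting all contributions, keeping $-\tfrac{\nu}{2}\lVert(w_{u}^{\mathcal{L}},w_{b}^{\mathcal{L}})\rVert_{\dot{H}^{1}}^{2}$ of the dissipation, using $\ln(e+\lambda_{t})\le 1+\ln\lambda_{t}$ and the trivial bound $(N_{t}^{\kappa})^{4}\le C_{1}\ln\lambda_{t}+C_{2}(N_{t}^{\kappa})$, yields \eqref{est 100} for suitable increasing continuous functions $C_{2},C_{3}$.

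Finally, \eqref{est 99} follows from \eqref{est 100} by a pathwise Gr\"onwall estimate. On $[T_{i},T_{i+1})$ with $i\ge i_{0}$ the threshold $\lambda_{t}\equiv\lambda^{i}=\lambda_{T_{i}}$ is constant and $t\mapsto N_{t}^{\kappa}$ is nondecreasing; writing $\phi(t):=\lVert(w_{u}^{\mathcal{L}},w_{b}^{\mathcal{L}})(t)\rVert_{L^{2}}^{2}$, $a:=C_{1}\ln\lambda_{T_{i}}+C_{2}(N_{T_{i+1}}^{\kappa})$ (which we may take $\ge 1$) and $b:=C_{3}(N_{T_{i+1}}^{\kappa})$, \eqref{est 100} becomes $\phi'(t)+\tfrac{\nu}{2}\lVert(w_{u}^{\mathcal{L}},w_{b}^{\mathcal{L}})(t)\rVert_{\dot{H}^{1}}^{2}\le a\phi(t)+\bigl(a\phi(T_{i})+b\bigr)$. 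Multiplying by $e^{-a(t-T_{i})}$, integrating on $[T_{i},t]$ and discarding the nonnegative dissipation gives $\phi(t)\le 2e^{a(t-T_{i})}\bigl(\phi(T_{i})+b\bigr)$, which is the supremum bound in \eqref{est 99}; integrating the differential inequality itself over $[T_{i},T_{i+1})$, inserting this pointwise bound on $\phi$ and using $\int_{T_{i}}^{T_{i+1}}e^{a(s-T_{i})}\,ds=a^{-1}(e^{a(T_{i+1}-T_{i})}-1)$ gives $\tfrac{\nu}{2}\int_{T_{i}}^{T_{i+1}}\lVert(w_{u}^{\mathcal{L}},w_{b}^{\mathcal{L}})(s)\rVert_{\dot{H}^{1}}^{2}\,ds\lesssim e^{a(T_{i+1}-T_{i})}\bigl(\phi(T_{i})+b\bigr)$. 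Summing the two bounds yields \eqref{est 99}.

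The genuinely hard step is the uniform-in-$\lambda$ quadratic-form bound on $\mathcal{A}_{t}^{\lambda}$ invoked above: it is the only singular ingredient, relying on the paracontrolled decomposition of the operator associated with $\nabla_{\text{spec}}$ together with the corrector \eqref{est 140a} and the renormalization \eqref{est 140b} — a crude bound $\lVert\nabla_{\text{spec}}\mathcal{L}_{\lambda}(X_{u},X_{b})\rVert_{L^{\infty}}\lesssim\lambda^{1+\kappa}N_{t}^{\kappa}$ is useless here since the resulting power of $\lambda_{t}$ would far exceed $\ln\lambda_{t}$ — and it is the magnetohydrodynamic ($\mathbb{M}^{4}$-block) counterpart of the operator analysis of \cite{HR23}. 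Every remaining step, namely the logarithmic bound on $r_{\lambda}$, the interpolation and Young estimates, the bookkeeping linking $\mathfrak{a}=3$ and Proposition \ref{Proposition 4.3} to the $\lVert\,\cdot\,(T_{i})\rVert_{L^{2}}^{2}$ term, and the final Gr\"onwall, is routine.
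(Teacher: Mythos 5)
Your proposal is correct and follows essentially the same route as the paper: starting from Corollary \ref{Corollary 4.7}, bounding the quadratic form of $\mathcal{A}_{t}^{\lambda_{t}}$ by the paracontrolled operator bound (Proposition \ref{Proposition 5.3}), using $r_{\lambda}\lesssim\ln\lambda$ from \eqref{est 232}, converting powers of $\lambda_{t}$ into $\lVert(w_{u}^{\mathcal{L}},w_{b}^{\mathcal{L}})(T_{i})\rVert_{L^{2}}$ via \eqref{est 37} with $\mathfrak{a}=3$ and Proposition \ref{Proposition 4.3}, absorbing the $H^{1-\frac{3\kappa}{2}}$ remainders by interpolation and Young's inequality, and closing with a linear Gr\"onwall argument on $[T_{i},T_{i+1})$ where $\lambda_{t}$ is constant and $N_{t}^{\kappa}$ nondecreasing. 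Your bookkeeping of exponents (e.g.\ $\lambda_{t}^{2/3}(N_{t}^{\kappa})^{4}$ versus the paper's $(N_{t}^{\kappa})^{\frac{12}{2+3\kappa}}$ and $(N_{t}^{\kappa})^{\frac{2}{\kappa}}$) differs only cosmetically and yields the same structure \eqref{est 100}.
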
 

\begin{proof}[Proof of Proposition \ref{Proposition 4.8}]
We fix an arbitrary $t \in [T_{i}, T_{i+1})$. Within \eqref{est 96}, we can bound 
\begin{subequations}\label{est 98} 
\begin{align}
& \left\langle 2 
\begin{pmatrix}
w_{u}^{\mathcal{L}}, \\
w_{b}^{\mathcal{L}} 
\end{pmatrix},  
\mathcal{A}_{t}^{\lambda_{t}} 
\begin{pmatrix}
w_{u}^{\mathcal{L}} \\
w_{b}^{\mathcal{L}} 
\end{pmatrix}  \right\rangle(t) 
\leq m(N_{t}^{\kappa}) \lVert (w_{u}^{\mathcal{L}}, w_{b}^{\mathcal{L}}) (t)\rVert_{L^{2}}^{2}, \label{est 98a} \\
& \lambda_{t}^{\frac{1}{3}} \overset{\eqref{est 37} \eqref{est 67}}{\lesssim}   1 + \lVert w_{u}^{\mathcal{L}}(T_{i}) \rVert_{L^{2}} + \lVert w_{b}^{\mathcal{L}} (T_{i}) \rVert_{L^{2}} + N_{t}^{\kappa}, \label{est 98b}  
\end{align}
\end{subequations} 
where $m$ is a continuous $\mathbb{R}_{+}$-valued function due to Proposition \ref{Proposition 5.3}. Applying \eqref{est 98} and \eqref{est 232} to \eqref{est 96} gives us 
\begin{align*}
& \partial_{t} (\lVert w_{u}^{\mathcal{L}} \rVert_{L^{2}}^{2} + \lVert w_{b}^{\mathcal{L}} \rVert_{L^{2}}^{2})(t)  \nonumber \\
\leq& - \frac{\nu }{2} \lVert (w_{u}^{\mathcal{L}}, w_{b}^{\mathcal{L}}) (t)\rVert_{\dot{H}^{1}}^{2} + 2m(N_{t}^{\kappa}) \lVert (w_{u}^{\mathcal{L}}, w_{b}^{\mathcal{L}} ) (t)\rVert_{L^{2}}^{2} + C \ln (\lambda_{t}) \lVert (w_{u}^{\mathcal{L}}, w_{b}^{\mathcal{L}}) (t)\rVert_{L^{2}}^{2} \nonumber \\
&+ C( \lVert w_{u}^{\mathcal{L}} (T_{i}) \rVert_{L^{2}} + \lVert w_{b}^{\mathcal{L}}(T_{i}) \rVert_{L^{2}} + N_{t}^{\kappa})^{2} (N_{t}^{\kappa})^{3( \frac{4}{2+ 3 \kappa})} + C (N_{t}^{\kappa})^{\frac{2}{\kappa}} \lVert (w_{u}^{\mathcal{L}}, w_{b}^{\mathcal{L}}) (t)\rVert_{L^{2}}^{2}. 
\end{align*}
Thus, along with $m(N_{t}^{\kappa})$, there exist a constant $C_{1}$ and increasing continuous maps $C_{2}$ and $C_{3}$ from $\mathbb{R}_{\geq 0}$ to $\mathbb{R}_{\geq 0}$ such that \eqref{est 100} holds. 
 
Next, for all $t \in [T_{i}, T_{i+1})$, we have for 
\begin{equation*}
\mu \triangleq C_{1} \ln (\lambda_{T_{i}}) + C_{2} (N_{T_{i+1}}^{\kappa}), 
\end{equation*} 
\begin{align}
& \lVert (w_{u}^{\mathcal{L}}, w_{b}^{\mathcal{L}}) (t) \rVert_{L^{2}}^{2} + \frac{\nu }{2} \int_{T_{i}}^{t} \lVert (w_{u}^{\mathcal{L}}, w_{b}^{\mathcal{L}})(s) \rVert_{\dot{H}^{1}}^{2} ds \nonumber \\
\lesssim& e^{(T_{i+1} - T_{i}) \mu}  \lVert (w_{u}^{\mathcal{L}}, w_{b}^{\mathcal{L}})(T_{i}) \rVert_{L^{2}}^{2} + e^{(T_{i+1} - T_{i}) \mu} C_{3} (N_{T_{i+1}}^{\kappa}). \label{est 102}
\end{align} 
This implies \eqref{est 99} and completes the proof of Proposition \ref{Proposition 4.8}. 
\end{proof} 

\begin{proposition}\label{Proposition 4.9}
\rm{(Cf. \cite[Lemma 5.2]{HR23})} Fix $\lambda_{t}$ from \eqref{est 37} with $\mathfrak{a} = 3$ and $\kappa_{0} \in (0, \frac{1}{44}]$. Consider $i \in \mathbb{N}$ such that $i \geq i_{0} (u^{\text{in}}, b^{\text{in}})$ and $t > 0$. If $T_{i+1} < T^{\max} \wedge t$, then for all $\kappa \in (0, \kappa_{0}]$ there exist constants $C(N_{t}^{\kappa})$ and $\tilde{C}(N_{t}^{\kappa})$ large such that 
\begin{equation}\label{est 105}
T_{i+1} - T_{i} \geq \frac{1}{\tilde{C}(N_{t}^{\kappa}) (1+ \ln (1+ i))} \ln \left( \frac{ i^{2} + 2i - C(N_{t}^{\kappa})}{i^{2} + \tilde{C} (N_{t}^{\kappa})} \right).
\end{equation} 
\end{proposition}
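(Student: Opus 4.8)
The plan is to read off \eqref{est 105} from the energy inequality \eqref{est 99}, exploiting that on $[T_i,T_{i+1})$ the cutoff is frozen at $\lambda_s=\lambda^i=(i+1)^3$ (Definition \ref{Definition 4.2}, since $\mathfrak a=3$) and that the paracontrolled correction $(w_u^{\mathcal H},w_b^{\mathcal H})$ is, for a good choice of $\delta$, controlled by $N_t^\kappa$ uniformly in $i$.

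\emph{Step 1 (a uniform bound on $(w_u^{\mathcal H},w_b^{\mathcal H})$).} Fix $t>0$ and $i\ge i_0$ with $T_i<T_{i+1}<T^{\max}\wedge t$, and pick $\delta\in[\tfrac23,1-2\kappa)$; this is admissible because $\kappa\le\kappa_0\le\tfrac1{44}$ forces $1-2\kappa>\tfrac23$. For $s\in[T_i,T_{i+1})$ one has $\|(w_u,w_b)(s)\|_{L^2}<i+1$ and $N_s^\kappa\le N_t^\kappa$, so Proposition \ref{Proposition 4.3} (with $\mathfrak a=3$, together with $H^{1-2\kappa-\delta}\hookrightarrow L^2$) gives
\begin{equation*}
\|(w_u^{\mathcal H},w_b^{\mathcal H})(s)\|_{L^2}\ \lesssim\ \bigl(1+\|(w_u,w_b)(s)\|_{L^2}\bigr)^{1-3\delta}N_t^\kappa .
\end{equation*}
Since $1-3\delta\le-1$, this yields simultaneously $\|(w_u^{\mathcal H},w_b^{\mathcal H})(s)\|_{L^2}\lesssim N_t^\kappa$ and $\bigl(1+\|(w_u,w_b)(s)\|_{L^2}\bigr)\|(w_u^{\mathcal H},w_b^{\mathcal H})(s)\|_{L^2}\lesssim N_t^\kappa$, with constants independent of $i$ and $s$.

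\emph{Step 2 (comparing $E(s):=\|(w_u^{\mathcal L},w_b^{\mathcal L})(s)\|_{L^2}^2$ with the stopping quantity).} Writing $(w_u^{\mathcal L},w_b^{\mathcal L})=(w_u,w_b)-(w_u^{\mathcal H},w_b^{\mathcal H})$, expanding $\|\cdot\|_{L^2}^2$, and inserting the two bounds of Step 1 (Cauchy--Schwarz on the cross term) produces a constant $\mathfrak C=\mathfrak C(N_t^\kappa)$, independent of $i$, with $\bigl|E(s)-\|(w_u,w_b)(s)\|_{L^2}^2\bigr|\le\mathfrak C$ on $[T_i,T_{i+1})$. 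By continuity of $s\mapsto\|(w_u,w_b)(s)\|_{L^2}$ and the definition of the stopping times, $\|(w_u,w_b)(T_i)\|_{L^2}=i$ for $i>i_0$ while $\|(w_u,w_b)(s)\|_{L^2}\to i+1$ as $s\uparrow T_{i+1}$, so
\begin{equation*}
E(T_i)\le i^2+\mathfrak C,\qquad \sup_{s\in[T_i,T_{i+1})}E(s)\ \ge\ \lim_{s\uparrow T_{i+1}}E(s)\ \ge\ (i+1)^2-\mathfrak C\ =\ i^2+2i-(\mathfrak C-1),
\end{equation*}
the single exceptional index $i=i_0$ (where only $E(T_{i_0})\le(i_0+1)^2+\mathfrak C$ is available) being absorbed by enlarging the constants below.

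\emph{Step 3 (inserting into \eqref{est 99} and taking logarithms).} On $[T_i,T_{i+1})$ one has $\lambda_{T_i}=(i+1)^3$, hence $C_1\ln\lambda_{T_i}=3C_1\ln(i+1)$; moreover $C_2(N^\kappa_{T_{i+1}})\le C_2(N_t^\kappa)$ and $C_3(N^\kappa_{T_{i+1}})\le C_3(N_t^\kappa)$ since $T_{i+1}<t$. Feeding Step 2 into \eqref{est 99} — equivalently, Gronwall-ing \eqref{est 100} on the running supremum $\sup_{\tau\in[T_i,\cdot]}E(\tau)$, which normalises the implicit constant to $1$ and bounds the exponent by $\tilde C(N_t^\kappa)(1+\ln(1+i))$ for a suitable increasing $\tilde C$ — one gets
\begin{equation*}
i^2+2i-C(N_t^\kappa)\ \le\ e^{(T_{i+1}-T_i)\,\tilde C(N_t^\kappa)(1+\ln(1+i))}\,\bigl(i^2+\tilde C(N_t^\kappa)\bigr),
\end{equation*}
where $C(N_t^\kappa):=\mathfrak C-1$ and $\tilde C(N_t^\kappa)$ is chosen large enough to dominate $\mathfrak C+C_3(N_t^\kappa)$, $3C_1+C_2(N_t^\kappa)$, the above implicit constants, and the $i=i_0$ correction. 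If $i^2+2i-C(N_t^\kappa)\le i^2+\tilde C(N_t^\kappa)$ the right-hand side of \eqref{est 105} is nonpositive and there is nothing to prove; otherwise, taking logarithms and dividing by $\tilde C(N_t^\kappa)(1+\ln(1+i))$ gives exactly \eqref{est 105}.

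\emph{The main obstacle} is Steps 1--2. A priori $(w_u^{\mathcal H},w_b^{\mathcal H})$ grows with a positive power of $\|(w_u,w_b)\|_{L^2}$, so the comparison error between $E$ and the stopped $L^2$-energy would itself grow with $i$ and destroy the argument; it is precisely the choice $\mathfrak a=3$ together with $\delta\ge\tfrac23$ (hence the hypothesis $\kappa_0\le\tfrac1{44}$, which keeps $\delta<1-2\kappa$ available) that pushes the exponent $1-3\delta$ down to $\le-1$, making both the correction and its product with $\|(w_u,w_b)\|_{L^2}$ bounded by $N_t^\kappa$ uniformly in $i$. This in turn forces the leading coefficient of $i^2+\tilde C$ on the right of the last display to be exactly $1$, which is what makes $\ln\frac{i^2+2i-C}{i^2+\tilde C}>0$ for all large $i$; summability-to-$+\infty$ of the resulting lower bounds is the feature used afterwards to conclude $T^{\max}=\infty$.
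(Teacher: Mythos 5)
Your proposal is correct and follows essentially the same route as the paper: there the Gr\"onwall bound \eqref{est 99} is rearranged into \eqref{est 103}, and Proposition \ref{Proposition 4.3} with $\mathfrak{a}=3$ and $\delta=\frac{2}{3}$ (so that the exponent $1-\mathfrak{a}\delta\le -1$) is used to compare $\lVert (w_{u}^{\mathcal{L}},w_{b}^{\mathcal{L}})\rVert_{L^{2}}$ with the stopped quantity at $T_{i}$ and $T_{i+1}-$ as in \eqref{est 104}, with $\ln \lambda_{T_{i}}=3\ln(i+1)$ producing the $1+\ln(1+i)$ factor, exactly as in your Steps 1--3. Your additional remarks (the explicit squared-energy comparison and the trivial case where the right-hand side of \eqref{est 105} is nonpositive) only make the same argument more careful.
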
 

\begin{proof}[Proof of Proposition \ref{Proposition 4.9}]
We can estimate similarly to \eqref{est 102} 
\begin{equation}\label{est 103} 
 \frac{1}{ C_{1} \ln \lambda_{T_{i}} + C_{2} (N_{t}^{\kappa})}  \ln \left( \frac{ \lVert (w_{u}^{\mathcal{L}}, w_{b}^{\mathcal{L}}) (T_{i+1}) \rVert_{L^{2}}^{2}}{ \lVert (w_{u}^{\mathcal{L}}, w_{b}^{\mathcal{L}})(T_{i}) \rVert_{L^{2}}^{2} + C_{3} (N_{t}^{\kappa})} \right) \leq T_{i+1} - T_{i}.
\end{equation}  
Furthermore, by \eqref{est 38}, \eqref{est 36}, and \eqref{est 67}, 
\begin{subequations}\label{est 104}
\begin{align}
&  \lVert w_{u}^{\mathcal{L}}(T_{i+1} -) \rVert_{L^{2}} + \lVert w_{b}^{\mathcal{L}} (T_{i+1} -) \rVert_{L^{2}} \geq i + 1 - \frac{C N_{t}^{\kappa}}{i + 2}, \\
&  \lVert w_{u}^{\mathcal{L}}(T_{i}) \rVert_{L^{2}} + \lVert w_{b}^{\mathcal{L}} (T_{i}) \rVert_{L^{2}} \leq i + \frac{C N_{t}^{\kappa}}{i}. 
\end{align}
\end{subequations}
Therefore, applying \eqref{est 104} to \eqref{est 103} leads to \eqref{est 105}, completing the proof of Proposition \ref{Proposition 4.9}. 
\end{proof} 

\begin{proposition}\label{Proposition 4.10}
\rm{(Cf. \cite[Lemma 5.3]{HR23})} Fix $\lambda_{t}$ from \eqref{est 37} with $\mathfrak{a} = 3$ and $\kappa_{0} \in (0, \frac{1}{44}]$. Then the following holds for any $\kappa \in (0, \kappa_{0})$ and $\epsilon \in (0, \kappa)$. Suppose that there exist $M > 1$ and $T > 0$ such that 
\begin{equation}\label{est 107} 
\lVert (w_{u}^{\mathcal{L}}, w_{b}^{\mathcal{L}}) (0) \rVert_{\dot{H}^{\epsilon}}^{2} + \sup_{s \in [0, T \wedge T^{\max}]} \lVert (w_{u}^{\mathcal{L}}, w_{b}^{\mathcal{L}})(t) \rVert_{L^{2}}^{2} + \nu \int_{0}^{T \wedge T^{\max}}  \lVert (w_{u}^{\mathcal{L}}, w_{b}^{\mathcal{L}})(s) \rVert_{\dot{H}^{1}}^{2} ds \leq M. 
\end{equation} 
Then there exists $C(T, M, N_{T}^{\kappa}) \in (0,\infty)$ such that 
\begin{equation}\label{est 137}
\sup_{t \in [0, T \wedge T^{\max}]} \lVert (w_{u}^{\mathcal{L}}, w_{b}^{\mathcal{L}})(t) \rVert_{H^{\epsilon}}^{2} \leq C(T, M, N_{T}^{\kappa}). 
\end{equation} 
\end{proposition}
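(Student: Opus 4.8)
The plan is to run a fresh $\dot{H}^{\epsilon}$ energy estimate on the equation \eqref{est 42} satisfied by $(w_{u}^{\mathcal{L}}, w_{b}^{\mathcal{L}})$. Writing $\mathbf{w} \triangleq (w_{u}^{\mathcal{L}}, w_{b}^{\mathcal{L}})$ for brevity, testing \eqref{est 42} against $2(-\Delta)^{\epsilon} \mathbf{w}$ produces
\begin{equation*}
\partial_{t} \lVert \mathbf{w}(t) \rVert_{\dot{H}^{\epsilon}}^{2} = - 2\nu \lVert \mathbf{w}(t) \rVert_{\dot{H}^{1+\epsilon}}^{2} + \mathcal{R}(t),
\end{equation*}
where $\mathcal{R}(t)$ collects the $\dot{H}^{\epsilon}$-analogues of $\RomanI_{1}, \dots, \RomanI_{4}$ from \eqref{est 43} together with the commutator contributions from \eqref{est 30}. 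The key structural point is that, since $\epsilon < \kappa \leq \frac{1}{44}$ is small, this estimate is comfortably subcritical: in contrast to the $L^{2}$ estimate leading to \eqref{est 65}, no renormalization is needed, because every low-frequency object $\mathcal{L}_{\lambda_{t}} X_{j}$ appearing in \eqref{est 42} is smooth and every remaining, rough, term will be absorbed into the dissipation $- \nu \lVert \mathbf{w} \rVert_{\dot{H}^{1+\epsilon}}^{2}$ after standard product estimates (Lemmas \ref{Lemma 3.1}--\ref{Lemma 3.2}), interpolation between $\dot{H}^{\epsilon}$ and $\dot{H}^{1+\epsilon}$, and Young's inequality.

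First I would record the two roles played by the hypothesis \eqref{est 107}. On the one hand, $\sup_{[0, T \wedge T^{\max}]} \lVert \mathbf{w} \rVert_{L^{2}}^{2} \leq M$ together with Proposition \ref{Proposition 4.3} bounds $\lVert w_{u}^{\mathcal{H}} \rVert_{H^{1-2\kappa-\delta}} + \lVert w_{b}^{\mathcal{H}} \rVert_{H^{1-2\kappa-\delta}} \lesssim (1+M) N_{T}^{\kappa}$, hence $\lVert (w_{u}, w_{b}) \rVert_{C([0, T\wedge T^{\max}]; L^{2})} \leq C(M, N_{T}^{\kappa})$ and, via \eqref{est 37} with $\mathfrak{a} = 3$, $\lambda_{t} \leq C(M, N_{T}^{\kappa})$ for all $t \in [0, T\wedge T^{\max}]$; consequently the powers $\lambda_{t}^{\theta}$ produced when Lemma \ref{Lemma 3.3} is applied to $\mathcal{L}_{\lambda_{t}} X_{j}$ (e.g. $\lVert \mathcal{L}_{\lambda_{t}} X_{j} \rVert_{\mathcal{C}^{\epsilon}} \lesssim \lambda_{t}^{\epsilon + \kappa} N_{t}^{\kappa}$) stay bounded by a constant depending only on $M$ and $N_{T}^{\kappa}$. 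On the other hand, $\nu \int_{0}^{T \wedge T^{\max}} \lVert \mathbf{w} \rVert_{\dot{H}^{1}}^{2} \, ds \leq M$ supplies a time-integrable coefficient for those terms of $\mathcal{R}(t)$ that are genuinely quadratic in $\mathbf{w}$.

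The estimate of $\mathcal{R}(t)$ then proceeds termwise, reusing the algebraic rewriting already performed for the $H^{\eta}$-estimates. Writing $D_{j} = 2X_{j} + 2 Y_{j}$ and $X_{j} = \mathcal{L}_{\lambda_{t}} X_{j} + \mathcal{H}_{\lambda_{t}} X_{j}$, the $X$-part of $D_{j} \otimes w_{j}$ combined with the subtracted $\circlesign{\succ}$-term of \eqref{est 42} becomes $2 \mathcal{L}_{\lambda_{t}} X_{j} \otimes w_{j} + 2 \mathcal{H}_{\lambda_{t}} X_{j} \circlesign{\prec} w_{j} + 2 \mathcal{H}_{\lambda_{t}} X_{j} \circlesign{\circ} w_{j}$, exactly as in \eqref{est 66}; the first summand is smooth and the rough summands are bounded in $\dot{H}^{\epsilon}$ by $N_{t}^{\kappa} \lVert w_{j} \rVert_{\dot{H}^{\epsilon + \kappa}}$ via \eqref{est 40c} and \eqref{est 40e}, after which I split $w_{j} = w_{j}^{\mathcal{L}} + w_{j}^{\mathcal{H}}$, use $\lVert w_{j}^{\mathcal{H}} \rVert_{\dot{H}^{\epsilon + \kappa}} \lesssim (1+M) N_{T}^{\kappa}$ from Proposition \ref{Proposition 4.3} and interpolate $\lVert w_{j}^{\mathcal{L}} \rVert_{\dot{H}^{\epsilon + \kappa}} \lesssim \lVert w_{j}^{\mathcal{L}} \rVert_{\dot{H}^{\epsilon}}^{1- \kappa} \lVert w_{j}^{\mathcal{L}} \rVert_{\dot{H}^{1+\epsilon}}^{\kappa}$, so that each such term is $\leq \frac{\nu}{16} \lVert \mathbf{w} \rVert_{\dot{H}^{1+\epsilon}}^{2} + C(M, N_{T}^{\kappa}) ( \lVert \mathbf{w} \rVert_{\dot{H}^{\epsilon}}^{2} + 1)$. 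The $Y_{j}$-parts of $D_{j} \otimes w_{j}$ and the terms $Y_{u}^{\otimes 2}, Y_{b}^{\otimes 2}, Y_{u} \otimes Y_{b}$ are controlled likewise using the positive regularity $Y_{j} \in \mathcal{C}^{2\kappa}$ with norm $\lesssim N_{t}^{\kappa}$; the $w^{\mathcal{H}}$-dependent pieces $w_{j}^{\mathcal{L}} \otimes w_{j'}^{\mathcal{H}}$ and $(w_{j}^{\mathcal{H}})^{\otimes 2}$ are controlled through Proposition \ref{Proposition 4.3}; and the commutators $C^{\circlesign{\prec}_{s}}(w_{j}, Q_{j'}^{\mathcal{H}})$, $C^{\circlesign{\prec}_{a}}(w_{j}, Q_{j'}^{\mathcal{H}})$ are handled exactly as in the proof of Proposition \ref{Proposition 4.6}, substituting $(\partial_{t} - \nu\Delta) w_{j}$ from \eqref{est 29} and using $Q_{j'}^{\mathcal{H}} \in \mathcal{C}^{2 - \frac{3\kappa}{2}}$ with norm $\lesssim N_{t}^{\kappa}$. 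The only terms not carrying a factor $\lVert \mathbf{w} \rVert_{\dot{H}^{\epsilon}}^{2}$ are the purely low-frequency quadratic ones $\langle (-\Delta)^{\epsilon} w_{j}^{\mathcal{L}}, \divergence ( w_{j'}^{\mathcal{L}} \otimes w_{j''}^{\mathcal{L}}) \rangle$, which by Lemma \ref{Lemma 3.2} and interpolation (two derivatives on three factors in $2$D) are $\lesssim \lVert \mathbf{w} \rVert_{L^{2}} \lVert \mathbf{w} \rVert_{\dot{H}^{1}}^{2} \leq M^{\frac{1}{2}} \lVert \mathbf{w} \rVert_{\dot{H}^{1}}^{2}$. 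Collecting everything yields, for a.e. $t \in [0, T \wedge T^{\max}]$,
\begin{equation*}
\partial_{t} \lVert \mathbf{w}(t) \rVert_{\dot{H}^{\epsilon}}^{2} \leq - \frac{\nu}{2} \lVert \mathbf{w}(t) \rVert_{\dot{H}^{1+\epsilon}}^{2} + C(M, N_{T}^{\kappa}) \lVert \mathbf{w}(t) \rVert_{\dot{H}^{\epsilon}}^{2} + C(M, N_{T}^{\kappa})(1 + \lVert \mathbf{w}(t) \rVert_{\dot{H}^{1}}^{2}).
\end{equation*}

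Finally I would integrate this differential inequality. Since $\lVert (w_{u}, w_{b}) \rVert_{L^{2}}$ is bounded on $[0, T\wedge T^{\max}]$, only finitely many of the stopping times $T_{i}$ of Definition \ref{Definition 4.2} lie there, so $\lambda_{t}$ is piecewise constant with finitely many jumps; across each $T_{i}$ the solution $(w_{u}, w_{b})$ is continuous while $w_{j}^{\mathcal{L}}$ jumps only through $w_{j}^{\mathcal{H}}$, and that jump is controlled by $\lVert \mathcal{H}_{\lambda^{i}} Q_{j} - \mathcal{H}_{\lambda^{i-1}} Q_{j} \rVert_{\mathcal{C}^{2-\frac{3\kappa}{2}}} \lesssim N_{T}^{\kappa}$ via \eqref{est 38} and Lemma \ref{Lemma 3.3}, hence contributes at most $C(M, N_{T}^{\kappa})$ to $\lVert \mathbf{w} \rVert_{\dot{H}^{\epsilon}}^{2}$. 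Running Grönwall's lemma on each subinterval, using $\lVert \mathbf{w}(0) \rVert_{\dot{H}^{\epsilon}}^{2} \leq M$ and $\nu \int_{0}^{T\wedge T^{\max}} \lVert \mathbf{w} \rVert_{\dot{H}^{1}}^{2} \leq M$, and adding the finitely many jump contributions, gives $\sup_{[0, T\wedge T^{\max}]} \lVert \mathbf{w} \rVert_{\dot{H}^{\epsilon}}^{2} \leq C(T, M, N_{T}^{\kappa})$; together with $\sup \lVert \mathbf{w} \rVert_{L^{2}}^{2} \leq M$ this is \eqref{est 137}. The main technical point, which I expect to be the most delicate, is verifying that the rough resonant products $\mathcal{H}_{\lambda_{t}} X_{j} \circlesign{\circ} w_{j}$ and the commutator terms genuinely close despite $\mathbf{w}$ having only $\dot{H}^{\epsilon}$-regularity a priori with $\epsilon < \kappa$; this works precisely because the parabolic gain to $\dot{H}^{1+\epsilon}$ furnished by \eqref{est 107} leaves enough room in the interpolations, as $\kappa$ is small.
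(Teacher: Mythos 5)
Your overall strategy is the same as the paper's: test \eqref{est 42} against $2(-\Delta)^{\epsilon}(w_{u}^{\mathcal{L}}, w_{b}^{\mathcal{L}})$, observe that no renormalization is needed because $\lambda_{t}\leq C(M,N_{T}^{\kappa})$ under \eqref{est 107}, absorb the paraproduct, high-frequency and commutator contributions (the analogues of $\RomanII_{1}$--$\RomanII_{3}$ and the $C^{\circlesign{\prec}_{s}},C^{\circlesign{\prec}_{a}}$ terms in $\RomanII_{4}$) into the dissipation $-\nu\lVert\cdot\rVert_{\dot{H}^{1+\epsilon}}^{2}$ plus $C(M,N_{T}^{\kappa})$, and close with Gr\"onwall using the time-integrability of $\lVert(w_{u}^{\mathcal{L}},w_{b}^{\mathcal{L}})\rVert_{\dot{H}^{1}}^{2}$. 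Up to numerology this matches \eqref{est 133}, \eqref{est 134}, \eqref{est 135} and the commutator part of $\RomanII_{4}$.

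However, there is a genuine gap exactly at the step you dismiss as routine: the purely low-frequency convective terms. Your claimed bound $\lvert\langle(-\Delta)^{\epsilon}w^{\mathcal{L}},\divergence(w^{\mathcal{L}}\otimes w^{\mathcal{L}})\rangle\rvert\lesssim\lVert \mathbf{w}\rVert_{L^{2}}\lVert \mathbf{w}\rVert_{\dot{H}^{1}}^{2}$ is false for $\epsilon>0$: the trilinear form carries $1+2\epsilon$ derivatives on three factors, while $L^{2}\times\dot{H}^{1}\times\dot{H}^{1}$ only pays for $1$ derivative in $2$D (rescaling $w\mapsto w(\mu\cdot)$ shows a deficit of order $\mu^{2\epsilon}$), so this term is \emph{not} controlled by $M^{1/2}\lVert\mathbf{w}\rVert_{\dot{H}^{1}}^{2}$, and a crude product estimate instead yields something like $\lVert\mathbf{w}\rVert_{\dot{H}^{2\epsilon}}\lVert\mathbf{w}\rVert_{\dot{H}^{1}}^{2}$, whose Young-absorption against $\lVert\mathbf{w}\rVert_{\dot{H}^{1+\epsilon}}^{2}$ produces a Gr\"onwall coefficient $\lVert\mathbf{w}\rVert_{\dot{H}^{1}}^{4/(2-\epsilon)}$ that is no longer integrable from \eqref{est 107} without further work. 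This is precisely why the paper does not estimate these terms directly: it exploits the transport structure, writing them as in \eqref{est 115}, using $\int(w_{u}^{\mathcal{L}}\cdot\nabla)(-\Delta)^{\epsilon/2}f\cdot(-\Delta)^{\epsilon/2}f\,dx=0$ together with the MHD cross-cancellation \eqref{est 238} (which holds only for the \emph{sum} of the two mixed terms, not termwise), so that only commutators $[(-\Delta)^{\epsilon/2},(w^{\mathcal{L}}\cdot\nabla)]$ survive; Kato--Ponce type commutator estimates then give \eqref{est 114}, i.e. $\frac{\nu}{16}\lVert\mathbf{w}\rVert_{\dot{H}^{1+\epsilon}}^{2}+C(M)\lVert\mathbf{w}\rVert_{H^{1}}^{2}\lVert\mathbf{w}\rVert_{H^{\epsilon}}^{2}$, whose coefficient $\lVert\mathbf{w}\rVert_{H^{1}}^{2}$ is integrable by \eqref{est 107} and makes Gr\"onwall close. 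Your proposal contains neither the symmetrization/commutator step nor the cancellation \eqref{est 238}, and the "main technical point" you flag at the end (the resonant products $\mathcal{H}_{\lambda_{t}}X\circlesign{\circ}w$) is in fact the benign part; the quadratic transport term is where the argument must be repaired.
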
 

\begin{proof}[Proof of Proposition \ref{Proposition 4.10}]
First,
\begin{equation}\label{est 132}
\partial_{t} \lVert (w_{u}^{\mathcal{L}}, w_{b}^{\mathcal{L}})(t) \rVert_{\dot{H}^{\epsilon}}^{2}  =  \sum_{k=1}^{4} \RomanII_{k}, 
\end{equation} 
where 
\begin{subequations}\label{est 106} 
\begin{align}
\RomanII_{1} \triangleq& 2 \langle (-\Delta)^{\epsilon} w_{u}^{\mathcal{L}}, \nu \Delta w_{u}^{\mathcal{L}} - \divergence ( 2 (\mathcal{L}_{\lambda_{t}} X_{u}) \otimes_{s} w_{u}^{\mathcal{L}} - 2( \mathcal{L}_{\lambda_{t}} X_{b}) \otimes_{s} w_{b}^{\mathcal{L}} ) \rangle(t) \nonumber  \\
+& 2 \langle (-\Delta)^{\epsilon}  w_{b}^{\mathcal{L}}, \nu \Delta w_{b}^{\mathcal{L}} - \divergence (2w_{b}^{\mathcal{L}} \otimes_{a} ( \mathcal{L}_{\lambda_{t}} X_{u}) - 2w_{u}^{\mathcal{L}} \otimes_{a} (\mathcal{L}_{\lambda_{t}} X_{b} ) \rangle(t),\label{est 106a}\\
\RomanII_{2} \triangleq& -2 \langle (-\Delta)^{\epsilon} w_{u}^{\mathcal{L}}, \divergence ( 2 ( \mathcal{H}_{\lambda_{t}} X_{u}) \otimes_{s} w_{u}^{\mathcal{L}} - 2 (\mathcal{H}_{\lambda_{t}} X_{u}) \circlesign{\succ}_{s} w_{u}^{\mathcal{L}} \nonumber \\
& \hspace{14mm} - 2 ( \mathcal{H}_{\lambda_{t}} X_{b}) \otimes_{s} w_{b}^{\mathcal{L}} + 2(\mathcal{H}_{\lambda_{t}} X_{b}) \circlesign{\succ}_{s} w_{b}^{\mathcal{L}} ) \rangle(t) \nonumber \\
& - 2 \langle (-\Delta)^{\epsilon}  w_{b}^{\mathcal{L}}, \divergence (2w_{b}^{\mathcal{L}} \otimes_{a} (\mathcal{H}_{\lambda_{t}} X_{u}) - 2 w_{b}^{\mathcal{L}} \circlesign{\prec}_{a} (\mathcal{H}_{\lambda_{t}} X_{u}) \nonumber  \\
& \hspace{14mm} -2 w_{u}^{\mathcal{L}} \otimes_{a} (\mathcal{H}_{\lambda_{t}} X_{b}) + 2 w_{u}^{\mathcal{L}} \circlesign{\prec}_{a} (\mathcal{H}_{\lambda_{t}} X_{b}) \rangle(t),  \label{est 106b}\\
\RomanII_{3} \triangleq& - 2 \langle (-\Delta)^{\epsilon} w_{u}^{\mathcal{L}}, \divergence (2X_{u} \otimes_{s} w_{u}^{\mathcal{H}} - 2 (\mathcal{H}_{\lambda_{t}} X_{u}) \circlesign{\succ}_{s} w_{u}^{\mathcal{H}}  \nonumber \\
& \hspace{14mm} - 2 X_{b} \otimes_{s} w_{b}^{\mathcal{H}} + 2(\mathcal{H}_{\lambda_{t}} X_{b}) \circlesign{\succ}_{s} w_{b}^{\mathcal{H}}) \rangle(t) \nonumber \\
&-2 \langle (-\Delta)^{\epsilon} w_{b}^{\mathcal{L}}, \divergence (2w_{b}^{\mathcal{H}} \otimes_{a} X_{u} - 2 w_{b}^{\mathcal{H}} \circlesign{\prec}_{a} (\mathcal{H}_{\lambda_{t}} X_{u})  \nonumber \\
& \hspace{14mm} - 2 w_{u}^{\mathcal{H}} \otimes_{a} X_{b} + 2w_{u}^{\mathcal{H}} \circlesign{\prec}_{a} (\mathcal{H}_{\lambda_{t}} X_{b} ) \rangle(t), \label{est 106c}\\
\RomanII_{4} \triangleq& -2 \langle (-\Delta)^{\epsilon} w_{u}^{\mathcal{L}}, \divergence (w_{u}^{\otimes 2} + 2Y_{u} \otimes_{s} w_{u} - w_{b}^{\otimes 2} - 2Y_{b} \otimes_{s} w_{b} \nonumber  \\
& \hspace{14mm} - C^{\circlesign{\prec}_{s}} (w_{u}, Q_{u}^{\mathcal{H}}) + Y_{u}^{\otimes 2} + C^{\circlesign{\prec}_{s}} (w_{b}, Q_{b}^{\mathcal{H}}) - Y_{b}^{\otimes 2} ) \rangle(t) \nonumber \\
& -2 \langle  (-\Delta)^{\epsilon}  w_{b}^{\mathcal{L}}, \divergence (w_{b} \otimes w_{u} + 2w_{b} \otimes_{a} Y_{u} - w_{u} \otimes w_{b} - 2w_{u} \otimes_{a} Y_{b} \nonumber\\
& \hspace{14mm} - C^{\circlesign{\prec}_{a}} (w_{b}, Q_{u}^{\mathcal{H}}) + Y_{b} \otimes Y_{u} + C^{\circlesign{\prec}_{a}} (w_{u}, Q_{b}^{\mathcal{H}})- Y_{u} \otimes Y_{b}) \rangle(t). \label{est 106d}
\end{align}
\end{subequations} 
First, within $\RomanII_{1}$, e.g. we estimate using \eqref{est 40c}, \eqref{est 40b}, and \eqref{est 40e}
\begin{align*}
&-4 \langle (-\Delta)^{\epsilon} w_{b}^{\mathcal{L}}, \divergence (w_{b}^{\mathcal{L}} \otimes_{a} (\mathcal{L}_{\lambda_{t}} X_{u} ) ) \rangle(t)    \\
\lesssim&  \lVert w_{b}^{\mathcal{L}} (t)\rVert_{H^{1}} ( \lVert w_{b}^{\mathcal{L}} \circlesign{\prec}_{a} ( \mathcal{L}_{\lambda_{t}} X_{u}) \rVert_{H^{2\epsilon}} + \lVert w_{b}^{\mathcal{L}} \circlesign{\succ}_{a} (\mathcal{L}_{\lambda_{t}} X_{u} ) \rVert_{H^{2\epsilon}} + \lVert w_{b}^{\mathcal{L}} \circlesign{\circ}_{a} ( \mathcal{L}_{\lambda_{t}} X_{u}) \rVert_{H^{2\epsilon}})(t) \nonumber \\
\lesssim & \lVert w_{b}^{\mathcal{L}} (t)\rVert_{H^{1}} ( \lVert w_{b}^{\mathcal{L}} \rVert_{H^{-\frac{1}{3} + \kappa + 2 \epsilon}} \lVert \mathcal{L}_{\lambda_{t}} X_{u} \rVert_{\mathcal{C}^{\frac{1}{3} - \kappa}}   +\lVert w_{b}^{\mathcal{L}} \rVert_{H^{2\epsilon}}  \lVert \mathcal{L}_{\lambda_{t}} X_{u} \rVert_{C^{\frac{1}{3} - \kappa}})(t)  \nonumber \\
\leq& C(M, N_{t}^{\kappa})\lVert w_{b}^{\mathcal{L}} (t)\rVert_{H^{1}} \lVert w_{b}^{\mathcal{L}} (t)\rVert_{H^{2\epsilon}} \leq \frac{\nu }{64} \lVert w_{b}^{\mathcal{L}} (t)\rVert_{\dot{H}^{1+ \epsilon}}^{2} + C(M, N_{t}^{\kappa}).  \nonumber 
\end{align*} 
Similar computations on the rest of the terms of $\RomanII_{1}$ in \eqref{est 106a} lead to altogether  
\begin{align*}
& -4 \langle (-\Delta)^{\epsilon} w_{u}^{\mathcal{L}},  \divergence (  (\mathcal{L}_{\lambda_{t}} X_{u}) \otimes_{s} w_{u}^{\mathcal{L}} -  ( \mathcal{L}_{\lambda_{t}} X_{b}) \otimes_{s} w_{b}^{\mathcal{L}} ) \rangle(t)  \\
&- 4 \langle (-\Delta)^{\epsilon}  w_{b}^{\mathcal{L}},   \divergence (w_{b}^{\mathcal{L}} \otimes_{a} ( \mathcal{L}_{\lambda_{t}} X_{u}) - w_{u}^{\mathcal{L}} \otimes_{a} (\mathcal{L}_{\lambda_{t}} X_{b} ) \rangle(t) \leq \frac{\nu }{16}  \lVert (w_{u}^{\mathcal{L}}, w_{b}^{\mathcal{L}})(t) \rVert_{\dot{H}^{1+ \epsilon}}^{2} + C (M, N_{t}^{\kappa}) \nonumber 
\end{align*}
so that applying this to \eqref{est 106a} gives us 
\begin{equation}\label{est 133} 
\RomanII_{1} \leq - \frac{31\nu }{16} \lVert (w_{u}^{\mathcal{L}}, w_{b}^{\mathcal{L}}) (t)\rVert_{\dot{H}^{1+ \epsilon}}^{2} + C(M, N_{t}^{\kappa}). 
\end{equation} 
Next, for $\RomanII_{2}$ in \eqref{est 106b}, for convenience we rewrite it as 
\begin{align}
\RomanII_{2} =& -4 \langle (-\Delta)^{\epsilon} w_{u}^{\mathcal{L}}, \divergence ( ( \mathcal{H}_{\lambda_{t}} X_{u}) \circlesign{\prec}_{s} w_{u}^{\mathcal{L}} + (\mathcal{H}_{\lambda_{t}} X_{u}) \circlesign{\circ}_{s}  w_{u}^{\mathcal{L}}  \nonumber \\
&\hspace{25mm}  - (\mathcal{H}_{\lambda_{t}} X_{b}) \circlesign{\prec}_{s} w_{b}^{\mathcal{L}} - (\mathcal{H}_{\lambda_{t}} X_{b}) \circlesign{\circ}_{s} w_{b}^{\mathcal{L}} \rangle(t) \nonumber  \\
& - 4 \langle (-\Delta)^{\epsilon} w_{b}^{\mathcal{L}}, \divergence ( w_{b}^{\mathcal{L}} \circlesign{\succ}_{a} (\mathcal{H}_{\lambda_{t}} X_{u}) + w_{b}^{\mathcal{L}} \circlesign{\circ}_{a} (\mathcal{H}_{\lambda_{t}} X_{u})  \nonumber \\
& \hspace{25mm} - w_{u}^{\mathcal{L}} \circlesign{\succ}_{a} (\mathcal{H}_{\lambda_{t}} X_{b}) - w_{u}^{\mathcal{L}} \circlesign{\circ}_{a} (\mathcal{H}_{\lambda_{t}} X_{b}) \rangle(t), \label{est 245}
\end{align}
and e.g. estimate 
\begin{align*}
& -4 \langle (-\Delta)^{\epsilon} w_{b}^{\mathcal{L}}, \divergence (w_{b}^{\mathcal{L}} \circlesign{\succ}_{a} (\mathcal{H}_{\lambda_{t}} X_{u}) + w_{b}^{\mathcal{L}} \circlesign{\circ}_{a} (\mathcal{H}_{\lambda_{t}} X_{u}) \rangle(t) \\
\overset{\eqref{est 40}}{\lesssim}& \lVert w_{b}^{\mathcal{L}} (t)\rVert_{H^{\frac{1}{2} + 3 \kappa + \epsilon}}  \lVert w_{b}^{\mathcal{L}}(t) \rVert_{H^{\frac{1}{2} - 2 \kappa + \epsilon}} \lVert \mathcal{H}_{\lambda_{t}} X_{u} (t)\rVert_{\mathcal{C}^{-\kappa}} \nonumber \\
\overset{\eqref{est 39} }{\lesssim}& \lVert w_{b}^{\mathcal{L}} (t)\rVert_{L^{2}}^{\frac{1- 6 \kappa}{2(1+ \epsilon)}}\lVert w_{b}^{\mathcal{L}}  (t)\rVert_{\dot{H}^{1+ \epsilon}}^{\frac{1+ 6 \kappa+ 2 \epsilon}{2(1+ \epsilon)}} \lVert w_{b}^{\mathcal{L}} (t)\rVert_{L^{2}}^{\frac{1+ 4 \kappa}{2(1+ \epsilon)}} \lVert w_{u}^{\mathcal{L}} (t)\rVert_{\dot{H}^{1+ \epsilon}}^{\frac{1- 4 \kappa + 2 \epsilon}{2(1+ \epsilon)}} N_{t}^{\kappa}   \leq \frac{\nu }{64} \lVert w_{b}^{\mathcal{L}} (t)\rVert_{\dot{H}^{1+ \epsilon}}^{2}+  C(M, N_{t}^{\kappa}).  \nonumber 
\end{align*}
Analogous computations on similar terms in \eqref{est 245} lead to altogether 
\begin{equation}\label{est 134} 
\RomanII_{2} \leq \frac{\nu }{16} \lVert (w_{u}^{\mathcal{L}}, w_{b}^{\mathcal{L}}) (t)\rVert_{\dot{H}^{1+ \epsilon}}^{2} + C(M, N_{t}^{\kappa}). 
\end{equation} 
Next, concerning $\RomanII_{3}$ from \eqref{est 106c}, we use \eqref{est 68} to rewrite for convenience 
\begin{align}
\RomanII_{3} =& -4 [ \langle (-\Delta)^{\epsilon} w_{u}^{\mathcal{L}}, \divergence ( \mathcal{L}_{\lambda_{t}} X_{u} \otimes_{s} w_{u}^{\mathcal{H}} - \mathcal{L}_{\lambda_{t}} X_{b} \otimes_{s} w_{b}^{\mathcal{H}} ) \rangle \nonumber \\
& \hspace{5mm}+ \langle (-\Delta)^{\epsilon} w_{b}^{\mathcal{L}}, \divergence (w_{b}^{\mathcal{H}} \otimes_{a} \mathcal{L}_{\lambda_{t}} X_{u} - w_{u}^{\mathcal{H}} \otimes_{a} \mathcal{L}_{\lambda_{t}} X_{b} ) \rangle ](t) \nonumber \\
& -4 [ \langle (-\Delta)^{\epsilon} w_{u}^{\mathcal{L}}, \divergence ( \mathcal{H}_{\lambda_{t}} X_{u} \circlesign{\prec}_{s} w_{u}^{\mathcal{H}} + \mathcal{H}_{\lambda_{t}} X_{u} \circlesign{\circ}_{s} w_{u}^{\mathcal{H}} \nonumber \\
& \hspace{25mm} - \mathcal{H}_{\lambda_{t}} X_{b} \circlesign{\prec}_{s} w_{b}^{\mathcal{H}} - \mathcal{H}_{\lambda_{t}} X_{b} \circlesign{\circ}_{s} w_{b}^{\mathcal{H}} ) \rangle \nonumber \\
& \hspace{5mm} + \langle (-\Delta)^{\epsilon} w_{b}^{\mathcal{L}}, \divergence ( w_{b}^{\mathcal{H}} \circlesign{\succ}_{a} \mathcal{H}_{\lambda_{t}} X_{u} + w_{b}^{\mathcal{H}} \circlesign{\circ}_{a} \mathcal{H}_{\lambda_{t}} X_{u}   \nonumber \\
& \hspace{25mm} - w_{u}^{\mathcal{H}} \circlesign{\succ}_{a} \mathcal{H}_{\lambda_{t}} X_{b} - w_{u}^{\mathcal{H}} \circlesign{\circ}_{a} \mathcal{H}_{\lambda_{t}} X_{b}) \rangle ](t). \label{est 247} 
\end{align} 
As an example, we can estimate for any $\eta \in (\frac{2}{3} + \kappa, 1)$ 
\begin{subequations}\label{est 111} 
\begin{align}
\lVert w_{u}^{\mathcal{H}} \otimes_{a} \mathcal{L}_{\lambda_{t}} X_{b} (t)\rVert_{H^{1-\eta}} &\lesssim \lambda_{t}^{\frac{1}{3}} (N_{t}^{\kappa})^{2}, \\
\lVert w_{b}^{\mathcal{H}} \circlesign{\succ}_{a} \mathcal{H}_{\lambda_{t}} X_{u} (t)\rVert_{H^{1-\eta}} + \lVert w_{b}^{\mathcal{H}} \circlesign{\circ}_{a} \mathcal{H}_{\lambda_{t}} X_{u} (t)\rVert_{H^{1-\eta}} &\lesssim (N_{t}^{\kappa})^{2}, 
\end{align} 
\end{subequations} 
which can be derived similarly to respectively \eqref{est 108} and \eqref{est 246}, and this leads to for any $\eta \in (\frac{2}{3} + \kappa, 1 - \kappa)$  
\begin{align*}
& 4 \langle ( -\Delta)^{\epsilon} w_{u}^{\mathcal{L}}, \divergence ( w_{u}^{\mathcal{H}} \otimes_{a} \mathcal{L}_{\lambda_{t}} X_{b}) \rangle(t) + 4 \langle (-\Delta)^{\epsilon} w_{b}^{\mathcal{L}}, \divergence ( w_{b}^{\mathcal{H}} \circlesign{\succ}_{a} \mathcal{H}_{\lambda_{t}} X_{u} + w_{b}^{\mathcal{H}} \circlesign{\circ}_{a} \mathcal{H}_{\lambda_{t}} X_{u}) \rangle(t) \nonumber \\
& \hspace{3mm} \overset{\eqref{est 111}}{\lesssim} \lVert w_{b}^{\mathcal{L}} (t)\rVert_{H^{\eta + 2 \epsilon}} [ \lambda_{t}^{\frac{1}{3}} (N_{t}^{\kappa})^{2} + (N_{t}^{\kappa})^{2} ] \overset{\eqref{est 36} \eqref{est 67} \eqref{est 107}}{\leq}  \frac{\nu }{64} \lVert w_{b}^{\mathcal{L}} (t)\rVert_{\dot{H}^{1+ \epsilon}}^{2} + C(M, N_{t}^{\kappa}).
\end{align*} 
Analogous computations on similar terms in \eqref{est 247} lead to 
\begin{equation}\label{est 135} 
\RomanII_{3} \leq \frac{\nu }{16} \lVert (w_{u}^{\mathcal{L}}, w_{b}^{\mathcal{L}}) (t)\rVert_{\dot{H}^{1+ \epsilon}}^{2} + C(M, N_{t}^{\kappa}). 
\end{equation} 
Concerning $\RomanII_{4}$ from \eqref{est 106d}, we work on the following four terms which we rewrite using \eqref{est 68}, \eqref{est 18}, and \eqref{est 19} for convenience:
\begin{align}
&-2 \langle (-\Delta)^{\epsilon} w_{u}^{\mathcal{L}}, \divergence (w_{u}^{\otimes 2} - w_{b}^{\otimes 2}) \rangle - 2 \langle ( -\Delta)^{\epsilon} w_{b}^{\mathcal{L}}, \divergence (w_{b} \otimes w_{u} - w_{u} \otimes w_{b}) \rangle \nonumber \\
=&  -2 \langle (-\Delta)^{\epsilon} w_{u}^{\mathcal{L}}, \divergence ( w_{u}^{\mathcal{L}} \otimes w_{u}^{\mathcal{L}}  + 2 w_{u}^{\mathcal{L}} \otimes_{s} w_{u}^{\mathcal{H}} + (w_{u}^{\mathcal{H}})^{\otimes 2}   \nonumber \\
& \hspace{25mm} - w_{b}^{\mathcal{L}} \otimes w_{b}^{\mathcal{L}}  - 2 w_{b}^{\mathcal{L}} \otimes_{s} w_{b}^{\mathcal{H}} - (w_{b}^{\mathcal{H}})^{\otimes 2} \rangle  \nonumber \\
& - 2 \langle (-\Delta)^{\epsilon} w_{b}^{\mathcal{L}}, \divergence ( 2w_{b}^{\mathcal{L}} \otimes_{a} w_{u}^{\mathcal{L}} + 2w_{b}^{\mathcal{L}} \otimes_{a} w_{u}^{\mathcal{H}} - 2 w_{u}^{\mathcal{L}} \otimes_{a} w_{b}^{\mathcal{H}} + 2w_{b}^{\mathcal{H}} \otimes_{a} w_{u}^{\mathcal{H}}).  \label{est 112} 
\end{align}
First, we work on the three products of the lower-order terms within \eqref{est 112}: 
  \begin{align}
& -2 \langle ( -\Delta)^{\epsilon} w_{u}^{\mathcal{L}}, \divergence (w_{u}^{\mathcal{L}} \otimes w_{u}^{\mathcal{L}}  - w_{b}^{\mathcal{L}} \otimes w_{b}^{\mathcal{L}}) \rangle - 2 \langle (-\Delta)^{\epsilon} w_{b}^{\mathcal{L}}, \divergence (2w_{b}^{\mathcal{L}} \otimes_{a} w_{u}^{\mathcal{L}} )\rangle   \nonumber \\
=& -2 [ \int_{\mathbb{T}^{2}} (-\Delta)^{\frac{\epsilon}{2}} [ (w_{u}^{\mathcal{L}} \cdot \nabla) w_{u}^{\mathcal{L}} ] \cdot (-\Delta)^{\frac{\epsilon}{2}} w_{u}^{\mathcal{L}} dx  \nonumber \\
&+ \int_{\mathbb{T}^{2}} (-\Delta)^{\frac{\epsilon}{2}} [ (w_{u}^{\mathcal{L}} \cdot \nabla) w_{b}^{\mathcal{L}} ] \cdot (-\Delta)^{\frac{\epsilon}{2}} w_{b}^{\mathcal{L}} dx \nonumber  \\
& - \int_{\mathbb{T}^{2}} (-\Delta)^{\frac{\epsilon}{2}} [ (w_{b}^{\mathcal{L}} \cdot \nabla) w_{b}^{\mathcal{L}} ] \cdot (-\Delta)^{\frac{\epsilon}{2}} w_{u}^{\mathcal{L}} + (-\Delta)^{\frac{\epsilon}{2}} [ (w_{b}^{\mathcal{L}} \cdot \nabla) w_{u}^{\mathcal{L}}] \cdot (-\Delta)^{\frac{\epsilon}{2}} w_{b}^{\mathcal{L}} dx].   \label{est 115} 
\end{align}
We take advantage of 
\begin{align*}
\int_{\mathbb{T}^{2}} (w_{u}^{\mathcal{L}} \cdot \nabla) (-\Delta)^{\frac{\epsilon}{2}} w_{u}^{\mathcal{L}} \cdot (-\Delta)^{\frac{\epsilon}{2}} w_{u}^{\mathcal{L}} dx = 0,  \hspace{3mm}  \int_{\mathbb{T}^{2}} (w_{u}^{\mathcal{L}} \cdot \nabla) (-\Delta)^{\frac{\epsilon}{2}} w_{b}^{\mathcal{L}} \cdot (-\Delta)^{\frac{\epsilon}{2}} w_{b}^{\mathcal{L}} dx = 0, 
\end{align*}
and while 
\begin{align*}
\int_{\mathbb{T}^{2}} (w_{b}^{\mathcal{L}} \cdot \nabla) (-\Delta)^{\frac{\epsilon}{2}} w_{b}^{\mathcal{L}} \cdot (-\Delta)^{\frac{\epsilon}{2}} w_{u}^{\mathcal{L}} dx \neq 0,  \hspace{3mm}   \int_{\mathbb{T}^{2}} (w_{b}^{\mathcal{L}} \cdot \nabla) (-\Delta)^{\frac{\epsilon}{2}} w_{u}^{\mathcal{L}} \cdot (-\Delta)^{\frac{\epsilon}{2}} w_{b}^{\mathcal{L}} dx \neq 0, 
\end{align*}
it turns out that 
\begin{equation}\label{est 238}
\int_{\mathbb{T}^{2}} (w_{b}^{\mathcal{L}} \cdot \nabla) (-\Delta)^{\frac{\epsilon}{2}} w_{b}^{\mathcal{L}} \cdot (-\Delta)^{\frac{\epsilon}{2}} w_{u}^{\mathcal{L}} dx + \int_{\mathbb{T}^{2}} (w_{b}^{\mathcal{L}} \cdot \nabla) (-\Delta)^{\frac{\epsilon}{2}} w_{u}^{\mathcal{L}} \cdot (-\Delta)^{\frac{\epsilon}{2}} w_{b}^{\mathcal{L}} dx  = 0. 
\end{equation} 
Therefore, we may make use of the classical commutator estimates and embeddings of $H^{\epsilon} (\mathbb{T}^{2}) \hookrightarrow L^{\frac{2}{1-\epsilon}}(\mathbb{T}^{2})$, $H^{\frac{1}{2}}(\mathbb{T}^{2}) \hookrightarrow L^{4}(\mathbb{T}^{2})$, and $H^{\frac{1}{2} - \epsilon}(\mathbb{T}^{2}) \hookrightarrow L^{\frac{4}{1+ 2 \epsilon}}(\mathbb{T}^{2})$  to continue from \eqref{est 115} by 
\begin{align}
& -2 \langle ( -\Delta)^{\epsilon} w_{u}^{\mathcal{L}}, \divergence (w_{u}^{\mathcal{L}} \otimes w_{u}^{\mathcal{L}}  - w_{b}^{\mathcal{L}} \otimes w_{b}^{\mathcal{L}}) \rangle(t) - 2 \langle (-\Delta)^{\epsilon} w_{b}^{\mathcal{L}}, \divergence (2w_{b}^{\mathcal{L}} \otimes_{a} w_{u}^{\mathcal{L}} )\rangle(t)    \nonumber \\
=& -2 [ \int_{\mathbb{T}^{2}} [ (-\Delta)^{\frac{\epsilon}{2}}, (w_{u}^{\mathcal{L}} \cdot \nabla) ] w_{u}^{\mathcal{L}} \cdot (-\Delta)^{\frac{\epsilon}{2}} w_{u}^{\mathcal{L}} dx + \int_{\mathbb{T}^{2}} [(-\Delta)^{\frac{\epsilon}{2}}, (w_{u}^{\mathcal{L}} \cdot \nabla) ] w_{b}^{\mathcal{L}} \cdot (-\Delta)^{\frac{\epsilon}{2}} w_{b}^{\mathcal{L}} dx \nonumber \\
& - \int_{\mathbb{T}^{2}} [(-\Delta)^{\frac{\epsilon}{2}}, (w_{b}^{\mathcal{L}} \cdot \nabla) ] w_{b}^{\mathcal{L}} \cdot (-\Delta)^{\frac{\epsilon}{2}} w_{u}^{\mathcal{L}} dx - \int_{\mathbb{T}^{2}} [(-\Delta)^{\frac{\epsilon}{2}}, (w_{b}^{\mathcal{L}} \cdot \nabla) ] w_{u}^{\mathcal{L}} \cdot (-\Delta)^{\frac{\epsilon}{2}} w_{b}^{\mathcal{L}} dx](t)  \nonumber \\
\lesssim& [\lVert [ (-\Delta)^{\frac{\epsilon}{2}}, w_{u}^{\mathcal{L}} \cdot \nabla ] w_{u}^{\mathcal{L}} \rVert_{L^{\frac{4}{3}}} \lVert (-\Delta)^{\frac{\epsilon}{2}} w_{u}^{\mathcal{L}} \rVert_{L^{4}} + \lVert [ (-\Delta)^{\frac{\epsilon}{2}}, w_{u}^{\mathcal{L}} \cdot \nabla]  w_{b}^{\mathcal{L}} \rVert_{L^{\frac{4}{3}}} \lVert (-\Delta)^{\frac{\epsilon}{2}}w_{b}^{\mathcal{L}} \rVert_{L^{4}} \nonumber \\
&+\lVert [ (-\Delta)^{\frac{\epsilon}{2}}, w_{b}^{\mathcal{L}} \cdot \nabla ] w_{b}^{\mathcal{L}} \rVert_{L^{\frac{4}{3}}} \lVert (-\Delta)^{\frac{\epsilon}{2}} w_{u}^{\mathcal{L}} \rVert_{L^{4}} + \lVert [ (-\Delta)^{\frac{\epsilon}{2}}, w_{b}^{\mathcal{L}} \cdot \nabla] w_{u}^{\mathcal{L}} \rVert_{L^{\frac{4}{3}}} \lVert (-\Delta)^{\frac{\epsilon}{2}} w_{b}^{\mathcal{L}} \rVert_{L^{4}}](t) \nonumber \\
\leq& \frac{\nu }{16} \lVert (w_{u}^{\mathcal{L}}, w_{b}^{\mathcal{L}}) (t)\rVert_{\dot{H}^{1+ \epsilon}}^{2} + C(M) \lVert (w_{u}^{\mathcal{L}}, w_{b}^{\mathcal{L}}) (t)\rVert_{H^{1}}^{2} \lVert (w_{u}^{\mathcal{L}}, w_{b}^{\mathcal{L}}) (t)\rVert_{H^{\epsilon}}^{2} \label{est 114} 
\end{align}
where $[A,B] \triangleq AB - BA$. Next, concerning the products of higher order terms within \eqref{est 112}, e.g. 
\begin{align*}
&-2 \langle (-\Delta)^{\epsilon} w_{b}^{\mathcal{L}}, \divergence (w_{b}^{\mathcal{H}} \otimes_{a} w_{u}^{\mathcal{H}}) \rangle(t) 
\overset{ \eqref{est 113}}{\lesssim} \lVert w_{b}^{\mathcal{L}} (t)\rVert_{H^{\frac{3}{4} + 3 \kappa + 2 \epsilon}} (N_{t}^{\kappa})^{2}   \\
& \hspace{20mm} \lesssim \lVert w_{b}^{\mathcal{L}} (t)\rVert_{L^{2}}^{\frac{1- 12 \kappa - 4 \epsilon}{4(1+ \epsilon)}} \lVert w_{b}^{\mathcal{L}} (t)\rVert_{H^{1+ \epsilon}}^{\frac{3+ 12 \kappa + 8 \epsilon}{4(1+ \epsilon)}} C(N_{t}^{\kappa}) \leq \frac{\nu }{64} \lVert w_{b}^{\mathcal{L}} (t)\rVert_{\dot{H}^{1+ \epsilon}}^{2} + C(M, N_{t}^{\kappa}). \nonumber 
\end{align*} 
Analogous computations on similar terms lead us to altogether 
\begin{align}
& -2 \langle (-\Delta)^{\epsilon} w_{u}^{\mathcal{L}}, \divergence ((w_{u}^{\mathcal{H}})^{\otimes 2} - (w_{b}^{\mathcal{H}})^{\otimes 2} ) \rangle(t) \label{est 117}\\
& -2 \langle (-\Delta)^{\epsilon} w_{b}^{\mathcal{L}}, \divergence (2w_{b}^{\mathcal{H}} \otimes_{a} w_{u}^{\mathcal{H}}) \rangle(t)  \leq  \frac{\nu }{16} \lVert (w_{u}^{\mathcal{L}}, w_{b}^{\mathcal{L}}) (t)\rVert_{\dot{H}^{1+ \epsilon}}^{2} + C(M, N_{t}^{\kappa}). \nonumber  
\end{align}
Finally, concerning products of high and low order terms within \eqref{est 112}, e.g. we estimate as follows 
\begin{align*}
-4\langle ( -\Delta)^{\epsilon} w_{b}^{\mathcal{L}}, \divergence (w_{b}^{\mathcal{L}}& \otimes_{a} w_{u}^{\mathcal{H}}) \rangle(t) \overset{\eqref{est 74}}{\lesssim} \lVert w_{b}^{\mathcal{L}} (t)\rVert_{H^{\frac{5}{6} + 2 \kappa + 2 \epsilon}}   \lVert w_{b}^{\mathcal{L}} (t)\rVert_{H^{\frac{2}{3} - \kappa}} \lVert w_{u}^{\mathcal{H}} (t)\rVert_{H^{\frac{2}{3}- 2 \kappa}}   \\
\overset{\eqref{est 67}}{\lesssim}&  \lVert w_{b}^{\mathcal{L}} (t)\rVert_{L^{2}}^{\frac{1- 2 \kappa}{2(1+ \epsilon)}}  \lVert w_{b}^{\mathcal{L}} (t)\rVert_{\dot{H}^{1+ \epsilon}}^{\frac{3 + 2 \kappa + 4 \epsilon}{2(1+ \epsilon)}} N_{t}^{\kappa} \leq \frac{\nu }{64} \lVert w_{b}^{\mathcal{L}} (t)\rVert_{\dot{H}^{1+ \epsilon}}^{2} + C(M, N_{T}^{\kappa}).  \nonumber 
\end{align*} 
Analogous computations for similar terms lead to altogether 
\begin{align}
& - 4 \langle (-\Delta)^{\epsilon}w_{u}^{\mathcal{L}}, \divergence (w_{u}^{\mathcal{L}} \otimes_{s} w_{u}^{\mathcal{H}} - w_{b}^{\mathcal{L}} \otimes_{s} w_{b}^{\mathcal{H}} ) \rangle (t) \label{est 118} \\
& - 4 \langle (-\Delta)^{\epsilon} w_{b}^{\mathcal{L}}, \divergence (w_{b}^{\mathcal{L}} \otimes_{a} w_{u}^{\mathcal{H}} - w_{u}^{\mathcal{L}} \otimes_{a} w_{b}^{\mathcal{H}}) \rangle(t) \leq \frac{\nu }{16} \lVert (w_{u}^{\mathcal{L}}, w_{b}^{\mathcal{L}}) (t)\rVert_{\dot{H}^{1+ \epsilon}}^{2} + C(M, N_{T}^{\kappa}).  \nonumber 
\end{align}
Applying \eqref{est 114}, \eqref{est 117}, and \eqref{est 118} to \eqref{est 112} leads us to 
\begin{align}
&-2 \langle (-\Delta)^{\epsilon} w_{u}^{\mathcal{L}}, \divergence (w_{u}^{\otimes 2} - w_{b}^{\otimes 2}) \rangle(t) - 2 \langle ( -\Delta)^{\epsilon} w_{b}^{\mathcal{L}}, \divergence (w_{b} \otimes w_{u} - w_{u} \otimes w_{b}) \rangle(t) \nonumber \\
\leq& \frac{3\nu }{16} \lVert (w_{u}^{\mathcal{L}}, w_{b}^{\mathcal{L}}) (t)\rVert_{\dot{H}^{1+ \epsilon}}^{2} + C(M) \lVert (w_{u}^{\mathcal{L}}, w_{b}^{\mathcal{L}}) (t)\rVert_{H^{1}}^{2} \lVert (w_{u}^{\mathcal{L}}, w_{b}^{\mathcal{L}}) (t)\rVert_{H^{\epsilon}}^{2} + C(M, N_{t}^{\kappa}).   \label{est 130} 
\end{align}
We rewrite the rest of the terms in $\RomanII_{4}$ of \eqref{est 106d} as 
\begin{align}
& -2 \langle ( -\Delta)^{\epsilon} w_{u}^{\mathcal{L}},  \divergence (2 Y_{u} \otimes_{s} w_{u} - 2Y_{b} \otimes_{s} w_{b} \nonumber \\
& \hspace{15mm} - C^{\circlesign{\prec}_{s}} (w_{u}, Q_{u}^{\mathcal{H}}) + Y_{u}^{\otimes 2} + C^{\circlesign{\prec}_{s}} (w_{b}, Q_{b}^{\mathcal{H}}) - Y_{b}^{\otimes 2} ) \rangle(t) \nonumber \\
& - 2 \langle (-\Delta)^{\epsilon} w_{b}^{\mathcal{L}}, \divergence (2w_{b} \otimes_{a} Y_{u} - 2 w_{u} \otimes_{a} Y_{b} \nonumber \\
 & \hspace{15mm} - C^{\circlesign{\prec}_{a}} (w_{b}, Q_{u}^{\mathcal{H}}) + Y_{b} \otimes Y_{u} + C^{\circlesign{\prec}_{a}} (w_{u}, Q_{b}^{\mathcal{H}}) - Y_{u} \otimes Y_{b}) \rangle(t) = \sum_{k=1}^{2} \RomanII_{4,k}, \label{est 119} 
\end{align}
where 
\begin{subequations}\label{est 248}
\begin{align}
\RomanII_{4,1} \triangleq&  - 2 \langle (-\Delta)^{\epsilon} w_{u}^{\mathcal{L}}, \divergence (2Y_{u} \otimes_{s} w_{u} + Y_{u}^{\otimes 2} - 2Y_{b} \otimes_{s} w_{b} - Y_{b}^{\otimes 2} ) \rangle(t) \nonumber \\
 & - 2 \langle (-\Delta)^{\epsilon} w_{b}^{\mathcal{L}}, \divergence (2w_{b} \otimes_{a} Y_{u} + Y_{b} \otimes Y_{u} - 2w_{u} \otimes_{a} Y_{b} - Y_{u} \otimes Y_{b}) \rangle(t), \label{est 248a}\\
 \RomanII_{4,2} \triangleq&  2 \langle (-\Delta)^{\epsilon} w_{u}^{\mathcal{L}}, \divergence (C^{\circlesign{\prec}_{s}} (w_{u}, Q_{u}^{\mathcal{H}}) - C^{\circlesign{\prec}_{s}} (w_{b}, Q_{b}^{\mathcal{H}}) \rangle(t) \nonumber \\
 &+ 2 \langle (-\Delta)^{\epsilon} w_{b}^{\mathcal{L}}, \divergence ( C^{\circlesign{\prec}_{a}} (w_{b}, Q_{u}^{\mathcal{H}}) - C^{\circlesign{\prec}_{a}} (w_{u}, Q_{b}^{\mathcal{H}}) \rangle(t). \label{est 248b}
\end{align}
\end{subequations} 
Among the non-commutator terms $\RomanII_{4,1}$ in \eqref{est 248a}, we estimate as an example 
\begin{align*}
& -2 \langle (-\Delta)^{\epsilon} w_{b}^{\mathcal{L}}, \divergence (2w_{b} \otimes_{a} Y_{u} + Y_{b} \otimes Y_{u} ) \rangle(t)  \nonumber \\
\lesssim& \lVert w_{b}^{\mathcal{L}} (t)\rVert_{H^{1- \frac{3\kappa}{2} + 2 \epsilon}}   [ \lVert w_{b} \circlesign{\prec}_{a} Y_{u} \rVert_{H^{\frac{3\kappa}{2}}} + \lVert w_{b} \circlesign{\succ}_{a} Y_{u} \rVert_{H^{\frac{3\kappa}{2}}} + \lVert w_{b} \circlesign{\circ}_{a} Y_{u} \rVert_{H^{\frac{3\kappa}{2}}}  \nonumber\\
& \hspace{15mm} + \lVert Y_{b} \circlesign{\prec}_{a} Y_{u} \rVert_{H^{\frac{3\kappa}{2}}} + \lVert Y_{b} \circlesign{\succ}_{a} Y_{u} \rVert_{H^{\frac{3\kappa}{2}}} + \lVert Y_{b} \circlesign{\circ}_{a} Y_{u} \rVert_{H^{\frac{3\kappa}{2}}} ](t)  \nonumber\\
\overset{\eqref{est 40}}{\lesssim}&  \lVert w_{b}^{\mathcal{L}} (t)\rVert_{H^{1- \frac{3\kappa}{2} + 2 \epsilon}}  [\lVert w_{b}^{\mathcal{L}} \rVert_{H^{2\kappa}} \lVert Y_{u} \rVert_{\mathcal{C}^{2\kappa}} +  \lVert Y_{b} \rVert_{\mathcal{C}^{2\kappa}} \lVert Y_{u} \rVert_{\mathcal{C}^{2\kappa}}](t) \nonumber \\
\overset{\eqref{est 39} \eqref{est 67} \eqref{est 107}}{\leq}& \frac{\nu }{64} \lVert w_{b}^{\mathcal{H}} (t)\rVert_{\dot{H}^{1+ \epsilon}}^{2} + C(M, N_{t}^{\kappa}). 
\end{align*}
Analogous computations on similar non-commutator terms in \eqref{est 248a} lead to   
\begin{equation*}
\RomanII_{4,1} \leq \frac{\nu }{16} \lVert (w_{u}^{\mathcal{L}}, w_{b}^{\mathcal{L}}) (t)\rVert_{\dot{H}^{1+ \epsilon}}^{2} + C(M, N_{t}^{\kappa}).  \label{est 131}
\end{equation*}
Among rest of the terms in \eqref{est 119}, namely \eqref{est 248b}, we can estimate 
\begin{align}
\RomanII_{4,2} \leq \frac{5\nu }{16} \lVert (w_{u}^{\mathcal{L}}, w_{b}^{\mathcal{L}} ) (t)\rVert_{\dot{H}^{1+ \epsilon}}^{2} + C(M, N_{t}^{\kappa}); \label{est 122}
\end{align}
the proof of \eqref{est 122} is similar to previous computations such as \eqref{est 92} and thus we leave this in the Appendix for completeness. Therefore, by applying \eqref{est 131} and \eqref{est 122} to  \eqref{est 119} and then its result and \eqref{est 130} to \eqref{est 106d} gives us 
\begin{equation}\label{est 136}
 \RomanII_{4}  \leq  \frac{9\nu }{16}  \lVert (w_{u}^{\mathcal{L}}, w_{b}^{\mathcal{L}}) (t)\rVert_{\dot{H}^{1+ \epsilon}}^{2}  + C(M, N_{t}^{\kappa}) \left(1+ \lVert (w_{u}^{\mathcal{L}}, w_{b}^{\mathcal{L}}) (t)\rVert_{H^{1}}^{2} \lVert (w_{u}^{\mathcal{L}}, w_{b}^{\mathcal{L}}) (t)\rVert_{H^{\epsilon}}^{2} \right). 
\end{equation} 
At last, we conclude by applying \eqref{est 133}, \eqref{est 134}, \eqref{est 135}, and \eqref{est 136} to \eqref{est 132} to deduce 
\begin{align*}
&\partial_{t} \lVert (w_{u}^{\mathcal{L}}, w_{b}^{\mathcal{L}}) (t)\rVert_{\dot{H}^{\epsilon}}^{2} \nonumber\\
\leq& - \nu \lVert (w_{u}^{\mathcal{L}}, w_{b}^{\mathcal{L}}) (t)\rVert_{\dot{H}^{1+ \epsilon}}^{2} + C(M, N_{t}^{\kappa}) \left( 1 +  \lVert (w_{u}^{\mathcal{L}}, w_{b}^{\mathcal{L}}) \rVert_{H^{1}}^{2} \lVert (w_{u}^{\mathcal{L}}, w_{b}^{\mathcal{L}}) \rVert_{H^{\epsilon}}^{2} \right)(t)
\end{align*} 
which implies \eqref{est 137} and completes the proof of Proposition \ref{Proposition 4.10}. 
\end{proof}  

\begin{proposition}\label{Proposition 4.11}  
\rm{(Cf. \cite[Corollary 5.4]{HR23})} Suppose that $(u^{\text{in}}, b^{\text{in}}) \in (L_{\sigma}^{2} \cap \mathcal{C}^{-1+ \kappa}) \times (L_{\sigma}^{2} \cap \mathcal{C}^{-1+ \kappa})$ for some $\kappa > 0$. If $T^{\max} < \infty$, then $\limsup_{t \nearrow T^{\max}} \lVert (w_{u}, w_{b})(t) \rVert_{L^{2}} = + \infty$.  
\end{proposition}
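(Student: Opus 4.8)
The plan is to argue by contradiction. Assume $T^{\max}<\infty$ and $\limsup_{t\nearrow T^{\max}}\lVert(w_u,w_b)(t)\rVert_{L^2}=:M<\infty$; we will continue the maximal mild solution of Proposition \ref{Proposition 4.1} (equivalently, by the discussion following Proposition \ref{Proposition 4.2}, that of Proposition \ref{Proposition 2.1}, since $(w_u,w_b)$ and $(v_u,v_b)$ differ by the globally defined $(Y_u,Y_b)$) past $T^{\max}$, a contradiction. After possibly shrinking $\kappa$ we may assume $\kappa\le\kappa_0$ small enough for all the propositions below. By continuity of $t\mapsto(w_u,w_b)(t)$ into $\mathcal{C}^{3\kappa/2}\hookrightarrow L^2$ on $(0,T^{\max})$, together with the bound of Corollary \ref{Corollary 4.7} near $t=0$ (where $Q_u^{\mathcal{H}}(0)=Q_b^{\mathcal{H}}(0)=0$, so $(w_u^{\mathcal{L}},w_b^{\mathcal{L}})(0)=(u^{\mathrm{in}},b^{\mathrm{in}})\in L^2$), the $\limsup$ hypothesis upgrades to $\sup_{t\in[0,T^{\max})}\lVert(w_u,w_b)(t)\rVert_{L^2}=:\bar M<\infty$. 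The first payoff is structural: in Definition \ref{Definition 4.2} the stopping time $T_{i+1}$ is the first time $\lVert w_u\rVert_{L^2}+\lVert w_b\rVert_{L^2}$ reaches $i+1$, capped at $T^{\max}$, so $T_{i+1}=T^{\max}$ whenever $i+1>\bar M$. Hence only the finitely many intervals $[T_i,T_{i+1})$ with $i_0\le i\le i^\ast:=\lceil\bar M\rceil$ are nontrivial, on the last of which $\lambda_t\equiv\lambda^{i^\ast}=(i^\ast+1)^3$; in particular $\sup_{[0,T^{\max}]}\lambda_t<\infty$, and since $X_u,X_b,Y_u,Y_b$ and the enhanced noise are globally defined, $\sup_{[0,T^{\max}]}N_t^{\kappa}<\infty$ as well.

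Next I would propagate regularity. Proposition \ref{Proposition 4.3}, with $\bar M$ in place of $\lVert w_u\rVert_{L^2}+\lVert w_b\rVert_{L^2}$, gives $\sup_{[0,T^{\max})}(\lVert w_u^{\mathcal{H}}\rVert_{H^{1-2\kappa-\delta}}+\lVert w_b^{\mathcal{H}}\rVert_{H^{1-2\kappa-\delta}})<\infty$ for small $\delta>0$, whence $\sup_{[0,T^{\max})}\lVert(w_u^{\mathcal{L}},w_b^{\mathcal{L}})(t)\rVert_{L^2}<\infty$ since $w^{\mathcal{L}}=w-w^{\mathcal{H}}$. Feeding the boundedness of $N_t^{\kappa}$ and of $\lambda_t$ into Proposition \ref{Proposition 4.8} on each of the finitely many intervals $[T_i,T_{i+1})$, $i_0\le i\le i^\ast$, and chaining the bounds \eqref{est 99}, yields $\sup_{[0,T^{\max})}\lVert(w_u^{\mathcal{L}},w_b^{\mathcal{L}})(t)\rVert_{L^2}^2+\nu\int_0^{T^{\max}}\lVert(w_u^{\mathcal{L}},w_b^{\mathcal{L}})(s)\rVert_{\dot{H}^1}^2\,ds<\infty$. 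Now pick $t_0\in(0,T^{\max})$ with $\lVert(w_u^{\mathcal{L}},w_b^{\mathcal{L}})(t_0)\rVert_{\dot{H}^1}<\infty$ (possible since the integral converges); interpolating with $L^2$ puts $(w_u^{\mathcal{L}},w_b^{\mathcal{L}})(t_0)$ in $\dot{H}^{\epsilon}$ for any $\epsilon\in(0,\kappa)$. Since the argument of Proposition \ref{Proposition 4.10} is covariant under translation of the time origin, its hypothesis \eqref{est 107} holds on $[t_0,T^{\max}]$ for some finite $M$, so $\sup_{[t_0,T^{\max})}\lVert(w_u^{\mathcal{L}},w_b^{\mathcal{L}})(t)\rVert_{H^{\epsilon}}^2<\infty$; moreover, integrating the final differential inequality in that proof over $[t_0,T^{\max})$ and using the bounds just obtained also gives $\int_{t_0}^{T^{\max}}\lVert(w_u^{\mathcal{L}},w_b^{\mathcal{L}})(s)\rVert_{\dot{H}^{1+\epsilon}}^2\,ds<\infty$.

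The last step is the restart. From the previous display there is a sequence $t_n\nearrow T^{\max}$ along which $\lVert(w_u^{\mathcal{L}},w_b^{\mathcal{L}})(t_n)\rVert_{\dot{H}^{1+\epsilon}}$ stays bounded; combined with the uniform $L^2$-bound on $(w_u^{\mathcal{L}},w_b^{\mathcal{L}})$, the uniform $H^{1-2\kappa-\delta}$-bound on $(w_u^{\mathcal{H}},w_b^{\mathcal{H}})$, and the embeddings $H^{1+\epsilon}\hookrightarrow\mathcal{C}^{\epsilon}\hookrightarrow\mathcal{C}^{-1+2\kappa}$ and $H^{1-2\kappa-\delta}\hookrightarrow\mathcal{C}^{-2\kappa-\delta}\hookrightarrow\mathcal{C}^{-1+2\kappa}$ (valid since $\epsilon>0$ and $4\kappa+\delta<1$), this gives $\sup_n\lVert(w_u,w_b)(t_n)\rVert_{\mathcal{C}^{-1+2\kappa}}=:R<\infty$, these functions remaining divergence-free and mean-zero. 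Running the fixed-point construction underlying Proposition \ref{Proposition 4.1} from initial time $t_n$ then produces a mild solution on $[t_n,t_n+\tau)$ with $\tau=\tau(R,\sup_{[0,T^{\max}+1]}L_t^{\kappa})>0$ independent of $n$; taking $n$ large enough that $T^{\max}-t_n<\tau$ and gluing via the uniqueness assertion of Proposition \ref{Proposition 4.1} yields a mild solution on $[0,t_n+\tau)\supsetneq[0,T^{\max}]$, contradicting the maximality of $T^{\max}$. Therefore $\limsup_{t\nearrow T^{\max}}\lVert(w_u,w_b)(t)\rVert_{L^2}=+\infty$. I expect the main obstacle to be not the regularity propagation (which is a direct assembly of Propositions \ref{Proposition 4.3}, \ref{Proposition 4.8}, and \ref{Proposition 4.10}) but the two pieces of bookkeeping: first, turning the $\limsup$ hypothesis into boundedness of $\lambda_t$ on $[0,T^{\max}]$ through the stopping-time structure of Definition \ref{Definition 4.2} — it is precisely this that neutralizes the $\ln\lambda_t$ and $\lambda_t^{1/3}$ growth appearing in Corollary \ref{Corollary 4.7} and Proposition \ref{Proposition 4.8}; and second, supplying a local existence time uniform over data of size $\le R$ so that the restart-and-glue step closes.
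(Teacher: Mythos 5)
Your architecture is the same as the paper's proof: argue by contradiction, note that a finite $\limsup$ forces the stopping times of Definition \ref{Definition 4.2} to stabilize at $T^{\max}$ (so $\lambda_{t}$ stays bounded and the $\ln\lambda_{t}$, $\lambda_{t}^{1/3}$ factors are harmless, while $N_{T^{\max}}^{\kappa}<\infty$ by Proposition \ref{Proposition 4.2}), control the high part by Proposition \ref{Proposition 4.3}, obtain the energy--dissipation bound of hypothesis \eqref{est 107} from Proposition \ref{Proposition 4.8}, upgrade to a uniform $H^{\epsilon}$ bound via Proposition \ref{Proposition 4.10}, and restart the local theory to extend past $T^{\max}$; the paper phrases the contradiction hypothesis as ``$T_{i}=T^{\max}$ for all $i\geq i_{\max}$'' and launches Proposition \ref{Proposition 4.10} from $T^{\max}/2$ (using the instantaneous $H^{\zeta}$ regularity of the mild solution rather than your selection of $t_{0}$ via the dissipation integral), but the content is identical.

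One step in your write-up is, however, not a valid inference: ``from the previous display there is a sequence $t_{n}\nearrow T^{\max}$ along which $\lVert (w_{u}^{\mathcal{L}},w_{b}^{\mathcal{L}})(t_{n})\rVert_{\dot{H}^{1+\epsilon}}$ stays bounded.'' Integrability of the square of a nonnegative function on a finite interval does not produce such a sequence: $g(s)=(T^{\max}-s)^{-1/4}$ has $g^{2}$ integrable while $g(t_{n})\to\infty$ along every sequence $t_{n}\to T^{\max}$. Fortunately the detour is unnecessary: the uniform bound $\sup_{t\in[t_{0},T^{\max})}\lVert (w_{u}^{\mathcal{L}},w_{b}^{\mathcal{L}})(t)\rVert_{H^{\epsilon}}\leq C$ from \eqref{est 137}, combined with your uniform $H^{1-2\kappa-\delta}$ bound on $(w_{u}^{\mathcal{H}},w_{b}^{\mathcal{H}})$ from Proposition \ref{Proposition 4.3}, already yields a bound on $(w_{u},w_{b})$ in $\mathcal{C}^{-1+\epsilon}$ uniformly up to $T^{\max}$, and since Propositions \ref{Proposition 2.1} and \ref{Proposition 4.1} provide local solvability, with existence time controlled by the data, for divergence-free mean-zero data in $\mathcal{C}^{-1+\kappa'}$ for \emph{every} $\kappa'>0$, you may restart with $\kappa'\leq\epsilon/2$ and glue exactly as you describe. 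Do note the exponent bookkeeping here: because Proposition \ref{Proposition 4.10} requires $\epsilon<\kappa$, one lands in $\mathcal{C}^{-1+\epsilon}$, not $\mathcal{C}^{-1+2\kappa}$ (the paper is equally terse on this point), so restarting at the smaller H\"older exponent is the clean fix. With that single replacement your argument closes and coincides with the paper's.
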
 

\begin{proof}[Proof of Proposition \ref{Proposition 4.11}]
By Proposition \ref{Proposition 4.1}, for any initial data $(u^{\text{in}}, b^{\text{in}}) \in (L_{\sigma}^{2} \cap \mathcal{C}^{-1+ \kappa}) \times (L_{\sigma}^{2} \cap \mathcal{C}^{-1+ \kappa})$ we know that there exists $T^{\max} = T^{\max} (L_{t}^{\kappa}, u^{\text{in}}, b^{\text{in}}) \in (0, \infty]$ and a unique mild solution $(w_{u}, w_{b}) \in \mathcal{M}_{T^{\max}}^{\frac{\gamma}{2}} \mathcal{C}^{\frac{3\kappa}{4}} \times \mathcal{M}_{T^{\max}}^{\frac{\gamma}{2}} \mathcal{C}^{\frac{3\kappa}{4}}$ over $[0, T^{\max})$ with $\gamma = 1- \frac{\kappa}{4}$ so that 
\begin{align*}
\sup_{t \in [0,T^{\max}]} t^{\frac{1}{2} - \frac{\kappa}{8}} \lVert w_{u}^{\mathcal{L}}  (t) \rVert_{\mathcal{C}^{\frac{3\kappa}{4}}} + \sup_{t \in [0, T^{\max}]} t^{\frac{1}{2} - \frac{\kappa}{8}} \lVert w_{b}^{\mathcal{L}} (t) \rVert_{\mathcal{C}^{\frac{3\kappa}{4}}} < \infty. 
\end{align*}
Considering \eqref{est 29}, we see that for any $\zeta < 1 - \kappa$ and $t \in [0, T^{\max})$ we have  
\begin{equation*}
\lVert w_{u}^{\mathcal{L}}(t) \rVert_{H^{\zeta}} + \lVert w_{b}^{\mathcal{L}}(t) \rVert_{H^{\zeta}} < \infty. 
\end{equation*} 
Suppose that there exists some $i_{\max} \in \mathbb{N}_{0}$ such that $T_{i} = T^{\max}$ for all $i \geq i_{\max}$. Then, because for any $M > 1$ and $T > 0$ such that 
\begin{align*}
\left\lVert (w_{u}^{\mathcal{L}}, w_{b}^{\mathcal{L}}) \left( \frac{T^{\max}}{2} \right) \right\rVert_{H^{\epsilon}}^{2} + \sup_{ t \in [ \frac{T^{\max}}{2}, T \wedge T^{\max}]} \lVert (w_{u}^{\mathcal{L}}, w_{b}^{\mathcal{L}}) (t) \rVert_{L^{2}}^{2} + \nu \int_{\frac{T^{\max}}{2}}^{T \wedge T^{\max}} \lVert (w_{u}^{\mathcal{L}}, w_{b}^{\mathcal{L}})(t) \rVert_{H^{1}}^{2} dt \leq M, 
\end{align*}
\eqref{est 137} gives us 
\begin{align*}
\sup_{t \in [\frac{T^{\max}}{2}, T \wedge T^{\max} ]}  \lVert (w_{u}^{\mathcal{L}}, w_{b}^{\mathcal{L}})(t) \rVert_{\mathcal{C}^{-1 + 2 \kappa}}^{2} \leq C(T, M, N_{T}^{\kappa}) < \infty 
\end{align*}
so that we can extend the solution beyond $T^{\max}$ and reach a contradiction. Therefore, we must have $T_{i} < T^{\max}$ for all $i \in \mathbb{N}$. This completes the proof of Proposition \ref{Proposition 4.11}. 
\end{proof} 

With all the results obtained thus far, we are ready prove Theorem \ref{Theorem 2.2}; we do so in the Appendix due to similarity to the proof of \cite[Theorem 2.5]{HR23}. 

\section{Proof of Theorem \ref{Theorem 2.3}}\label{Section 5} 
We start with the long-awaited definition of a HL weak solution of \eqref{est 17}.  
\begin{define}\label{Definition 5.1}  
\rm{(Cf. \cite[Definition 6.1]{HR23})} Given any $(u^{\text{in}}, b^{\text{in}}) \in L_{\sigma}^{2} \times L_{\sigma}^{2}$ and any $\kappa \in (0,1)$, a pair $(v_{u}, v_{b})$ such that each lies in $C([0,\infty); \mathcal{S}' (\mathbb{T}^{2}; \mathbb{R}^{2}))$ is called a global high-low (HL) weak solution to \eqref{est 17} starting from $(u^{\text{in}}, b^{\text{in}})$ if $(w_{u}, w_{b}) = (v_{u} - Y_{u}, v_{b} - Y_{b})$ from \eqref{est 28} satisfies the following, where $(Y_{u}, Y_{b})$ solves \eqref{est 26}. 
\begin{enumerate}
\item For any $T > 0$, there exists a $\lambda_{T} > 0$ such that for any $\lambda \geq \lambda_{T}$, there exists 
\begin{subequations}\label{est 160}   
\begin{align}
&w_{u}^{\mathcal{L}, \lambda}, w_{b}^{\mathcal{L}, \lambda} \in L^{\infty} ([0, T]; L_{\sigma}^{2}) \cap L^{2} ([0,T]; H^{1}), \label{est 160a} \\
&w_{u}^{\mathcal{H}, \lambda}, w_{b}^{\mathcal{H}, \lambda} \in L^{\infty}([0,T]; L_{\sigma}^{2}) \cap L^{2} ([0,T]; B_{4,2}^{1- 2 \kappa})  \label{est 160b} 
\end{align}
\end{subequations} 
that satisfies 
\begin{subequations}\label{est 161} 
\begin{align}
& w_{u}^{\mathcal{H}, \lambda}(t) \triangleq - \mathbb{P}_{L} \divergence (w_{u} \circlesign{\prec}_{s} \mathcal{H}_{\lambda} Q_{u} - w_{b} \circlesign{\prec}_{s} \mathcal{H}_{\lambda} Q_{b} )(t), \hspace{2mm} w_{u}(t) = w_{u}^{\mathcal{L},\lambda}(t) + w_{u}^{\mathcal{H},\lambda}(t) \\
& w_{b}^{\mathcal{H}, \lambda}(t) \triangleq - \mathbb{P}_{L} \divergence (w_{b} \circlesign{\prec}_{a} \mathcal{H}_{\lambda} Q_{u} - w_{u} \circlesign{\prec}_{a} \mathcal{H}_{\lambda} Q_{b} )(t), \hspace{2mm}  w_{b}(t) = w_{b}^{\mathcal{L},\lambda}(t) + w_{b}^{\mathcal{H},\lambda}(t)
\end{align}
\end{subequations}
for all $t \in [0,T]$ and for $Q_{u}$ and $Q_{b}$ defined in \eqref{est 32}. 
\item The pair $(w_{u}, w_{b})$ solves \eqref{est 29} distributionally; i.e., for any $T > 0$ and any $\phi,\psi \in C^{\infty} ([0,T]\times \mathbb{T}^{2})$ such that $\nabla\cdot \phi = \nabla\cdot \psi = 0$, 
\begin{align}
& \langle w_{u}(T), \phi(T) \rangle - \langle w_{u}(0), \phi(0) \rangle = \int_{0}^{T} \langle w_{u}, \partial_{t} \phi + \nu \Delta \phi \rangle \nonumber \\
& \hspace{5mm} + \langle w_{u}, (w_{u} \cdot \nabla) \phi \rangle + \frac{1}{2} \langle D_{u}, (w_{u} \cdot \nabla) \phi \rangle + \frac{1}{2} \langle w_{u}, (D_{u} \cdot \nabla) \phi \rangle  + \langle Y_{u}, (Y_{u} \cdot \nabla) \phi \rangle \nonumber \\
& \hspace{5mm} - \langle w_{b}, (w_{b} \cdot \nabla) \phi \rangle - \frac{1}{2} \langle D_{b}, (w_{b} \cdot \nabla) \phi \rangle - \frac{1}{2} \langle w_{b}, (D_{b} \cdot \nabla) \phi \rangle - \langle Y_{b}, (Y_{b} \cdot \nabla) \phi \rangle dt, \\
& \langle w_{b}(T), \psi(T) \rangle - \langle w_{b}(0), \psi(0) \rangle = \int_{0}^{T} \langle w_{b}, \partial_{t} \psi + \nu \Delta \psi \rangle  \nonumber \\
& \hspace{5mm} + \langle w_{b}, (w_{u} \cdot \nabla) \psi \rangle + \frac{1}{2} \langle w_{b}, (D_{u} \cdot \nabla) \psi \rangle - \frac{1}{2} \langle D_{u}, (w_{b} \cdot \nabla) \psi \rangle + \langle Y_{b}, (Y_{u} \cdot \nabla) \psi \rangle  \nonumber \\
& \hspace{5mm} - \langle w_{u}, (w_{b} \cdot \nabla) \psi \rangle - \frac{1}{2} \langle w_{u}, (D_{b} \cdot \nabla) \psi \rangle + \frac{1}{2} \langle D_{b}, (w_{u} \cdot \nabla) \psi \rangle - \langle Y_{u}, (Y_{b} \cdot \nabla) \psi \rangle dt. 
\end{align}
\end{enumerate}  
\end{define}

The regularity $L^{2}([0,T]; B_{4,2}^{1-2\kappa})$ of $w_{u}^{\mathcal{H},\lambda}$ and $w_{b}^{\mathcal{H},\lambda}$ in \eqref{est 160b} is higher than $L^{2}([0,T]; B_{4,\infty}^{1-2\kappa})$ of $w_{u}^{\mathcal{H},\lambda}$ in \cite[Definition 6.1]{HR23}. 
\begin{proposition}\label{Proposition 5.1}
\rm{(Cf. \cite[Lemma 6.2]{HR23})} Let $\mathcal{N}'' \subset \Omega$  be the null set from Proposition \ref{Proposition 4.2}. Then for any $\omega \in \Omega \setminus \mathcal{N}''$ and $(u^{\text{in}}, b^{\text{in}}) \in L_{\sigma}^{2} \times L_{\sigma}^{2}$, there exists a HL weak solution to \eqref{est 17} starting from $(u^{\text{in}}, b^{\text{in}})$. 
\end{proposition}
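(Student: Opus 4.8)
The plan is to construct the high-low weak solution by a vanishing-regularization (Galerkin or mollification) argument, using the a priori estimates already assembled in Section \ref{Section 4} uniformly in the approximation parameter. First I would fix $\omega \in \Omega \setminus \mathcal{N}''$, a terminal time $T > 0$, and initial data $(u^{\text{in}}, b^{\text{in}}) \in L_\sigma^2 \times L_\sigma^2$; since $L^2 \not\subseteq \mathcal{C}^{-1+\kappa}$ (Remark \ref{Remark 2.1}), the first step is to approximate $(u^{\text{in}}, b^{\text{in}})$ by a sequence $(u^{\text{in}}_n, b^{\text{in}}_n) \in (L_\sigma^2 \cap \mathcal{C}^{-1+\kappa}) \times (L_\sigma^2 \cap \mathcal{C}^{-1+\kappa})$ converging strongly in $L_\sigma^2 \times L_\sigma^2$ (e.g. by Littlewood-Paley truncation). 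For each $n$, Theorem \ref{Theorem 2.2} (via Proposition \ref{Proposition 2.1}, Proposition \ref{Proposition 4.1}, and the blow-up criterion Proposition \ref{Proposition 4.11}) produces a global mild solution $(v_u^n, v_b^n)$ to \eqref{est 17}, hence $(w_u^n, w_b^n) = (v_u^n - Y_u, v_b^n - Y_b)$ solving \eqref{est 29}, together with the decomposition $(w_u^{\mathcal{L},n}, w_b^{\mathcal{L},n})$, $(w_u^{\mathcal{H},n}, w_b^{\mathcal{H},n})$ from \eqref{est 38}.

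Next I would extract uniform bounds on $[0,T]$. Integrating \eqref{est 100} of Proposition \ref{Proposition 4.8} over the stopping-time intervals and using the logarithmic Grönwall structure emphasized in Remark \ref{Remark 2.2} (inequality \eqref{new 1}), together with the lower bound \eqref{est 105} on the interval lengths $T_{i+1} - T_i$ from Proposition \ref{Proposition 4.9}, one shows that $\sum_i (T_{i+1} - T_i)$ cannot close up before $T$, so $T^{\max,n} > T$ and moreover $\sup_{t \le T}\lVert (w_u^{\mathcal{L},n}, w_b^{\mathcal{L},n})(t)\rVert_{L^2}^2 + \nu\int_0^T \lVert (w_u^{\mathcal{L},n}, w_b^{\mathcal{L},n})\rVert_{\dot H^1}^2\,dt$ is bounded by a constant depending only on $N_T^\kappa(\omega)$, $T$, and $\lVert (u^{\text{in}}, b^{\text{in}})\rVert_{L^2}$ — uniformly in $n$ for $n$ large. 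Proposition \ref{Proposition 4.3} then converts this into a uniform bound on $w_u^{\mathcal{H},n}, w_b^{\mathcal{H},n}$ in $L^\infty([0,T]; L^2) \cap L^2([0,T]; H^{1-2\kappa-\delta})$; refining the remainder estimate directly from the definition of Besov space (as indicated in Remark \ref{Remark 2.3}(2), cf. \eqref{est 173}) and using Lemma \ref{Lemma 3.1}\eqref{est 40a} with the $L^2_x L^2_t$-to-$L^4_x L^2_t$ improvement that $Q_u^{\mathcal{H}}, Q_b^{\mathcal{H}} \in L^\infty_t \mathcal{C}^{2-\frac{3\kappa}{2}}$ permits, one gets the sharper $L^2([0,T]; B_{4,2}^{1-2\kappa})$ bound required by \eqref{est 160b} (this is where the improved regularity over \cite[Definition 6.1]{HR23} comes from). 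Finally, bounding $\partial_t w_u^n, \partial_t w_b^n$ in a negative-order space uniformly in $n$ by testing \eqref{est 29} against smooth divergence-free functions gives equicontinuity in time.

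The limit is then taken by Aubin–Lions–Simon: the uniform $L^2_t H^1_x$ bound on the low part plus the time-equicontinuity yields a subsequence with $(w_u^{\mathcal{L},n}, w_b^{\mathcal{L},n}) \to (w_u^{\mathcal{L}}, w_b^{\mathcal{L}})$ strongly in $L^2([0,T]; L^2)$ and weakly-$*$ in $L^\infty_t L^2_x$, weakly in $L^2_t H^1_x$, and $(w_u^{\mathcal{H},n}, w_b^{\mathcal{H},n}) \to (w_u^{\mathcal{H}}, w_b^{\mathcal{H}})$ weakly in $L^2([0,T]; B_{4,2}^{1-2\kappa})$; since the threshold $\lambda$ is now fixed (not the solution-dependent $\lambda_t$), the map $w \mapsto \mathcal{H}_\lambda Q$ is a fixed bounded Fourier multiplier, so the paraproduct relations \eqref{est 161} pass to the limit, and $\lambda_T$ is chosen as any fixed level above the thresholds used. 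For the weak formulation, the only genuinely nonlinear terms are the quadratic ones $w_u \otimes w_u$, $w_b \otimes w_b$, $w_b \otimes w_u$ etc.; writing $w_j = w_j^{\mathcal{L}} + w_j^{\mathcal{H}}$, the $\mathcal{L}\times\mathcal{L}$ products pass by strong $L^2_t L^2_x$ convergence, while products involving at least one $\mathcal{H}$-factor are handled via the product estimate Lemma \ref{Lemma 3.2} (in $\dot H^s$) together with the uniform $B_{4,2}^{1-2\kappa}$ bound on the high parts and weak convergence, exactly as the estimates \eqref{est 113}, \eqref{est 75}–\eqref{est 77} and \eqref{est 120} were run in Section \ref{Section 4}; the terms linear in $(w_u, w_b)$ with $D_u, D_b, Y_u, Y_b$ coefficients are easy since those coefficients are fixed $C_t\mathcal{C}^{-\kappa}$ (resp.\ $\mathcal{C}^\kappa$) fields.

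\textbf{Main obstacle.} The delicate point is \emph{not} the soft compactness but ensuring the a priori bounds are genuinely uniform in $n$ up to the \emph{fixed} time $T$: one must combine the logarithmic energy inequality \eqref{est 100} with the interval-length lower bound \eqref{est 105} to rule out that the stopping times $T_i^n$ accumulate before $T$, and simultaneously keep $\lambda_t^n$ (which depends on $\lVert (w_u^n, w_b^n)(T_i)\rVert_{L^2}$) under control so that $N_T^\kappa(\omega)$ — which involves $\sup_i$ over the fixed deterministic scales $\lambda^i = (i+1)^{\mathfrak a}$, hence is $n$-independent by Proposition \ref{Proposition 4.2} — actually dominates all the renormalization errors appearing in \eqref{est 65}, \eqref{est 236}, \eqref{est 97}. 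Once this uniform-in-$n$, finite-time bound is in hand, the passage to the limit and the verification of items (1) and (2) of Definition \ref{Definition 5.1} are routine, and a diagonal argument over $T \nearrow \infty$ produces the global HL weak solution. This completes the proof of Proposition \ref{Proposition 5.1}.
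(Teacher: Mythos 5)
Your proposal is correct in outline and shares the paper's skeleton (uniform-in-$n$ energy bounds from the Section \ref{Section 4} machinery with the stopping-time/$\lambda_t$ control, a Besov refinement giving the $L^{2}([0,T];B_{4,2}^{1-2\kappa})$ bound on the high parts, a time-derivative bound, Aubin--Lions compactness, and verification of Definition \ref{Definition 5.1}), but it differs from the paper in one structural choice: you regularize only the \emph{initial data} and invoke Theorem \ref{Theorem 2.2} with the true noise to produce the global approximating solutions, whereas the paper regularizes the \emph{noise itself}, setting $(X_{u}^{n},X_{b}^{n})=(\mathcal{L}_{n}X_{u},\mathcal{L}_{n}X_{b})$ and solving the regularized system \eqref{est 141} with data $(\mathcal{L}_{n}u^{\text{in}},\mathcal{L}_{n}b^{\text{in}})$. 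The paper's route then has to introduce $N_{t}^{n,\kappa}$, $\bar{N}_{t}^{\kappa}$, $r_{\lambda}^{n}$ in \eqref{est 148} and check that these are uniformly finite and converge (via Proposition \ref{Proposition 4.2}/\ref{Proposition 5.4}), as well as that $Y^{n},Q^{n},D^{n}$ converge to the true objects; what it buys is that each approximating problem has smooth noise, so its global solvability from merely $L^{2}$ data is classical and does not lean on the rough-noise local theory. Your route avoids tracking any regularized enhanced noise (all of $Y,Q,D,N_{T}^{\kappa}$ are fixed and $n$-independent), at the price of resting the existence of the approximating solutions entirely on Theorem \ref{Theorem 2.2}; since that theorem is proved before this point and your data truncations satisfy $\lVert S_{n}u^{\text{in}}\rVert_{L^{2}}\lesssim\lVert u^{\text{in}}\rVert_{L^{2}}$ uniformly, the stopping-time and energy bounds are indeed uniform in $n$, so the strategy goes through.

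One imprecision to repair in the limit passage: as written you claim only weak convergence of $(w_{u}^{\mathcal{H},n},w_{b}^{\mathcal{H},n})$ in $L^{2}([0,T];B_{4,2}^{1-2\kappa})$ and propose to pass to the limit in products ``involving at least one $\mathcal{H}$-factor'' using this weak convergence; products of two merely weakly convergent sequences do not converge to the product of the limits, so the high-high terms such as $w_{b}^{\mathcal{H},n}\otimes w_{u}^{\mathcal{H},n}$ are not handled by that argument. The fix is the one the paper uses at \eqref{est 154}--\eqref{est 157}: apply Aubin--Lions to the \emph{full} $(w_{u}^{n},w_{b}^{n})$, for which you already have the uniform $L^{2}([0,T];H^{1-2\kappa-\delta})$ bound (low part in $H^{1}$ plus Proposition \ref{Proposition 4.3}) and the uniform $\partial_{t}$ bound in $L^{2}([0,T];H^{-1-2\kappa})$, to get strong convergence in $L^{2}([0,T];H^{\beta})$ for some $\beta>0$; this gives strong convergence of the high parts as well (they are continuous images of $w^{n}$ once $\lambda$ is fixed) and all quadratic terms then pass to the limit.
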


\begin{proof}[Proof of Proposition \ref{Proposition 5.1}]
We define for $n \in \mathbb{N}_{0}$
\begin{equation*}
(X_{u}^{n}, X_{b}^{n}) \triangleq (\mathcal{L}_{n} X_{u}, \mathcal{L}_{n} X_{b}) 
\end{equation*} 
where $X_{u}$ and $X_{b}$ solve \eqref{est 14} and \eqref{est 15}, respectively. We define $(Y_{u}^{n}, Y_{b}^{n})$ to be the corresponding solution to \eqref{est 26} with $(X_{u}, X_{b})$ therein replaced by $(X_{u}^{n}, X_{b}^{n})$. Similarly to \eqref{es 139} we define 
\begin{equation}\label{es 140}
D_{u}^{n} \triangleq 2(X_{u}^{n} + Y_{u}^{n}) \text{ and } D_{b}^{n} \triangleq 2(X_{b}^{n} + Y_{b}^{n}), 
\end{equation} 
and that $(w_{u}^{n}, w_{b}^{n})$ to be the solution to 
\begin{subequations}\label{est 141}
\begin{align}
&\partial_{t} w_{u}^{n} + \mathbb{P}_{L} \divergence ((w_{u}^{n})^{\otimes 2} + D_{u}^{n} \otimes_{s} w_{u}^{n} + (Y_{u}^{n})^{\otimes 2} \nonumber \\
& \hspace{16mm} - (w_{b}^{n})^{\otimes 2} - D_{b}^{n} \otimes_{s} w_{b}^{n} - (Y_{b}^{n})^{\otimes 2}) = \nu \Delta w_{u}^{n},  \label{est 141a}\\
&\partial_{t} w_{b}^{n} +  \mathbb{P}_{L} \divergence (w_{b}^{n} \otimes w_{u}^{n} + w_{b}^{n}\otimes_{a} D_{u}^{n} + Y_{b}^{n} \otimes Y_{u}^{n}  \nonumber \\
& \hspace{16mm} - w_{u}^{n} \otimes w_{b}^{n} - w_{u}^{n} \otimes_{a} D_{b}^{n} - Y_{u}^{n} \otimes Y_{b}^{n})  = \nu \Delta w_{b}^{n}, \label{est 141b}\\
& w_{u}^{n}(0,x) = \mathcal{L}_{n} u^{\text{in}}(x), w_{b}^{n}(0,x) = \mathcal{L}_{n} b^{\text{in}}(x), \label{est 141c}
\end{align}
\end{subequations} 
similarly to \eqref{est 29}. Furthermore, we define similarly to \eqref{est 39} and \eqref{est 140},
\begin{subequations}\label{est 148}
\begin{align}
&L_{t}^{n, \kappa} \triangleq 1+ \sum_{j\in \{u,b\}} [ \lVert X_{j}^{n}  \rVert_{ C_{t} \mathcal{C}^{-\kappa}} + \lVert Y_{j}^{n}  \rVert_{C_{t}\mathcal{C}^{2\kappa}}], \label{est 148a}  \\
& N_{t}^{n, \kappa} \triangleq L_{t}^{n, \kappa} +  \sup_{i\in\mathbb{N}} \{ \lVert ( \nabla_{\text{spec}} \mathcal{L}_{\lambda^{i}} (X_{u}^{n}, X_{b}^{n})  \circlesign{\circ} P^{\lambda_{i, n}} - r_{\lambda^{i}}^{n}  \Id \rVert_{C_{t}\mathcal{C}^{-\kappa}} \},  \label{est 148b} \\
& \bar{N}_{t}^{\kappa}(\omega) \triangleq \sup_{n\in\mathbb{N}} N_{t}^{n,\kappa}(\omega),  \label{est 148c} 
\end{align}
\end{subequations} 
with $\{ \lambda^{i} \}_{i\in\mathbb{N}}$ from Definition \ref{Definition 4.2}, where 
\begin{equation*}
P^{\lambda,n}(t,x) \triangleq \left(-\frac{\nu \Delta}{2} + 1 \right)^{-1} \nabla_{\text{spec}} \mathcal{L}_{\lambda}(X_{u}^{n}, X_{b}^{n}) (t,x)
\end{equation*} 
and 
\begin{equation*}
r_{\lambda}^{n} (t) \triangleq  \sum_{k\in\mathbb{Z}^{2} \setminus \{0\}} \frac{1}{4}  l \left( \frac{ \lvert k \rvert}{\lambda} \right) l \left( \frac{\lvert k \rvert}{n} \right) (1- e^{-2\nu  \lvert k \rvert^{2} t})\left( \frac{ \nu \lvert k \rvert^{2}}{2} + 1\right)^{-1}, \hspace{5mm} r_{\lambda}^{n} (t) \leq c \ln (\lambda \wedge n), 
\end{equation*} 
where the inequality took into account of \eqref{est 232}. It follows that $\lim_{n\to\infty} N_{t}^{n,\kappa}(\omega) = N_{t}^{\kappa}(\omega)$ for $N_{t}^{\kappa}$ from \eqref{est 39}, and $\bar{N}_{t}^{\kappa}(\omega) < \infty$ for all $\omega \in \Omega \setminus \mathcal{N}''$ where $\mathcal{N}''$ is the null set from Proposition \ref{Proposition 4.2}. Similarly to Definition \ref{Definition 4.2} we define 
\begin{equation*}
T_{0}^{n} \triangleq 0 \text{ and } T_{i+1}^{n} (\omega, u^{\text{in}}, b^{\text{in}}) \triangleq \inf\{t \geq T_{i}^{n}: \hspace{1mm} \lVert w_{u}^{n}(t) \rVert_{L^{2}} + \lVert w_{b}^{n}(t) \rVert_{L^{2}} \geq i + 1 \},
\end{equation*} 
and 
\begin{align*}
\lambda_{0}^{n} \triangleq& \lambda_{0},   \\
\lambda_{t}^{n} \triangleq& (1+ \lVert w_{u}^{n}(T_{i}^{n}) \rVert_{L^{2}} + \lVert w_{b}^{n}(T_{i}^{n}) \rVert_{L^{2}})^{3} \hspace{3mm} \text{ for } t > 0, t \in [T_{i}^{n}, T_{i+1}^{n}). 
\end{align*} 
Similarly to \eqref{est 32} we consider $Q_{u}^{n}$ and $Q_{b}^{n}$ that solve 
\begin{equation}\label{est 146}
(\partial_{t} - \nu\Delta ) Q_{u}^{n} = 2 X_{u}^{n}, \hspace{1mm} Q_{u}^{n}(0) = 0, \hspace{2mm} \text{ and } \hspace{2mm} (\partial_{t} - \nu\Delta ) Q_{b}^{n} = 2X_{b}^{n}, \hspace{1mm} Q_{b}^{n}(0) = 0 
\end{equation} 
and define similarly to \eqref{est 38} 
\begin{subequations}\label{est 143}
\begin{align}
& Q_{u}^{n,\mathcal{H}}(t) \triangleq \mathcal{H}_{\lambda_{t}} Q_{u}^{n}(t), \hspace{3mm} Q_{b}^{n,\mathcal{H}}(t) \triangleq \mathcal{H}_{\lambda_{t}} Q_{b}^{n}(t), \label{est 143a} \\
& w_{u}^{n,\mathcal{H}} \triangleq - \mathbb{P}_{L} \divergence (w_{u}^{n} \circlesign{\prec}_{s} Q_{u}^{n,\mathcal{H}} - w_{b}^{n} \circlesign{\prec}_{s} Q_{b}^{n,\mathcal{H}}), \hspace{3mm} w_{u}^{n,\mathcal{L}} \triangleq w_{u}^{n} - w_{u}^{n,\mathcal{H}}, \label{est 143b}\\
& w_{b}^{n,\mathcal{H}} \triangleq - \mathbb{P}_{L} \divergence (w_{b}^{n} \circlesign{\prec}_{a} Q_{u}^{n,\mathcal{H}} - w_{u}^{n} \circlesign{\prec}_{a} Q_{b}^{n,\mathcal{H}}), \hspace{3mm} w_{b}^{n,\mathcal{L}} \triangleq w_{b}^{n} - w_{b}^{n,\mathcal{H}}. \label{est 143c}
\end{align}
\end{subequations} 
Under these settings, repeating the proof identically up to Proposition \ref{Proposition 4.8}, we can obtain $\kappa_{0} > 0$ sufficiently small so that there exists a constant $C_{1} > 0$ and increasing continuous maps $C_{2}, C_{3}: \hspace{1mm} \mathbb{R}_{+} \mapsto \mathbb{R}_{+}$ such that 
\begin{align*}
& \sup_{t \in [T_{i}^{n}, T_{i+1}^{n})} \lVert (w_{u}^{n,\mathcal{L}}, w_{b}^{n,\mathcal{L}}) (t) \rVert_{L^{2}}^{2} + \frac{\nu }{2} \int_{T_{i}^{n}}^{T_{i+1}^{n}} \lVert (w_{u}^{n,\mathcal{L}}, w_{b}^{n,\mathcal{L}})(s) \rVert_{H^{1}}^{2} ds\nonumber  \\
\lesssim& e^{(T_{i+1}^{n} - T_{i}^{n})( C_{2} (N_{T_{i+1}^{n}}^{\kappa}) + C_{1} \ln (\lambda_{T_{i}^{n}} \wedge n) )} \left( \lVert (w_{u}^{n,\mathcal{L}}, w_{b}^{n,\mathcal{L}}) (T_{i}^{n}) \rVert_{L^{2}}^{2} + C_{3} (N_{T_{i+1}^{n}}^{n,\kappa}) \right)   
\end{align*} 
for all $\kappa \in (0, \kappa_{0}]$ and $i \in \mathbb{N}$ such that $i \geq i_{0}$. Similarly to Proposition \ref{Proposition 4.9} and the proof of Theorem \ref{Theorem 2.2}, we can also show uniformly over all $n \in \mathbb{N}$ and $i \geq i_{0} (u^{\text{in}}, b^{\text{in}})$, 
\begin{equation*}
T_{i+1}^{n} - T_{i}^{n} \geq \frac{1}{\tilde{C}(\bar{N}_{T_{i+1}^{n}}^{\kappa}) (1+ \ln (1+ i))} \ln \left( \frac{ i^{2} + 2i - C(\bar{N}_{T_{i+1}^{n}}^{\kappa})}{i^{2} + \tilde{C} (\bar{N}_{T_{i+1}^{n}}^{\kappa})} \right) 
\end{equation*} 
for constants $C(\bar{N}_{T_{i+1}^{n}}^{\kappa})$ and $\tilde{C} (\bar{N}_{T_{i+1}^{n}}^{\kappa})$ and thus for every $T > 0$, $i \in \mathbb{N}$ and $i > i_{0} (u^{\text{in}}, b^{\text{in}})$ there exists $\mathfrak{t}(i, \bar{N}_{T}^{\kappa}) \in (0, T]$ such that 
\begin{align*}
\inf_{n\in\mathbb{N}} T_{i}^{n} \geq \mathfrak{t} (i, \bar{N}_{T}^{\kappa}) \hspace{3mm} \mathfrak{t} (i, \bar{N}_{T}^{\kappa}) = T \hspace{3mm} \forall \hspace{1mm} i \text{ sufficiently large}. 
\end{align*}  
Therefore, for all $T > 0$ and $\kappa > 0$ sufficiently small, there exists $C (T, \bar{N}_{T}^{\kappa}) > 0$ such that 
\begin{equation}\label{est 145}
\sup_{n \in \mathbb{N}} \left[  \lVert (w_{u}^{n,\mathcal{L}}, w_{b}^{n,\mathcal{L}}) \rVert_{C_{T}L^{2}}^{2} + \nu \int_{0}^{T} \lVert (w_{u}^{n,\mathcal{L}}, w_{b}^{n,\mathcal{L}}) (t) \rVert_{\dot{H}^{1}}^{2} dt \right] \leq C(T, \bar{N}_{T}^{\kappa}). 
\end{equation} 
Moreover, we can find $\bar{\lambda}_{T} > 0$, in accordance to Definition \ref{Definition 5.1} (1), such that 
\begin{equation}\label{est 144}
\lambda_{t}^{n} \leq \bar{\lambda}_{T} \hspace{3mm} \forall \hspace{1mm} t \in [0,T], n \in \mathbb{N}. 
\end{equation} 
Therefore, extending the definitions \eqref{est 143b}-\eqref{est 143c} to 
\begin{subequations}\label{est 151}
\begin{align}
& w_{u}^{n,\mathcal{H}, \lambda} \triangleq - \mathbb{P}_{L} \divergence (w_{u}^{n} \circlesign{\prec}_{s} \mathcal{H}_{\lambda} Q_{u}^{n} - w_{b}^{n} \circlesign{\prec}_{s} \mathcal{H}_{\lambda} Q_{b}^{n}), \hspace{1mm} w_{u}^{n,\mathcal{L}, \lambda} \triangleq w_{u}^{n} - w_{u}^{n,\mathcal{H}, \lambda}, \\
& w_{b}^{n,\mathcal{H}, \lambda} \triangleq - \mathbb{P}_{L} \divergence (w_{b}^{n} \circlesign{\prec}_{a} \mathcal{H}_{\lambda} Q_{u}^{n} - w_{u}^{n} \circlesign{\prec}_{a} \mathcal{H}_{\lambda} Q_{b}^{n}), \hspace{1mm} w_{b}^{n,\mathcal{L}, \lambda} \triangleq w_{b}^{n} - w_{b}^{n,\mathcal{H}, \lambda} 
\end{align}
\end{subequations} 
for all $\lambda \geq \bar{\lambda}_{T}$, we see that for all $\lambda \geq \bar{\lambda}_{T}$ and hence $\lambda \geq \lambda_{t}^{n}$ for all $t \in [0,T]$  and $n \in \mathbb{N}$ due to \eqref{est 144}, following the previous computations leads to now 
\begin{equation}\label{est 162}
\sup_{n\in\mathbb{N}} \left[ \lVert (w_{u}^{n,\mathcal{L},\lambda}, w_{b}^{n,\mathcal{L},\lambda}) \rVert_{C_{T}L^{2}}^{2} + \nu \int_{0}^{T} \lVert (w_{u}^{n,\mathcal{L}, \lambda}, w_{b}^{n, \mathcal{L}, \lambda})(t) \rVert_{\dot{H}^{1}}^{2} dt \right] \leq C(\lambda, T, \bar{N}_{T}^{\kappa}). 
\end{equation} 
Next, for any $\mathfrak{a} \in [2,\infty), \kappa > 0$ sufficiently small, and $\alpha \leq 1 - 2 \kappa - \frac{1}{\mathfrak{a}}$,  
\begin{equation}\label{est 150}
 \sup_{n\in\mathbb{N}} \left[ \lVert (w_{u}^{n}, w_{b}^{n})(t) \rVert_{C_{T}L^{2}}^{2} + \int_{0}^{T} \lVert (w_{u}^{n}, w_{b}^{n})(t) \rVert_{H^{\alpha}}^{2} dt  \right]  \leq C(T, \bar{N}_{T}^{\kappa}) 
\end{equation} 
due to \eqref{est 67} and \eqref{est 145}. Furthermore, 
\begin{align}
& \left(\lVert w_{u}^{n} \rVert_{H^{1- \frac{3\kappa}{2}}} + \lVert w_{b}^{n} \rVert_{H^{1- \frac{3\kappa}{2}}}\right)(t) \nonumber\\
&\overset{\eqref{est 143b} \eqref{est 143c}}{\lesssim} \lVert \mathbb{P}_{L} \divergence (w_{u}^{n} \circlesign{\prec}_{s} Q_{u}^{n,\mathcal{H}} - w_{b}^{n} \circlesign{\prec}_{s} Q_{b}^{n,\mathcal{H}}) (t) \rVert_{H^{1- \frac{3\kappa}{2}}} \nonumber\\
& \hspace{13mm} + \lVert \mathbb{P}_{L} \divergence ( w_{b}^{n} \circlesign{\prec}_{a} Q_{u}^{n,\mathcal{H}} - w_{u}^{n} \circlesign{\prec}_{a} Q_{b}^{n,\mathcal{H}} ) (t)\rVert_{H^{1- \frac{3\kappa}{2}}} + \lVert w_{u}^{n,\mathcal{L}} (t) \rVert_{H^{1- \frac{3\kappa}{2}}} + \lVert w_{b}^{n,\mathcal{L}}  (t) \rVert_{H^{1- \frac{3\kappa}{2}}} \nonumber\\
& \lesssim ( \lVert w_{u}^{n} \rVert_{H^{-\frac{\kappa}{4}}} + \lVert w_{b}^{n} \rVert_{H^{-\frac{\kappa}{4}}})(t) ( \lVert Q_{u}^{n,\mathcal{H}} \rVert_{\mathcal{C}^{2- \frac{5\kappa}{4}}} + \lVert Q_{b}^{n,\mathcal{H}} \rVert_{\mathcal{C}^{2- \frac{5\kappa}{4}}})(t) + \lVert w_{u}^{n,\mathcal{L}} (t)\rVert_{H^{1- \frac{3\kappa}{2}}} + \lVert w_{b}^{n,\mathcal{L}} (t)\rVert_{H^{1-\frac{3\kappa}{2}}}   \nonumber \\
& \overset{\eqref{est 143a} \eqref{est 146} \eqref{est 148}}{\lesssim} ( \lVert w_{u}^{n} \rVert_{L^{2}} + \lVert w_{b}^{n} \rVert_{L^{2}})(t) \bar{N}_{t}^{\kappa} + \lVert w_{u}^{n,\mathcal{L}} (t)\rVert_{H^{1- \frac{3\kappa}{2}}} + \lVert w_{b}^{n,\mathcal{L}} (t)\rVert_{H^{1- \frac{3\kappa}{2}}}.  \label{est 149} 
\end{align}
It follows from \eqref{est 149}, \eqref{est 145}, and \eqref{est 150} that for all $\kappa > 0$ sufficiently small 
\begin{equation}\label{est 154} 
\sup_{n\in\mathbb{N}} [ \lVert (w_{u}^{n}, w_{b}^{n} ) \rVert_{L^{2} ([0,T]; H^{1- \frac{3\kappa}{2}})}^{2} ]  \lesssim  C(T, \bar{N}_{T}^{\kappa}). 
\end{equation} 
Consequently, for some $N_{1} \in \mathbb{N}$ from \eqref{est 152}  
\begin{align}   
& \lVert (w_{u}^{n, \mathcal{H}, \lambda}, w_{b}^{n, \mathcal{H}, \lambda}) \rVert_{L^{2}([0,T]; B_{4,2}^{1-2\kappa})}^{2}  \nonumber \\
&\overset{\eqref{est 151}\eqref{est 152}}{\lesssim}  \int_{0}^{T} \sum_{m\geq -1} \lvert 2^{m(2-2\kappa)} \sum_{l: l \leq m + N_{1} - 2} [ \lVert  \Delta_{l} w_{u}^{n} \Delta_{m} \mathcal{H}_{\lambda} Q_{u}^{n,\mathcal{H}} \rVert_{L^{4}} + \lVert  \Delta_{l} w_{b}^{n} \Delta_{m} \mathcal{H}_{\lambda} Q_{b}^{n,\mathcal{H}} \rVert_{L^{4}} \nonumber \\
& \hspace{20mm} + \lVert \Delta_{l} w_{b}^{n} \Delta_{m} \mathcal{H}_{\lambda} Q_{u}^{n,\mathcal{H}}  \rVert_{L^{4}} + \lVert \Delta_{l} w_{u}^{n} \Delta_{m} \mathcal{H}_{\lambda} Q_{b}^{n,\mathcal{H}} \rVert_{L^{4}} ] \rvert^{2} dt \nonumber \\
& \hspace{15mm} \lesssim \int_{0}^{T}   \lVert  ( \mathcal{H}_{\lambda} Q_{u}^{n,\kappa}, \mathcal{H}_{\lambda_{t}} Q_{b}^{n,\kappa} ) \rVert_{\mathcal{C}^{2- \frac{3\kappa}{2}}}^{2} \lVert 2^{-m (\frac{\kappa}{2})} \ast_{m} 2^{-m (\frac{\kappa}{2})}  ( \lVert \Delta_{m} w_{u}^{n} \rVert_{L^{4}} + \lVert \Delta_{m} w_{b}^{n} \rVert_{L^{4}}) \rVert_{l^{2}}^{2} dt \nonumber \\
& \hspace{9mm} \overset{\eqref{est 146} \eqref{est 154}}{\lesssim}  \lVert (X_{u}^{n}, X_{b}^{n}) \rVert_{C_{T}\mathcal{C}^{-\kappa}}  C(T, \bar{N}_{T}^{\kappa})
 \overset{\eqref{est 148}}{\lesssim} C(T, \bar{N}_{T}^{\kappa}). \label{est 147} 
\end{align} 
Next, we can estimate e.g. 
\begin{subequations}\label{est 153}
\begin{align}
& \lVert (w_{u}^{n})^{\otimes 2} (t)\rVert_{H^{-2\kappa}}  \lesssim \lVert w_{u}^{n} (t)\rVert_{L^{\frac{4}{1+ 2\kappa}}}^{2} \lesssim \lVert w_{u}^{n} (t)\rVert_{L^{2}} \lVert w_{u}^{n} (t)\rVert_{H^{1- 2 \kappa}}, \label{est 153a}\\
&  \lVert D_{u}^{n} \otimes_{s} w_{u}^{n}(t) \rVert_{H^{-2\kappa}}  \overset{\eqref{est 40c}\eqref{est 40d}\eqref{est 40e}\eqref{est 148} }{\lesssim}  \bar{N}_{T}^{\kappa} \lVert w_{u}^{n}(t) \rVert_{H^{2\kappa}},  \label{est 153b}\\
&  \lVert (Y_{u}^{n})^{\otimes 2} (t)\rVert_{H^{-2\kappa}} \overset{\eqref{est 40c} \eqref{est 40e}}{\lesssim} \lVert Y_{u}^{n} (t)\rVert_{\mathcal{C}^{2\kappa}}^{2}  \overset{\eqref{est 148}}{\lesssim} (\bar{N}_{T}^{\kappa})^{2}, \label{est 153c}
\end{align}
\end{subequations} 
and use these, together with \eqref{est 141}-\eqref{est 148}, to deduce 
\begin{align}
& [\lVert \partial_{t} w_{u}^{n} \rVert_{H^{-1 - 2\kappa}}+ \lVert \partial_{t} w_{b}^{n} \rVert_{H^{-1 - 2\kappa}}](t) \label{est 155}\\
\overset{\eqref{est 141}}{\lesssim}& [\lVert w_{u}^{n} \rVert_{H^{1- 2 \kappa} } + \lVert w_{b}^{n} \rVert_{H^{1- 2 \kappa}} + \lVert (w_{u}^{n})^{\otimes 2} \rVert_{H^{-2\kappa}} + \lVert (w_{b}^{n})^{\otimes 2} \rVert_{H^{-2\kappa}} + \lVert w_{b}^{n} \otimes w_{u}^{n} \rVert_{H^{-2\kappa}} + \lVert w_{u}^{n} \otimes w_{b}^{n} \rVert_{H^{-2\kappa}}  \nonumber \\
&+ \lVert D_{u}^{n} \otimes_{s} w_{u}^{n} - D_{b}^{n} \otimes_{s} w_{b}^{n} \rVert_{H^{-2\kappa}} + \lVert w_{b}^{n} \otimes_{a} D_{u}^{n} - w_{u}^{n} \otimes_{a} D_{b}^{n} \rVert_{H^{-2\kappa}} \nonumber \\
&+ \lVert (Y_{u}^{n})^{\otimes 2} - (Y_{b}^{n})^{\otimes 2} \rVert_{H^{-2\kappa}} + \lVert Y_{b}^{n} \otimes Y_{u}^{n} - Y_{u}^{n} \otimes Y_{b}^{n} \rVert_{H^{-2\kappa}} ](t) \nonumber \\
\overset{\eqref{est 153}}{\lesssim}& [\lVert w_{u}^{n} \rVert_{H^{1- 2 \kappa}} + \lVert w_{b}^{n} \rVert_{H^{1- 2 \kappa}}](t)  \nonumber\\
& \hspace{10mm} + (\lVert w_{u}^{n}(t) \rVert_{H^{1- 2 \kappa}}  +   \lVert w_{b}^{n} (t)\rVert_{H^{1- 2 \kappa}} + \bar{N}_{T}^{\kappa}) (\lVert w_{u}^{n} (t)\rVert_{L^{2}} + \lVert w_{b}^{n} (t)\rVert_{L^{2}} + \bar{N}_{T}^{\kappa}). \nonumber 
\end{align}
Therefore, applying \eqref{est 150} and \eqref{est 154} to  \eqref{est 155} gives us 
\begin{equation}\label{est 156} 
\sup_{n\in\mathbb{N}} \lVert (\partial_{t} w_{u}^{n}, \partial_{t} w_{b}^{n}) \rVert_{L^{2}([0,T]; H^{-1-2\kappa})}^{2} \leq C(T, \bar{N}_{T}^{\kappa}). 
\end{equation} 
Thus, by \eqref{est 150}, \eqref{est 154}, \eqref{est 156}, and Lions-Aubins compactness lemma (e.g. \cite[Lemma 4]{S90} concerning \eqref{est 157c}) there exists a subsequence $\{(w_{u}^{n_{k}}, w_{b}^{n_{k}}) \}$ and $(w_{u}, w_{b})$ such that 
\begin{subequations}\label{est 157} 
\begin{align}
& w_{u}^{n_{k}} \overset{\ast}{\rightharpoonup} w_{u}, w_{b}^{n_{k}} \overset{\ast}{\rightharpoonup} w_{b} \text{ weak-$\ast$ in } L^{\infty} ([0,T]; L^{2}(\mathbb{T}^{2})), \label{est 157a} \\
& w_{u}^{n_{k}} \rightharpoonup w_{u}, w_{b}^{n_{k}} \rightharpoonup w_{b} \text{ weakly in } L^{2} ([0,T]; H^{1- \frac{3\kappa}{2}}(\mathbb{T}^{2})),  \label{est 157b} \\
& w_{u}^{n_{k}} \to w_{u}, w_{b}^{n_{k}} \to w_{b} \text{ strongly in } L^{2}([0,T]; H^{\beta}(\mathbb{T}^{2})) \hspace{1mm} \forall \hspace{1mm} \beta \in \left( -1 - 2 \kappa,  1- \frac{3\kappa}{2}\right).  \label{est 157c} 
\end{align}
\end{subequations} 
With these convergence results, it follows that $(w_{u}, w_{b})$ is a weak solution to \eqref{est 17}. Moreover, it follows from \eqref{est 151}, \eqref{est 161}, \eqref{est 40}, \eqref{est 16}, \eqref{est 148}, and \eqref{est 157} that 
\begin{equation*}
w_{u}^{n,\mathcal{H},\lambda} \to w_{u}^{\mathcal{H},\lambda} \text{ and } w_{b}^{n,\mathcal{H},\lambda} \to w_{b}^{\mathcal{H}, \lambda} \text{ as } n\to\infty \text{ strongly in } L^{2} (0, T; H^{1- 4 \kappa}). 
\end{equation*} 
Finally, from \eqref{est 147} we see that $w_{u}^{\mathcal{H},\lambda}, w_{b}^{\mathcal{H},\lambda} \in L^{2}([0,T]; B_{4,2}^{1-2\kappa})$ as claimed in \eqref{est 160b}. The fact that $w_{u}^{\mathcal{H}, \lambda}, w_{b}^{\mathcal{H}, \lambda} \in L^{\infty} ([0,T]; L_{\sigma}^{2})$ follows from \eqref{est  145} and \eqref{est 150}. Finally, \eqref{est 162} implies the desired result of $w_{u}^{\mathcal{L}, \lambda}, w_{b}^{\mathcal{L}, \lambda} \in L^{\infty} ([0, T]; L_{\sigma}^{2}) \cap L^{2} ([0,T]; H^{1})$ in \eqref{est 160a}.  
\end{proof}

\begin{proposition}\label{Proposition 5.2} 
\rm{(Cf. \cite[Lemma 6.3]{HR23})} Let $\mathcal{N}''$ be the null set from Proposition \ref{Proposition 4.2}. Then, for any $\omega \in \Omega \setminus \mathcal{N}''$ and any $(u^{\text{in}}, b^{\text{in}}) \in L_{\sigma}^{2} \times L_{\sigma}^{2}$, there exists at most one HL weak solution starting from $(u^{\text{in}}, b^{\text{in}})$. 
\end{proposition}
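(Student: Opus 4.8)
The plan is a weak--strong-type uniqueness argument carried out on the difference of the two solutions, using the paracontrolled splitting of Definition \ref{Definition 5.1} exactly as in Section \ref{Section 4} but now with a \emph{fixed} frequency threshold $\lambda$. Fix $\omega \in \Omega \setminus \mathcal{N}''$, $(u^{\text{in}}, b^{\text{in}}) \in L_{\sigma}^{2} \times L_{\sigma}^{2}$, two HL weak solutions $(w_{u}^{1}, w_{b}^{1})$ and $(w_{u}^{2}, w_{b}^{2})$ starting from $(u^{\text{in}}, b^{\text{in}})$, and $T > 0$; it suffices to show they agree on $[0,T]$. The processes $X_{u}, X_{b}, Y_{u}, Y_{b}, D_{u}, D_{b}, Q_{u}, Q_{b}$ from \eqref{est 14}--\eqref{est 15}, \eqref{est 26}, \eqref{es 139}, and \eqref{est 32} depend only on the noise, so the difference $(\bar{w}_{u}, \bar{w}_{b}) \triangleq (w_{u}^{1} - w_{u}^{2}, w_{b}^{1} - w_{b}^{2})$ solves, distributionally on $[0,T]$, the difference of the two copies of \eqref{est 29}: the forcing terms built purely from $Y_{u}, Y_{b}$ cancel, each nonlinear term becomes bilinear with one factor $(\bar{w}_{u}, \bar{w}_{b})$ and the other $(w_{u}^{j}, w_{b}^{j})$, and $(\bar{w}_{u}, \bar{w}_{b})$ remains divergence-free and mean-zero. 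I would choose $\lambda = \lambda_{T}$ larger than the two thresholds furnished by Definition \ref{Definition 5.1}(1) and, in addition, large enough that -- using \eqref{est 40a}, Lemma \ref{Lemma 3.3}, and $Q_{u}, Q_{b} \in C([0,T]; \mathcal{C}^{2-2\kappa})$, so that $\lVert \mathcal{H}_{\lambda} Q_{u} \rVert_{C_{T}\mathcal{C}^{1+\kappa}} + \lVert \mathcal{H}_{\lambda} Q_{b} \rVert_{C_{T}\mathcal{C}^{1+\kappa}} \lesssim \lambda^{-(1- 3\kappa)} \to 0$ -- the map $(f,g) \mapsto \mathbb{P}_{L} \divergence ( f \circlesign{\prec}_{s} \mathcal{H}_{\lambda} Q_{u} - g \circlesign{\prec}_{s} \mathcal{H}_{\lambda} Q_{b})$ and its $\circlesign{\prec}_{a}$-analogue have $L^{2}$-operator norm at most $\tfrac{1}{4}$ on $[0,T]$. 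With this fixed $\lambda$, both solutions -- hence the difference -- admit the splitting of \eqref{est 161}; write $\bar{w}_{u} = \bar{w}_{u}^{\mathcal{H}} + \bar{w}_{u}^{\mathcal{L}}$ and $\bar{w}_{b} = \bar{w}_{b}^{\mathcal{H}} + \bar{w}_{b}^{\mathcal{L}}$ as in \eqref{est 38b}--\eqref{est 38c} with $Q_{u}^{\mathcal{H}} = \mathcal{H}_{\lambda} Q_{u}$ and $Q_{b}^{\mathcal{H}} = \mathcal{H}_{\lambda} Q_{b}$.

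From the choice of $\lambda$, first $\lVert (\bar{w}_{u}^{\mathcal{H}}, \bar{w}_{b}^{\mathcal{H}})(t) \rVert_{L^{2}} \leq \tfrac{1}{4} \lVert (\bar{w}_{u}, \bar{w}_{b})(t) \rVert_{L^{2}}$, so $\lVert (\bar{w}_{u}, \bar{w}_{b})(t) \rVert_{L^{2}} \leq \tfrac{4}{3} \lVert (\bar{w}_{u}^{\mathcal{L}}, \bar{w}_{b}^{\mathcal{L}})(t) \rVert_{L^{2}}$; second, exactly as in Proposition \ref{Proposition 4.3} (with $\lambda$ fixed the gain $\lambda^{-\delta}$ there is not even needed), $\lVert (\bar{w}_{u}^{\mathcal{H}}, \bar{w}_{b}^{\mathcal{H}})(t) \rVert_{H^{1- 3\kappa}} \lesssim_{\lambda} \lVert (\bar{w}_{u}^{\mathcal{L}}, \bar{w}_{b}^{\mathcal{L}})(t) \rVert_{L^{2}} N_{t}^{\kappa}$, whence also $\lVert (\bar{w}_{u}^{\mathcal{H}}, \bar{w}_{b}^{\mathcal{H}})(t) \rVert_{L^{4}} \lesssim_{\lambda} \lVert (\bar{w}_{u}^{\mathcal{L}}, \bar{w}_{b}^{\mathcal{L}})(t) \rVert_{L^{2}} N_{t}^{\kappa}$ by $H^{\frac12} \hookrightarrow L^{4}$. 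Subtracting $\bar{w}^{\mathcal{H}}$ from the difference equation as in the passage \eqref{est 29} $\to$ \eqref{est 42} (fixed $\lambda$ in place of $\lambda_{t}$, $\mathcal{H}_{\lambda} X$ in place of $\mathcal{H}_{\lambda_{t}} X$) gives the equation for $(\bar{w}_{u}^{\mathcal{L}}, \bar{w}_{b}^{\mathcal{L}})$, and testing it against $2(\bar{w}_{u}^{\mathcal{L}}, \bar{w}_{b}^{\mathcal{L}})$ while replaying the purely algebraic cancellations \eqref{est 45}--\eqref{est 65} (which use only $\nabla\cdot \mathcal{L}_{\lambda} X_{u} = \nabla\cdot \mathcal{L}_{\lambda} X_{b} = \nabla\cdot \bar{w}_{u}^{\mathcal{L}} = \nabla\cdot \bar{w}_{b}^{\mathcal{L}} = 0$) leads to
\begin{equation*}
\partial_{t}\lVert (\bar{w}_{u}^{\mathcal{L}}, \bar{w}_{b}^{\mathcal{L}})(t)\rVert_{L^{2}}^{2} = -\nu\lVert (\bar{w}_{u}^{\mathcal{L}}, \bar{w}_{b}^{\mathcal{L}})(t)\rVert_{\dot{H}^{1}}^{2} + 2\left\langle \begin{pmatrix} \bar{w}_{u}^{\mathcal{L}} \\ \bar{w}_{b}^{\mathcal{L}} \end{pmatrix}, \mathcal{A}_{t}^{\lambda}\begin{pmatrix} \bar{w}_{u}^{\mathcal{L}} \\ \bar{w}_{b}^{\mathcal{L}} \end{pmatrix}\right\rangle(t) + r_{\lambda}(t)\lVert (\bar{w}_{u}^{\mathcal{L}}, \bar{w}_{b}^{\mathcal{L}})(t)\rVert_{L^{2}}^{2} + \mathcal{R}(t),
\end{equation*}
where $\mathcal{R}(t)$ gathers the remaining terms, each bilinear in $(\bar{w}_{u}^{\mathcal{L}}, \bar{w}_{b}^{\mathcal{L}}, \bar{w}_{u}^{\mathcal{H}}, \bar{w}_{b}^{\mathcal{H}})$ and in the data $(w_{u}^{j}, w_{b}^{j}), X_{u}, X_{b}, Y_{u}, Y_{b}, D_{u}, D_{b}, Q_{u}^{\mathcal{H}}, Q_{b}^{\mathcal{H}}$ (the commutators $C^{\circlesign{\prec}_{s}}(\bar{w}_{u}, Q_{u}^{\mathcal{H}})$, $C^{\circlesign{\prec}_{a}}(\bar{w}_{b}, Q_{u}^{\mathcal{H}})$, etc.\ included). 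Since $\lambda$ is fixed, $r_{\lambda}(t) \leq C_{\lambda}$ by \eqref{est 140b}, and $\lvert\langle (\bar{w}_{u}^{\mathcal{L}}, \bar{w}_{b}^{\mathcal{L}}), \mathcal{A}_{t}^{\lambda}(\bar{w}_{u}^{\mathcal{L}}, \bar{w}_{b}^{\mathcal{L}})\rangle\rvert \leq m(N_{t}^{\kappa})\lVert (\bar{w}_{u}^{\mathcal{L}}, \bar{w}_{b}^{\mathcal{L}})\rVert_{L^{2}}^{2}$ as in \eqref{est 98a}, the function $m$ coming from Proposition \ref{Proposition 5.3}.

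Next I would bound $\mathcal{R}(t)$ by mimicking Propositions \ref{Proposition 4.4}--\ref{Proposition 4.6} and Corollary \ref{Corollary 4.7}, with solution norms replaced by difference norms and by the two solutions' norms, using the MHD cancellations of Remark \ref{Remark 1.1} and \eqref{est 237} (so the velocity--magnetic cross terms of Lorentz/induction type survive only in cancelling pairs) and the high-part bound just recalled; the resulting $\dot{H}^{1}$-losses are absorbed via $\lVert \cdot \rVert_{H^{\eta}} \lesssim \lVert \cdot \rVert_{L^{2}}^{1-\eta}\lVert \cdot \rVert_{\dot{H}^{1}}^{\eta}$ (valid on mean-zero fields) with $\eta < 1$ strict and Young's inequality. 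The one place requiring $L^{1}$-in-time (not merely $L^{\infty}$-in-time) control of the solutions is the genuine Navier--Stokes nonlinearity: after integrating by parts using divergence-freeness, the diagonal pieces $\langle \bar{w}_{u}^{\mathcal{L}}, (w_{u}^{j}\cdot\nabla)\bar{w}_{u}^{\mathcal{L}}\rangle$ vanish, and what remains, e.g.\ $\lvert\langle (\bar{w}_{u}\cdot\nabla)\bar{w}_{u}^{\mathcal{L}}, w_{u}^{j}\rangle\rvert \lesssim \lVert \bar{w}_{u}\rVert_{L^{4}}\lVert \bar{w}_{u}^{\mathcal{L}}\rVert_{\dot{H}^{1}}\lVert w_{u}^{j}\rVert_{L^{4}}$, is handled by the two-dimensional Ladyzhenskaya inequality $\lVert f\rVert_{L^{4}}^{2}\lesssim\lVert f\rVert_{L^{2}}\lVert f\rVert_{\dot{H}^{1}}$, the bound on $\lVert \bar{w}^{\mathcal{H}}\rVert_{L^{4}}$, and the fact that, by \eqref{est 160}, $\lVert w_{u}^{j}(t)\rVert_{L^{4}}^{4} \lesssim \lVert w_{u}^{j,\mathcal{L}}(t)\rVert_{L^{2}}^{2}\lVert w_{u}^{j,\mathcal{L}}(t)\rVert_{\dot{H}^{1}}^{2} + \lVert w_{u}^{j,\mathcal{H}}(t)\rVert_{L^{\infty}}^{2}\lVert w_{u}^{j,\mathcal{H}}(t)\rVert_{L^{2}}^{2} \in L^{1}(0,T)$ (with $w^{j,\mathcal{L}}\in L^{2}(0,T;H^{1})\cap L^{\infty}(0,T;L^{2})$, $w^{j,\mathcal{H}}\in L^{2}(0,T;B_{4,2}^{1-2\kappa})\cap L^{\infty}(0,T;L^{2})$ and $B_{4,2}^{1-2\kappa}\hookrightarrow L^{\infty}$ for $\kappa<\tfrac14$), followed by Young. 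Collecting everything gives
\begin{equation*}
\partial_{t}\lVert(\bar{w}_{u}^{\mathcal{L}}, \bar{w}_{b}^{\mathcal{L}})(t)\rVert_{L^{2}}^{2}\leq -\tfrac{\nu}{2}\lVert(\bar{w}_{u}^{\mathcal{L}}, \bar{w}_{b}^{\mathcal{L}})(t)\rVert_{\dot{H}^{1}}^{2} + g(t)\lVert(\bar{w}_{u}^{\mathcal{L}}, \bar{w}_{b}^{\mathcal{L}})(t)\rVert_{L^{2}}^{2},
\end{equation*}
with $g(t) = C(\lambda,N_{T}^{\kappa})\big(1 + \sum_{j}\lVert (w_{u}^{j,\mathcal{L}}, w_{b}^{j,\mathcal{L}})(t)\rVert_{\dot{H}^{1}}^{2} + \sum_{j}\lVert (w_{u}^{j,\mathcal{H}}, w_{b}^{j,\mathcal{H}})(t)\rVert_{B_{4,2}^{1-2\kappa}}^{2}\big) \in L^{1}(0,T)$ by \eqref{est 160}, the $L^{2}$-energy identity being justified by a standard mollification argument as in \cite{HR23} since $(\bar{w}_{u}^{\mathcal{L}}, \bar{w}_{b}^{\mathcal{L}})\in L^{\infty}(0,T;L^{2})\cap L^{2}(0,T;H^{1})$. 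As $(\bar{w}_{u}^{\mathcal{L}}, \bar{w}_{b}^{\mathcal{L}})(0)=0$, Gr\"onwall's inequality forces $(\bar{w}_{u}^{\mathcal{L}}, \bar{w}_{b}^{\mathcal{L}})\equiv 0$ on $[0,T]$, hence $(\bar{w}_{u}, \bar{w}_{b}) = (\bar{w}_{u}^{\mathcal{L}}+\bar{w}_{u}^{\mathcal{H}}, \bar{w}_{b}^{\mathcal{L}}+\bar{w}_{b}^{\mathcal{H}})\equiv 0$ on $[0,T]$; since $T>0$ is arbitrary, uniqueness follows.

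The step I expect to be the main obstacle is exactly this final estimate on $\mathcal{R}$. A priori $(\bar{w}_{u}, \bar{w}_{b})$ carries only $L^{2}$-in-space regularity -- only its low-frequency part regains $H^{1}$, and only a posteriori -- while the two solutions are merely weak, so every term of $\mathcal{R}$ must be coerced into the shape $\lVert (\bar{w}_{u}^{\mathcal{L}}, \bar{w}_{b}^{\mathcal{L}})\rVert_{H^{\eta}}\big(\lVert (\bar{w}_{u}^{\mathcal{L}}, \bar{w}_{b}^{\mathcal{L}})\rVert_{H^{\eta}}+\text{l.o.t.}\big)h(t)$ with $\eta<1$ strict and $h\in L^{1}(0,T)$, so that Young's inequality can absorb the $\dot{H}^{1}$-loss into the diffusion. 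This is a weak--strong-type argument in which ``strong'' refers precisely to the $L^{2}(0,T;H^{1})$ and $L^{2}(0,T;B_{4,2}^{1-2\kappa})$ regularity built into Definition \ref{Definition 5.1}, and it closes only because the $\nabla_{\text{spec}}$-cancellation of \eqref{est 236} disposes of the singular $\mathcal{L}_{\lambda}(X_{u},X_{b})$-contribution and the cancellations \eqref{est 11} and \eqref{est 237} dispose of the velocity--magnetic cross terms.
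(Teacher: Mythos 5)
Your proposal is correct and follows essentially the same route as the paper's own proof: fix a sufficiently large threshold $\lambda$ (beyond both solutions' thresholds from Definition \ref{Definition 5.1}) so that the paracontrolled map is a contraction and the difference is controlled by its low-frequency part, run an $L^{2}$-energy estimate on the low part of the difference using the divergence-free cancellations for the $\mathcal{L}_{\lambda}$-noise terms, and close with Gr\"onwall using an $L^{1}$-in-time coefficient supplied by the $L^{2}([0,T];H^{1})$ and $L^{2}([0,T];B_{4,2}^{1-2\kappa})$ regularity in Definition \ref{Definition 5.1}. The only minor deviations are that the paper dispenses with $\mathcal{A}_{t}^{\lambda}$ and renormalization at this stage, estimating the fixed-$\lambda$ low-frequency terms directly as in \eqref{est 254} together with the pairing cancellation \eqref{est 252}--\eqref{est 253}, and handles the quadratic difference terms via the Besov interpolation \eqref{est 173} and the $B_{\infty,2}^{0}$-control \eqref{est 175}--\eqref{est 177} of the high part of the difference rather than your $B_{4,2}^{1-2\kappa}\hookrightarrow L^{\infty}$/Ladyzhenskaya route; both variants serve the same purpose within the same argument.
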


\begin{proof}[Proof of Proposition \ref{Proposition 5.2}]
Le us suppose that 
\begin{equation*}
(v_{u}, v_{b}) \triangleq (w_{u} + Y_{u}, w_{b} + Y_{b}), \hspace{3mm} (\bar{v}_{u}, \bar{v}_{b}) \triangleq (\bar{w}_{u} + Y_{u}, \bar{w}_{b} + Y_{b}), 
\end{equation*} 
are two HL weak solutions and define 
\begin{equation}\label{est 167}
(z_{u}, z_{b}) \triangleq (w_{u} - \bar{w}_{u}, w_{b} - \bar{w}_{b}),
\end{equation} 
and 
\begin{equation}\label{est 172} 
 (z_{u}^{\mathcal{L},\lambda}, z_{b}^{\mathcal{L}, \lambda}) \triangleq (w_{u}^{\mathcal{L},\lambda} - \bar{w}_{u}^{\mathcal{L},\lambda}, w_{b}^{\mathcal{L},\lambda} - \bar{w}_{b}^{\mathcal{L},\lambda}), \hspace{1mm} \text{ and } \hspace{1mm} (z_{u}^{\mathcal{H},\lambda}, z_{b}^{\mathcal{H},\lambda}) \triangleq (z_{u} - z_{u}^{\mathcal{L},\lambda}, z_{b} - z_{b}^{\mathcal{L},\lambda}). 
\end{equation} 
Then 
\begin{subequations}\label{est 163} 
\begin{align}
\partial_{t} z_{u}^{\mathcal{L}, \lambda} -  \nu \Delta z_{u}^{\mathcal{L},\lambda}  &=- \mathbb{P}_{L} \divergence ( 2( \mathcal{L}_{\lambda} X_{u}) \otimes_{s} z_{u}^{\mathcal{L},\lambda} - 2 ( \mathcal{L}_{\lambda} X_{b}) \otimes_{s} z_{b}^{\mathcal{L}, \lambda}) \label{est 163a} \\
& - \mathbb{P}_{L} \divergence ( 2( \mathcal{H}_{\lambda} X_{u} ) \otimes_{s} z_{u}^{\mathcal{L},\lambda} - 2 \mathcal{H}_{\lambda} X_{u} \circlesign{\succ}_{s} z_{u}^{\mathcal{L},\lambda} \nonumber \\
& \hspace{10mm}  -2 ( \mathcal{H}_{\lambda} X_{b}) \otimes_{s} z_{b}^{\mathcal{L},\lambda}+ 2 \mathcal{H}_{\lambda} X_{b} \circlesign{\succ}_{s} z_{b}^{\mathcal{L},\lambda}) \nonumber \\
& - \mathbb{P}_{L} \divergence ( 2X_{u} \otimes_{s} z_{u}^{\mathcal{H},\lambda} - 2 \mathcal{H}_{\lambda} X_{u} \circlesign{\succ}_{s} z_{u}^{\mathcal{H},\lambda} - C^{\circlesign{\prec}_{s}} (z_{u}, \mathcal{H}_{\lambda} Q_{u}) \nonumber \\
& \hspace{10mm} - 2 X_{b} \otimes_{s} z_{b}^{\mathcal{H},\lambda} + 2\mathcal{H}_{\lambda} X_{b} \circlesign{\succ}_{s} z_{b}^{\mathcal{H},\lambda} + C^{\circlesign{\prec}_{s}} (z_{b},\mathcal{H}_{\lambda} Q_{b}) )   \nonumber \\
& - \mathbb{P}_{L} \divergence (w_{u}^{\otimes 2} - \bar{w}_{u}^{\otimes 2} + 2Y_{u} \otimes_{s} z_{u} - w_{b}^{\otimes 2} + \bar{w}_{b}^{\otimes 2} - 2Y_{b} \otimes_{s} z_{b}), \nonumber  \\
\partial_{t} z_{b}^{\mathcal{L},\lambda} - \nu \Delta z_{b}^{\mathcal{L},\lambda} &= - \mathbb{P}_{L} \divergence (2 z_{b}^{\mathcal{L},\lambda} \otimes_{a} \mathcal{L}_{\lambda} X_{u} - 2 z_{u}^{\mathcal{L},\lambda} \otimes_{a} \mathcal{L}_{\lambda} X_{b}) \label{est 163b} \\
 & - \mathbb{P}_{L} \divergence (2z_{b}^{\mathcal{L},\lambda} \otimes_{a} \mathcal{H}_{\lambda} X_{u} - 2 z_{b}^{\mathcal{L},\lambda} \circlesign{\prec}_{a} \mathcal{H}_{\lambda} X_{u} \nonumber \\
 & \hspace{10mm} - 2 z_{u}^{\mathcal{L},\lambda} \otimes_{a} \mathcal{H}_{\lambda} X_{b} + 2 z_{u}^{\mathcal{L},\lambda} \circlesign{\prec}_{a} \mathcal{H}_{\lambda} X_{b}) \nonumber \\
 & - \mathbb{P}_{L} \divergence (2z_{b}^{\mathcal{H},\lambda} \otimes_{a} X_{u} - 2 z_{b}^{\mathcal{H},\lambda} \circlesign{\prec}_{a} \mathcal{H}_{\lambda} X_{u} - C^{\circlesign{\prec}_{a}} (z_{b}, \mathcal{H}_{\lambda} Q_{u}) \nonumber \\
 & \hspace{10mm} - 2z_{u}^{\mathcal{H},\lambda} \otimes_{a} X_{b} + 2z_{u}^{\mathcal{H},\lambda} \circlesign{\prec}_{a} \mathcal{H}_{\lambda} X_{b} + C^{\circlesign{\prec}_{a}} (z_{u}, \mathcal{H}_{\lambda} Q_{b} )) \nonumber \\
& - \mathbb{P}_{L} \divergence (w_{b} \otimes w_{u} - \bar{w}_{b} \otimes \bar{w}_{u} + 2z_{b} \otimes_{a} Y_{u} - w_{u} \otimes w_{b} + \bar{w}_{u} \otimes \bar{w}_{b} - 2z_{u} \otimes_{a} Y_{b}). \nonumber 
\end{align}
\end{subequations} 
Then taking $L^{2}(\mathbb{T}^{2})$-inner products on \eqref{est 163} with $(z_{u}^{\mathcal{L},\lambda}, z_{b}^{\mathcal{L},\lambda})$ gives us 
\begin{equation}\label{est 185}
\frac{1}{2} \partial_{t} \lVert (z_{u}^{\mathcal{L},\lambda}, z_{b}^{\mathcal{L},\lambda})(t) \rVert_{L^{2}}^{2} = \sum_{k=1}^{4} \RomanIII_{k}, 
\end{equation} 
where 
\begin{subequations}\label{est 164} 
\begin{align}
\RomanIII_{1} \triangleq& - \langle z_{u}^{\mathcal{L},\lambda}, - \nu\Delta  z_{u}^{\mathcal{L}, \lambda} + \mathbb{P}_{L} \divergence ( 2( \mathcal{L}_{\lambda} X_{u}) \otimes_{s} z_{u}^{\mathcal{L},\lambda} - 2 (\mathcal{L}_{\lambda} X_{b}) \otimes_{s} z_{b}^{\mathcal{L},\lambda}) \rangle(t)  \label{est 164a} \\
& - \langle z_{b}^{\mathcal{L},\lambda}, - \nu\Delta  z_{b}^{\mathcal{L},\lambda} + \mathbb{P}_{L} \divergence (2z_{b}^{\mathcal{L},\lambda} \otimes_{a} \mathcal{L}_{\lambda} X_{u} - 2 z_{u}^{\mathcal{L},\lambda} \otimes_{a} \mathcal{L}_{\lambda} X_{b}) \rangle(t), \nonumber \\
\RomanIII_{2} \triangleq& - \langle z_{u}^{\mathcal{L},\lambda}, \mathbb{P}_{L} \divergence (2( \mathcal{H}_{\lambda} X_{u}) \otimes_{s} z_{u}^{\mathcal{L},\lambda} - 2 \mathcal{H}_{\lambda} X_{u} \circlesign{\succ}_{s} z_{u}^{\mathcal{L},\lambda}   \label{est 164b} \\
& \hspace{10mm} - 2 (\mathcal{H}_{\lambda} X_{b}) \otimes_{s} z_{b}^{\mathcal{L},\lambda} + 2 \mathcal{H}_{\lambda} X_{b} \circlesign{\succ}_{s} z_{b}^{\mathcal{L},\lambda}) \rangle(t) \nonumber \\
& - \langle z_{b}^{\mathcal{L},\lambda},\mathbb{P}_{L} \divergence (2z_{b}^{\mathcal{L},\lambda} \otimes_{a} \mathcal{H}_{\lambda} X_{u} - 2 z_{b}^{\mathcal{L},\lambda} \circlesign{\prec}_{a} \mathcal{H}_{\lambda} X_{u} \nonumber \\
& \hspace{10mm} -2z_{u}^{\mathcal{L},\lambda} \otimes_{a} \mathcal{H}_{\lambda} X_{b} + 2z_{u}^{\mathcal{L},\lambda} \circlesign{\prec}_{a} \mathcal{H}_{\lambda} X_{b} ) \rangle(t),\nonumber \\
\RomanIII_{3} \triangleq& - \langle z_{u}^{\mathcal{L},\lambda}, \mathbb{P}_{L} \divergence (2X_{u} \otimes_{s} z_{u}^{\mathcal{H},\lambda} - 2 \mathcal{H}_{\lambda} X_{u} \circlesign{\succ}_{s} z_{u}^{\mathcal{H},\lambda} - C^{\circlesign{\prec}_{s}} (z_{u}, \mathcal{H}_{\lambda} Q_{u}) \label{est 164c} \\
& \hspace{10mm} - 2X_{b} \otimes_{s} z_{b}^{\mathcal{H},\lambda} + 2\mathcal{H}_{\lambda} X_{b} \circlesign{\succ}_{s} z_{b}^{\mathcal{H},\lambda} +C^{\circlesign{\prec}_{s}} (z_{b}, \mathcal{H}_{\lambda} Q_{b} )) \rangle(t) \nonumber \\
& - \langle z_{b}^{\mathcal{L},\lambda}, \mathbb{P}_{L} \divergence ( 2z_{b}^{\mathcal{H}, \lambda} \otimes_{a} X_{u} - 2z_{b}^{\mathcal{H},\lambda} \circlesign{\prec}_{a} \mathcal{H}_{\lambda} X_{u} - C^{\circlesign{\prec}_{a}} (z_{b},\mathcal{H}_{\lambda} Q_{u}) \nonumber \\
& \hspace{10mm} - 2z_{u}^{\mathcal{H},\lambda} \otimes_{a} X_{b} + 2z_{u}^{\mathcal{H},\lambda} \circlesign{\prec}_{a} \mathcal{H}_{\lambda} X_{b} + C^{\circlesign{\prec}_{a}} (z_{u}, \mathcal{H}_{\lambda} Q_{b} )) \rangle(t), \nonumber \\
\RomanIII_{4} \triangleq& - \langle z_{u}^{\mathcal{L},\lambda}, \mathbb{P}_{L} \divergence (w_{u}^{\otimes 2} - \bar{w}_{u}^{\otimes 2} + 2 Y_{u} \otimes_{s} z_{u} - w_{b}^{\otimes 2} + \bar{w}_{b}^{\otimes 2} - 2 Y_{b} \otimes_{s} z_{b} ) \rangle(t) \label{est 164d} \\
& - \langle z_{b}^{\mathcal{L},\lambda}, \mathbb{P}_{L} \divergence (w_{b} \otimes w_{u} - \bar{w}_{b} \otimes \bar{w}_{u} + 2 z_{b} \otimes_{a} Y_{u} - w_{u} \otimes w_{b} + \bar{w}_{u} \otimes \bar{w}_{b} - 2 z_{u} \otimes_{a} Y_{b}) \rangle(t). \nonumber 
\end{align}
\end{subequations} 
Within $\RomanIII_{1}$ of \eqref{est 164a} we can estimate 
\begin{align}
- \langle z_{b}^{\mathcal{L}, \lambda}, \mathbb{P}_{L} & \divergence (2z_{b}^{\mathcal{L},\lambda}  \otimes_{a} \mathcal{L}_{\lambda} X_{u}) \rangle(t) 
\lesssim  \lVert z_{b}^{\mathcal{L},\lambda} (t)\rVert_{L^{2}} \lVert \nabla z_{b}^{\mathcal{L},\lambda} (t)\rVert_{L^{2}} \lVert \mathcal{L}_{\lambda} X_{u} (t)\rVert_{\mathcal{C}^{\kappa}} \nonumber\\
\overset{\eqref{est 33} \eqref{est 39}}{\lesssim}&  \lVert z_{b}^{\mathcal{L},\lambda} (t)\rVert_{L^{2}} \lVert \nabla z_{b}^{\mathcal{L},\lambda} (t)\rVert_{L^{2}} \lambda^{2\kappa} N_{T}^{\kappa} \leq \frac{\nu }{64} \lVert z_{b}^{\mathcal{L},\lambda} (t)\rVert_{\dot{H}^{1}}^{2} + C(\lambda, N_{T}^{\kappa}) \lVert  z_{b}^{\mathcal{L},\lambda} (t)\rVert_{L^{2}}^{2}. \label{est 254} 
\end{align} 
Analogous estimates can be achieved the first term $- \langle z_{u}^{\mathcal{L}, \lambda}, \mathbb{P}_{L} \divergence (2 (\mathcal{L}_{\lambda} X_{u}) \otimes_{s} z_{u}^{\mathcal{L},\lambda} \rangle$. For the second and fourth terms, they need to be paired to obtain the necessary cancellations: 
\begin{align}
& \langle z_{u}^{\mathcal{L},\lambda}, \mathbb{P}_{L} \divergence (2 ( \mathcal{L}_{\lambda} X_{b}) \otimes_{s} z_{b}^{\mathcal{L},\lambda} ) \rangle + \langle z_{b}^{\mathcal{L},\lambda}, \mathbb{P}_{L} \divergence ( z_{u}^{\mathcal{L},\lambda} \otimes_{a} \mathcal{L}_{\lambda} X_{b}) \rangle \nonumber \\
\overset{\eqref{est 18} \eqref{est 19}}{=}& \int_{\mathbb{T}^{2}} z_{u}^{\mathcal{L},\lambda} \cdot [(z_{b}^{\mathcal{L},\lambda} \cdot \nabla) \mathcal{L}_{\lambda} X_{b} + (\mathcal{L}_{\lambda} X_{b} \cdot \nabla) z_{b}^{\mathcal{L},\lambda} ] \nonumber \\
& + z_{b}^{\mathcal{L},\lambda} \cdot [(\mathcal{L}_{\lambda} X_{b} \cdot \nabla) z_{u}^{\mathcal{L},\lambda} - (z_{u}^{\mathcal{L},\lambda} \cdot \nabla) \mathcal{L}_{\lambda} X_{b} ] dx  \nonumber  \\
\overset{\eqref{est 252}}{=}& \int_{\mathbb{T}^{2}} z_{u}^{\mathcal{L},\lambda} \cdot (z_{b}^{\mathcal{L},\lambda} \cdot \nabla) \mathcal{L}_{\lambda} X_{b} - z_{b}^{\mathcal{L},\lambda} \cdot (z_{u}^{\mathcal{L},\lambda} \cdot \nabla) \mathcal{L}_{\lambda} X_{b}   dx, \label{est 253} 
\end{align}
where we used 
\begin{equation}\label{est 252} 
\int_{\mathbb{T}^{2}} z_{u}^{\mathcal{L},\lambda} \cdot (\mathcal{L}_{\lambda} X_{b} \cdot \nabla) z_{b}^{\mathcal{L},\lambda} + z_{b}^{\mathcal{L},\lambda} \cdot (\mathcal{L}_{\lambda} X_{b} \cdot \nabla) z_{u}^{\mathcal{L},\lambda} dx = 0.
\end{equation} 
Having obtained this necessary cancellation, \eqref{est 253} can be estimated similarly to \eqref{est 254} now. Thus, we conclude 
\begin{equation}\label{est 186}
\RomanIII_{1} \leq -\frac{15\nu }{16} \lVert (z_{u}^{\mathcal{L},\lambda}, z_{b}^{\mathcal{L},\lambda} )  (t)\rVert_{\dot{H}^{1}}^{2} + C( \lambda, N_{T}^{\kappa}) \lVert (z_{u}^{\mathcal{L},\lambda}, z_{b}^{\mathcal{L},\lambda}) (t)\rVert_{L^{2}}^{2}. 
\end{equation} 
Next, within $\RomanIII_{2}$ of \eqref{est 164b} we can estimate using $\phi \otimes_{a} \psi = \phi \circlesign{\prec}_{a} \psi + \phi \circlesign{\succ}_{a} \psi + \phi \circlesign{\circ}_{a} \psi$, 
\begin{align*}
& - \langle z_{b}^{\mathcal{L},\lambda},\mathbb{P}_{L} \divergence (2z_{b}^{\mathcal{L},\lambda} \otimes_{a} \mathcal{H}_{\lambda} X_{u} - 2 z_{b}^{\mathcal{L},\lambda} \circlesign{\prec}_{a} \mathcal{H}_{\lambda} X_{u}) \rangle(t) \\
\lesssim& \lVert z_{b}^{\mathcal{L}, \lambda} (t)\rVert_{\dot{H}^{1}} ( \lVert z_{b}^{\mathcal{L},\lambda} \circlesign{\succ}_{a} \mathcal{H}_{\lambda} X_{u} \rVert_{L^{2}} + \lVert z_{b}^{\mathcal{L},\lambda} \circlesign{\circ}_{a} \mathcal{H}_{\lambda} X_{u} \rVert_{H^{\kappa}})(t) \nonumber \\
\overset{\eqref{est 40d} \eqref{est 40e}}{\lesssim}& \lVert z_{b}^{\mathcal{L},\lambda} (t)\rVert_{\dot{H}^{1}} \lVert z_{b}^{\mathcal{L},\lambda} (t)\rVert_{L^{2}}^{1-2\kappa} \lVert z_{b}^{\mathcal{L},\lambda} (t)\rVert_{\dot{H}^{1}}^{2\kappa} \lVert X_{u} (t)\rVert_{\mathcal{C}^{-\kappa}} \leq \frac{\nu }{32} \lVert z_{b}^{\mathcal{L},\lambda} (t)\rVert_{\dot{H}^{1}}^{2} + C(N_{T}^{\kappa}) \lVert z_{b}^{\mathcal{L},\lambda} (t)\rVert_{L^{2}}^{2}.  \nonumber 
\end{align*}
Analogous estimates on similar terms lead to 
\begin{equation}\label{est 187}
\RomanIII_{2} \leq \frac{\nu}{16} \lVert (z_{u}^{\mathcal{L}, \lambda}, z_{b}^{\mathcal{L},\lambda}) (t)\rVert_{\dot{H}^{1}}^{2} + C(N_{T}^{\kappa}) \lVert (z_{u}^{\mathcal{L},\lambda}, z_{b}^{\mathcal{L},\lambda}) (t)\rVert_{L^{2}}^{2}. 
\end{equation} 
Next, within $\RomanIII_{3}$ of \eqref{est 164c} we work on 
\begin{align}
& \langle z_{b}^{\mathcal{L}, \lambda}, \mathbb{P}_{L} \divergence ( 2z_{b}^{\mathcal{H},\lambda} \otimes_{a} X_{u} - 2z_{b}^{\mathcal{H},\lambda} \circlesign{\prec}_{a} \mathcal{H}_{\lambda} X_{u} - C^{\circlesign{\prec}_{a}} (z_{b}, \mathcal{H}_{\lambda} (Q_{u} )) \rangle \nonumber \\
\lesssim& \lVert z_{b}^{\mathcal{L},\lambda} \rVert_{H^{1}} [ \lVert z_{b}^{\mathcal{H},\lambda} \otimes_{a} X_{u} - z_{b}^{\mathcal{H},\lambda} \circlesign{\prec}_{a} \mathcal{H}_{\lambda} X_{u} \rVert_{L^{2}}  \nonumber \\
& \hspace{10mm} + \lVert ((\partial_{t} - \nu \Delta) z_{b}) \circlesign{\prec}_{a} \mathcal{H}_{\lambda} Q_{u} \rVert_{L^{2}} + \sum_{k=1}^{2}  \lVert  \partial_{k} z_{b} \circlesign{\prec}_{a} \partial_{k} \mathcal{H}_{\lambda} Q_{u}  \rVert_{L^{2}}] \label{est 166}
\end{align}
where we used \eqref{est 30b}. First, we rewrite using \eqref{est 68} and then $\phi \otimes_{a} \psi = \phi \circlesign{\prec}_{a} \psi + \phi \circlesign{\succ}_{a} \psi + \phi \circlesign{\circ}_{a} \psi$ to compute 
\begin{align}
&  \lVert( z_{b}^{\mathcal{H},\lambda} \otimes_{a} X_{u} - z_{b}^{\mathcal{H},\lambda} \circlesign{\prec}_{a} \mathcal{H}_{\lambda} X_{u} )(t)\rVert_{L^{2}}   \nonumber \\
=& \lVert (z_{b}^{\mathcal{H},\lambda} \otimes_{a} \mathcal{L}_{\lambda} X_{u} + z_{b}^{\mathcal{H},\lambda} \circlesign{\succ}_{a} \mathcal{H}_{\lambda} X_{u} + z_{b}^{\mathcal{H},\lambda} \circlesign{\circ}_{a} \mathcal{H}_{\lambda} X_{u} )(t)\rVert_{L^{2}}  \nonumber \\
\lesssim& \lVert z_{b}^{\mathcal{H}, \lambda} \otimes_{a} \mathcal{L}_{\lambda} X_{u} (t)\rVert_{L^{2}} + \lVert z_{b}^{\mathcal{H}, \lambda} \circlesign{\succ}_{a} \mathcal{H}_{\lambda} X_{u}  (t)\rVert_{L^{2}} + \lVert z_{b}^{\mathcal{H},\lambda} \circlesign{\circ}_{a} \mathcal{H}_{\lambda} X_{u} (t)\rVert_{H^{\kappa}} \nonumber \\
\overset{\eqref{est 40d}}{\lesssim}& [\lVert z_{b}^{\mathcal{H}, \lambda} \rVert_{L^{2}} \lVert \mathcal{L}_{\lambda} X_{u} \rVert_{L^{\infty}} + \lVert z_{b}^{\mathcal{H},\lambda} \rVert_{H^{\kappa}} \lVert \mathcal{H}_{\lambda} X_{u} \rVert_{\mathcal{C}^{-\kappa}} + \lVert z_{b}^{\mathcal{H}, \lambda} \rVert_{H^{2\kappa}} \lVert \mathcal{H}_{\lambda} X_{u} \rVert_{\mathcal{C}^{-\kappa}} ](t) \nonumber \\
\overset{\eqref{est 39}}{\lesssim}& (\lVert z_{b}^{\mathcal{H}, \lambda} \rVert_{H^{\kappa}} \lambda^{2\kappa} + \lVert z_{b}^{\mathcal{H},\lambda} \rVert_{H^{2\kappa}})(t) N_{T}^{\kappa}. \label{est 169} 
\end{align}
Second, we first rewrite using \eqref{est 167} and \eqref{est 29b}, 
\begin{equation*}
(\partial_{t} -\nu \Delta) z_{b} = - \mathbb{P}_{L} \divergence (z_{b} \otimes w_{u} + \bar{w}_{b} \otimes z_{u} + z_{b} \otimes_{a} D_{u} - z_{u} \otimes w_{b} - \bar{w}_{u} \otimes z_{b} - z_{u} \otimes_{a} D_{b}).
\end{equation*} 
Thus, by H$\ddot{\mathrm{o}}$lder's inequality and Sobolev embeddings of $L^{\frac{4}{4-3\kappa}}(\mathbb{T}^{2}) \hookrightarrow H^{-1+ \frac{3\kappa}{2}}(\mathbb{T}^{2})$ and $H^{\frac{3\kappa}{2}} (\mathbb{T}^{2}) \hookrightarrow L^{\frac{4}{2-3\kappa}}(\mathbb{T}^{2})$, 
\begin{align}
& \lVert (( \partial_{t} - \nu\Delta ) z_{b}) \circlesign{\prec}_{a} \mathcal{H}_{\lambda} Q_{u} (t)\rVert_{L^{2}}  \nonumber  \\
&\overset{\eqref{est 40c}}{\lesssim}  \lVert (z_{b} \otimes w_{u} + \bar{w}_{b} \otimes z_{u} + z_{b} \otimes_{a} D_{u} - z_{u} \otimes w_{b} - \bar{w}_{u} \otimes z_{b} - z_{u} \otimes_{a} D_{b}) (t)\rVert_{H^{-1+ \frac{3\kappa}{2}}} \lVert Q_{u} (t)\rVert_{\mathcal{C}^{2- \frac{3\kappa}{2}}} \nonumber\\
&\overset{\eqref{est 32}\eqref{est 39}}{\lesssim} ( \lVert z_{u} \rVert_{L^{2}} + \lVert z_{b} \rVert_{L^{2}})(t) ( \lVert w_{u} (t)\rVert_{H^{2\kappa}} + \lVert \bar{w}_{u} (t)\rVert_{H^{2\kappa}}+ \lVert w_{b} (t)\rVert_{H^{2\kappa}} + \lVert \bar{w}_{b} (t)\rVert_{H^{2\kappa}} + N_{T}^{\kappa} ) N_{T}^{\kappa} \nonumber \\
& \hspace{50mm} + (\lVert z_{u} \rVert_{H^{2\kappa}} + \lVert z_{b} \rVert_{H^{2\kappa}})(t) (N_{T}^{\kappa})^{2}.   \label{est 170}
\end{align}
Third, we estimate 
\begin{equation}\label{est 171} 
\sum_{k=1}^{2} \lVert \partial_{k} z_{b} \circlesign{\prec}_{a} \partial_{k} \mathcal{H}_{\lambda} Q_{u} (t)\rVert_{L^{2}} \overset{\eqref{est 40c}}{\lesssim}   \lVert z_{b} (t)\rVert_{H^{2\kappa}} \lVert Q_{u} (t)\rVert_{\mathcal{C}^{2- \frac{3\kappa}{2}}} \overset{\eqref{est 32}\eqref{est 39}}{\lesssim} \lVert z_{b} (t)\rVert_{H^{2\kappa}} N_{T}^{\kappa}.
\end{equation} 
At last, we apply \eqref{est 169}, \eqref{est 170}, and \eqref{est 171} to \eqref{est 166} to deduce 
\begin{align*}
& \langle z_{b}^{\mathcal{L}, \lambda}, \mathbb{P}_{L} \divergence ( 2z_{b}^{\mathcal{H},\lambda} \otimes_{a} X_{u} - 2z_{b}^{\mathcal{H},\lambda} \circlesign{\prec}_{a} \mathcal{H}_{\lambda} X_{u} - C^{\circlesign{\prec}_{a}} (z_{b}, \mathcal{H}_{\lambda} Q_{u}) \rangle(t) \nonumber \\
&\leq \frac{\nu}{64} \lVert z_{b}^{\mathcal{L},\lambda} (t)\rVert_{\dot{H}^{1}}^{2} + C(N_{T}^{\kappa},\lambda) [ (\lVert z_{b}^{\mathcal{H},\lambda} \rVert_{H^{2\kappa}} + \lVert z_{u} \rVert_{H^{2\kappa}} + \lVert z_{b} \rVert_{H^{2\kappa}} )(t) \nonumber \\
& \hspace{30mm} \times ( \lVert w_{u} \rVert_{H^{2\kappa}} + \lVert \bar{w}_{u} \rVert_{H^{2\kappa}} + \lVert w_{b} \rVert_{H^{2\kappa}} + \lVert \bar{w}_{b} \rVert_{H^{2\kappa}} + 1)(t) ]^{2}.  
\end{align*}
Similar computations on analogous terms of $\RomanIII_{3}$ in \eqref{est 164c} lead to altogether 
\begin{align}
&\RomanIII_{3} \leq \frac{\nu }{16} \lVert (z_{u}^{\mathcal{L},\lambda}, z_{b}^{\mathcal{L},\lambda}) (t)\rVert_{\dot{H}^{1}}^{2}  \label{est 188} \\
&+ C(N_{T}^{\kappa},\lambda) [ ( \lVert (z_{u}^{\mathcal{H},\lambda}, z_{b}^{\mathcal{H},\lambda}) (t)\rVert_{H^{2\kappa}} + \lVert (z_{u},z_{b}) (t)\rVert_{H^{2\kappa}}) (\lVert (w_{u},w_{b}) (t)\rVert_{H^{2\kappa}} + \lVert (\bar{w}_{u},\bar{w}_{b}) (t)\rVert_{H^{2\kappa}} + 1)]^{2}. \nonumber 
\end{align} 

Finally, within $\RomanIII_{4}$ of \eqref{est 164d}, we estimate as an example
\begin{align}
& - \langle z_{b}^{\mathcal{L},\lambda}, \mathbb{P}_{L} \divergence (w_{b} \otimes w_{u} - \bar{w}_{b} \otimes \bar{w}_{u} + 2 z_{b} \otimes_{a} Y_{u}) \rangle(t) \nonumber \\
\overset{\eqref{est 167}}{=}& - \langle z_{b}^{\mathcal{L},\lambda}, \mathbb{P}_{L} \divergence (z_{b} \otimes w_{u} + \bar{w}_{b} \otimes z_{u} + 2 z_{b} \otimes_{a} Y_{u}) \rangle(t) \nonumber \\
\overset{\eqref{est 39}}{\lesssim}& \lVert z_{b}^{\mathcal{L}, \lambda} (t)\rVert_{H^{1}} (\lVert z_{b} \otimes w_{u} (t)\rVert_{L^{2}} + \lVert \bar{w}_{b} \otimes z_{u} (t)\rVert_{L^{2}} + \lVert z_{b} (t)\rVert_{L^{2}} N_{T}^{\kappa}).  \label{est 180} 
\end{align}
By \eqref{est 172} and \eqref{est 161}, we expand 
\begin{subequations}\label{est 174}
\begin{align}
& z_{b} \otimes w_{u} = z_{b}^{\mathcal{L},\lambda} \otimes w_{u}^{\mathcal{L},\lambda}+ z_{b}^{\mathcal{L},\lambda} \otimes w_{u}^{\mathcal{H},\lambda} + z_{b}^{\mathcal{H},\lambda} \otimes w_{u}^{\mathcal{L},\lambda} + z_{b}^{\mathcal{H},\lambda} \otimes w_{u}^{\mathcal{H},\lambda},  \\
&\bar{w}_{b} \otimes z_{u}  = \bar{w}_{b}^{\mathcal{L},\lambda} \otimes z_{u}^{\mathcal{L},\lambda} + \bar{w}_{b}^{\mathcal{L},\lambda} \otimes z_{u}^{\mathcal{H},\lambda} + \bar{w}_{b}^{\mathcal{H},\lambda} \otimes z_{u}^{\mathcal{L},\lambda} + \bar{w}_{b}^{\mathcal{H},\lambda} \otimes z_{u}^{\mathcal{H},\lambda}. 
\end{align}
\end{subequations}
First, we estimate the products of lower-order terms in a straight-forward manner via the Gagliardo-Nirenberg inequality of $\lVert f \rVert_{L^{4}} \lesssim \lVert f \rVert_{L^{2}}^{\frac{1}{2}} \lVert f \rVert_{H^{1}}^{\frac{1}{2}}$, 
\begin{align}
& \lVert z_{b}^{\mathcal{L},\lambda} \otimes w_{u}^{\mathcal{L},\lambda} \rVert_{L^{2}} + \lVert \bar{w}_{b}^{\mathcal{L},\lambda} \otimes z_{u}^{\mathcal{L},\lambda} \rVert_{L^{2}}   \nonumber \\
\lesssim& \lVert z_{b}^{\mathcal{L}, \lambda} \rVert_{L^{2}}^{\frac{1}{2}} \lVert z_{b}^{\mathcal{L},\lambda} \rVert_{H^{1}}^{\frac{1}{2}} \lVert w_{u}^{\mathcal{L},\lambda} \rVert_{L^{2}}^{\frac{1}{2}} \lVert w_{u}^{\mathcal{L},\lambda} \rVert_{H^{1}}^{\frac{1}{2}}+ \lVert \bar{w}_{b}^{\mathcal{L},\lambda} \rVert_{L^{2}}^{\frac{1}{2}} \lVert \bar{w}_{b}^{\mathcal{L},\lambda} \rVert_{H^{1}}^{\frac{1}{2}} \lVert z_{u}^{\mathcal{L},\lambda} \rVert_{L^{2}}^{\frac{1}{2}} \lVert z_{u}^{\mathcal{L},\lambda} \rVert_{H^{1}}^{\frac{1}{2}}. \label{est 181}
\end{align}
Next, let us rely on the following Besov space interpolation inequalities:
\begin{equation}\label{est 173} 
\lVert f \rVert_{L^{4}} \lesssim \lVert f \rVert_{L^{2}}^{\frac{1}{2}} \lVert f \rVert_{B_{4,2}^{\frac{1}{2}}}^{\frac{1}{2}}, \hspace{5mm} \lVert f \rVert_{L^{4}} \lesssim \lVert f \rVert_{L^{2}}^{\frac{1}{2}} \lVert f \rVert_{B_{\infty, 2}^{0}}^{\frac{1}{2}}; 
\end{equation} 
both of them follow from definitions of Besov spaces and we leave proofs in the Appendix for completeness.  Now, among the six other terms $ z_{b}^{\mathcal{L},\lambda} \otimes w_{u}^{\mathcal{H},\lambda}$, $z_{b}^{\mathcal{H},\lambda} \otimes w_{u}^{\mathcal{L},\lambda}$,  $z_{b}^{\mathcal{H},\lambda} \otimes w_{u}^{\mathcal{H},\lambda}$, $\bar{w}_{b}^{\mathcal{L},\lambda} \otimes z_{u}^{\mathcal{H},\lambda}$,  $\bar{w}_{b}^{\mathcal{H},\lambda} \otimes z_{u}^{\mathcal{L},\lambda}$, and $\bar{w}_{b}^{\mathcal{H},\lambda} \otimes z_{u}^{\mathcal{H},\lambda}$ in \eqref{est 174}, we work on the terms that do not involve $z_{u}^{\mathcal{H},\lambda}$ or $z_{b}^{\mathcal{H},\lambda}$ as follows:
\begin{align}
& \lVert z_{b}^{\mathcal{L},\lambda} \otimes w_{u}^{\mathcal{H},\lambda} \rVert_{L^{2}} + \lVert \bar{w}_{b}^{\mathcal{H},\lambda} \otimes z_{u}^{\mathcal{L},\lambda} \rVert_{L^{2}} \lesssim \lVert z_{b}^{\mathcal{L},\lambda} \rVert_{L^{4}} \lVert w_{u}^{\mathcal{H},\lambda} \rVert_{L^{4}} + \lVert \bar{w}_{b}^{\mathcal{H},\lambda} \rVert_{L^{4}} \lVert z_{u}^{\mathcal{L},\lambda} \rVert_{L^{4}} \nonumber \\
\overset{\eqref{est 173}}{\lesssim}& \lVert  z_{b}^{\mathcal{L}, \lambda} \rVert_{L^{2}}^{\frac{1}{2}} \lVert z_{b}^{\mathcal{L},\lambda} \rVert_{H^{1}}^{\frac{1}{2}} \lVert w_{u}^{\mathcal{H},\lambda} \rVert_{L^{2}}^{\frac{1}{2}} \lVert w_{u}^{\mathcal{H},\lambda} \rVert_{B_{4,2}^{\frac{1}{2}}}^{\frac{1}{2}} + \lVert \bar{w}_{b}^{\mathcal{H},\lambda} \rVert_{L^{2}}^{\frac{1}{2}} \lVert \bar{w}_{b}^{\mathcal{H},\lambda} \rVert_{B_{4,2}^{\frac{1}{2}}}^{\frac{1}{2}} \lVert z_{u}^{\mathcal{L},\lambda} \rVert_{L^{2}}^{\frac{1}{2}} \lVert z_{u}^{\mathcal{L},\lambda} \rVert_{H^{1}}^{\frac{1}{2}}.  \label{est 182}
\end{align}
For the products involving higher-order terms in \eqref{est 17}, we estimate 
\begin{align}
& \lVert z_{b}^{\mathcal{H},\lambda} \otimes w_{u}^{\mathcal{L},\lambda} \rVert_{L^{2}} + \lVert z_{b}^{\mathcal{H},\lambda} \otimes w_{u}^{\mathcal{H},\lambda} \rVert_{L^{2}} + \lVert \bar{w}_{b}^{\mathcal{L},\lambda} \otimes z_{u}^{\mathcal{H},\lambda} \rVert_{L^{2}} + \lVert \bar{w}_{b}^{\mathcal{H},\lambda} \otimes z_{u}^{\mathcal{H},\lambda} \rVert_{L^{2}} \label{est 178}\\
\overset{\eqref{est 173}}{\lesssim}& \lVert (z_{u}^{\mathcal{H},\lambda}, z_{b}^{\mathcal{H},\lambda}) \rVert_{L^{2}}^{\frac{1}{2}} \lVert (z_{u}^{\mathcal{H},\lambda}, z_{b}^{\mathcal{H},\lambda}) \rVert_{B_{\infty,2}^{0}}^{\frac{1}{2}} \nonumber\\
& \times [ \lVert w_{u}^{\mathcal{L},\lambda} \rVert_{L^{2}}^{\frac{1}{2}} \lVert w_{u}^{\mathcal{L},\lambda} \rVert_{H^{1}}^{\frac{1}{2}} + \lVert w_{u}^{\mathcal{H},\lambda} \rVert_{L^{2}}^{\frac{1}{2}} \lVert w_{u}^{\mathcal{H},\lambda} \rVert_{B_{4,2}^{\frac{1}{2}}}^{\frac{1}{2}} + \lVert w_{b}^{\mathcal{L},\lambda} \rVert_{L^{2}}^{\frac{1}{2}} \lVert w_{b}^{\mathcal{L},\lambda} \rVert_{H^{1}}^{\frac{1}{2}} + \lVert \bar{w}_{b}^{\mathcal{H},\lambda} \rVert_{L^{2}}^{\frac{1}{2}} \lVert \bar{w}_{b}^{\mathcal{H},\lambda} \rVert_{B_{4,2}^{\frac{1}{2}}}^{\frac{1}{2}}]. \nonumber 
\end{align}
Now, as an example we estimate by Bernstein's and H$\ddot{\mathrm{o}}$lder's inequalities, for some $N_{1}$ from \eqref{est 152},  
\begin{align}
&\lVert z_{u} \rVert_{B_{\infty,2}^{0}} \overset{\eqref{est 172} \eqref{est 167}\eqref{est 161}}{\leq}  \lVert z_{u}^{\mathcal{L},\lambda} \rVert_{B_{\infty,2}^{0}} + \lVert - \mathbb{P}_{L} \divergence ( z_{u} \circlesign{\prec}_{s} \mathcal{H}_{\lambda} Q_{u} - z_{b} \circlesign{\prec}_{s} \mathcal{H}_{\lambda} Q_{b}) \rVert_{B_{\infty,2}^{0}} \nonumber \\
&\overset{\eqref{est 152}}{\leq} C_{1} \lVert z_{u}^{\mathcal{L},\lambda} \rVert_{H^{1}} + C_{2} [ \left( \sum_{m\geq -1} \lvert 2^{m} \sum_{l: l \leq m + N_{1} - 2} \lVert \Delta_{l} z_{u} \otimes_{s} \Delta_{m} \mathcal{H}_{\lambda} Q_{u} \rVert_{L^{\infty}} \rvert^{2} \right)^{\frac{1}{2}} \nonumber\\
& \hspace{35mm} + \left( \sum_{m\geq -1} \lvert 2^{m} \sum_{l: l \leq m + N_{1} - 2} \lVert \Delta_{l} z_{b} \otimes_{s} \Delta_{m} \mathcal{H}_{\lambda} Q_{b} \rVert_{L^{\infty}} \rvert^{2} \right)^{\frac{1}{2}} ] \nonumber \\
&\overset{\eqref{est 33}}{\leq} C_{1} \lVert z_{u}^{\mathcal{L},\lambda} \rVert_{H^{1}} + C_{2} [ \lambda^{-\frac{\kappa}{2}} \lVert Q_{u} \rVert_{\mathcal{C}^{2- \frac{3\kappa}{2}}} \lVert z_{u} \rVert_{B_{\infty,2}^{-1+ 2 \kappa}} + \lambda^{-\frac{\kappa}{2}} \lVert Q_{b} \rVert_{\mathcal{C}^{2-\frac{3\kappa}{2}}} \lVert z_{b} \rVert_{B_{\infty,2}^{-1 + 2 \kappa}}] \nonumber \\
&\overset{\eqref{est 39}}{\leq}  C_{1} \lVert z_{u}^{\mathcal{L},\lambda} \rVert_{H^{1}}  + C_{2} \lambda^{-\frac{\kappa}{2}} N_{T}^{\kappa} ( \lVert z_{u} \rVert_{B_{\infty,2}^{0}} + \lVert z_{b} \rVert_{B_{\infty,2}^{0}}) \leq C_{1} \lVert z_{u}^{\mathcal{L},\lambda} \rVert_{H^{1}}  + \frac{\lVert z_{u} \rVert_{B_{\infty,2}^{0}} + \lVert z_{b} \rVert_{B_{\infty,2}^{0}}}{4} \label{est 175} 
\end{align} 
for $\lambda \geq \bar{\lambda} \vee \lambda_{T}$ where $\lambda_{T}$ is from Definition \ref{Definition 5.1} (1) and $\bar{\lambda} \gg  1$ is taken sufficiently large. Repeating identical computations to \eqref{est 175} for $\lVert z_{b} \rVert_{B_{\infty,2}^{0}}$ gives in sum 
\begin{equation*}
\lVert z_{u} \rVert_{B_{\infty,2}^{0}} + \lVert z_{b} \rVert_{B_{\infty,2}^{0}} \leq C_{1} \lVert (z_{u}^{\mathcal{L},\lambda}, z_{b}^{\mathcal{L},\lambda}) \rVert_{H^{1}} + \frac{ \lVert z_{u} \rVert_{B_{\infty,2}^{0}} + \lVert z_{b} \rVert_{B_{\infty,2}^{0}}}{2} 
\end{equation*}  
and therefore 
\begin{equation}\label{est 176} 
\lVert z_{u} \rVert_{B_{\infty,2}^{0}} + \lVert z_{b} \rVert_{B_{\infty,2}^{0}} \leq 2 C_{1} \lVert (z_{u}^{\mathcal{L},\lambda}, z_{b}^{\mathcal{L},\lambda}) \rVert_{H^{1}}. 
\end{equation} 
Together with Bernstein's inequality, \eqref{est 176} implies 
\begin{equation}\label{est 177} 
\lVert z_{u}^{\mathcal{H}, \lambda} \rVert_{B_{\infty, 2}^{0}} + \lVert z_{b}^{\mathcal{H},\lambda} \rVert_{B_{\infty,2}^{0}} \lesssim \lVert (z_{u}^{\mathcal{L},\lambda}, z_{b}^{\mathcal{L},\lambda}) \rVert_{H^{1}}.
\end{equation} 
Therefore, if we define 
\begin{equation}\label{est 179} 
\lvert\lVert \phi \vert\rVert_{\lambda}  \triangleq \lVert \phi^{\mathcal{L},\lambda}  \rVert_{H^{1}} + \lVert \phi^{\mathcal{H},\lambda} \rVert_{B_{4,2}^{\frac{1}{2} + \kappa}}, 
\end{equation} 
then applying \eqref{est 177} and \eqref{est 179} to \eqref{est 178} gives us 
\begin{align}
& \lVert z_{b}^{\mathcal{H},\lambda} \otimes w_{u}^{\mathcal{L},\lambda} \rVert_{L^{2}} + \lVert z_{b}^{\mathcal{H},\lambda} \otimes w_{u}^{\mathcal{H},\lambda} \rVert_{L^{2}} + \lVert \bar{w}_{b}^{\mathcal{L},\lambda} \otimes z_{u}^{\mathcal{H},\lambda} \rVert_{L^{2}} + \lVert \bar{w}_{b}^{\mathcal{H},\lambda} \otimes z_{u}^{\mathcal{H},\lambda} \rVert_{L^{2}} \nonumber\\
\lesssim&  \lVert (z_{u}^{\mathcal{H},\lambda}, z_{b}^{\mathcal{H},\lambda}) \rVert_{L^{2}}^{\frac{1}{2}} \lVert (z_{u}^{\mathcal{L},\lambda}, z_{b}^{\mathcal{L},\lambda}) \rVert_{H^{1}}^{\frac{1}{2}} \lVert (w_{u}^{\mathcal{L},\lambda}, w_{u}^{\mathcal{H},\lambda}, \bar{w}_{b}^{\mathcal{L},\lambda} \bar{w}_{b}^{\mathcal{H},\lambda}) \rVert_{L^{2}}^{\frac{1}{2}}  \lVert \lvert (w_{u}, \bar{w}_{b}) \rvert \rVert_{\lambda}^{\frac{1}{2}}.  \label{est 183} 
\end{align}
Hence, we can now deduce 
\begin{align}
& - \langle z_{b}^{\mathcal{L},\lambda}, \mathbb{P}_{L} \divergence (w_{b} \otimes w_{u} - \bar{w}_{b} \otimes \bar{w}_{u} - 2 z_{b} \otimes_{a} Y_{u}) \rangle(t) \label{est 184} \\
& \hspace{1mm} \overset{\eqref{est 180}\eqref{est 174}}{\lesssim} \lVert z_{b}^{\mathcal{L}, \lambda} (t)\rVert_{H^{1}} (\lVert[ z_{b}^{\mathcal{L},\lambda} \otimes w_{u}^{\mathcal{L},\lambda}+ z_{b}^{\mathcal{L},\lambda} \otimes w_{u}^{\mathcal{H},\lambda} + z_{b}^{\mathcal{H},\lambda} \otimes w_{u}^{\mathcal{L},\lambda} + z_{b}^{\mathcal{H},\lambda} \otimes w_{u}^{\mathcal{H},\lambda}](t) \rVert_{L^{2}}  \nonumber \\
& \hspace{6mm}  + \lVert [\bar{w}_{b}^{\mathcal{L},\lambda} \otimes z_{u}^{\mathcal{L},\lambda} + \bar{w}_{b}^{\mathcal{L},\lambda} \otimes z_{u}^{\mathcal{H},\lambda} + \bar{w}_{b}^{\mathcal{H},\lambda} \otimes z_{u}^{\mathcal{L},\lambda} + \bar{w}_{b}^{\mathcal{H},\lambda} \otimes z_{u}^{\mathcal{H},\lambda} ](t)\rVert_{L^{2}} + \lVert z_{b}(t) \rVert_{L^{2}} N_{T}^{\kappa})   \nonumber \\
& \hspace{1mm} \overset{\eqref{est 181} \eqref{est 182} \eqref{est 183}}{\leq} \frac{\nu }{64} \lVert (z_{u}^{\mathcal{L},\lambda}, z_{b}^{\mathcal{L},\lambda}) (t)\rVert_{\dot{H}^{1}}^{2} \nonumber \\
& \hspace{6mm} + C(N_{T}^{\kappa}) \lVert (z_{u}^{\mathcal{L}, \lambda},z_{b}^{\mathcal{L},\lambda}, z_{u}^{\mathcal{H},\lambda}, z_{b}^{\mathcal{H},\lambda}) (t)\rVert_{L^{2}}^{2} \left(\lVert (w_{u}^{\mathcal{L},\lambda}, w_{u}^{\mathcal{H},\lambda} \bar{w}_{b}^{\mathcal{L},\lambda}, \bar{w}_{b}^{\mathcal{H},\lambda}) (t)\rVert_{L^{2}}^{2} \lvert \lVert (w_{u},\bar{w}_{b}) (t)\rvert\rVert_{\lambda}^{2} + 1 \right). \nonumber 
\end{align}
Analogous computations to \eqref{est 184} on similar terms in $\RomanIII_{4}$ of \eqref{est 164d} give us 
\begin{align} 
\RomanIII_{4} \leq& \frac{\nu }{16} \lVert (z_{u}^{\mathcal{L},\lambda}, z_{b}^{\mathcal{L},\lambda}) (t)\rVert_{\dot{H}^{1}}^{2} + C(N_{T}^{\kappa}) \lVert (z_{u}^{\mathcal{L},\lambda}, z_{u}^{\mathcal{H},\lambda},z_{b}^{\mathcal{L},\lambda}, z_{b}^{\mathcal{H},\lambda} ) (t)\rVert_{L^{2}}^{2} \label{est 189}\\
& \times \left(\lVert (w_{u}^{\mathcal{L},\lambda}, w_{u}^{\mathcal{H},\lambda},w_{b}^{\mathcal{L},\lambda}, w_{b}^{\mathcal{H},\lambda} ,\bar{w}_{u}^{\mathcal{L},\lambda}, \bar{w}_{u}^{\mathcal{H},\lambda}, \bar{w}_{b}^{\mathcal{L},\lambda}, \bar{w}_{b}^{\mathcal{H},\lambda}) (t)\rVert_{L^{2}}^{2} \lVert \lvert (w_{u},w_{b}, \bar{w}_{u}, \bar{w}_{b}) (t)\rvert\rVert_{\lambda}^{2} + 1 \right). \nonumber 
\end{align} 
At last, applying  \eqref{est 186}, \eqref{est 187}, \eqref{est 188}, and \eqref{est 189} to \eqref{est 185} results in 
\begin{align}
& \frac{1}{2} \partial_{t} \lVert (z_{u}^{\mathcal{L}, \lambda}, z_{b}^{\mathcal{L},\lambda}) (t)\rVert_{L^{2}}^{2}  \leq  - \frac{3\nu}{4} \lVert ( z_{u}^{\mathcal{L},\lambda}, z_{b}^{\mathcal{L}, \lambda} ) (t)\rVert_{\dot{H}^{1}}^{2} + C(\lambda, N_{T}^{\kappa}) \lVert (z_{u}^{\mathcal{L}, \lambda}, z_{b}^{\mathcal{L},\lambda}) (t)\rVert_{L^{2}}^{2}  \label{est 194} \\
&\hspace{4mm} + C(N_{T}^{\kappa},\lambda) [ ( \lVert (z_{u}^{\mathcal{H},\lambda}, z_{b}^{\mathcal{H},\lambda}) \rVert_{H^{2\kappa}} + \lVert (z_{u},z_{b}) \rVert_{H^{2\kappa}})(t) (\lVert (w_{u},w_{b}) \rVert_{H^{2\kappa}} + \lVert (\bar{w}_{u},\bar{w}_{b}) \rVert_{H^{2\kappa}} + 1)(t)]^{2} \nonumber \\
& \hspace{4mm} + C(N_{T}^{\kappa}) \lVert (z_{u}^{\mathcal{L},\lambda}, z_{u}^{\mathcal{H},\lambda},z_{b}^{\mathcal{L},\lambda}, z_{b}^{\mathcal{H},\lambda} ) (t)\rVert_{L^{2}}^{2} \nonumber \\
& \hspace{8mm} \times \left( \lVert (w_{u}^{\mathcal{L},\lambda}, w_{u}^{\mathcal{H},\lambda}, w_{b}^{\mathcal{L},\lambda}, w_{b}^{\mathcal{H},\lambda}, \bar{w}_{u}^{\mathcal{L},\lambda}, \bar{w}_{u}^{\mathcal{H},\lambda}, \bar{w}_{b}^{\mathcal{L},\lambda}, \bar{w}_{b}^{\mathcal{H},\lambda}) (t)\rVert_{L^{2}}^{2} \lVert \lvert (w_{u},w_{b}, \bar{w}_{u}, \bar{w}_{b}) (t)\rvert\rVert_{\lambda}^{2} + 1 \right). \nonumber 
\end{align}
For any $s \in [0, 1- 2 \kappa)$ where $\kappa \in (0, \frac{1}{2})$, e.g. 
\begin{align*}
 \lVert z_{b} (t)\rVert_{H^{s}} &\overset{\eqref{est 172} \eqref{est 167} \eqref{est 161}}{\leq}  \lVert z_{b}^{\mathcal{L},\lambda} (t)\rVert_{H^{s}} + C \lVert (z_{b} \circlesign{\prec}_{a} \mathcal{H}_{\lambda} Q_{u} - z_{u} \circlesign{\prec}_{a} \mathcal{H}_{\lambda} Q_{b})(t) \rVert_{H^{s+1}}   \nonumber \\
& \hspace{10mm} \overset{\eqref{est 40c} \eqref{est 68}}{\leq} \lVert z_{b}^{\mathcal{L},\lambda} (t)\rVert_{H^{s}} + C \lambda^{-\frac{\kappa}{2}}  [ \lVert X_{u}  \rVert_{C_{t}\mathcal{C}^{-\kappa}} +  \lVert X_{b}  \rVert_{C_{t}\mathcal{C}^{-\kappa}}] ( \lVert z_{u} \rVert_{H^{s}} + \lVert z_{b} \rVert_{H^{s}})(t)  \nonumber \\
& \hspace{10mm} \leq \lVert z_{b}^{\mathcal{L},\lambda} (t)\rVert_{H^{s}} + \frac{ \lVert z_{u} (t)\rVert_{H^{s}}+  \lVert z_{b} (t)\rVert_{H^{s}}}{4}
\end{align*}
for $\lambda \geq \bar{\lambda} (\alpha, \kappa, T) \vee \lambda_{T}$ where $\bar{\lambda} (\alpha,\kappa,T) \geq 1$ is taken to be sufficiently large.  Similarly, we can compute $\lVert z_{u} \rVert_{H^{s}} \leq \lVert z_{u}^{\mathcal{L},\lambda} \rVert_{H^{s}} + \frac{ \lVert z_{u} \rVert_{H^{s}} + \lVert z_{b} \rVert_{H^{s}}}{4}$ so that for any $s \in [0, 1 - 2 \kappa)$ where $\kappa \in (0, \frac{1}{2})$ 
\begin{equation}\label{est 190} 
\lVert z_{u} \rVert_{H^{s}} + \lVert z_{b} \rVert_{H^{s}}  \leq 2(\lVert z_{u}^{\mathcal{L},\lambda} \rVert_{H^{s}} + \lVert z_{b}^{\mathcal{L},\lambda} \rVert_{H^{s}}).
\end{equation} 
On the other hand, e.g. if we let 
\begin{equation}\label{est 191} 
M_{T} \triangleq \lVert (w_{u}^{\mathcal{L},\lambda}, w_{u}^{\mathcal{H},\lambda}, w_{b}^{\mathcal{L},\lambda}, w_{b}^{\mathcal{H},\lambda}) \rVert_{L_{T}^{\infty} L_{x}^{2}} + \lVert ( \bar{w}_{u}^{\mathcal{L},\lambda}, \bar{w}_{u}^{\mathcal{H},\lambda}, \bar{w}_{b}^{\mathcal{L},\lambda}, \bar{w}_{b}^{\mathcal{H},\lambda}) \rVert_{L_{T}^{\infty} L_{x}^{2}} < \infty, 
\end{equation} 
then we can estimate by \eqref{est 161} and the fact that $B_{\infty,2}^{s} \subset W^{s,p}$ for all $p\in [2,\infty)$ and $s \in\mathbb{R}$ (see e.g. \cite[p. 152]{BL76}), 
\begin{align}
&\lVert (w_{u}, w_{b})(t) \rVert_{H^{2\kappa}} \label{est 192} \\
\lesssim& [\lVert (w_{u}^{\mathcal{L},\lambda}, w_{b}^{\mathcal{L},\lambda}) \rVert_{L^{2}}^{1-2\kappa} \lVert (w_{u}^{\mathcal{L},\lambda}, w_{b}^{\mathcal{L},\lambda}) \rVert_{H^{1}}^{2\kappa} + \lVert (w_{u}^{\mathcal{H},\lambda}, w_{b}^{\mathcal{H},\lambda}) \rVert_{L^{2}}^{\frac{1-\kappa}{1+ \kappa}} \lVert (w_{u}^{\mathcal{H},\lambda}, w_{b}^{\mathcal{H},\lambda}) \rVert_{W^{4, \frac{1}{2} + \kappa}}^{\frac{2\kappa}{1+ \kappa}}] (t) \nonumber \\
& \hspace{13mm} \overset{\eqref{est 191} \eqref{est 179}}{\lesssim}  M_{T}^{1- 2\kappa} \lVert \lvert (w_{u}, w_{b})(t) \rvert \rVert_{\lambda}^{2\kappa} + M_{T}^{\frac{1- \kappa}{1+ \kappa}} \lVert \lvert (w_{u}, w_{b})(t) \rvert \rVert_{\lambda}^{\frac{2\kappa}{1+ \kappa}}. \nonumber 
\end{align}   
Identically to \eqref{est 192} we can estimate 
\begin{equation*}
\lVert (\bar{w}_{u}, \bar{w}_{b}) (t)\rVert_{H^{2\kappa}} \lesssim M_{T}^{1- 2\kappa} \lVert \lvert (\bar{w_{u}}, \bar{w_{b}}) (t)\rvert \rVert_{\lambda}^{2\kappa} + M_{T}^{\frac{1- \kappa}{1+ \kappa}} \lVert \lvert (\bar{w}_{u}, \bar{w}_{b}) (t)\rvert \rVert_{\lambda}^{\frac{2\kappa}{1+ \kappa}}. 
\end{equation*} 
At last, the claimed uniqueness follows from the following differential inequality: for $\kappa \in (0, \frac{1}{6})$ sufficiently small 
\begin{align*}
& \frac{1}{2} \partial_{t} \lVert (z_{u}^{\mathcal{L}, \lambda}, z_{b}^{\mathcal{L},\lambda}) (t)\rVert_{L^{2}}^{2} \\
&\overset{\eqref{est 194}\eqref{est 172} \eqref{est 191} \eqref{est 190}}{\leq}  - \frac{3\nu }{4} \lVert ( z_{u}^{\mathcal{L},\lambda}, z_{b}^{\mathcal{L}, \lambda} ) (t)\rVert_{\dot{H}^{1}}^{2} + C(\lambda, N_{T}^{\kappa}) \lVert (z_{u}^{\mathcal{L}, \lambda}, z_{b}^{\mathcal{L},\lambda}) (t)\rVert_{L^{2}}^{2}   \nonumber \\
&\hspace{10mm} + C(N_{T}^{\kappa},\lambda) [  \lVert (z_{u}^{\mathcal{L},\lambda},z_{b}^{\mathcal{L},\lambda}) \rVert_{H^{2\kappa}}(t)  (\lVert (w_{u},w_{b}) \rVert_{H^{2\kappa}} + \lVert (\bar{w}_{u},\bar{w}_{b}) \rVert_{H^{2\kappa}} + 1)(t)]^{2} \nonumber \\
& \hspace{10mm} + C(M_{T}, \lambda, N_{T}^{\kappa}) \lVert (z_{u}^{\mathcal{L},\lambda},z_{b}^{\mathcal{L},\lambda})(t) \rVert_{L^{2}}^{2}  \left(\lVert \lvert (w_{u},w_{b}, \bar{w}_{u}, \bar{w}_{b})(t) \rvert\rVert_{\lambda}^{2} + 1 \right) \nonumber \\
&\overset{\eqref{est 192} }{\leq} - \frac{\nu }{2} \lVert  (z_{u}^{\mathcal{L},\lambda}, z_{b}^{\mathcal{L},\lambda}) (t)\rVert_{\dot{H}^{1}}^{2} + C(M_{T}, N_{T}^{\kappa}, \lambda)  \lVert (z_{u}^{\mathcal{L},\lambda}, z_{b}^{\mathcal{L},\lambda}) (t)\rVert_{L^{2}}^{2} ( \lVert \lvert (w_{u}, w_{b}, \bar{w}_{u}, \bar{w}_{b})(t) \rvert \rVert_{\lambda} + 1)^{2}. \nonumber 
\end{align*} 
\end{proof} 

We return to $\mathcal{A}_{t}$, $\mathcal{A}_{t}^{\lambda}$, and the enhanced noise defined in \eqref{est 195}, \eqref{est 97}, and \eqref{est 165}, respectively. We now define the space of enhanced noise $\Theta_{\kappa} \subset  \mathcal{C}^{-1 - \kappa} (\mathbb{T}^{2}; \mathbb{M}^{4}) \times \mathcal{C}^{-2\kappa} (\mathbb{T}^{2}; \mathbb{M}^{4})$ by 
\begin{equation}\label{est 196}
\Theta_{\kappa} \triangleq \overline{ \{ (X_{1}, X_{1} \circlesign{\circ} \left( - \frac{\nu \Delta}{2} + 1\right)^{-1} X_{1} - c): \hspace{1mm} X_{1} \in \mathcal{S}(\mathbb{T}^{2}; \mathbb{M}^{4}), c \in \mathbb{R} \}}, 
\end{equation} 
where the closure is taken w.r.t. the product norm of $\mathcal{C}^{-1-\kappa} (\mathbb{T}^{2}; \mathbb{M}^{4}) \times \mathcal{C}^{-2\kappa} (\mathbb{T}^{2}; \mathbb{M}^{4})$. In order to define 
\begin{equation*}
\mathcal{U}(X) \triangleq \frac{\nu \Delta}{2} \Id + X_{1} 
\end{equation*} 
for any $X = (X_{1}, X_{2}) \in \Theta_{\kappa}$ for some $\kappa > 0$, we define the space of strongly paracontrolled distributions (\cite[Definitions 4.1 and 4.17, Section 4.2]{AC15}) 
\begin{subequations}\label{est 197}
\begin{align}
P \triangleq& \left(-\frac{\nu \Delta}{2} + 1 \right)^{-1} X_{1}, \label{est 197a}\\
\mathcal{X}_{\kappa}(X) \triangleq& \{ \phi \in H^{1-\kappa}: \hspace{1mm} \phi = \phi \circlesign{\prec} P + \phi^{\sharp}, \phi^{\sharp} \in H^{2-2\kappa} \},  \label{est 197b}\\
 \lVert \phi \rVert_{\mathcal{X}_{\kappa}} \triangleq& \lVert \phi \rVert_{H^{1-\kappa}} + \lVert \phi - \phi \circlesign{\prec} P \rVert_{H^{2-2\kappa}}. \label{est 197c}
\end{align}
\end{subequations} 
The following proposition can be obtained from \cite{AC15} (e.g. see \cite[Propositions 4.13 and 4.23, Lemma 4.15]{AC15}); indeed, \cite[Proposition 7.1]{HR23} was a time-dependent higher-dimensional version of such a result from \cite{AC15} and our case is simply the same except $\mathbb{M}^{4}$ rather than $\mathbb{M}^{2}$. 
 
\begin{proposition}\label{Proposition 5.3} 
\rm{(Cf. \cite[Proposition 7.1]{HR23})} Define $\Theta \triangleq \cup_{0 < \kappa < \kappa_{0}}$ with $\Theta_{\kappa}$ defined in \eqref{est 196} and $C_{\text{op}}$ to be a space of closed self-adjoint operators with the graph distance where the convergence in this distance is implied by the convergence in the resolvent sense. Then there exists a $\kappa_{0} > 0$ and a unique map $\mathcal{U}: \hspace{1mm} \Theta \mapsto C_{\text{op}}$ such that the following hold. 
\begin{enumerate}
\item For any smooth $X = (X_{1}, X_{2}) \in [\mathcal{S}(\mathbb{T}^{2}; \mathbb{M}^{4}) \times \mathcal{S}(\mathbb{T}^{2}; \mathbb{M}^{4})] \cap \Theta$ and $\phi \in H^{2}$, $\mathcal{U}$ satisfies 
\begin{equation*}
\mathcal{U} (X) \phi = \frac{\nu\Delta}{2} \phi + X_{1} \circlesign{\prec} \phi + X_{1} \circlesign{\succ} \phi + X_{1} \circlesign{\circ} \phi^{\sharp} + \phi \circlesign{\prec} X_{2} + C^{\circlesign{\circ}} (\phi, P, X_{1}) 
\end{equation*} 
where $P$ is defined in \eqref{est 197a} and 
\begin{equation*}
C^{\circlesign{\circ}} (\phi, P, X_{1}) \triangleq X_{1} \circlesign{\circ} (\phi \circlesign{\prec} P) - \phi \circlesign{\prec} (P \circlesign{\circ} X_{1}). 
\end{equation*} 
If $X_{2} = P \circlesign{\circ} X_{1}$, it follows that $\mathcal{U}(X) \phi = \frac{\nu\Delta }{2} \phi + X_{1} \phi$. 
\item For any $\{X^{n}\}_{n \in\mathbb{N}} \subset \mathcal{S} ( \mathbb{T}^{2}; \mathbb{M}^{4}) \times \mathcal{S}( \mathbb{T}^{2}; \mathbb{M}^{4})$ such that $X^{n} \to X$ in $\Theta_{\kappa}$ as $n\to\infty$ for some $\kappa \in (0, \kappa_{0})$ and $X \in \Theta_{\kappa}$, $\mathcal{U} (X^{n})$ converges to $\mathcal{U}(X)$ in resolvent sense. Moreover, for any $\kappa \in (0, \kappa_{0})$, there exist two continuous maps $m, \textbf{c}: \hspace{1mm} \Theta_{\kappa} \mapsto \mathbb{R}_{+}$ such that 
\begin{equation*}
[m (X), \infty) \subset \rho( \mathcal{U}(X)) \hspace{3mm} \forall \hspace{1mm} X \in \Theta_{\kappa}
\end{equation*} 
where $\rho( \mathcal{U}(X))$ is the resolvent set of an operator $\mathcal{U}(X)$ with an upper bound 
\begin{equation*}
\lVert ( - \mathcal{U} (X) + m )^{-1} \phi \rVert_{\mathcal{X}_{\kappa}} \leq \textbf{c}(X) \lVert \phi \rVert_{L^{2}} \hspace{3mm} \forall \hspace{1mm} m \geq \textbf{m}(X). 
\end{equation*} 
\end{enumerate} 
\end{proposition}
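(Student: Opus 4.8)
The plan is to reduce Proposition \ref{Proposition 5.3} to the corresponding statement in \cite{AC15}, which is proved for scalar-valued (or $\mathbb{M}^{2}$-valued) strongly paracontrolled distributions, by observing that the only structural ingredient used in the construction of $\mathcal{U}$ there is the algebra of Bony paraproducts together with the Schauder-type smoothing of the operator $(-\tfrac{\nu\Delta}{2}+1)^{-1}$; none of these cares whether the ambient coefficient space is $\mathbb{R}$, $\mathbb{M}^{2}$, or $\mathbb{M}^{4}$. First I would fix $\kappa_{0}>0$ small enough that all the paraproduct estimates of Lemma \ref{Lemma 3.1} close with the exponents $1-\kappa$, $2-2\kappa$ appearing in \eqref{est 197}, and then define $\mathcal{U}$ on smooth enhanced noise by the explicit formula in item (1), checking that the right-hand side indeed maps $\mathcal{X}_{\kappa}(X)$ into $\mathcal{S}'$ and is symmetric. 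The symmetry (self-adjointness after taking the Friedrichs extension) follows because $X_{1}$ is $\mathbb{M}^{4}$-valued symmetric in the relevant sense — this is exactly where the particular structure of $\nabla_{\mathrm{spec}}$ in \eqref{est 64} would matter in our application, but for the abstract proposition it suffices that the excerpt's hypothesis places $X$ in $\Theta_{\kappa}$, whose definition \eqref{est 196} is the closure of symmetric smooth data.

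Next I would establish the a priori bound in item (2): for $\phi\in\mathcal{X}_{\kappa}(X)$ with paracontrolled derivative $\phi^{\sharp}=\phi-\phi\circlesign{\prec}P\in H^{2-2\kappa}$, one writes the resolvent equation $(-\mathcal{U}(X)+m)\phi=\psi$, substitutes the paracontrolled ansatz, and isolates the high-regularity remainder equation for $\phi^{\sharp}$. Using \eqref{est 40} one bounds every paraproduct and commutator term $C^{\circlesign{\circ}}(\phi,P,X_{1})$ by $\lVert X\rVert_{\Theta_{\kappa}}\lVert\phi\rVert_{\mathcal{X}_{\kappa}}$ with a small constant once $m$ is taken large; absorbing the small terms and using the $H^{2-2\kappa}$-coercivity of $-\tfrac{\nu\Delta}{2}+m$ yields $\lVert\phi\rVert_{\mathcal{X}_{\kappa}}\le \mathbf{c}(X)\lVert\psi\rVert_{L^{2}}$ for all $m\ge \mathbf{m}(X)$, with $\mathbf{m},\mathbf{c}$ continuous on $\Theta_{\kappa}$ since they are built from $\lVert X\rVert_{\Theta_{\kappa}}$ and the implicit constants in Lemma \ref{Lemma 3.1}. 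The continuity/convergence statement — $X^{n}\to X$ in $\Theta_{\kappa}$ implies $\mathcal{U}(X^{n})\to\mathcal{U}(X)$ in the resolvent sense — then follows by applying this uniform bound to the difference of resolvents and using that the formula for $\mathcal{U}$ is multilinear and continuous in $X_{1}$, $X_{2}$, $P$ with respect to the relevant Hölder--Besov norms; uniqueness of $\mathcal{U}$ is forced because the two items pin down its action on a dense set and the resolvent-convergence topology is Hausdorff.

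The main obstacle I expect is purely bookkeeping rather than conceptual: one must verify that the commutator $C^{\circlesign{\circ}}(\phi,P,X_{1})$ — which involves the matrix product of an $\mathbb{M}^{4}$-valued distribution $X_{1}$ with the paraproduct $\phi\circlesign{\prec}P$ where $P$ is itself $\mathbb{M}^{4}$-valued — obeys the same trilinear commutator estimate as in the scalar case of \cite[Lemma 4.15]{AC15}, i.e. $\lVert C^{\circlesign{\circ}}(\phi,P,X_{1})\rVert_{H^{2-2\kappa}}\lesssim \lVert\phi\rVert_{H^{1-\kappa}}\lVert P\rVert_{\mathcal{C}^{1-\kappa}}\lVert X_{1}\rVert_{\mathcal{C}^{-1-\kappa}}$. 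Matrix multiplication is not commutative, so one cannot literally cite the scalar lemma; however, the commutator estimate is proved by Littlewood--Paley frequency analysis term by term and the only algebraic property it uses is bilinearity of the product, which matrix multiplication retains — so the proof goes through verbatim with $\lvert\cdot\rvert$ replaced by an operator norm on $\mathbb{M}^{4}$. Since the excerpt explicitly remarks that ``our case is simply the same except $\mathbb{M}^{4}$ rather than $\mathbb{M}^{2}$,'' I would state the proposition as a direct corollary of \cite[Propositions 4.13, 4.23 and Lemma 4.15]{AC15} and \cite[Proposition 7.1]{HR23}, indicating that the only modification is carrying matrix norms through the frequency decompositions, and refer the reader there for the remaining details.
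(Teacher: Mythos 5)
Your proposal matches the paper's treatment: the paper gives no independent proof of Proposition \ref{Proposition 5.3}, but simply invokes \cite[Propositions 4.13 and 4.23, Lemma 4.15]{AC15} together with \cite[Proposition 7.1]{HR23}, remarking that the only change is working with $\mathbb{M}^{4}$-valued rather than $\mathbb{M}^{2}$-valued data, which is exactly the reduction you carry out (your sketch of the paracontrolled a priori bound and matrix-valued commutator estimate is extra detail beyond what the paper records, though note the commutator should land in a space of regularity about $1-3\kappa$ rather than $2-2\kappa$).
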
 

Now we write for $\xi_{l}$ where $l \in  \{u,b\}$, 
\begin{equation*}
\mathbb{P}_{L} \mathbb{P}_{\neq 0} \xi_{l}(t,x) \overset{\eqref{est 200}}{=} \sum_{m\in\mathbb{Z}^{2} \setminus \{0\}} e^{i2\pi  m \cdot x} \left( \frac{ \partial_{t} \beta_{l,1} (t,m) m_{2} - \partial_{t} \beta_{l,2} (t,m) m_{1}}{\lvert m \rvert^{2}} \right) m^{\bot} 
\end{equation*} 
where $\{\beta_{l,i}(m)\}_{i\in \{1,2\}, m \in \mathbb{Z}_{+}^{2}}$ is a family of $\mathbb{C}$-valued two-sided Brownian motions such that 
\begin{align*}
& \mathbb{E} [\partial_{t} \beta_{u,i} (t,m) \partial_{t} \beta_{u,j} (s, m') ] = \delta(t-s) 1_{ \{i=j\}} 1_{\{m = -m'\}}, \\
& \mathbb{E} [ \partial_{t}\beta_{b,i} (t,m) \partial_{t} \beta_{b,j} (s,m') ] = \delta(t-s) 1_{\{i=j\}} 1_{\{ m = -m'\}}, \\
& \mathbb{E} [ \partial_{t} \beta_{u,i} (t,m) \partial_{t} \beta_{b,j} (s, m') ] = 0, 
\end{align*}
(recall \eqref{STWN u} and \eqref{STWN b}). We define for all $t \in (0,\infty)$ and $m \in \mathbb{Z}^{2} \setminus \{0\}$ 
\begin{equation*}
\zeta_{u}(t,m) \triangleq \frac{ \beta_{u,1} (t,m) m_{2} - \beta_{u,2} (t,m)m_{1}}{\lvert m \rvert} \hspace{1mm} \text{ and } \hspace{1mm} \zeta_{b}(t,m) \triangleq \frac{ \beta_{b,1} (t,m) m_{2} - \beta_{b,2} (t,m) m_{1}}{\lvert m \rvert} 
\end{equation*} 
so that 
\begin{subequations}\label{est 211}
\begin{align}
& \mathbb{E} [ \partial_{t} \zeta_{u} (t,m) \partial_{t} \zeta_{u} (s, m') ] = -  \delta (t-s) 1_{\{ m = -m' \}}, \\
&  \mathbb{E} [ \partial_{t} \zeta_{b} (t,m) \partial_{t} \zeta_{b} (s,m') ] = - \delta(t-s) 1_{\{ m =- m' \}}, \hspace{3mm}  \mathbb{E} [ \partial_{t} \zeta_{u} (t,m) \partial_{t} \zeta_{b} (s,m') ] = 0. 
\end{align}
\end{subequations} 
Denote 
\begin{equation*} 
e_{m}(x) \triangleq e^{i2\pi  m \cdot x} \frac{m^{\bot}}{\lvert m \rvert} 
\end{equation*} 
so that 
\begin{equation*}
\mathbb{P}_{L} \mathbb{P}_{\neq 0} \xi_{u}(t,x)  =  \sum_{m\in\mathbb{Z}^{2}\setminus \{0\}} \partial_{t} \zeta_{u} (t,m) e_{m}(x)  \text{ and }\mathbb{P}_{L} \mathbb{P}_{\neq 0} \xi_{b}(t,x) = \sum_{m\in\mathbb{Z}^{2} \setminus \{0\}} \partial_{t} \zeta_{b}(t,m) e_{m}(x).
\end{equation*} 
Now, by defining for $l \in \{u,b\}$, 
\begin{equation}\label{est 206} 
F_{l}(t,m) \triangleq \int_{0}^{t} e^{-\nu \lvert m \rvert^{2}(t-s)} d \zeta_{l}(s,m), \hspace{3mm} F_{l}^{\lambda} (t,m) \triangleq \int_{0}^{t} e^{-\nu \lvert m \rvert^{2} (t-s)} \mathfrak{l} \left( \frac{ \lvert m \rvert}{\lambda} \right) d \zeta_{l}(s,m) 
\end{equation} 
where $\mathfrak{l}$ is the projection onto lower frequencies from Definition \ref{Definition 3.1}, we can solve from \eqref{est 14} and \eqref{est 15} for $l \in \{u,b\}$, 
\begin{subequations}\label{est 209} 
\begin{align}
&X_{l}(t,x) = \sum_{m\in\mathbb{Z}^{2} \setminus \{0\}} F_{u}(t,m) e_{m}(x), \hspace{3mm}\mathcal{L}_{\lambda} X_{l} (t,x) \overset{\eqref{est 206}}{=} \sum_{m\in\mathbb{Z}^{2} \setminus \{0\}} e_{m}(x) F_{l}^{\lambda}(t,m), \label{est 209a} \\
& \left( - \frac{\nu \Delta}{2} + 1 \right)^{-1} \mathcal{L}_{\lambda} X_{l} = \sum_{m\in\mathbb{Z}^{2} \setminus \{0\}} e_{m}(x) F_{l}^{\lambda} (t,m) \left( \frac{\nu  \lvert m \rvert^{2}}{2} + 1 \right)^{-1}. \label{est 209b} 
\end{align}
\end{subequations} 

\begin{proposition}\label{Proposition 5.4} 
\rm{(Cf. \cite[Lemma 7.2]{HR23})} For any $\kappa > 0$, define $\Theta_{\kappa}$ by \eqref{est 196}, $P^{\lambda}$ and $r_{\lambda}$ by \eqref{est 140}. Then, for any $t \geq 0$, there exists a distribution $\nabla_{\text{spec}} (X_{u}, X_{b})(t) \diamond P_{t} \in \mathcal{C}^{-\kappa} (\mathbb{T}^{2}; \mathbb{M}^{4})$ such that   
\begin{align}
&\left( \nabla_{\text{spec}} (\mathcal{L}_{\lambda^{n}} X_{u}, \mathcal{L}_{\lambda^{n}} X_{b}), ( \nabla_{\text{spec}} (\mathcal{L}_{\lambda^{n}} X_{u}, \mathcal{L}_{\lambda^{n}} X_{b} )) \circlesign{\circ} P^{\lambda^{n}} - r_{\lambda^{n}} \Id\right) \nonumber \\
& \hspace{50mm}  \to ( \nabla_{\text{spec}}(X_{u}, X_{b}), \nabla_{\text{spec}} (X_{u}, X_{b}) \diamond P)  \label{est 228} 
\end{align} 
as $n\to\infty$ both in $L^{p}(\Omega; C_{\text{loc}} (\mathbb{R}_{+}; \Theta_{\kappa}))$ for any $p \in [1,\infty)$ and $\mathbb{P}$-a.s. Finally, there exists a constant $c > 0$ such for all $\lambda \geq 1$, 
\begin{equation}\label{est 232}
r_{\lambda}(t) \leq c \ln (\lambda) 
\end{equation} 
uniformly over all $t \geq 0$. 
\end{proposition}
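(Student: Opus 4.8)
\textbf{Proof proposal for Proposition \ref{Proposition 5.4}.}

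The plan is to treat the two components of the convergence \eqref{est 228} separately, because the first component is a purely Gaussian (first-chaos) object while the second is a second-chaos object requiring renormalization. First I would address the first component: the map $\nabla_{\text{spec}}$ from \eqref{est 64} is a bounded linear operator from $\mathcal{C}^{-\kappa}$-valued pairs to $\mathcal{C}^{-\kappa}(\mathbb{T}^2;\mathbb{M}^4)$, so it suffices to recall that $\mathcal{L}_{\lambda^n}X_u\to X_u$ and $\mathcal{L}_{\lambda^n}X_b\to X_b$ in $C_{\mathrm{loc}}(\mathbb{R}_+;\mathcal{C}^{-\kappa})$, $\mathbb{P}$-a.s.\ and in every $L^p(\Omega)$; this in turn follows from the explicit series representation \eqref{est 209a}, the Gaussianity of the coefficients $F_l(t,m)$ in \eqref{est 206}, hypercontractivity on the first Wiener chaos, and the Besov characterization exactly as in \cite[Lemma 10.2]{H14}, together with Lemma \ref{Lemma 3.3} which gives $\|\mathcal{H}_{\lambda^n}X_l\|_{\mathcal{C}^{-\kappa}}\lesssim (\lambda^n)^{-\kappa/2}\|X_l\|_{\mathcal{C}^{-\kappa/2}}\to 0$ since $\lambda^n=(n+1)^{\mathfrak a}\to\infty$. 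Continuity in $t$ is handled by the same Kolmogorov-type argument applied to time increments, using the parabolic scaling $\mathfrak{s}$ from \eqref{est 9}.

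The second component is the heart of the matter. Writing $X_1^n\triangleq\nabla_{\text{spec}}(\mathcal{L}_{\lambda^n}X_u,\mathcal{L}_{\lambda^n}X_b)$ and $P^{\lambda^n}=(-\tfrac{\nu\Delta}{2}+1)^{-1}X_1^n$ as in \eqref{est 140a}, I would expand $X_1^n\circlesign{\circ}P^{\lambda^n}$ in terms of the Fourier series \eqref{est 209}, so that each entry of the resulting $\mathbb{M}^4$-valued field is a sum over $m,m'\in\mathbb{Z}^2\setminus\{0\}$ of products $F_l^{\lambda^n}(t,m)\overline{F_{l'}^{\lambda^n}(t,m')}$ against $e_m(x)e_{m'}(x)$, restricted by the paraproduct-remainder frequency condition $|m|\approx|m'|$. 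Using the Wick/Itô product structure coming from \eqref{est 211}, I would split this into the purely off-diagonal (second-chaos) part and the diagonal (deterministic, divergent) part; the diagonal part is precisely $r_{\lambda^n}(t)\,\Id$, whose entrywise value one computes from $\mathbb{E}[|F_l^{\lambda^n}(t,m)|^2]=\mathfrak{l}(|m|/\lambda^n)^2\frac{1-e^{-2\nu|m|^2t}}{2\nu|m|^2}$ together with the resolvent factor $(\tfrac{\nu|m|^2}{2}+1)^{-1}$ and the combinatorial factor $\tfrac14$, giving exactly \eqref{est 140b}. I then need two things: (a) the renormalized (Wick) products converge in $L^p(\Omega;C_{\mathrm{loc}}(\mathbb{R}_+;\mathcal{C}^{-\kappa}))$ and a.s.\ to a limit $\nabla_{\text{spec}}(X_u,X_b)\diamond P$, which is a standard second-chaos computation: estimate $\mathbb{E}\big[\|\Delta_j(X_1^n\circlesign{\circ}P^{\lambda^n}-r_{\lambda^n}\Id)-(\text{same with }m)\|_{L^2}^2\big]\lesssim 2^{2j\kappa}$ uniformly in $n$, and show the increments between $n$ and $m$ are bounded by a negative power of $\lambda^n\wedge\lambda^m$, then invoke Besov embedding, Nelson's hypercontractivity to upgrade $L^2(\Omega)$ to $L^p(\Omega)$, and Kolmogorov's continuity theorem in $t$; and (b) the antisymmetry/structure of $\nabla_{\text{spec}}$ does not obstruct any of this — it is merely a fixed linear recombination of the scalar kernels, so the same bounds apply entrywise.

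Finally, for the quantitative bound \eqref{est 232}, I would estimate $r_\lambda(t)$ directly from \eqref{est 140b}: bounding $1-e^{-2\nu|k|^2 t}\le 1$ and $\mathfrak{l}(|k|/\lambda)^2\le 1$, one gets $r_\lambda(t)\le\sum_{k\in\mathbb{Z}^2\setminus\{0\}}\frac{1}{4\nu}\mathfrak{l}(|k|/\lambda)^2(\tfrac{\nu|k|^2}{2}+1)^{-1}$; since $\mathfrak{l}(|k|/\lambda)$ is supported on $|k|\lesssim\lambda$ and $(\tfrac{\nu|k|^2}{2}+1)^{-1}\approx|k|^{-2}$ for large $|k|$, the sum over the annuli $2^j\lesssim|k|\lesssim 2^{j+1}$ with $j\lesssim\log_2\lambda$ contributes $O(1)$ each, giving $r_\lambda(t)\lesssim\ln\lambda$ uniformly in $t\ge 0$, which is \eqref{est 232}. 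The main obstacle I anticipate is (a): carefully organizing the second-chaos variance estimate so that the frequency-localized bounds are genuinely uniform in $n$ and decay in $\lambda^n\wedge\lambda^m$, in the presence of the time-cutoff factors $1-e^{-2\nu|m|^2 t}$ and the $\mathbb{M}^4$-matrix bookkeeping introduced by $\nabla_{\text{spec}}$; once the single crucial variance bound is in place, hypercontractivity and Kolmogorov's theorem make the rest routine, and the passage to the a.s.\ statement follows from Borel--Cantelli along the sequence $\lambda^n=(n+1)^{\mathfrak a}$.
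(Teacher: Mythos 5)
Your proposal follows essentially the same route as the paper's proof: expand $\nabla_{\text{spec}}(\mathcal{L}_{\lambda}X_u,\mathcal{L}_{\lambda}X_b)\circlesign{\circ}P^{\lambda}$ in the Fourier/Wiener-chaos representation \eqref{est 209}, identify the diagonal expectation (via \eqref{est 214}) as exactly $r_{\lambda}\Id$ while the off-diagonal expectations vanish, then prove convergence through uniform second-chaos variance bounds on Littlewood--Paley blocks, differences decaying in a negative power of $\lambda^{n}$, Gaussian hypercontractivity, Besov embedding, and Borel--Cantelli for the almost-sure statement, which is precisely the structure of \eqref{est 227}--\eqref{est 231}. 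Your direct computation of $r_{\lambda}(t)\lesssim\ln\lambda$ from \eqref{est 140b} is also correct (the paper leaves this short estimate implicit), so the proposal is sound as it stands.
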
 

\begin{proof}[Proof of Proposition \ref{Proposition 5.4}]
We focus on the more difficult task of proving the convergence of $( \nabla_{\text{spec}} (\mathcal{L}_{\lambda^{n}} X_{u}, \mathcal{L}_{\lambda^{n}} X_{b} )) \circlesign{\circ} P^{\lambda^{n}} - r_{\lambda^{n}} \Id \to \nabla_{\text{spec}} (X_{u}, X_{b}) \diamond P$ as $n\to\infty$. For brevity, we denote $X_{\alpha,\lambda} \triangleq \mathcal{L}_{\lambda} X_{\alpha}$ for $\alpha \in \{u,b\}$. Considering the 16 entries within 
\begin{align}
&4 ( \nabla_{\text{spec}} (\mathcal{L}_{\lambda} X_{u}, \mathcal{L}_{\lambda} X_{b}) \circlesign{\circ} P^{\lambda}) \label{est 212} \\ 
&\overset{\eqref{est 140}}{=} \left(2 \nabla_{\text{spec}} (\mathcal{L}_{\lambda} X_{u}, \mathcal{L}_{\lambda} X_{b}) \circlesign{\circ} \left( - \frac{\nu \Delta}{2} + 1 \right)^{-1} 2\nabla_{\text{spec}} (\mathcal{L}_{\lambda} X_{u}, \mathcal{L}_{\lambda} X_{b}) \right) \nonumber \\
&\overset{\eqref{est 64}}{=}  
\begin{pmatrix}
\begin{pmatrix}
\partial_{1} X_{u,\lambda}^{1} + \partial_{1} X_{u,\lambda}^{1} & \partial_{1} X_{u,\lambda}^{2} + \partial_{2} X_{u,\lambda}^{1} \\
\partial_{2} X_{u,\lambda}^{1} + \partial_{1} X_{u,\lambda}^{2} & \partial_{2} X_{u,\lambda}^{2} + \partial_{2} X_{u,\lambda}^{2} 
\end{pmatrix}  & 
\begin{pmatrix}
0 & \partial_{1} X_{b,\lambda}^{2} - \partial_{2} X_{b,\lambda}^{1} \\
\partial_{2} X_{b,\lambda}^{1} - \partial_{1} X_{b,\lambda}^{2} & 0 
\end{pmatrix} \\
- \begin{pmatrix}
0 & \partial_{1} X_{b,\lambda}^{2} - \partial_{2} X_{b,\lambda}^{1} \\
\partial_{2} X_{b,\lambda}^{1} - \partial_{1} X_{b,\lambda}^{2} & 0 
\end{pmatrix} & - \begin{pmatrix}
\partial_{1} X_{u,\lambda}^{1} + \partial_{1} X_{u,\lambda}^{1} & \partial_{1} X_{u,\lambda}^{2} + \partial_{2} X_{u,\lambda}^{1} \\
\partial_{2} X_{u,\lambda}^{1} + \partial_{1} X_{u,\lambda}^{2} & \partial_{2} X_{u,\lambda}^{2} + \partial_{2} X_{u,\lambda}^{2} 
\end{pmatrix} 
\end{pmatrix} \nonumber \\
&\circlesign{\circ} \left(-  \frac{\nu \Delta}{2} + 1 \right)^{-1} 
\begin{pmatrix}
\begin{pmatrix}
\partial_{1} X_{u,\lambda}^{1} + \partial_{1} X_{u,\lambda}^{1} & \partial_{1} X_{u,\lambda}^{2} + \partial_{2} X_{u,\lambda}^{1} \\
\partial_{2} X_{u,\lambda}^{1} + \partial_{1} X_{u,\lambda}^{2} & \partial_{2} X_{u,\lambda}^{2} + \partial_{2} X_{u,\lambda}^{2} 
\end{pmatrix}  & 
\begin{pmatrix}
0 & \partial_{1} X_{b,\lambda}^{2} - \partial_{2} X_{b,\lambda}^{1} \\
\partial_{2} X_{b,\lambda}^{1} - \partial_{1} X_{b,\lambda}^{2} & 0 
\end{pmatrix} \\
- \begin{pmatrix}
0 & \partial_{1} X_{b,\lambda}^{2} - \partial_{2} X_{b,\lambda}^{1} \\
\partial_{2} X_{b,\lambda}^{1} - \partial_{1} X_{b,\lambda}^{2} & 0 
\end{pmatrix} & - \begin{pmatrix}
\partial_{1} X_{u,\lambda}^{1} + \partial_{1} X_{u,\lambda}^{1} & \partial_{1} X_{u,\lambda}^{2} + \partial_{2} X_{u,\lambda}^{1} \\
\partial_{2} X_{u,\lambda}^{1} + \partial_{1} X_{u,\lambda}^{2} & \partial_{2} X_{u,\lambda}^{2} + \partial_{2} X_{u,\lambda}^{2} 
\end{pmatrix} 
\end{pmatrix}, \nonumber 
\end{align} 
we see a common form of $\partial_{i} X_{\alpha, \lambda}^{j} \circ \left(-\frac{\nu \Delta}{2} + 1 \right)^{-1} \partial_{l} X_{\gamma, \lambda}^{m}$ for $i, j, l, m \in \{1,2\}$ and $\alpha, \gamma \in \{u,b\}$. Let us list several of them here and leave the rest in the Appendix for completeness.  
\begin{subequations}\label{est 213} 
\begin{align}  
& (1,2): \hspace{1mm}  ( \partial_{1} X_{u,\lambda}^{1} + \partial_{1} X_{u,\lambda}^{1}) \circ \left( - \frac{\nu \Delta}{2} + 1 \right)^{-1} ( \partial_{1} X_{u,\lambda}^{2} + \partial_{2} X_{u,\lambda}^{1})  \nonumber \\
& \hspace{30mm} + ( \partial_{1} X_{u,\lambda}^{2} + \partial_{2} X_{u,\lambda}^{1}) \circ \left(-\frac{\nu \Delta}{2} + 1 \right)^{-1}  (\partial_{2} X_{u,\lambda}^{2} + \partial_{2} X_{u,\lambda}^{2}), \label{est 213b}\\
& (4,4): \hspace{1mm}  -(\partial_{2} X_{b,\lambda}^{1} - \partial_{1} X_{b,\lambda}^{2}) \circ \left( -\frac{\nu \Delta}{2}+ 1 \right)^{-1} (\partial_{1} X_{b,\lambda}^{2} - \partial_{2} X_{b,\lambda}^{1}) \nonumber \\
& \hspace{30mm} + ( \partial_{2} X_{u,\lambda}^{1} + \partial_{1} X_{u,\lambda}^{2}) \circ \left( -\frac{\nu \Delta}{2} + 1\right)^{-1} (\partial_{1} X_{u,\lambda}^{2} + \partial_{2} X_{u,\lambda}^{1}) \nonumber \\
& \hspace{30mm } + (\partial_{2} X_{u,\lambda}^{2} + \partial_{2} X_{u,\lambda}^{2}) \circ \left( -\frac{\nu \Delta}{2}+1 \right)^{-1} (\partial_{2} X_{u,\lambda}^{2} + \partial_{2} X_{u,\lambda}^{2}). \label{est 213d} 
\end{align}
\end{subequations} 
Thus, let us define 
\begin{equation}\label{est 210} 
c_{j,m}^{i,l} (k,k') \triangleq -k_{i} k_{j}^{\bot} k_{l}' (k')_{m}^{\bot} \lvert k \rvert^{-1} \lvert k' \rvert^{-1}
\end{equation} 
so that we can write for all $i, j, l, m \in \{1,2\}$ and $\alpha, \gamma \in \{u,b\}$, using \eqref{est 208}, \eqref{est 209}, and \eqref{est 210}, 
\begin{align*}
& \partial_{i} X_{\alpha,\lambda}^{j} \circ \left( - \frac{\nu \Delta}{2} + 1 \right)^{-1} \partial_{l} X_{\gamma,\lambda}^{m} = \sum_{k,k' \in \mathbb{Z}^{2}: k' \neq 0, k \neq k', \lvert c-d \rvert \leq 1} e^{i2\pi k \cdot x} \rho_{c}(k-k') \rho_{d}(k') \mathfrak{l} \left( \frac{ \lvert k-k' \rvert}{\lambda} \right) \mathfrak{l} \left( \frac{ \lvert k ' \rvert}{\lambda} \right)  \nonumber \\
& \hspace{40mm} \times F_{\alpha} (t, k-k') F_{\gamma} (t, k') \left( \frac{ \nu \lvert k ' \rvert^{2}}{2} + 1 \right)^{-1} c_{j,m}^{i,l} (k-k', k').  
\end{align*}
Now we are ready to compute the zeroth Wiener chaos (cf. \cite[Section 1.1]{N95}) which are the renormalization constants (e.g. \cite[Equations (136) and (193)]{Y21a}). Because 
\begin{align*} 
\mathbb{E} [ F_{\alpha}(t, k-k') F_{\gamma} (t, k') ] = -\frac{1 - e^{-2\nu  \lvert k' \rvert^{2} t}}{2 \nu \lvert k' \rvert^{2}} 1_{\{ \alpha = \gamma \}} 1_{\{ k- k' = -k' \}}  
\end{align*}
due to \eqref{est 206} and \eqref{est 211}, we deduce 
\begin{align}
& \mathbb{E}\left[ \partial_{i} X_{\alpha,\lambda}^{j} \circ \left( - \frac{\nu \Delta}{2} + 1 \right)^{-1} \partial_{l} X_{\gamma, \lambda}^{m} \right](x) \nonumber\\
=&  \sum_{k \in \mathbb{Z}^{2} \setminus \{0\}}  \mathfrak{l} \left( \frac{ \lvert k \rvert}{\lambda} \right)^{2} \frac{1- e^{-2\nu  \lvert k \rvert^{2} t}}{2 \nu \lvert k \rvert^{4}} \left( \frac{ \nu \lvert k \rvert^{2}}{2} + 1 \right)^{-1} k_{i} k_{j}^{\bot} k_{l} k_{m}^{\bot} 1_{ \{ \alpha = \gamma \}}. \label{est 214} 
\end{align}  
It follows that all except the diagonal entries of \eqref{est 212} vanish. We show examples of the two cases from \eqref{est 213} and leave the rest in the Appendix for completeness. First, the (1,2)-entry from \eqref{est 213b} vanishes as follows: 
\begin{align}
&\mathbb{E} [ 2 \partial_{1} X_{u,\lambda}^{1} \circ \left( - \frac{\nu \Delta}{2} + 1 \right)^{-1} \partial_{1} X_{u,\lambda}^{2} + 2 \partial_{1} X_{u,\lambda}^{1} \circ \left(-\frac{\nu \Delta}{2} + 1 \right)^{-1} \partial_{2} X_{u,\lambda}^{1} \nonumber \\
& + 2 \partial_{1} X_{u,\lambda}^{2} \circ \left(-\frac{\nu \Delta}{2} + 1 \right)^{-1} \partial_{2} X_{u,\lambda}^{2} + 2 \partial_{2} X_{u,\lambda}^{1} \circ \left(-\frac{\nu \Delta}{2} + 1 \right)^{-1} \partial_{2} X_{u,\lambda}^{2} ](t) \nonumber \\
\overset{\eqref{est 214}}{=}& 2 \sum_{k\in\mathbb{Z}^{2} \setminus \{0\}} \frac{ \mathfrak{l} \left( \frac{\lvert k \rvert}{\lambda} \right)^{2} (1- e^{-2\nu  \lvert k \rvert^{2} t})}{2 \nu \lvert k \rvert^{4}} \left( \frac{\nu \lvert k \rvert^{2}}{2} + 1 \right)^{-1} \nonumber \\
& \times [ k_{1} k_{1}^{\bot} k_{1} k_{2}^{\bot} + k_{1} k_{1}^{\bot} k_{2} k_{1}^{\bot} + k_{1} k_{2}^{\bot} k_{2} k_{2}^{\bot} + k_{2} k_{1}^{\bot} k_{2} k_{2}^{\bot} ] = 0. 
\end{align}
On the other hand, due to \eqref{est 213d} and \eqref{est 214}, the (4,4)-entry can be computed as 
\begin{align*}
& \mathbb{E}[ -\partial_{2} X_{b,\lambda}^{1} \circ \left( - \frac{\nu\Delta }{2} + 1 \right)^{-1} \partial_{1} X_{b,\lambda}^{2} + \partial_{2} X_{b,\lambda}^{1} \circ \left(-\frac{\nu\Delta }{2} + 1 \right)^{-1} \partial_{2} X_{b,\lambda}^{1} \nonumber \\
& + \partial_{1} X_{b,\lambda}^{2} \circ \left( - \frac{\nu\Delta }{2} + 1 \right)^{-1} \partial_{1} X_{b,\lambda}^{2} - \partial_{1} X_{b,\lambda}^{2} \circ \left( - \frac{\nu\Delta }{2} + 1 \right)^{-1} \partial_{2} X_{b,\lambda}^{1} \nonumber \\
& + \partial_{2} X_{u,\lambda}^{1} \circ \left( - \frac{\nu\Delta }{2} + 1 \right)^{-1} \partial_{1} X_{u,\lambda}^{2} + \partial_{2} X_{u,\lambda}^{1} \circ \left( - \frac{\nu\Delta }{2} + 1 \right)^{-1} \partial_{2} X_{u,\lambda}^{1} \nonumber \\
& + \partial_{1} X_{u,\lambda}^{2} \circ \left(-\frac{\nu\Delta }{2} + 1 \right)^{-1} \partial_{1} X_{u,\lambda}^{2} +  \partial_{1} X_{u,\lambda}^{2} \circ \left(-\frac{\nu\Delta }{2} + 1 \right)^{-1} \partial_{2} X_{u,\lambda}^{1} \nonumber \\
& + 4 \partial_{2} X_{u,\lambda}^{2} \circ \left( - \frac{\nu\Delta }{2} + 1 \right)^{-1} \partial_{2} X_{u,\lambda}^{2}](t)  \nonumber  \\
=& - \sum_{k \in \mathbb{Z}^{2} \setminus \{0\}} \frac{ \mathfrak{l} \left( \frac{ \lvert k \rvert}{\lambda} \right)^{2} (1- e^{-2 \nu \lvert k \rvert^{2} t})}{2 \lvert k \rvert^{4}} \left( \frac{\nu  \lvert k \rvert^{2}}{2} + 1 \right)^{-1}  \nonumber \\
& \times [ k_{2} k_{1}^{\bot} k_{1} k_{2}^{\bot} - k_{2} k_{1}^{\bot} k_{2} k_{1}^{\bot} - k_{1} k_{2}^{\bot} k_{1} k_{2}^{\bot} + k_{1} k_{2}^{\bot} k_{2} k_{1}^{\bot} \nonumber  \\
& \hspace{5mm} - k_{2} k_{1}^{\bot} k_{1} k_{2}^{\bot} - k_{2} k_{1}^{\bot} k_{2} k_{1}^{\bot} - k_{1} k_{2}^{\bot} k_{1} k_{2}^{\bot} - k_{1} k_{2}^{\bot} k_{2} k_{1}^{\bot} -4 k_{2} k_{2}^{\bot} k_{2} k_{2}^{\bot} ]  \overset{\eqref{est 140b}}{=} 4 r_{\lambda}(t). 
\end{align*}
In the following series of inequalities, some of which are very similar to computations in past works (e.g. \cite{ZZ15}), it suffices to prove the estimate for the $(4,4)$-entry as an example among the four diagonal entries as the other terms can be handled similarly. By defining 
\begin{equation}\label{est 218} 
\psi_{0} (k,k') \triangleq \sum_{\lvert c-d \rvert \leq 1} \rho_{c} (k) \rho_{d}(k'), 
\end{equation} 
we can compute  using properties of Wick products (e.g. \cite{J97})  
\begin{align}
& \mathbb{E} [ \lvert \Delta_{m} ( \nabla_{\text{spec}} ( \mathcal{L}_{\lambda} X_{u}, \mathcal{L}_{\lambda} X_{b}) \circlesign{\circ} P^{\lambda})_{4,4}(t) - r_{\lambda}(t) \rvert^{2} ] \nonumber \\
\lesssim& \sum_{k, k' \in \mathbb{Z}^{2} \setminus \{0\}} \rho_{m}^{2} (k+k') \lvert \psi_{0}(k, k') \rvert^{2} \mathfrak{l} \left( \frac{ \lvert k \rvert}{\lambda}\right)^{2} \mathfrak{l} \left( \frac{ \lvert k' \rvert}{\lambda} \right)^{2} \left( \frac{1}{\lvert k' \rvert^{2} + 2} \right)^{2}  \nonumber \\
\overset{\eqref{est 218}}{\lesssim}&  \sum_{k, k' \in \mathbb{Z}^{2} \setminus \{0\}: \lvert k \rvert \approx 2^{m}, \lvert k' \rvert \gtrsim 2^{m}} \lvert k' \rvert \left( \sum_{c: m \lesssim c} \frac{1}{2^{c}} \right) \left( \frac{1}{\lvert k' \rvert^{2} + 2} \right)^{2} 
\lesssim 2^{m} \sum_{k' \in \mathbb{Z}^{2} \setminus \{0\}: \lvert k' \rvert  \gtrsim 2^{m}} \frac{1}{\lvert k' \rvert^{3}} \approx 1.  \label{est 227} 
\end{align} 
where we used that $\rho_{m}(k), \rho_{c}(k-k')$, and $\rho_{d}(k')$ imply $m\lesssim c$. Thus, we now conclude for any $p \in [2,\infty)$, via Gaussian hypercontractivity theorem (e.g. \cite[Theorem 3.50]{J97}) 
\begin{align*}
& \sup_{\lambda \geq 1} \mathbb{E} [ \lVert ( \nabla_{\text{spec}} ( \mathcal{L}_{\lambda}, X_{u}, \mathcal{L}_{\lambda} X_{b}) \circlesign{\circ} P^{\lambda} )_{4,4} (t) - r_{\lambda}(t) \rVert_{B_{p,p}^{-\kappa}}^{p} ] \nonumber \\
\lesssim& \sup_{\lambda \geq 1}  \sum_{m \geq -1} 2^{-\kappa m p}  \int_{\mathbb{T}^{2}}   \lVert \Delta_{m} ( \nabla_{\text{spec}} ( \mathcal{L}_{\lambda} X_{u}, \mathcal{L}_{\lambda} X_{b}) \circ P^{\lambda} )_{4,4}(t) - r_{\lambda}(t) \rVert_{L_{\omega}^{2}}^{p}   dx \overset{\eqref{est 227}}{\lesssim} 1. 
\end{align*}
This leads to the convergence of \eqref{est 228} in $L^{p}$ for all $p \in [1,\infty)$. Concerning the convergence of \eqref{est 228} $\mathbb{P}$-a.s., we can compute for $\{ \lambda^{l}\}_{l \in \mathbb{N}}$, similarly to \eqref{est 227} 
\begin{align}
&\mathbb{E} [ \lvert \Delta_{m} [  ( \nabla_{\text{spec}} (\mathcal{L}_{\lambda^{l}} X_{u}, \mathcal{L}_{\lambda^{l}} X_{b}) \circlesign{\circ} P^{\lambda^{l}} )_{4,4}(t) - r_{\lambda^{l}}(t) \label{est 231}\\
& \hspace{5mm} - (\nabla_{\text{spec}} ( \mathcal{L}_{\lambda^{l + 1}} X_{u}, \mathcal{L}_{\lambda^{l + 1}} X_{b}) \circlesign{\circ} P^{\lambda^{l+1}} )_{4,4}(t) + r_{\lambda^{l + 1}} (t) ] (x) \rvert^{2} ] \nonumber \\
\lesssim &  \sum_{k, ' \in\mathbb{Z}^{2} \setminus \{0\}} \rho_{m}^{2} (k+k') \lvert \psi_{0} (k, k') \rvert^{2} \left( \frac{1}{\lvert k' \rvert^{2} + 2} \right)^{2} [ 1_{[\lambda^{l}, \lambda^{l + 1} ]} (\lvert k \rvert) + 1_{[ \lambda^{l}, \lambda^{l + 1} ]} (\lvert k' \rvert) ] \nonumber \\
\lesssim& (\lambda^{l})^{-\frac{\kappa}{4}} \sum_{k, k' \in \mathbb{Z}^{2} \setminus \{0\}: \lvert k \rvert \approx 2^{m}, \lvert k' \rvert \gtrsim 2^{m}} \frac{ \lvert k'\rvert}{2^{m}} \left( \frac{1}{\lvert k' \rvert^{2} + 2} \right)^{2} [ \lvert k-k' \rvert^{\frac{\gamma}{4}} + \lvert k' \rvert^{\frac{\kappa}{4}} ] \lesssim (\lambda^{l})^{-\frac{\kappa}{4}} 2^{\frac{m\kappa}{4}}. \nonumber 
\end{align}
 We conclude via Gaussian hypercontractivity theorem that 
\begin{align*}
& \mathbb{E} [ \lVert ( \nabla_{\text{spec}} (\mathcal{L}_{\lambda^{l}} X_{u}, \mathcal{L}_{\lambda^{l}} X_{b} ) \circlesign{\circ} P^{\lambda^{l}})_{4,4} (t) - r_{\lambda^{l}}(t)  \nonumber \\
& \hspace{20mm} - (\nabla_{\text{spec}} (\mathcal{L}_{\lambda^{l + 1}} X_{b}, \mathcal{L}_{\lambda^{l + 1}} X_{b}) \circlesign{\circ} P^{\lambda^{l + 1}})_{4,4}(t) + r_{\lambda^{l + 1}}(t) \rVert_{B_{p,p}^{-\kappa}}^{p} ] \nonumber \\ 
\lesssim& \sum_{m=-1}^{\infty} 2^{-\kappa p m} \int_{\mathbb{T}^{2}} \lVert \Delta_{m} [ ( \nabla_{\text{spec}} (\mathcal{L}_{\lambda^{l}} X_{u}, \mathcal{L}_{\lambda^{l}} X_{b}) \circlesign{\circ} P^{\lambda^{l}})_{4,4} - r_{\lambda^{l}}(t) \nonumber \\
& \hspace{20mm} - (\nabla_{\text{spec}} (\mathcal{L}_{\lambda^{l + 1}} X_{u}, \mathcal{L}_{\lambda^{l + 1}} X_{b} )\circlesign{\circ} P^{\lambda^{l + 1}})_{4,4} + r_{\lambda^{l + 1}}(t) ] \rVert_{L_{\omega}^{2}}^{p} dx \overset{\eqref{est 231}}{\lesssim} (\lambda^{l})^{- \frac{\kappa p}{8}}.  
\end{align*}
\end{proof} 

\section{Appendix}\label{Appendix}
\subsection{Proof of \eqref{est 122}}
In this subsection we prove \eqref{est 122}. First, we work on one of the terms in $\RomanII_{4,2}$, e.g. $2 \langle ( -\Delta)^{\epsilon} w_{b}^{\mathcal{L}}, \divergence C^{\circlesign{\prec}_{a}} (w_{b}, Q_{u}^{\mathcal{H}}) \rangle$ that we rewrite for convenience using \eqref{est 30b} and \eqref{est 29b}, 
\begin{align}
& 2 \langle ( -\Delta)^{\epsilon} w_{b}^{\mathcal{L}}, \divergence C^{\circlesign{\prec}_{a}} (w_{b}, Q_{u}^{\mathcal{H}}) \rangle \label{est 121} \\
=& - 2 \langle (-\Delta)^{\epsilon} w_{b}^{\mathcal{L}}, \divergence [ [ \mathbb{P}_{L} \divergence ( w_{b} \otimes w_{u} + w_{b} \otimes_{a} D_{u} + Y_{b} \otimes Y_{u}  \nonumber \\
& \hspace{25mm} - w_{u} \otimes w_{b} - w_{u} \otimes_{a} D_{b} - Y_{u} \otimes Y_{b} ) ] \circlesign{\prec}_{a} Q_{u}^{\mathcal{H}} - 2\nu  \sum_{k=1}^{2} \partial_{k} w_{b} \circlesign{\prec}_{a} \partial_{k} Q_{u}^{\mathcal{H}} ] \rangle.  \nonumber 
\end{align}
Making use of \eqref{est 120} with $\mathfrak{a}$ = $3$ by hypothesis, we compute 
\begin{align}
& -2\langle (-\Delta)^{\epsilon} w_{b}^{\mathcal{L}}, \divergence [[ \mathbb{P}_{L} \divergence ( w_{b} \otimes w_{u}) ] \circlesign{\prec}_{a} Q_{u}^{\mathcal{H}} ] \rangle(t)  \nonumber \\
\overset{\eqref{est 120}}{\lesssim}&  [ ( \lVert w_{u}^{\mathcal{L}} \rVert_{L^{2}} + \lVert w_{b}^{\mathcal{L}} \rVert_{L^{2}})^{\frac{ 3 \bar{\eta} - 2 \kappa - \bar{\gamma} - 2 \epsilon -1}{\bar{\eta}}}(t) ( \lVert w_{u}^{\mathcal{L}} \rVert_{H^{\bar{\eta}}} + \lVert  w_{b}^{\mathcal{L}} \rVert_{H^{\bar{\eta}}})^{\frac{1+ 2 \kappa + \bar{\gamma} + 2 \epsilon}{\bar{\eta}}}(t) \nonumber \\
&+ ( \lVert w_{u}^{\mathcal{L}} \rVert_{L^{2}} + \lVert w_{b}^{\mathcal{L}} \rVert_{L^{2}})^{\frac{ \bar{\eta} - 2 \kappa - \bar{\gamma} - 2 \epsilon}{\bar{\eta}}}(t) ( \lVert w_{u}^{\mathcal{L}} \rVert_{H^{\bar{\eta}}} + \lVert w_{b}^{\mathcal{L}} \rVert_{H^{\bar{\eta}}})^{\frac{2\kappa + \bar{\gamma} + 2 \epsilon}{\bar{\eta}}}(t) (N_{t}^{\kappa})^{2} ] \nonumber \\
& \hspace{40mm} \times  (1+ \lVert w_{u}(t) \rVert_{L^{2}} + \lVert w_{b} (t) \rVert_{L^{2}})^{-3 \bar{\gamma}} N_{t}^{\kappa}\nonumber \\ 
\leq& \frac{\nu }{64} \lVert (w_{u}^{\mathcal{L}}, w_{b}^{\mathcal{L}})(t) \rVert_{\dot{H}^{1+ \epsilon}}^{2} + C(M, N_{t}^{\kappa})  \label{est 125} 
\end{align}
via appropriate choices of $\bar{\eta}$ and $\bar{\gamma}$ similarly to \eqref{est 83}, e.g. 
\begin{equation}\label{est 124} 
\bar{\eta} = \frac{3}{4} + 3 \kappa + \epsilon \text{ and } \bar{\gamma} = \frac{1}{2}. 
\end{equation} 
Identical estimates show 
\begin{equation}\label{est 126} 
2\langle (-\Delta)^{\epsilon} w_{b}^{\mathcal{L}}, \divergence [[ \mathbb{P}_{L} \divergence ( w_{u} \otimes w_{b}) ] \circlesign{\prec}_{a} Q_{u}^{\mathcal{H}} ] \rangle(t) \leq \frac{\nu }{64} \lVert (w_{u}^{\mathcal{L}}, w_{b}^{\mathcal{L}})(t) \rVert_{\dot{H}^{1+ \epsilon}}^{2} + C(M, N_{t}^{\kappa}). 
\end{equation} 
Second, within \eqref{est 121}, with the same $\bar{\eta}$ from \eqref{est 124}, we can estimate 
\begin{align}
& -2 \langle (-\Delta)^{\epsilon} w_{b}^{\mathcal{L}}, \divergence [ [ \mathbb{P}_{L} \divergence (w_{b} \otimes_{a} D_{u} + Y_{b} \otimes Y_{u} ) ] \circlesign{\prec}_{a} Q_{u}^{\mathcal{H}} ] \rangle(t)\nonumber \\
&\overset{\eqref{est 123}}{\lesssim} \lVert w_{b}^{\mathcal{L}} (t)\rVert_{H^{\bar{\eta} + 2 \epsilon}} [ \lVert w_{b} \otimes_{a} D_{u} \rVert_{H^{-\bar{\eta} + \frac{3\kappa}{2}}} + \lVert Y_{b} \otimes Y_{u} \rVert_{H^{-\bar{\eta} + \frac{3\kappa}{2}}}](t) \lVert Q_{u} (t)\rVert_{\mathcal{C}^{2- \frac{3\kappa}{2}}} \nonumber \\
& \hspace{40mm} \leq \frac{\nu }{64} \lVert w_{b}^{\mathcal{L}} (t)\rVert_{\dot{H}^{1+ \epsilon}}^{2} + C(M, N_{t}^{\kappa}). \label{est 127} 
\end{align}
Analogous computations lead to 
\begin{equation}\label{est 128} 
2 \langle (-\Delta)^{\epsilon} w_{b}^{\mathcal{L}}, \divergence [ [ \mathbb{P}_{L} \divergence (w_{u} \otimes_{a} D_{b} + Y_{u} \otimes Y_{b} ) ] \circlesign{\prec}_{a} Q_{u}^{\mathcal{H}} ] \rangle(t) \leq \frac{\nu }{64} \lVert w_{b}^{\mathcal{L}} (t)\rVert_{\dot{H}^{1+ \epsilon}}^{2} + C(M, N_{t}^{\kappa}).
\end{equation} 
Lastly, we estimate 
\begin{align}
&4\nu  \sum_{k=1}^{2} \langle (-\Delta)^{\epsilon} w_{b}^{\mathcal{L}}, \divergence ( \partial_{k} w_{b} \circlesign{\prec}_{a} \partial_{k} Q_{u}^{\mathcal{H}} ) \rangle(t) \nonumber \\
\lesssim& \lVert w_{b}^{\mathcal{L}} (t)\rVert_{L^{2}}^{\frac{1- 12 \kappa - 8 \epsilon}{4 (1+ \epsilon)}}  \lVert w_{b}^{\mathcal{L}} (t)\rVert_{H^{1+ \epsilon}}^{\frac{3}{4} (\frac{1+ 4 \kappa + 4 \epsilon}{1+ \epsilon} ) } \lVert \partial_{k} w_{b} \circlesign{\prec}_{a} \partial_{k} Q_{u}^{\mathcal{H}} (t)\rVert_{H^{\frac{1}{4} - 3 \kappa - \epsilon}}  \nonumber \\
\overset{\eqref{est 107} \eqref{est 40c} }{\lesssim}&  C(M) \lVert w_{b}^{\mathcal{L}} (t)\rVert_{H^{1+ \epsilon}}^{\frac{3}{4} (\frac{1+ 4 \kappa + 4 \epsilon}{1+ \epsilon})} ( \lVert w_{b}^{\mathcal{L}} \rVert_{H^{\frac{1}{4} - \frac{3\kappa}{2} + \epsilon}} + \lVert w_{b}^{\mathcal{H}} \rVert_{H^{\frac{1}{4} - \frac{3\kappa}{2} + \epsilon}})(t) \lVert \mathcal{H}_{\lambda_{t}} Q_{u} (t)\rVert_{\mathcal{C}^{2- \frac{3\kappa}{2}}} \nonumber \\
& \hspace{40mm} \overset{\eqref{est 39} }{\leq} \frac{\nu }{64} \lVert w_{b}^{\mathcal{L}} (t)\rVert_{\dot{H}^{1+ \epsilon}}^{2} + C(M, N_{t}^{\kappa}). \label{est 129} 
\end{align}  
Thus, by applying \eqref{est 125}, \eqref{est 126}, \eqref{est 127}, \eqref{est 128}, and \eqref{est 129} to \eqref{est 121} gives 
\begin{equation*}
2 \langle ( -\Delta)^{\epsilon} w_{b}^{\mathcal{L}}, \divergence C^{\circlesign{\prec}_{a}} (w_{b}, Q_{u}^{\mathcal{H}}) \rangle(t)   \leq \frac{5\nu }{64} \lVert (w_{u}^{\mathcal{L}}, w_{b}^{\mathcal{L}} ) (t)\rVert_{\dot{H}^{1+\epsilon}}^{2} + C(M, N_{t}^{\kappa})  
\end{equation*} 
and analogous computations on similar terms lead to \eqref{est 122}.  
  
\subsection{Conclusion of the proof of Theorem \ref{Theorem 2.2}} 
Suppose that $T^{\max} < \infty. $ By Proposition \ref{Proposition 4.11} this implies $\limsup_{t\nearrow T^{\max}} \lVert (w_{u}, w_{b})(t) \rVert_{L^{2}} = + \infty$.  By \eqref{est 36} this implies $T_{i} < T^{\max}$ for all $i \in \mathbb{N}$. Because $T^{\max} < + \infty$, \eqref{est 105} gives us 
\begin{equation*}
T_{i+1} - T_{i} \geq \frac{1}{\tilde{C}(N_{T^{\max}}^{\kappa}) (1+ \ln (1+ i))} \ln \left( \frac{ i^{2} + 2i - C(N_{T^{\max}}^{\kappa})}{i^{2} + \tilde{C} (N_{T^{\max}}^{\kappa})} \right)
\end{equation*} 
where $\sum_{i=1}^{\infty} T_{i+1} - T_{i} < \infty$. On the other hand, the sum over the right hand side over $i \in \mathbb{N}$ blows up to $+ \infty$ and thus a contradiction. 

\subsection{Proof of \eqref{est 173}} 
For both inequalities in \eqref{est 173}, we first rely on the fact that $B_{p ,2}^{s} \subset W^{s,p}$ for all $p\in [2,\infty)$ and $s \in\mathbb{R}$ (see e.g. \cite[p. 152]{BL76}) so that 
\begin{equation*}
\lVert f \rVert_{L^{4}} \lesssim \lVert f \rVert_{B_{4,2}^{0}}  \lesssim \left( \sum_{m\geq -1} 2^{2m (\frac{1}{2} - \frac{1}{4})} \lVert \Delta_{m} f \rVert_{L^{2}} \lVert \Delta_{m} f \rVert_{L^{4}} \right)^{\frac{1}{2}} 
\lesssim  \lVert f \rVert_{L^{2}}^{\frac{1}{2}} \lVert f \rVert_{B_{4,2}^{\frac{1}{2}}}^{\frac{1}{2}}
\end{equation*} 
by Bernstein's inequality and H$\ddot{\mathrm{o}}$lder's inequality. Additionally, 
\begin{equation*}
\lVert f \rVert_{L^{4}} \lesssim \lVert f \rVert_{B_{4,2}^{0}} \lesssim \left( \sum_{m\geq -1} \lVert \Delta_{m} f \rVert_{L^{2}} \lVert \Delta_{m} f \rVert_{L^{\infty}} \right)^{\frac{1}{2}} \lesssim  \lVert f \rVert_{L^{2}}^{\frac{1}{2}} \lVert f \rVert_{B_{\infty,2}^{0}}^{\frac{1}{2}}
\end{equation*} 
by interpolation inequality of $L^{p}$ spaces and H$\ddot{\mathrm{o}}$lder's inequality. 

\subsection{Details of \eqref{est 212}}
In \eqref{est 213} we described the $(1,1)$ and $(4,4)$ entries of \eqref{est 212}; we leave the rest here for completeness. 
\begin{align*}
& (1,1): \hspace{1mm} ( \partial_{1} X_{u,\lambda}^{1} + \partial_{1} X_{u,\lambda}^{1}) \circ \left( -\frac{\nu \Delta}{2}+ 1 \right)^{-1} (\partial_{1} X_{u,\lambda}^{1} + \partial_{1} X_{u,\lambda}^{1}) \nonumber \\
& \hspace{30mm} + ( \partial_{1} X_{u,\lambda}^{2} + \partial_{2} X_{u,\lambda}^{1}) \circ \left( -\frac{\nu \Delta}{2} + 1\right)^{-1} (\partial_{2} X_{u,\lambda}^{1} + \partial_{1} X_{u,\lambda}^{2}) \nonumber \\
& \hspace{30mm } - (\partial_{1} X_{b,\lambda}^{2} - \partial_{2} X_{b,\lambda}^{1}) \circ \left( -\frac{\nu \Delta}{2}+1\right)^{-1} (\partial_{2} X_{b,\lambda}^{1} - \partial_{1} X_{b,\lambda}^{2}),   \\
& (1,3): \hspace{1mm} ( \partial_{1} X_{u,\lambda}^{2} + \partial_{2} X_{u,\lambda}^{1}) \circ \left(-\frac{\nu \Delta}{2} + 1 \right)^{-1} (\partial_{2} X_{b,\lambda}^{1} - \partial_{1} X_{b,\lambda}^{2}) \nonumber \\
& \hspace{30mm} - (\partial_{1} X_{b,\lambda}^{2} - \partial_{2} X_{b,\lambda}^{1} ) \circ \left(-\frac{\nu \Delta}{2} + 1 \right)^{-1} (\partial_{2} X_{u,\lambda}^{1} + \partial_{1} X_{u,\lambda}^{2} ),  \\
& (1,4):  \hspace{1mm}  ( \partial_{1} X_{u,\lambda}^{1} + \partial_{1} X_{u,\lambda}^{1}) \circ \left(-\frac{\nu \Delta}{2} + 1 \right)^{-1} (\partial_{1} X_{b,\lambda}^{2} - \partial_{2} X_{b,\lambda}^{1}) \nonumber \\
& \hspace{30mm} - (\partial_{1} X_{b,\lambda}^{2} - \partial_{2} X_{b,\lambda}^{1} ) \circ \left(-\frac{\nu \Delta}{2} + 1 \right)^{-1} (\partial_{2} X_{u,\lambda}^{2} + \partial_{2} X_{u,\lambda}^{2} ), \\
& (2,1):  \hspace{1mm}  ( \partial_{2} X_{u,\lambda}^{1} + \partial_{1} X_{u,\lambda}^{2}) \circ \left(-\frac{\nu \Delta}{2} + 1 \right)^{-1} (\partial_{1} X_{u,\lambda}^{1} + \partial_{1} X_{u,\lambda}^{1}) \nonumber \\
& \hspace{30mm} + (\partial_{2} X_{u,\lambda}^{2} + \partial_{2} X_{u,\lambda}^{2} ) \circ \left(-\frac{\nu \Delta}{2} + 1 \right)^{-1} (\partial_{2} X_{u,\lambda}^{1} + \partial_{1} X_{u,\lambda}^{2} ), \\
& (2,2): \hspace{1mm}  ( \partial_{2} X_{u,\lambda}^{1} + \partial_{1} X_{u,\lambda}^{2}) \circ \left( -\frac{\nu \Delta}{2}+ 1 \right)^{-1} (\partial_{1} X_{u,\lambda}^{2} + \partial_{2} X_{u,\lambda}^{1}) \nonumber \\
& \hspace{30mm} + ( \partial_{2} X_{u,\lambda}^{2} + \partial_{2} X_{u,\lambda}^{2}) \circ \left( -\frac{\nu \Delta}{2} + 1\right)^{-1} (\partial_{2} X_{u,\lambda}^{2} + \partial_{2} X_{u,\lambda}^{2}) \nonumber \\
& \hspace{30mm } - (\partial_{2} X_{b,\lambda}^{1} - \partial_{1} X_{b,\lambda}^{2}) \circ \left( -\frac{\nu \Delta}{2}+1 \right)^{-1} (\partial_{1} X_{b,\lambda}^{2} - \partial_{2} X_{b,\lambda}^{1}),  \\
& (2,3):  \hspace{1mm}  ( \partial_{2} X_{u,\lambda}^{2} + \partial_{2} X_{u,\lambda}^{2}) \circ \left(-\frac{\nu \Delta}{2} + 1 \right)^{-1} (\partial_{2} X_{b,\lambda}^{1} - \partial_{1} X_{b,\lambda}^{2}) \nonumber \\
& \hspace{30mm} - (\partial_{2} X_{b,\lambda}^{1} - \partial_{1} X_{b,\lambda}^{2} ) \circ \left(-\frac{\nu \Delta}{2} + 1 \right)^{-1} (\partial_{1} X_{u,\lambda}^{1} + \partial_{1} X_{u,\lambda}^{1} ), \\
& (2,4):  \hspace{1mm}  ( \partial_{2} X_{u,\lambda}^{1} + \partial_{1} X_{u,\lambda}^{2}) \circ \left(-\frac{\nu \Delta}{2} + 1 \right)^{-1} (\partial_{1} X_{b,\lambda}^{2} - \partial_{2} X_{b,\lambda}^{1}) \nonumber \\
& \hspace{30mm} - (\partial_{2} X_{b,\lambda}^{1} - \partial_{1} X_{b,\lambda}^{2} ) \circ \left(-\frac{\nu \Delta}{2} + 1 \right)^{-1} (\partial_{1} X_{u,\lambda}^{2} + \partial_{2} X_{u,\lambda}^{1} ), \\
& (3,1):  -( \partial_{1} X_{b,\lambda}^{2} - \partial_{2} X_{b,\lambda}^{1}) \circ \left(-\frac{\nu \Delta}{2} + 1 \right)^{-1} (\partial_{2} X_{u,\lambda}^{1} + \partial_{1} X_{u,\lambda}^{2}) \nonumber \\
& \hspace{30mm} + (\partial_{1} X_{u,\lambda}^{2} + \partial_{2} X_{u,\lambda}^{1} ) \circ \left(-\frac{\nu \Delta}{2} + 1 \right)^{-1} (\partial_{2} X_{b,\lambda}^{1} - \partial_{1} X_{b,\lambda}^{2} ),\\
& (3,2): -( \partial_{1} X_{b,\lambda}^{2} - \partial_{2} X_{b,\lambda}^{1}) \circ \left(-\frac{\nu \Delta}{2} + 1 \right)^{-1} (\partial_{2} X_{u,\lambda}^{2} + \partial_{2} X_{u,\lambda}^{2}) \nonumber \\
& \hspace{30mm} + (\partial_{1} X_{u,\lambda}^{1} + \partial_{1} X_{u,\lambda}^{1} ) \circ \left(-\frac{\nu \Delta}{2} + 1 \right)^{-1} (\partial_{1} X_{b,\lambda}^{2} - \partial_{2} X_{b,\lambda}^{1} ),\\
& (3,3): - (\partial_{1} X_{b,\lambda}^{2} - \partial_{2} X_{b,\lambda}^{1}) \circ \left( -\frac{\nu \Delta}{2}+ 1 \right)^{-1} (\partial_{2} X_{b,\lambda}^{1} - \partial_{1} X_{b,\lambda}^{2}) \nonumber \\
& \hspace{30mm} + ( \partial_{1} X_{u,\lambda}^{1} + \partial_{1} X_{u,\lambda}^{1}) \circ \left( -\frac{\nu \Delta}{2} + 1\right)^{-1} (\partial_{1} X_{u,\lambda}^{1} + \partial_{1} X_{u,\lambda}^{1}) \nonumber \\
& \hspace{30mm } + (\partial_{1} X_{u,\lambda}^{2} + \partial_{2} X_{u,\lambda}^{1}) \circ \left( -\frac{\nu \Delta}{2}+1\right)^{-1} (\partial_{2} X_{u,\lambda}^{1} + \partial_{1} X_{u,\lambda}^{2}), \\
& (3,4): \hspace{1mm}  (\partial_{1} X_{u,\lambda}^{1} + \partial_{1} X_{u,\lambda}^{1}) \circ \left(-\frac{\nu \Delta}{2} + 1 \right)^{-1} (\partial_{1} X_{u,\lambda}^{2} + \partial_{2} X_{u,\lambda}^{1}) \nonumber \\
& \hspace{30mm} + (\partial_{1} X_{u,\lambda}^{2} + \partial_{2} X_{u,\lambda}^{1} ) \circ \left(-\frac{\nu \Delta}{2} + 1 \right)^{-1} (\partial_{2} X_{u,\lambda}^{2} + \partial_{2} X_{u,\lambda}^{2} ),\\
& (4,1): - (\partial_{2} X_{b,\lambda}^{1} - \partial_{1} X_{b,\lambda}^{2}) \circ \left(-\frac{\nu \Delta}{2} + 1 \right)^{-1} (\partial_{1} X_{u,\lambda}^{1} + \partial_{1} X_{u,\lambda}^{1}) \nonumber \\
& \hspace{30mm} + (\partial_{2} X_{u,\lambda}^{2} + \partial_{2} X_{u,\lambda}^{2} ) \circ \left(-\frac{\nu \Delta}{2} + 1 \right)^{-1} (\partial_{2} X_{b,\lambda}^{1} - \partial_{1} X_{b,\lambda}^{2}),\\
& (4,2): - ( \partial_{2} X_{b,\lambda}^{1} - \partial_{1} X_{b,\lambda}^{2}) \circ \left(-\frac{\nu \Delta}{2} + 1 \right)^{-1} (\partial_{1} X_{u,\lambda}^{2} + \partial_{2} X_{u,\lambda}^{1}) \nonumber \\
& \hspace{30mm} + (\partial_{2} X_{u,\lambda}^{1} + \partial_{1} X_{u,\lambda}^{2} ) \circ \left(-\frac{\nu \Delta}{2} + 1 \right)^{-1} (\partial_{1} X_{b,\lambda}^{2} - \partial_{2} X_{b,\lambda}^{1} ),\\
& (4,3): \hspace{1mm}  ( \partial_{2} X_{u,\lambda}^{1} +  \partial_{1} X_{u,\lambda}^{2}) \circ \left(-\frac{\nu \Delta}{2} + 1 \right)^{-1} (\partial_{1} X_{u,\lambda}^{1} + \partial_{1} X_{u,\lambda}^{1}) \nonumber \\
& \hspace{30mm} + (\partial_{2} X_{u,\lambda}^{2} + \partial_{2} X_{u,\lambda}^{2} ) \circ \left(-\frac{\nu \Delta}{2} + 1 \right)^{-1} (\partial_{2} X_{u,\lambda}^{1} + \partial_{1} X_{u,\lambda}^{2} ). 
\end{align*}
All the mathematical expectations of $(1,3)$, $(1,4)$, $(2,3)$, $(2,4)$, $(3,1)$, $(3,2)$, $(4,1)$, and $(4,2)$ entries immediately vanish due to $1_{\{ \alpha = \gamma \}}$ from \eqref{est 214}. Additionally, the entries of $(2,1)$, $(3,4)$, and $(4,3)$ also vanish just like the $(1,2)$-entry. Finally, all of $(2,2)$, $(3,3)$, and $(4,4)$-entries equal $4r_{\lambda}(t)$ just like the $(1,1)$-entry. 

\section*{Acknowledgments}
This work is supported by the Simons Foundation (962572, KY). The author thanks Prof. Zdzis$\l$aw Brze$\acute{\mathrm{z}}$niak, Prof. Paul Razafimandimby, Prof. Jiahong Wu, Prof. Dimitri Volchenkov, and Prof. Vincent Martinez for valuable discussions.

\end{document}